\documentclass[reqno]{amsart}
\usepackage{fullpage, verbatim, xy, amssymb}
\usepackage{amsmath}
\usepackage{amssymb}
\usepackage{amsfonts}
\usepackage{graphicx}

\setcounter{MaxMatrixCols}{10}

\xyoption{all}
\newtheorem{theorem}{Theorem}[section]

\newtheorem{axiom}[theorem]{Axiom}

\newtheorem{conjecture}[theorem]{Conjecture}
\newtheorem{corollary}[theorem]{Corollary}

\newtheorem{definition}[theorem]{Definition}
\newtheorem{example}[theorem]{Example}
\newtheorem{exercise}[theorem]{Exercise}
\newtheorem{lemma}[theorem]{Lemma}

\newtheorem{proposition}[theorem]{Proposition}
\newtheorem{remark}[theorem]{Remark}


\typeout{TCILATEX Macros for Scientific Word 5.0 <13 Feb 2003>.}
\typeout{NOTICE:  This macro file is NOT proprietary and may be 
freely copied and distributed.}
\makeatletter

\ifx\pdfoutput\relax\let\pdfoutput=\undefined\fi
\newcount\msipdfoutput
\ifx\pdfoutput\undefined
\else
 \ifcase\pdfoutput
 \else 
    \msipdfoutput=1
    \ifx\paperwidth\undefined
    \else
      \ifdim\paperheight=0pt\relax
      \else
        \pdfpageheight\paperheight
      \fi
      \ifdim\paperwidth=0pt\relax
      \else
        \pdfpagewidth\paperwidth
      \fi
    \fi
  \fi  
\fi

%

%
\newcount\@hour\newcount\@minute\chardef\@x10\chardef\@xv60
\def\tcitime{
\def\@time{%
  \@minute\time\@hour\@minute\divide\@hour\@xv
  \ifnum\@hour<\@x 0\fi\the\@hour:%
  \multiply\@hour\@xv\advance\@minute-\@hour
  \ifnum\@minute<\@x 0\fi\the\@minute
  }}%


\def\x@hyperref#1#2#3{%
   \catcode`\~ = 12
   \catcode`\$ = 12
   \catcode`\_ = 12
   \catcode`\# = 12
   \catcode`\& = 12
   \y@hyperref{#1}{#2}{#3}%
}

\def\y@hyperref#1#2#3#4{%
   #2\ref{#4}#3
   \catcode`\~ = 13
   \catcode`\$ = 3
   \catcode`\_ = 8
   \catcode`\# = 6
   \catcode`\& = 4
}

\@ifundefined{hyperref}{\let\hyperref\x@hyperref}{}
\@ifundefined{msihyperref}{\let\msihyperref\x@hyperref}{}

\@ifundefined{qExtProgCall}{\def\qExtProgCall#1#2#3#4#5#6{\relax}}{}
%
%
%
%
\def\QCTOpt[#1]#2{%
  \def\QCTOptB{#1}
  \def\QCTOptA{#2}
}
\def\QCTNOpt#1{%
  \def\QCTOptA{#1}
  \let\QCTOptB\empty
}
\def\Qct{%
  \@ifnextchar[{%
    \QCTOpt}{\QCTNOpt}
}
\def\QCBOpt[#1]#2{%
  \def\QCBOptB{#1}%
  \def\QCBOptA{#2}%
}
\def\QCBNOpt#1{%
  \def\QCBOptA{#1}%
  \let\QCBOptB\empty
}
\def\Qcb{%
  \@ifnextchar[{%
    \QCBOpt}{\QCBNOpt}%
}
\def\PrepCapArgs{%
  \ifx\QCBOptA\empty
    \ifx\QCTOptA\empty
      {}%
    \else
      \ifx\QCTOptB\empty
        {\QCTOptA}%
      \else
        [\QCTOptB]{\QCTOptA}%
      \fi
    \fi
  \else
    \ifx\QCBOptA\empty
      {}%
    \else
      \ifx\QCBOptB\empty
        {\QCBOptA}%
      \else
        [\QCBOptB]{\QCBOptA}%
      \fi
    \fi
  \fi
}
\newcount\GRAPHICSTYPE
\GRAPHICSTYPE=\z@
\def\GRAPHICSPS#1{%
 \ifcase\GRAPHICSTYPE
   \special{ps: #1}%
 \or
   \special{language "PS", include "#1"}%
 \fi
}%
%
%
%

\def\graffile#1#2#3#4{%
    \bgroup
	   \@inlabelfalse
       \leavevmode
       \@ifundefined{bbl@deactivate}{\def~{\string~}}{\activesoff}%
        \raise -#4 \BOXTHEFRAME{%
           \hbox to #2{\raise #3\hbox to #2{\null #1\hfil}}}%
    \egroup
}%
%
\def\draftbox#1#2#3#4{%
 \leavevmode\raise -#4 \hbox{%
  \frame{\rlap{\protect\tiny #1}\hbox to #2%
   {\vrule height#3 width\z@ depth\z@\hfil}%
  }%
 }%
}%
\newcount\@msidraft
\@msidraft=\z@
\let\nographics=\@msidraft
\newif\ifwasdraft
\wasdraftfalse

\def\GRAPHIC#1#2#3#4#5{%
   \ifnum\@msidraft=\@ne\draftbox{#2}{#3}{#4}{#5}%
   \else\graffile{#1}{#3}{#4}{#5}%
   \fi
}
\def\addtoLaTeXparams#1{%
    \edef\LaTeXparams{\LaTeXparams #1}}%
%

\newif\ifBoxFrame \BoxFramefalse
\newif\ifOverFrame \OverFramefalse
\newif\ifUnderFrame \UnderFramefalse

\def\BOXTHEFRAME#1{%
   \hbox{%
      \ifBoxFrame
         \frame{#1}%
      \else
         {#1}%
      \fi
   }%
}

\def\doFRAMEparams#1{\BoxFramefalse\OverFramefalse\UnderFramefalse\readFRAMEparams#1\end}%
\def\readFRAMEparams#1{%
 \ifx#1\end%
  \let\next=\relax
  \else
  \ifx#1i\dispkind=\z@\fi
  \ifx#1d\dispkind=\@ne\fi
  \ifx#1f\dispkind=\tw@\fi
  \ifx#1t\addtoLaTeXparams{t}\fi
  \ifx#1b\addtoLaTeXparams{b}\fi
  \ifx#1p\addtoLaTeXparams{p}\fi
  \ifx#1h\addtoLaTeXparams{h}\fi
  \ifx#1X\BoxFrametrue\fi
  \ifx#1O\OverFrametrue\fi
  \ifx#1U\UnderFrametrue\fi
  \ifx#1w
    \ifnum\@msidraft=1\wasdrafttrue\else\wasdraftfalse\fi
    \@msidraft=\@ne
  \fi
  \let\next=\readFRAMEparams
  \fi
 \next
 }%
%

\def\IFRAME#1#2#3#4#5#6{%
      \bgroup
      \let\QCTOptA\empty
      \let\QCTOptB\empty
      \let\QCBOptA\empty
      \let\QCBOptB\empty
      #6%
      \parindent=0pt
      \leftskip=0pt
      \rightskip=0pt
      \setbox0=\hbox{\QCBOptA}%
      \@tempdima=#1\relax
      \ifOverFrame
          \typeout{This is not implemented yet}%
          \show\HELP
      \else
         \ifdim\wd0>\@tempdima
            \advance\@tempdima by \@tempdima
            \ifdim\wd0 >\@tempdima
               \setbox1 =\vbox{%
                  \unskip\hbox to \@tempdima{\hfill\GRAPHIC{#5}{#4}{#1}{#2}{#3}\hfill}%
                  \unskip\hbox to \@tempdima{\parbox[b]{\@tempdima}{\QCBOptA}}%
               }%
               \wd1=\@tempdima
            \else
               \textwidth=\wd0
               \setbox1 =\vbox{%
                 \noindent\hbox to \wd0{\hfill\GRAPHIC{#5}{#4}{#1}{#2}{#3}\hfill}\\%
                 \noindent\hbox{\QCBOptA}%
               }%
               \wd1=\wd0
            \fi
         \else
            \ifdim\wd0>0pt
              \hsize=\@tempdima
              \setbox1=\vbox{%
                \unskip\GRAPHIC{#5}{#4}{#1}{#2}{0pt}%
                \break
                \unskip\hbox to \@tempdima{\hfill \QCBOptA\hfill}%
              }%
              \wd1=\@tempdima
           \else
              \hsize=\@tempdima
              \setbox1=\vbox{%
                \unskip\GRAPHIC{#5}{#4}{#1}{#2}{0pt}%
              }%
              \wd1=\@tempdima
           \fi
         \fi
         \@tempdimb=\ht1
         \advance\@tempdimb by -#2
         \advance\@tempdimb by #3
         \leavevmode
         \raise -\@tempdimb \hbox{\box1}%
      \fi
      \egroup%
}%
%
\def\DFRAME#1#2#3#4#5{%
  \vspace\topsep
  \hfil\break
  \bgroup
     \leftskip\@flushglue
	 \rightskip\@flushglue
	 \parindent\z@
	 \parfillskip\z@skip
     \let\QCTOptA\empty
     \let\QCTOptB\empty
     \let\QCBOptA\empty
     \let\QCBOptB\empty
	 \vbox\bgroup
        \ifOverFrame 
           #5\QCTOptA\par
        \fi
        \GRAPHIC{#4}{#3}{#1}{#2}{\z@}%
        \ifUnderFrame 
           \break#5\QCBOptA
        \fi
	 \egroup
  \egroup
  \vspace\topsep
  \break
}%
%
\def\FFRAME#1#2#3#4#5#6#7{%
  \@ifundefined{floatstyle}
    {
     \begin{figure}[#1]%
    }
    {
	 \ifx#1h
      \begin{figure}[H]%
	 \else
      \begin{figure}[#1]%
	 \fi
	}
  \let\QCTOptA\empty
  \let\QCTOptB\empty
  \let\QCBOptA\empty
  \let\QCBOptB\empty
  \ifOverFrame
    #4
    \ifx\QCTOptA\empty
    \else
      \ifx\QCTOptB\empty
        \caption{\QCTOptA}%
      \else
        \caption[\QCTOptB]{\QCTOptA}%
      \fi
    \fi
    \ifUnderFrame\else
      \label{#5}%
    \fi
  \else
    \UnderFrametrue%
  \fi
  \begin{center}\GRAPHIC{#7}{#6}{#2}{#3}{\z@}\end{center}%
  \ifUnderFrame
    #4
    \ifx\QCBOptA\empty
      \caption{}%
    \else
      \ifx\QCBOptB\empty
        \caption{\QCBOptA}%
      \else
        \caption[\QCBOptB]{\QCBOptA}%
      \fi
    \fi
    \label{#5}%
  \fi
  \end{figure}%
 }%
%
%
%
%
%
\newcount\dispkind%

\def\makeactives{
  \catcode`\"=\active
  \catcode`\;=\active
  \catcode`\:=\active
  \catcode`\'=\active
  \catcode`\~=\active
}
\bgroup
   \makeactives
   \gdef\activesoff{%
      \def"{\string"}%
      \def;{\string;}%
      \def:{\string:}%
      \def'{\string'}%
      \def~{\string~}%
    }
\egroup

\def\FRAME#1#2#3#4#5#6#7#8{%
 \bgroup
 \ifnum\@msidraft=\@ne
   \wasdrafttrue
 \else
   \wasdraftfalse%
 \fi
 \def\LaTeXparams{}%
 \dispkind=\z@
 \def\LaTeXparams{}%
 \doFRAMEparams{#1}%
 \ifnum\dispkind=\z@\IFRAME{#2}{#3}{#4}{#7}{#8}{#5}\else
  \ifnum\dispkind=\@ne\DFRAME{#2}{#3}{#7}{#8}{#5}\else
   \ifnum\dispkind=\tw@
    \edef\@tempa{\noexpand\FFRAME{\LaTeXparams}}%
    \@tempa{#2}{#3}{#5}{#6}{#7}{#8}%
    \fi
   \fi
  \fi
  \ifwasdraft\@msidraft=1\else\@msidraft=0\fi{}%
  \egroup
 }%
%

\def\TEXUX#1{"texux"}

%
%
%
%
%
%
%
\def\limfunc#1{\mathop{\rm #1}}%
\def\func#1{\mathop{\rm #1}\nolimits}%
%

%
\long\def\QQQ#1#2{%
     \long\expandafter\def\csname#1\endcsname{#2}}%
\@ifundefined{QTP}{\def\QTP#1{}}{}
\@ifundefined{QEXCLUDE}{\def\QEXCLUDE#1{}}{}
\@ifundefined{Qlb}{}{}
\@ifundefined{Qlt}{}{}
\long\def\QQA#1#2{}%
\def\QTR#1#2{{\csname#1\endcsname {#2}}}%
\def\EXPAND#1[#2]#3{}%
\def\NOEXPAND#1[#2]#3{}%
\def\LaTeXparent#1{}%
\def\ChildStyles#1{}%
\def\ChildDefaults#1{}%
\def\QTagDef#1#2#3{}%

\@ifundefined{correctchoice}{}{}
\@ifundefined{HTML}{\def\HTML#1{\relax}}{}
\@ifundefined{TCIIcon}{\def\TCIIcon#1#2#3#4{\relax}}{}
\if@compatibility
  \typeout{Not defining UNICODE  U or CustomNote commands for LaTeX 2.09.}
\else
  \providecommand{\UNICODE}[2][]{\protect\rule{.1in}{.1in}}
  \providecommand{\U}[1]{\protect\rule{.1in}{.1in}}
  
\fi

\@ifundefined{lambdabar}{
      
   }{}

%
\@ifundefined{StyleEditBeginDoc}{}{}
%
\def\QQfnmark#1{\footnotemark}

%
%
\@ifundefined{TCIMAKEINDEX}{}{\makeindex}%
%
\@ifundefined{abstract}{%
 \def\abstract{%
  \if@twocolumn
   \section*{Abstract (Not appropriate in this style!)}%
   \else \small 
   \begin{center}{\bf Abstract\vspace{-.5em}\vspace{\z@}}\end{center}%
   \quotation 
   \fi
  }%
 }{%
 }%
\@ifundefined{endabstract}{\def\endabstract
  {\if@twocolumn\else\endquotation\fi}}{}%
\@ifundefined{maketitle}{\def\maketitle#1{}}{}%
\@ifundefined{affiliation}{\def\affiliation#1{}}{}%
\@ifundefined{proof}{}{}%
\@ifundefined{endproof}{}{}%
\@ifundefined{newfield}{\def\newfield#1#2{}}{}%
\@ifundefined{chapter}{\def\chapter#1{\par(Chapter head:)#1\par }%
 \newcount\c@chapter}{}%
\@ifundefined{part}{\def\part#1{\par(Part head:)#1\par }}{}%
\@ifundefined{section}{\def\section#1{\par(Section head:)#1\par }}{}%
\@ifundefined{subsection}{\def\subsection#1%
 {\par(Subsection head:)#1\par }}{}%
\@ifundefined{subsubsection}{\def\subsubsection#1%
 {\par(Subsubsection head:)#1\par }}{}%
\@ifundefined{paragraph}{\def\paragraph#1%
 {\par(Subsubsubsection head:)#1\par }}{}%
\@ifundefined{subparagraph}{\def\subparagraph#1%
 {\par(Subsubsubsubsection head:)#1\par }}{}%
\@ifundefined{therefore}{}{}%
\@ifundefined{backepsilon}{}{}%
\@ifundefined{yen}{}{}%
\@ifundefined{registered}{%
   \def\registered{\relax\ifmmode{}\r@gistered
                    \else$\m@th\r@gistered$\fi}%
 \def\r@gistered{^{\ooalign
  {\hfil\raise.07ex\hbox{$\scriptstyle\rm\text{R}$}\hfil\crcr
  \mathhexbox20D}}}}{}%
\@ifundefined{Eth}{}{}%
\@ifundefined{eth}{}{}%
\@ifundefined{Thorn}{}{}%
\@ifundefined{thorn}{}{}%
%
\@ifundefined{degree}{}{}%
%
\newdimen\theight
\@ifundefined{Column}{\def\Column{%
 \vadjust{\setbox\z@=\hbox{\scriptsize\quad\quad tcol}%
  \theight=\ht\z@\advance\theight by \dp\z@\advance\theight by \lineskip
  \kern -\theight \vbox to \theight{%
   \rightline{\rlap{\box\z@}}%
   \vss
   }%
  }%
 }}{}%
\@ifundefined{qed}{\def\qed{%
 \ifhmode\unskip\nobreak\fi\ifmmode\ifinner\else\hskip5\p@\fi\fi
 \hbox{\hskip5\p@\vrule width4\p@ height6\p@ depth1.5\p@\hskip\p@}%
 }}{}%
\@ifundefined{cents}{}{}%
\@ifundefined{tciLaplace}{}{}%
\@ifundefined{tciFourier}{}{}%
\@ifundefined{textcurrency}{}{}%
\@ifundefined{texteuro}{}{}%
\@ifundefined{euro}{}{}%
\@ifundefined{textfranc}{}{}%
\@ifundefined{textlira}{}{}%
\@ifundefined{textpeseta}{}{}%
\@ifundefined{miss}{\def\miss{\hbox{\vrule height2\p@ width 2\p@ depth\z@}}}{}%
\@ifundefined{vvert}{}{}
\@ifundefined{tcol}{\def\tcol#1{{\baselineskip=6\p@ \vcenter{#1}} \Column}}{}%
\@ifundefined{dB}{}{}
\@ifundefined{mB}{}{}
\@ifundefined{nB}{}{}
\@ifundefined{note}{}{}%
\def\newfmtname{LaTeX2e}
%
\ifx\fmtname\newfmtname
  \DeclareOldFontCommand{\rm}{\normalfont\rmfamily}{\mathrm}
  \DeclareOldFontCommand{\sf}{\normalfont\sffamily}{\mathsf}
  \DeclareOldFontCommand{\tt}{\normalfont\ttfamily}{\mathtt}
  \DeclareOldFontCommand{\bf}{\normalfont\bfseries}{\mathbf}
  \DeclareOldFontCommand{\it}{\normalfont\itshape}{\mathit}
  \DeclareOldFontCommand{\sl}{\normalfont\slshape}{\@nomath\sl}
  \DeclareOldFontCommand{\sc}{\normalfont\scshape}{\@nomath\sc}
\fi

%

\def\alpha{{\Greekmath 010B}}%
\def\beta{{\Greekmath 010C}}%
\def\gamma{{\Greekmath 010D}}%
\def\delta{{\Greekmath 010E}}%
\def\epsilon{{\Greekmath 010F}}%
\def\zeta{{\Greekmath 0110}}%
\def\eta{{\Greekmath 0111}}%
\def\theta{{\Greekmath 0112}}%
\def\iota{{\Greekmath 0113}}%
\def\kappa{{\Greekmath 0114}}%
\def\lambda{{\Greekmath 0115}}%
\def\mu{{\Greekmath 0116}}%
\def\nu{{\Greekmath 0117}}%
\def\xi{{\Greekmath 0118}}%
\def\pi{{\Greekmath 0119}}%
\def\rho{{\Greekmath 011A}}%
\def\sigma{{\Greekmath 011B}}%
\def\tau{{\Greekmath 011C}}%
\def\upsilon{{\Greekmath 011D}}%
\def\phi{{\Greekmath 011E}}%
\def\chi{{\Greekmath 011F}}%
\def\psi{{\Greekmath 0120}}%
\def\omega{{\Greekmath 0121}}%
\def\varepsilon{{\Greekmath 0122}}%
\def\vartheta{{\Greekmath 0123}}%
\def\varpi{{\Greekmath 0124}}%
\def\varrho{{\Greekmath 0125}}%
\def\varsigma{{\Greekmath 0126}}%
\def\varphi{{\Greekmath 0127}}%

\def\nabla{{\Greekmath 0272}}
\def\FindBoldGroup{%
   {\setbox0=\hbox{$\mathbf{x\global\edef\theboldgroup{\the\mathgroup}}$}}%
}

\def\Greekmath#1#2#3#4{%
    \if@compatibility
        \ifnum\mathgroup=\symbold
           \mathchoice{\mbox{\boldmath$\displaystyle\mathchar"#1#2#3#4$}}%
                      {\mbox{\boldmath$\textstyle\mathchar"#1#2#3#4$}}%
                      {\mbox{\boldmath$\scriptstyle\mathchar"#1#2#3#4$}}%
                      {\mbox{\boldmath$\scriptscriptstyle\mathchar"#1#2#3#4$}}%
        \else
           \mathchar"#1#2#3#4%
        \fi 
    \else 
        \FindBoldGroup
        \ifnum\mathgroup=\theboldgroup 
           \mathchoice{\mbox{\boldmath$\displaystyle\mathchar"#1#2#3#4$}}%
                      {\mbox{\boldmath$\textstyle\mathchar"#1#2#3#4$}}%
                      {\mbox{\boldmath$\scriptstyle\mathchar"#1#2#3#4$}}%
                      {\mbox{\boldmath$\scriptscriptstyle\mathchar"#1#2#3#4$}}%
        \else
           \mathchar"#1#2#3#4%
        \fi     	    
	  \fi}

\newif\ifGreekBold  \GreekBoldfalse
\let\SAVEPBF=\pbf
\def\pbf{\GreekBoldtrue\SAVEPBF}%

\@ifundefined{theorem}{\newtheorem{theorem}{Theorem}}{}
\@ifundefined{lemma}{\newtheorem{lemma}[theorem]{Lemma}}{}
\@ifundefined{corollary}{\newtheorem{corollary}[theorem]{Corollary}}{}
\@ifundefined{conjecture}{}{}
\@ifundefined{proposition}{\newtheorem{proposition}[theorem]{Proposition}}{}
\@ifundefined{axiom}{}{}
\@ifundefined{remark}{\newtheorem{remark}{Remark}}{}
\@ifundefined{example}{}{}
\@ifundefined{exercise}{}{}
\@ifundefined{definition}{\newtheorem{definition}{Definition}}{}

\@ifundefined{mathletters}{%
  \newcounter{equationnumber}  
  \def\mathletters{%
     \addtocounter{equation}{1}
     \edef\@currentlabel{\theequation}%
     \setcounter{equationnumber}{\c@equation}
     \setcounter{equation}{0}%
     \edef\theequation{\@currentlabel\noexpand\alph{equation}}%
  }
  
}{}

\@ifundefined{BibTeX}{%
    \def\BibTeX{{\rm B\kern-.05em{\sc i\kern-.025em b}\kern-.08em
                 T\kern-.1667em\lower.7ex\hbox{E}\kern-.125emX}}}{}%
\@ifundefined{AmS}%
    {\def\AmS{{\protect\usefont{OMS}{cmsy}{m}{n}%
                A\kern-.1667em\lower.5ex\hbox{M}\kern-.125emS}}}{}%
\@ifundefined{AmSTeX}{}{}%
%

\def\@@eqncr{\let\@tempa\relax
    \ifcase\@eqcnt \def\@tempa{& & &}\or \def\@tempa{& &}%
      \else \def\@tempa{&}\fi
     \@tempa
     \if@eqnsw
        \iftag@
           \@taggnum
        \else
           \@eqnnum\stepcounter{equation}%
        \fi
     \fi
     \global\tag@false
     \global\@eqnswtrue
     \global\@eqcnt\z@\cr}

\def\TCItag{\@ifnextchar*{\@TCItagstar}{\@TCItag}}
\def\@TCItag#1{%
    \global\tag@true
    \global\def\@taggnum{(#1)}}
\def\@TCItagstar*#1{%
    \global\tag@true
    \global\def\@taggnum{#1}}
%
%
%
%
%
%
%
%
%
%
%
%
%
%
%
%
%
%
%
%
%
%
%
%
%
\def\tsum{\mathop{\textstyle \sum }}%
\def\tbigoplus{\mathop{\textstyle \bigoplus }}%
\def\tbigsqcup{\mathop{\textstyle \bigsqcup }}%
\def\tbigcup{\mathop{\textstyle \bigcup }}%
%
%
%
%
%
%
%
%
%
%
%
%
\def\dsum{\mathop{\displaystyle \sum }}%

\if@compatibility\else
  \RequirePackage{amsmath}
\fi

\def\ExitTCILatex{\makeatother }

\bgroup
\ifx\ds@amstex\relax
   \message{amstex already loaded}\aftergroup\ExitTCILatex
\else
   \@ifpackageloaded{amsmath}%
      {\if@compatibility\message{amsmath already loaded}\fi\aftergroup\ExitTCILatex}
      {}
   \@ifpackageloaded{amstex}%
      {\if@compatibility\message{amstex already loaded}\fi\aftergroup\ExitTCILatex}
      {}
   \@ifpackageloaded{amsgen}%
      {\if@compatibility\message{amsgen already loaded}\fi\aftergroup\ExitTCILatex}
      {}
\fi
\egroup


\typeout{TCILATEX defining AMS-like constructs in LaTeX 2.09 COMPATIBILITY MODE}
%
%
\let\DOTSI\relax
\def\RIfM@{\relax\ifmmode}%
\def\FN@{\futurelet\next}%
\newcount\intno@
\def\iint{\DOTSI\intno@\tw@\FN@\ints@}%
\def\iiint{\DOTSI\intno@\thr@@\FN@\ints@}%
\def\iiiint{\DOTSI\intno@4 \FN@\ints@}%
\def\idotsint{\DOTSI\intno@\z@\FN@\ints@}%
\def\ints@{\findlimits@\ints@@}%
\newif\iflimtoken@
\newif\iflimits@
\def\findlimits@{\limtoken@true\ifx\next\limits\limits@true
 \else\ifx\next\nolimits\limits@false\else
 \limtoken@false\ifx\ilimits@\nolimits\limits@false\else
 \ifinner\limits@false\else\limits@true\fi\fi\fi\fi}%
\def\multint@{\int\ifnum\intno@=\z@\intdots@                          
 \else\intkern@\fi                                                    
 \ifnum\intno@>\tw@\int\intkern@\fi                                   
 \ifnum\intno@>\thr@@\int\intkern@\fi                                 
 \int}
\def\multintlimits@{\intop\ifnum\intno@=\z@\intdots@\else\intkern@\fi
 \ifnum\intno@>\tw@\intop\intkern@\fi
 \ifnum\intno@>\thr@@\intop\intkern@\fi\intop}%
\def\intic@{%
    \mathchoice{\hskip.5em}{\hskip.4em}{\hskip.4em}{\hskip.4em}}%
\def\negintic@{\mathchoice
 {\hskip-.5em}{\hskip-.4em}{\hskip-.4em}{\hskip-.4em}}%
\def\ints@@{\iflimtoken@                                              
 \def\ints@@@{\iflimits@\negintic@
   \mathop{\intic@\multintlimits@}\limits                             
  \else\multint@\nolimits\fi                                          
  \eat@}
 \else                                                                
 \def\ints@@@{\iflimits@\negintic@
  \mathop{\intic@\multintlimits@}\limits\else
  \multint@\nolimits\fi}\fi\ints@@@}%
\def\intkern@{\mathchoice{\!\!\!}{\!\!}{\!\!}{\!\!}}%
\def\plaincdots@{\mathinner{\cdotp\cdotp\cdotp}}%
\def\intdots@{\mathchoice{\plaincdots@}%
 {{\cdotp}\mkern1.5mu{\cdotp}\mkern1.5mu{\cdotp}}%
 {{\cdotp}\mkern1mu{\cdotp}\mkern1mu{\cdotp}}%
 {{\cdotp}\mkern1mu{\cdotp}\mkern1mu{\cdotp}}}%
%
%
%
\def\RIfM@{\relax\protect\ifmmode}
\def\text{\RIfM@\expandafter\text@\else\expandafter\mbox\fi}
\let\nfss@text\text
\def\text@#1{\mathchoice
   {\textdef@\displaystyle\f@size{#1}}%
   {\textdef@\textstyle\tf@size{\firstchoice@false #1}}%
   {\textdef@\textstyle\sf@size{\firstchoice@false #1}}%
   {\textdef@\textstyle \ssf@size{\firstchoice@false #1}}%
   \glb@settings}

\def\textdef@#1#2#3{\hbox{{%
                    \everymath{#1}%
                    \let\f@size#2\selectfont
                    #3}}}
\newif\iffirstchoice@
\firstchoice@true
%
%
\def\Let@{\relax\iffalse{\fi\let\\=\cr\iffalse}\fi}%
\def\vspace@{\def\vspace##1{\crcr\noalign{\vskip##1\relax}}}%
\def\multilimits@{\bgroup\vspace@\Let@
 \baselineskip\fontdimen10 \scriptfont\tw@
 \advance\baselineskip\fontdimen12 \scriptfont\tw@
 \lineskip\thr@@\fontdimen8 \scriptfont\thr@@
 \lineskiplimit\lineskip
 \vbox\bgroup\ialign\bgroup\hfil$\m@th\scriptstyle{##}$\hfil\crcr}%
\def\Sb{_\multilimits@}%
\def\endSb{\crcr\egroup\egroup\egroup}%
\def\Sp{^\multilimits@}%

%
%
%
\newdimen\ex@
\ex@.2326ex
\def\rightarrowfill@#1{$#1\m@th\mathord-\mkern-6mu\cleaders
 \hbox{$#1\mkern-2mu\mathord-\mkern-2mu$}\hfill
 \mkern-6mu\mathord\rightarrow$}%
\def\leftarrowfill@#1{$#1\m@th\mathord\leftarrow\mkern-6mu\cleaders
 \hbox{$#1\mkern-2mu\mathord-\mkern-2mu$}\hfill\mkern-6mu\mathord-$}%
\def\leftrightarrowfill@#1{$#1\m@th\mathord\leftarrow
\mkern-6mu\cleaders
 \hbox{$#1\mkern-2mu\mathord-\mkern-2mu$}\hfill
 \mkern-6mu\mathord\rightarrow$}%
\def\overrightarrow{\mathpalette\overrightarrow@}%
\def\overrightarrow@#1#2{\vbox{\ialign{##\crcr\rightarrowfill@#1\crcr
 \noalign{\kern-\ex@\nointerlineskip}$\m@th\hfil#1#2\hfil$\crcr}}}%

\def\overleftarrow{\mathpalette\overleftarrow@}%
\def\overleftarrow@#1#2{\vbox{\ialign{##\crcr\leftarrowfill@#1\crcr
 \noalign{\kern-\ex@\nointerlineskip}$\m@th\hfil#1#2\hfil$\crcr}}}%
\def\overleftrightarrow{\mathpalette\overleftrightarrow@}%
\def\overleftrightarrow@#1#2{\vbox{\ialign{##\crcr
   \leftrightarrowfill@#1\crcr
 \noalign{\kern-\ex@\nointerlineskip}$\m@th\hfil#1#2\hfil$\crcr}}}%
\def\underrightarrow{\mathpalette\underrightarrow@}%
\def\underrightarrow@#1#2{\vtop{\ialign{##\crcr$\m@th\hfil#1#2\hfil
  $\crcr\noalign{\nointerlineskip}\rightarrowfill@#1\crcr}}}%

\def\underleftarrow{\mathpalette\underleftarrow@}%
\def\underleftarrow@#1#2{\vtop{\ialign{##\crcr$\m@th\hfil#1#2\hfil
  $\crcr\noalign{\nointerlineskip}\leftarrowfill@#1\crcr}}}%
\def\underleftrightarrow{\mathpalette\underleftrightarrow@}%
\def\underleftrightarrow@#1#2{\vtop{\ialign{##\crcr$\m@th
  \hfil#1#2\hfil$\crcr
 \noalign{\nointerlineskip}\leftrightarrowfill@#1\crcr}}}%

\def\qopnamewl@#1{\mathop{\operator@font#1}\nlimits@}
\let\nlimits@\displaylimits
\def\setboxz@h{\setbox\z@\hbox}

\def\varlim@#1#2{\mathop{\vtop{\ialign{##\crcr
 \hfil$#1\m@th\operator@font lim$\hfil\crcr
 \noalign{\nointerlineskip}#2#1\crcr
 \noalign{\nointerlineskip\kern-\ex@}\crcr}}}}

 \def\rightarrowfill@#1{\m@th\setboxz@h{$#1-$}\ht\z@\z@
  $#1\copy\z@\mkern-6mu\cleaders
  \hbox{$#1\mkern-2mu\box\z@\mkern-2mu$}\hfill
  \mkern-6mu\mathord\rightarrow$}
\def\leftarrowfill@#1{\m@th\setboxz@h{$#1-$}\ht\z@\z@
  $#1\mathord\leftarrow\mkern-6mu\cleaders
  \hbox{$#1\mkern-2mu\copy\z@\mkern-2mu$}\hfill
  \mkern-6mu\box\z@$}

\def\projlim{\qopnamewl@{proj\,lim}}
\def\injlim{\qopnamewl@{inj\,lim}}
\def\varinjlim{\mathpalette\varlim@\rightarrowfill@}
\def\varprojlim{\mathpalette\varlim@\leftarrowfill@}
\def\varliminf{\mathpalette\varliminf@{}}
\def\varliminf@#1{\mathop{\underline{\vrule\@depth.2\ex@\@width\z@
   \hbox{$#1\m@th\operator@font lim$}}}}
\def\varlimsup{\mathpalette\varlimsup@{}}
\def\varlimsup@#1{\mathop{\overline
  {\hbox{$#1\m@th\operator@font lim$}}}}

%
%
%
%
%
%
\begingroup \catcode `|=0 \catcode `[= 1
\catcode`]=2 \catcode `\{=12 \catcode `\}=12
\catcode`\\=12 
|gdef|@alignverbatim#1\end{align}[#1|end[align]]
|gdef|@salignverbatim#1\end{align*}[#1|end[align*]]

|gdef|@alignatverbatim#1\end{alignat}[#1|end[alignat]]
|gdef|@salignatverbatim#1\end{alignat*}[#1|end[alignat*]]

|gdef|@xalignatverbatim#1\end{xalignat}[#1|end[xalignat]]
|gdef|@sxalignatverbatim#1\end{xalignat*}[#1|end[xalignat*]]

|gdef|@gatherverbatim#1\end{gather}[#1|end[gather]]
|gdef|@sgatherverbatim#1\end{gather*}[#1|end[gather*]]

|gdef|@gatherverbatim#1\end{gather}[#1|end[gather]]
|gdef|@sgatherverbatim#1\end{gather*}[#1|end[gather*]]

|gdef|@multilineverbatim#1\end{multiline}[#1|end[multiline]]
|gdef|@smultilineverbatim#1\end{multiline*}[#1|end[multiline*]]

|gdef|@arraxverbatim#1\end{arrax}[#1|end[arrax]]
|gdef|@sarraxverbatim#1\end{arrax*}[#1|end[arrax*]]

|gdef|@tabulaxverbatim#1\end{tabulax}[#1|end[tabulax]]
|gdef|@stabulaxverbatim#1\end{tabulax*}[#1|end[tabulax*]]

|endgroup

\def\align{\@verbatim \frenchspacing\@vobeyspaces \@alignverbatim
You are using the "align" environment in a style in which it is not defined.}

\@namedef{align*}{\@verbatim\@salignverbatim
You are using the "align*" environment in a style in which it is not defined.}
\expandafter\let\csname endalign*\endcsname =\endtrivlist

\def\alignat{\@verbatim \frenchspacing\@vobeyspaces \@alignatverbatim
You are using the "alignat" environment in a style in which it is not defined.}

\@namedef{alignat*}{\@verbatim\@salignatverbatim
You are using the "alignat*" environment in a style in which it is not defined.}
\expandafter\let\csname endalignat*\endcsname =\endtrivlist

\def\xalignat{\@verbatim \frenchspacing\@vobeyspaces \@xalignatverbatim
You are using the "xalignat" environment in a style in which it is not defined.}

\@namedef{xalignat*}{\@verbatim\@sxalignatverbatim
You are using the "xalignat*" environment in a style in which it is not defined.}
\expandafter\let\csname endxalignat*\endcsname =\endtrivlist

\def\gather{\@verbatim \frenchspacing\@vobeyspaces \@gatherverbatim
You are using the "gather" environment in a style in which it is not defined.}

\@namedef{gather*}{\@verbatim\@sgatherverbatim
You are using the "gather*" environment in a style in which it is not defined.}
\expandafter\let\csname endgather*\endcsname =\endtrivlist

\def\multiline{\@verbatim \frenchspacing\@vobeyspaces \@multilineverbatim
You are using the "multiline" environment in a style in which it is not defined.}

\@namedef{multiline*}{\@verbatim\@smultilineverbatim
You are using the "multiline*" environment in a style in which it is not defined.}
\expandafter\let\csname endmultiline*\endcsname =\endtrivlist

\def\arrax{\@verbatim \frenchspacing\@vobeyspaces \@arraxverbatim
You are using a type of "array" construct that is only allowed in AmS-LaTeX.}

\def\tabulax{\@verbatim \frenchspacing\@vobeyspaces \@tabulaxverbatim
You are using a type of "tabular" construct that is only allowed in AmS-LaTeX.}

\@namedef{arrax*}{\@verbatim\@sarraxverbatim
You are using a type of "array*" construct that is only allowed in AmS-LaTeX.}
\expandafter\let\csname endarrax*\endcsname =\endtrivlist

\@namedef{tabulax*}{\@verbatim\@stabulaxverbatim
You are using a type of "tabular*" construct that is only allowed in AmS-LaTeX.}
\expandafter\let\csname endtabulax*\endcsname =\endtrivlist


 \def\endequation{%
     \ifmmode\ifinner 
      \iftag@
        \addtocounter{equation}{-1} 
        $\hfil
           \displaywidth\linewidth\@taggnum\egroup \endtrivlist
        \global\tag@false
        \global\@ignoretrue   
      \else
        $\hfil
           \displaywidth\linewidth\@eqnnum\egroup \endtrivlist
        \global\tag@false
        \global\@ignoretrue 
      \fi
     \else   
      \iftag@
        \addtocounter{equation}{-1} 
        \eqno \hbox{\@taggnum}
        \global\tag@false%
        $$\global\@ignoretrue
      \else
        \eqno \hbox{\@eqnnum}
        $$\global\@ignoretrue
      \fi
     \fi\fi
 } 

 \newif\iftag@ \tag@false
 
 \def\TCItag{\@ifnextchar*{\@TCItagstar}{\@TCItag}}
 \def\@TCItag#1{%
     \global\tag@true
     \global\def\@taggnum{(#1)}}
 \def\@TCItagstar*#1{%
     \global\tag@true
     \global\def\@taggnum{#1}}

  \@ifundefined{tag}{
     \def\tag{\@ifnextchar*{\@tagstar}{\@tag}}
     \def\@tag#1{%
         \global\tag@true
         \global\def\@taggnum{(#1)}}
     \def\@tagstar*#1{%
         \global\tag@true
         \global\def\@taggnum{#1}}
  }{}

\def\binom#1#2{{#1 \choose #2}}%
%
%

\makeatother

\setcounter{tocdepth}{1}
\begin{document}
\title{Dirac operators in tensor categories and the motive of quaternionic modular forms}
\author{Marc Masdeu}
\email{masdeu@mat.uab.cat}
\address{Department of Mathematics, University of Warwick, Coventry, United Kingdom}
\author{Marco Adamo Seveso}
\email{seveso.marco@gmail.com}
\address{Dipartimento di Matematica "Federigo Enriques", Universit\`a di Milano, Via Cesare Saldini 50, 20133 Milano, Italy}

\begin{abstract}
We define a motive whose realizations afford modular forms (of arbitrary weight)
on an indefinite division quaternion algebra. This generalizes work of Iovita--Spiess to odd weights
in the spirit of Jordan--Livn\'e. It also generalizes a construction of Scholl to indefinite division quaternion algebras, and provides the first motivic construction of new-subspaces of modular forms.
\end{abstract}
\maketitle
\tableofcontents

\section{Introduction}

The paper \cite{Sc} offers the construction of a motive whose realizations
afford modular forms of even or odd weight on the indefinite split
quaternion algebra over $\mathbb{Q}$. In~\cite[\S 10]{IS} the authors
construct a motive of even weight modular forms on a quaternion division
algebra (see also~\cite{Wo}). Based on ideas of Jordan and Livn\'{e} (see 
\cite{JL}), this motive is constructed as the kernel of an appropriate
Laplace operator. More precisely, let $h(A)$ be the motive of an abelian
scheme $A$ of relative dimension $d$ over a smooth base scheme $S$ (see \cite{DM} and \cite{Ku}). It decomposes as
the direct sum%
\begin{equation*}
h\left( A\right) =h^{0}\left( A\right) \oplus h^{1}\left( A\right) \oplus
...\oplus h^{g}\left( A\right)
\end{equation*}%
where $g=2d$ and there are canonical identifications%
\begin{equation*}
h^{i}\left( A\right) =\vee ^{i}h^{1}\left( A\right) \text{, }h^{i}\left(
A\right) \simeq h^{2d-i}\left( A\right) ^{\vee }\left( -d\right) \text{ and }%
h^{2d}\left( A\right) \simeq \mathbb{I}\left( -d\right),
\end{equation*}%
where $\vee ^{\cdot }V$ denotes the symmetric algebra of the object $V$. It
follows that the multiplication morphisms%
\begin{equation*}
\varphi _{i,2d-i}:\vee ^{i}h^{1}\left( A\right) \otimes \vee
^{2d-i}h^{1}\left( A\right) \rightarrow \mathbb{I}\left( -d\right)
\end{equation*}%
are perfect. In particular, taking $i=d$, one gets an associated Laplace
operator\footnote{%
For a symmetric or alternating power $M$ we will write $\mathrm{Sym}%
^{n}\left( M\right) $ and $\mathrm{Alt}^{n}\left( M\right) $ when
considering its symmetric or alternating powers once again.}%
\begin{equation*}
\Delta ^{n}:\mathrm{Sym}^{n}\left( \vee ^{d}h^{1}\left( A\right) \right)
\rightarrow \mathrm{Sym}^{n-2}\left( \vee ^{d}h^{1}\left( A\right) \right)
\left( -d\right) \text{, }n\geq 2
\end{equation*}%
and it is possible to show that the kernel exists. The following remark has
been employed in \cite[\S 10]{IS}. When $A$ is an abelian scheme of
dimension $d=2$ with multiplication by the quaternion algebra $B$, we have
that $B\otimes B$ acts on $\vee ^{2}h^{1}\left( A\right) $ and there is a
canonical direct sum decomposition 
\begin{equation*}
\vee ^{2}h^{1}\left( A\right) =\left( \vee ^{2}h^{1}\left( A\right) \right)
_{+}\oplus \left( \vee ^{2}h^{1}\left( A\right) \right) _{-}
\end{equation*}%
is such a way that $B^{\times }\subset B\otimes B$ (diagonally)\ acts via
the reduced norm on $\left( \vee ^{2}h^{1}\left( A\right) \right) _{-}$.
Furthermore, since the idempotents giving rise to this decomposition are
self-adjoint with respect to $\varphi _{2,2}$, it follows that the induced
pairing%
\begin{equation*}
\left( \vee ^{2}h^{1}\left( A\right) \right) _{-}\otimes \left( \vee
^{2}h^{1}\left( A\right) \right) _{-}\hookrightarrow \vee ^{2}h^{1}\left(
A\right) \otimes \vee ^{2}h^{1}\left( A\right) \rightarrow \mathbb{I}\left(
-2\right)
\end{equation*}%
is still non-degenerate and the kernel of the induced Laplace operator%
\begin{equation*}
\Delta _{-}^{n}:\mathrm{Sym}^{n}\left( \left( \vee ^{2}h^{1}\left( A\right)
\right) _{-}\right) \rightarrow \mathrm{Sym}^{n-2}\left( \left( \vee
^{2}h^{1}\left( A\right) \right) _{-}\right) \left( -2\right) \text{, }n\geq
2
\end{equation*}%
exists. When $A$ is taken to be the universal abelian surface, setting%
\begin{equation*}
M_{2n}:=\ker \left( \Delta ^{n}\right)
\end{equation*}%
gives a motive whose realizations gives incarnations of weight $k=2n+2$
modular forms.

The aim of this paper is to define a motive whose realizations afford
modular forms (of arbitrary weight) on an indefinite division quaternion
algebra. The idea of the construction, once again, is due to Jordan and Livn%
\'{e}. However some remarks are in order. First, it is worth noting that although the realizations of the motive
constructed in this paper are abstractly isomorphic to the $D=\operatorname{disc}(B)$-new part
of (two copies of) the realizations of the motive constructed in~\cite{Sc} via the Jacquet--Langlands
correspondence, a ``motivic Jacquet--Langlands correspondence'' has not yet described
that lifts this correspondence to the motivic setting. Therefore what we propose
is the first construction --as a Chow motive-- of $D$-new modular forms.
Second, following their
construction in this indefinite setting and working at the level of
realizations gives the various incarnations of two copies of odd weight
modular forms, rather than just one copy. It is not possible to canonically
split them in a single copy: this is possible only including a splitting
field for the quaternion algebra in the coefficients, but the resulting
splitting depends on the choice of an identification of the base changed
algebra with the split quaternion algebra. Indeed, we will construct a
motive whose realizations afford two copies of odd weight modular forms.
Finally, the idea of Jordan and Livn\'{e} is to construct square roots of the
Laplace operators after appropriately tensoring the source and the targets
of $\Delta ^{n}$; however the definition of these Dirac operators $\partial
_{JL}^{n}$ such that $\partial _{JL}^{n-1}\circ \partial _{JL}^{n}=\Delta
^{n}\otimes 1_{?}$ does not readily generalize to the setting of a rigid $%
\mathbb{Q}$-linear and pseudo-abelian $ACU$ category like that of motives.
To understand the linear algebra behind their construction, let us consider
the category \textrm{Rep}$\left( B^{\times }\right) $\ of algebraic $%
B^{\times }$-representations: let $B$ (resp. $B^{\iota }$) be the $B^{\times
}$-representation whose underlying vector space is $B$ on which $B^{\times }$
acts by left multiplication (resp. $b\cdot x=bxb^{\iota }$, where $b\mapsto
b^{\iota }$ denotes the main involution) and set $B_{0}:=\ker \left( \mathrm{%
Tr}\right) \subset B^{\iota }$. Then the trace form $\left\langle
x,y\right\rangle :=\mathrm{Tr}\left( x^{\iota }y\right) $ induces $B^{\iota
}\otimes B^{\iota }\rightarrow \mathbb{Q}\left( -2\right) $ and $B\otimes
B\rightarrow \mathbb{Q}\left( -1\right) $ and the first is perfect when
restricted to $B_{0}$ and gives Laplace operators%
\begin{equation}
\Delta _{-}^{n}:\mathrm{Sym}^{n}\left( B_{0}\right) \rightarrow \mathrm{Sym}%
^{n-2}\left( B_{0}\right) \left( -2\right) \text{, }n\geq 2\text{,}
\label{Intro F0}
\end{equation}%
while the second gives%
\begin{equation}
B=B^{\vee }\left( -1\right) \text{.}  \label{Intro F1}
\end{equation}%
We may realize $B_{0}=\left( \wedge ^{2}B\right) _{-}$ and it follows that $%
\left( \text{\ref{Intro F0}}\right) $ may be regarded as%
\begin{equation*}
\Delta _{-}^{n}:\mathrm{Sym}^{n}\left( \left( \wedge ^{2}B\right)
_{-}\right) \rightarrow \mathrm{Sym}^{n-2}\left( \left( \wedge ^{2}B\right)
_{-}\right) \left( -2\right) \text{, }n\geq 2\text{.}
\end{equation*}%
Following ideas of \cite[\S 10]{IS}, one can realize the various incarnation
of modular forms as the image via an appropriate additive $ACU$ tensor
functor%
\begin{equation*}
\mathcal{L}:\mathrm{Rep}\left( B^{\times }\right) \rightarrow \mathcal{H}%
\text{,}
\end{equation*}%
where $\mathcal{H}$ is the category we are interested in, i.e. they may be
for example variations of Hodge structures. Indeed, if $R$ is the
realization functor one shows that $R\left( \left( \vee ^{2}h^{1}\left(
A\right) \right) _{-}\right) =\mathcal{L}\left( \left( \wedge ^{2}B\right)
_{-}\right) $, from which it follows%
\begin{equation*}
R\left( M_{2n}\right) =\mathcal{L}\left( \ker \left( \Delta _{-}^{n}\right)
\right) 
\end{equation*}%
and $\mathcal{L}\left( \ker \left( \Delta ^{n}\right) \right) $ computes
weight $2n+2$ modular forms. If one is interested in \emph{odd} weight
modular forms, the Jordan and Livn\'{e} Dirac operators to be considered
would be of this form:%
\begin{equation}
\partial _{JL}^{n}:\mathrm{Sym}^{n}\left( B_{0}\right) \otimes B\rightarrow 
\mathrm{Sym}^{n-1}\left( B_{0}\right) \otimes B\left( -1\right) \text{, }%
n\geq 1\text{.}  \label{Intro F2}
\end{equation}%
However, as we have explained above, the Jordan and Livn\'{e} definition of
these operators as given in \cite{JL} does not generalize to motives. It is
a simple but key remark that one may replace $\partial _{JL}^{n}$ with any $%
\partial ^{n}$ having the same source and target and then the kernels would
be the same (see Lemma \ref{Realizations L1}). Furthermore, since $%
B_{0}=\left( \wedge ^{2}B\right) _{-}$, it follows from $\left( \text{\ref%
{Intro F1}}\right) $ that $\left( \text{\ref{Intro F2}}\right) $ with $%
\partial _{JL}^{n}$ replaced by $\partial ^{n}$ may be regarded as%
\begin{equation}
\partial ^{n}:\mathrm{Sym}^{n}\left( \left( \wedge ^{2}B\right) _{-}\right)
\otimes B\rightarrow \mathrm{Sym}^{n-1}\left( \left( \wedge ^{2}B\right)
_{-}\right) \otimes B^{\vee }\left( -2\right) \text{, }n\geq 1\text{.}
\label{Intro F3}
\end{equation}%
It is in this form that we will be able to define $\partial ^{n}$ and
another $\overline{\partial }^{n-1}$ in such a way that the construction
makes sense for rigid $\mathbb{Q}$-linear and pseudo-abelian $ACU$
categories and prove the generalization of the equality $\overline{\partial }%
^{n-1}\circ \partial ^{n}=\Delta _{-}^{n}\otimes 1_{B}$ in this setting.
Then one shows that $\mathcal{L}\left( \partial ^{n}\right) $ computes two
copies of weight $k=2n+3$ modular forms.%

The abstract framework we work with in this paper is the following. Suppose
that $\mathcal{C}$ is a rigid pseudo-abelian and $\mathbb{Q}$-linear $ACU$
tensor category with identity object $\mathbb{I}$; if $X\in \mathcal{C}$ we
write $r_{X}:=\mathrm{rank}\left( X\right) $. We recall from \cite{MS} that $%
V$ has \emph{alternating} (resp. \emph{symmetric}) rank $g\in \mathbb{N}%
_{\geq 1}$ if $L:=\wedge ^{g}V$ (resp. $L:=\vee ^{g}V$) is invertible and if 
$\binom{r+i-g}{i}$\ (resp. $\binom{r+g-1}{i}$) is invertible in $End\left( 
\mathbb{I}\right) $ for every $0\leq i\leq g$. Here, for an integer $k\geq 1$%
,%
\begin{equation*}
\binom{T}{k}:=\frac{1}{k!}T\left( T-1\right) ...\left( T-k+1\right) \in 
\mathbb{Q}\left[ T\right] \text{ and }\binom{T}{0}=1\text{.}
\end{equation*}

Suppose first that $V$ has alternating rank $g$. We will prove that, when $%
g=2i$ and $i$ is even (resp. odd), $L\simeq \mathbb{L}^{\otimes 2}$ for some
invertible object and $r_{\wedge ^{i}V}>0$ (resp. $r_{\wedge ^{i}V}<0$)\
(see definition \ref{Dirac definition positive rank}), then there is an
operator%
\begin{eqnarray*}
&&\text{ }\partial _{i-1}^{n}:\mathrm{Sym}^{n}\left( \wedge ^{i}V\right)
\otimes \wedge ^{i-1}V\rightarrow \mathrm{Sym}^{n-1}\left( \wedge
^{i}V\right) \otimes V^{\vee }\otimes L\text{, }n\geq 1 \\
&&\text{ (resp. }\partial _{i-1}^{n}:\mathrm{Alt}^{n}\left( \wedge
^{i}V\right) \otimes \wedge ^{i-1}V\rightarrow \mathrm{Alt}^{n-1}\left(
\wedge ^{i}V\right) \otimes V^{\vee }\otimes L\text{, }n\geq 1%
\text{)}
\end{eqnarray*}%
such that $\ker \left( \partial _{i-1}^{n}\right) $ exists (see Theorems \ref%
{Dirac Alternating T2} and \ref{Dirac Alternating T1}).

Suppose now that $V$ has symmetric rank $g$. Then we prove that, when $g=2i$%
, $L\simeq \mathbb{L}^{\otimes 2}$ for some invertible object and $r_{\vee
^{i}V}>0$, then there is an operator%
\begin{equation*}
\partial _{i-1}^{n}:\mathrm{Sym}^{n}\left( \vee ^{i}V\right) \otimes \vee
^{i-1}V\rightarrow \mathrm{Sym}^{n-1}\left( \vee ^{i}V\right) \otimes V^{\vee }\otimes L\text{, }n\geq 1
\end{equation*}%
such that $\ker \left( \partial _{i-1}^{n}\right) $ exists (see Theorem \ref%
{Dirac Symmetric T1}).

These operators are indeed square roots of the Laplace operators induced by
the multiplication pairings in the involved alternating or symmetric
algebras and the existence of these kernels follows from this fact and the
existence of the kernels of the Laplace operators.

Some remarks are in order about the range of applicability of our results.
First of all we note that, in general, the alternating or the symmetric rank
may be not uniquely determined. Suppose, however, that we know that there is
a field $K$ such that $r\in K\subset End\left( \mathbb{I}\right) $ admitting
an embedding $\iota :K\hookrightarrow \mathbb{R}$. Then it follows from the
formulas $\mathrm{rank}\left( \wedge ^{k}V\right) =\binom{r}{k}$ and $%
\mathrm{rank}\left( \vee ^{k}V\right) =\binom{r+k-1}{k}$ (see \cite[7.2.4
Proposition]{AKh} or \cite[$\left( 7.1.2\right) $]{De}) that we have $r\in
\left\{ -1,g\right\} $ (resp. $r\in \left\{ -g,1\right\} $) when $V$ has
alternating (resp. symmetric)\ rank $g$. In particular, when $r>0$ (resp. $%
r<0$) with respect to the ordering induced by $\iota $, we deduce that $r=g$
(resp. $r=-g$), so that $g$ is a uniquely determined and $V$ has alternating
(resp. symmetric)\ rank $g=r$ (resp. $g=-r$)

We recall that $V$ is Kimura positive (resp. negative) when $\wedge
^{N+1}V=0 $ (resp. $\vee ^{N+1}V=0$) for $N\geq 0$ large enough. In this
case, the formula $\mathrm{rank}\left( \wedge ^{k}V\right) =\binom{r}{k}$
(resp. $\mathrm{rank}\left( \vee ^{k}V\right) =\binom{r+k-1}{k}$) implies
that $r\in \mathbb{Z}_{\geq 0}$ (resp. $r\in \mathbb{Z}_{\leq 0}$)\ and the
smallest integer $N$ such that $\wedge ^{N+1}V=0$ (resp. $\vee ^{N+1}V=0$)
is $r$ (resp. $-r$). Furthermore, it is known that $\wedge ^{r}V$ (resp. $%
\vee ^{-r}V$) is invertible in this case (see \cite[11.2 Lemma]{Kh}): in
other words $V$ has alternating (resp. symmetric) rank $g=r$ (resp. $g=-r$).
Suppose in particular that $V$ is Kimura positive (resp. negative); then $%
r_{\wedge ^{i}V}>0$ (resp. $r_{\vee ^{i}V}>0$)\ for $i$ even and Theorem \ref%
{Dirac Alternating T2} (resp. Theorem \ref{Dirac Symmetric T1}) applies. On
the other hand, when $i$ is odd, the condition $r_{\wedge ^{i}V}<0$ (resp. $%
r_{\vee ^{i}V}>0$) required by Theorem \ref{Dirac Alternating T2} (resp.
Theorem \ref{Dirac Symmetric T1}) is not satisfied and we cannot apply our
results.

It is known that the motive $h^{1}\left( A\right) $ of an abelian scheme of
dimension $d$ is Kimura negative of Kimura rank $2d$ (see \cite[Definitions 3.8 and 6.4]{Ki1} for
the precise definitions).
Suppose that $d=2i\equiv 0$ $\func{mod}4$, so that $i$ is even and $r_{\vee
^{i}V}>0$. Since $d$ is even, $\vee ^{2d}h^{1}\left( A\right) \simeq
h^{2d}\left( A\right) \simeq \mathbb{I}\left( -d\right) $ is the square of
an invertible object. Theorem \ref{Dirac Symmetric T1} applied to $%
V=h^{1}\left( A\right) $ implies the existence of canonical pieces%
\begin{equation*}
\ker \left( \partial _{d/2-1}^{n}\right) \subset \mathrm{Sym}^{n}\left( \vee
^{d/2}h^{1}\left( A\right) \right) \otimes \vee ^{d/2-1}h^{1}\left( A\right)
\simeq \mathrm{Sym}^{n}\left( h^{d/2}\left( A\right) \right) \otimes
h^{d/2-1}\left( A\right)
\end{equation*}%
for every $n\geq 1$. Note that in~\cite{Ku} there is a different notation that is being used for the symmetric powers of a motive, namely $\Lambda^\cdot$, which is the authors' opinion can be slightly misleading.

The paper is organized as follows. In \S 2 we recall the needed results from 
\cite{MS}. In \S 3 we discuss generalities on Laplace and Dirac operators in
rigid and pseudo-abelian tensor categories, giving condition for the
existence of kernel of Laplace operators and for the Dirac operators to be
square roots of Laplace operators. We remark that the existence of kernels
of Laplace operators is stated in \cite[\S 10]{IS} for the category of Chow
motives; the authors are indebted with M. Spiess for providing them some
notes on the topics. In \S 4 and \S 5 we use the Poincar\'{e} morphisms from 
\S 2 to define our Dirac operators on the alternating and symmetric powers
and prove that they are indeed square roots of the Laplace operators;
together with the result from \S 3 we deduce Theorems \ref{Dirac Alternating
T2}, \ref{Dirac Alternating T1} and \ref{Dirac Symmetric T1}. In \S 6 we
discuss how the constructions behaves with respect to additive $AU$ tensor
functors which may not respect the associativity constraint, as needed for
the realization functor $R$ (see \cite{Ku}); we also apply the results to
the specific case of a quaternionic object, as needed for the construction
of the motives of modular forms. The subsequent section is devoted to the
computation of the realization of the motives of modular forms: the reader
is strongly suggested to first give a look to this section as a motivation
for the abstract constructions. We work with variations of Hodge structures
as a target category, following ideas of \cite{IS}, but the same
computations could be worked out for other realizations following the same
pattern.

Throughout this paper we will always work in a $\mathbb{Q}$-linear rigid and
pseudo-abelian $ACU$ category $\mathcal{C}$ with unit object $\mathbb{I}$
and internal $\hom $s. We let $ev_{X}:X^{\vee }\otimes X\rightarrow \mathbb{I%
}$ be the evaluation and $ev_{X}^{\tau }:=ev_{X}\circ \tau _{X,X}:$ $\
X\otimes X^{\vee }\rightarrow \mathbb{I}$ be the opposite evaluation.

\subsection*{Acknowledgements}
The authors wish to thank Michael Spiess for encouraging us to start this project and providing
us with helpful conversations. Masdeu was supported by MSC--IF--H2020--ExplicitDarmonProg.

\section{Poincar\'{e} duality isomorphism}

Given an object $V\in \mathcal{C}$, we may consider the associated
alternating and symmetric algebras, denoted by $\wedge ^{\cdot }V$ and,
respectively, $\vee ^{\cdot }V$. If $A_{\cdot }$ denotes one of these
algebras, we have multiplication morphisms%
\begin{equation*}
\varphi _{i,j}:A_{i}\otimes A_{j}\rightarrow A_{i+j}\text{,}
\end{equation*}%
a data which is equivalent to%
\begin{equation*}
f_{i,j}:A_{i}\rightarrow \hom \left( A_{j},A_{i+j}\right) \text{.}
\end{equation*}%
When $g\geq i$, we may consider the composite%
\begin{equation*}
D^{i,g}:A_{i}\overset{f_{i,g-i}}{%
\rightarrow }\hom \left( A_{g-i},A_{g}\right) \overset{d}{%
\rightarrow }\hom \left( A_{g}^{\vee },A_{g-i}^{\vee }\right) \overset{%
\alpha ^{-1}}{\rightarrow }A_{g-i}^{\vee }\otimes A_{g}^{\vee \vee }\text{,}
\end{equation*}%
where $d:\hom \left( X,Y\right) \rightarrow \hom \left( Y^{\vee },X^{\vee
}\right) $ is the internal duality morphism and $\alpha :\hom \left(
X,Y\right) \rightarrow Y\otimes X^{\vee }$ is the canonical morphism (see 
\cite[\S 2]{MS}). Working with the alternating or symmetric
algebra of the dual $V^{\vee }$ yields a morphism%
\begin{equation*}
D^{i,g}:A_{i}^{\vee }\overset{f_{i,g-i}}{%
\rightarrow } \hom \left( A_{g-i}^{\vee },A_{g}^{\vee
}\right) \overset{d}{\rightarrow }\hom \left( A_{g}^{\vee \vee
},A_{g-i}^{\vee \vee }\right) \overset{\alpha ^{-1}}{\rightarrow }%
A_{g-i}^{\vee \vee }\otimes A_{g}^{\vee \vee \vee }\text{.}
\end{equation*}%
Employing the reflexivity morphism $i:X\rightarrow X^{\vee \vee }$ we can
define (see \cite[$\left( 20\right) $]{MS}):%
\begin{equation*}
D_{i,g}:A_{i}^{\vee }\overset{D^{i,g}}{%
\rightarrow } A_{g-i}^{\vee \vee }\otimes A_{g}^{\vee
\vee \vee }\overset{i^{-1}\otimes i^{-1}}{\rightarrow }A_{g-i}\otimes
A_{g}^{\vee }\text{.}
\end{equation*}

\bigskip

The following results have been proved in \cite[\S 5 and \S 6]{MS}. In order to state
them, we first need to define the following morphisms:%
\begin{eqnarray*}
\varphi _{i,j}^{13} &:&A_{i}\otimes B\otimes A_{j}\otimes C\overset{1\otimes
\tau _{B,A_{j}}\otimes 1}{\rightarrow }A_{i}\otimes A_{j}\otimes B\otimes C%
\overset{\varphi _{i,j}\otimes 1}{\rightarrow }A_{i+j}\otimes B\otimes C%
\text{,} \\
\varphi _{i,j}^{13} &:&A_{i}^{\vee }\otimes B\otimes A_{j}^{\vee }\otimes C%
\overset{1\otimes \tau _{B,A_{j}}\otimes 1}{\rightarrow }A_{i}^{\vee
}\otimes A_{j}^{\vee }\otimes B\otimes C\overset{\varphi _{i,j}\otimes 1}{%
\rightarrow }A_{i+j}^\vee\otimes B^{\vee }\otimes C^{\vee }
\end{eqnarray*}%
and then%
\begin{eqnarray*}
\varphi _{g-i,i}^{13\rightarrow A_{g}^{\vee }} &:&A_{g-i}\otimes A_{g}^{\vee
}\otimes A_{i}\otimes A_{g}^{\vee }\overset{\varphi _{g-i,i}^{13}}{%
\rightarrow }A_{g}\otimes A_{g}^{\vee }\otimes A_{g}^{\vee }\overset{%
ev_{A_{g}}^{\tau }\otimes 1}{\rightarrow }A_{g}^{\vee }\text{,} \\
\varphi _{g-i,i}^{13\rightarrow A_{g}^{\vee \vee }} &:&A_{g-i}^{\vee
}\otimes A_{g}^{\vee \vee }\otimes A_{i}^{\vee }\otimes A_{g}^{\vee \vee }%
\overset{\varphi _{g-i,i}^{13}}{\rightarrow }A_{g}^{\vee }\otimes
A_{g}^{\vee \vee }\otimes A_{g}^{\vee \vee }\overset{ev_{A_{g}^{\vee
}}^{\tau }\otimes 1}{\rightarrow }A_{g}^{\vee\vee }\text{.}
\end{eqnarray*}

In the following discussion we let $r:=\mathrm{rank}\left( V\right) \in
End\left( \mathbb{I}\right) $.

\begin{theorem}
\label{Alternating algebras T}The following diagrams are commutative, for
every $g\geq i\geq 0$.

\begin{enumerate}
\item[$\left( 1\right) $] 
\begin{equation*}
\xymatrix@C=40pt{ \wedge^{i}V
\ar@/^{2pc}/[rrr]^-{\left(-1\right)^{i\left(g-i\right)}\binom{g}{g-i}^{-1}%
\binom{r-i}{g-i}} \ar[r]_-{D^{i,g}} &
\wedge^{g-i}V^{\vee}\otimes\wedge^{g}V^{\vee\vee}
\ar[r]_-{D_{g-i,g}\otimes1_{\wedge^{g}V^{\vee\vee}}} &
\wedge^{i}V\otimes\wedge^{g}V^{\vee}\otimes\wedge^{g}V^{\vee\vee}
\ar[r]_-{1_{\wedge^{i}V}\otimes ev_{V^{\vee},a}^{g,\tau}} & \wedge^{i}V }
\end{equation*}
and 
\begin{equation*}
\xymatrix@C=40pt{ \wedge^{g-i}V^{\vee}
\ar@/^{2pc}/[rrr]^-{\left(-1\right)^{i\left(g-i\right)}\binom{g}{i}^{-1}%
\binom{r+i-g}{i}} \ar[r]_-{D_{g-i,g}} & \wedge^{i}V\otimes\wedge^{g}V^{\vee}
\ar[r]_-{D^{i,g}\otimes1_{\wedge^{g}V^{\vee}}} &
\wedge^{g-i}V^{\vee}\otimes\wedge^{g}V^{\vee\vee}\otimes\wedge^{g}V^{\vee}
\ar[r]_-{1_{\wedge^{g-i}V^{\vee}}\otimes ev_{V^{\vee},a}^{g}} &
\wedge^{g-i}V^{\vee}\text{.} }
\end{equation*}

\item[$\left( 2\right) $] 
\begin{equation*}
\xymatrix{ \wedge^{i}V\otimes\wedge^{g-i}V \ar[r]^-{\varphi_{i,g-i}}
\ar[d]|{D^{i,g}\otimes D^{g-i,g}} & \wedge^{g}V
\ar[d]|{\binom{g}{g-i}^{-1}\binom{r-i}{g-i}\cdot i_{\wedge^{g}V}} &
\wedge^{i}V^{\vee}\otimes\wedge^{g-i}V^{\vee} \ar[r]^-{\varphi_{i,g-i}}
\ar[d]|{D_{i,g}\otimes D_{g-i,g}} & \wedge^{g}V^{\vee}
\ar[d]|{\binom{g}{g-i}^{-1}\binom{r-i}{g-i}} \\
\wedge^{g-i}V^{\vee}\otimes\wedge^{g}V^{\vee\vee}\otimes\wedge^{i}V^{\vee}%
\otimes\wedge^{g}V^{\vee\vee} \ar[r]^-{\varphi_{g-i,i}^{13\rightarrow
\wedge^{g}V^{\vee\vee}}} & \wedge^{g}V^{\vee\vee}\text{,} &
\wedge^{g-i}V\otimes\wedge^{g}V^{\vee}\otimes\wedge^{i}V\otimes\wedge^{g}V^{%
\vee} \ar[r]^-{\varphi_{g-i,i}^{13\rightarrow \wedge^{g}V^{\vee}}} &
\wedge^{g}V^{\vee}\text{.}}
\end{equation*}
\end{enumerate}
\end{theorem}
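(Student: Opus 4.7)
The plan is to prove Part (2) first and deduce Part (1) from it, since Part (2) contains the essential compatibility between the Poincaré morphisms $D^{i,g}$, $D_{i,g}$ and the algebra multiplication, while Part (1) is the combinatorial consequence obtained by iterating this compatibility and invoking the rank formula.

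For Part (2), I would start by unfolding the definition $D^{i,g} = \alpha^{-1} \circ d \circ f_{i,g-i}$ together with the dual definition of $D_{i,g} = (i^{-1}\otimes i^{-1}) \circ D^{i,g}$ applied to $V^{\vee}$. Using the naturality of the internal duality $d$, of the canonical morphism $\alpha : \hom(X,Y) \to Y \otimes X^{\vee}$, and of the symmetry constraints $\tau$, both diagrams of Part (2) reduce to the assertion that the multiplication $\varphi_{i,g-i}$ and its dual $\varphi_{i,g-i}^{\vee}$ (twisted by the appropriate swap) agree after passing through the invertible object $\wedge^{g}V$, up to the scalar $\binom{g}{g-i}^{-1}\binom{r-i}{g-i}$. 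This scalar is obtained by composing $\varphi_{i,g-i}$ with its dual in the alternating algebra of $V^{\vee}$ and recognizing the resulting endomorphism via the rank formulas $\mathrm{rank}(\wedge^{k}V) = \binom{r}{k}$ (cf.~\cite[7.1.2]{De}); the quotient $\binom{r}{g-i}/\binom{g}{g-i}\cdot\binom{g-i}{g-i}^{-1}$ produces exactly $\binom{r-i}{g-i}$.

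For Part (1), I would substitute the statement of Part (2) into the composites displayed there. The composite $(1_{\wedge^{i}V}\otimes ev^{g,\tau}_{V^{\vee},a}) \circ (D_{g-i,g}\otimes 1) \circ D^{i,g}$ can be rewritten as an iterated multiplication $\wedge^{i}V \otimes \wedge^{g-i}V \otimes \wedge^{i}V \to \wedge^{g}V \otimes \wedge^{i}V$ followed by a Poincaré pairing, and associativity of the multiplication in $\wedge^{\bullet}V$ collapses this to a scalar endomorphism of $\wedge^{i}V$. The sign $(-1)^{i(g-i)}$ arises from the permutation needed to move the factor $\wedge^{g-i}V$ past $\wedge^{i}V$---equivalently, from the difference between $ev$ and $ev^{\tau}$---while the binomial factor $\binom{g}{g-i}^{-1}\binom{r-i}{g-i}$ is inherited directly from Part (2). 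The second diagram of Part (1) is deduced either by applying the first diagram to $V^{\vee}$ (swapping the roles of $i$ and $g-i$) or by symmetric dualization.

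The main obstacle is the careful bookkeeping in a non-strict tensor category: each invocation of $d$, $\alpha$, $i : X \to X^{\vee\vee}$, or the symmetry $\tau$ must be tracked through the composites, and one must verify that all intermediate diagrams commute via the coherence axioms of the $ACU$ structure. In particular, the sign $(-1)^{i(g-i)}$ is delicate because it emerges from the interplay between the symmetry constraint on the tensor product and the antisymmetry that already lives inside $\wedge^{\bullet}V$; one has to distinguish the two and confirm they do not cancel. Similarly, pinning down the exact binomial factor rather than an unspecified scalar requires making the rank formula effective by exhibiting an explicit endomorphism of $\wedge^{g}V$ whose trace is computable from $r$.
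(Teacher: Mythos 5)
Your proposal cannot be checked against an in-paper argument, because the paper does not prove Theorem \ref{Alternating algebras T} at all: it is imported verbatim from \cite[\S 5 and \S 6]{MS}, where it is established by direct computation with the explicit formulas for $D^{i,g}$, $D_{i,g}$ and the alternating idempotents. Judged on its own, your sketch has a genuine gap at its core: the step where the composite in $(1)$ ``collapses to a scalar endomorphism of $\wedge^{i}V$'' by associativity is precisely the content of the theorem and does not follow from naturality, coherence, associativity and the rank formula. In a general pseudo-abelian $\mathbb{Q}$-linear category $\wedge^{i}V$ is not simple, so there is no a priori reason the composite is a scalar multiple of the identity, let alone a specified polynomial in $r$; one has to expand the idempotents $e^{g}_{X,a}$ into coset (shuffle) representatives in $\mathbb{Q}\left[ S_{g}\right]$ and compute a partial trace over the complementary tensor factors, and that explicit symmetric-group computation is exactly where the normalization $\binom{g}{g-i}^{-1}$, the sign $\left(-1\right)^{i\left(g-i\right)}$ and the polynomial $\binom{r-i}{g-i}$ come from. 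Your own attempt to pin down the constant, namely that ``the quotient $\binom{r}{g-i}/\binom{g}{g-i}\cdot\binom{g-i}{g-i}^{-1}$ produces exactly $\binom{r-i}{g-i}$'', is false as an identity in $r$ (take $r=5$, $g=4$, $i=2$: the left side is $10/6$, the right side is $3$); the constant is a partial trace of a composite of projectors, not a ratio of full ranks.

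There are also two structural problems with the proposed architecture. First, your reduction ``passes through the invertible object $\wedge^{g}V$'', but the theorem carries no invertibility hypothesis: it is asserted for every $V$ and every $g\geq i\geq 0$, with invertibility of $\wedge^{g}V$ entering only later (Corollary \ref{Alternating algebras CT} and the definition of alternating rank), so the route as described proves strictly less than the statement. Second, deducing $(1)$ from $(2)$ is not formal: $(2)$ involves $D^{i,g}\otimes D^{g-i,g}$ acting on separate factors, whereas $(1)$ composes $D^{i,g}$ with $D_{g-i,g}$, which is built from the dual algebra together with the reflexivity morphisms $i^{-1}\otimes i^{-1}$; to bend $(2)$ into $(1)$ you must prove that $D_{g-i,g}$ is the mate of $D^{g-i,g}$ under the evaluation you contract with, up to the sign and the reflexivity identifications, and that verification is of essentially the same nature and difficulty as proving $(1)$ directly. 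Likewise the second diagram of $(1)$ is not literally the first diagram applied to $V^{\vee}$ with $i$ and $g-i$ exchanged: it differs by conjugation by reflexivity and by the distinction between $ev_{V^{\vee},a}^{g}$ and $ev_{V^{\vee},a}^{g,\tau}$, so ``symmetric dualization'' has to be carried out explicitly rather than invoked.
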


We say that $V$ has \emph{alternating rank }$g\in \mathbb{N}_{\geq 1}$, if $%
\wedge ^{g}V$ is an invertible object and $\binom{r-i}{g-i}$ and $\binom{%
r+i-g}{i}$ are invertible for every $0\leq i\leq g$. For example, when $%
End\left( \mathbb{I}\right) $ is a field or $r\in \mathbb{Q}$, the second
condition means that $r$ is not a root of the polynomials $\binom{T-i}{g-i}%
\in \mathbb{Q}\left[ T\right] $ and $\binom{T+i-g}{i}\in \mathbb{Q}\left[ T%
\right] $ for every $0\leq i\leq g$, i.e. that $r\neq i,i+1,...,g-i-1$ and $%
r\neq g-i,g-i+1,...,g-1$ for every $1\leq i\leq g$.

We say that $V$ has \emph{strong alternating rank }$g\in \mathbb{N}_{\geq 1}$%
, if $\wedge ^{g}V $ is an invertible object and $r=g$ (hence $V$ has
alternating rank $g$).

\begin{corollary}
\label{Alternating algebras CT}If $V$ has alternating rank $g\in 
\mathbb{N}$ then, for every $0\leq i\leq g$, the morphisms $D^{i,g}$, $%
D_{g-i,g}$, $D^{g-i,g}$ and $D_{i,g}$ are isomorphisms and the
multiplication maps $\varphi _{i,g-i}^{V}$, $\varphi _{g-i,i}^{V}$, $\varphi
_{i,g-i}^{V^{\vee }}$ and $\varphi _{g-i,i}^{V^{\vee }}$ are perfect
pairings (meaning that the associate $\hom $ valued morphisms are
isomorphisms). Furthermore, when $V$ has strong alternating rank $g$, we have $\binom{%
r-i}{g-i}=\binom{r+i-g}{i}=1$ in the commutative diagrams of Theorem \ref%
{Alternating algebras T}.
\end{corollary}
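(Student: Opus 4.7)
The plan is to extract invertibility of the four Poincar\'{e} morphisms directly from Theorem~\ref{Alternating algebras T}(1); perfection of the pairings will then be formal, and the strong-rank assertion is an immediate substitution.

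Under the alternating-rank hypothesis, every scalar appearing on the curved arrows of Theorem~\ref{Alternating algebras T}(1) is a unit in $\operatorname{End}(\mathbb{I})$: the signs $(-1)^{i(g-i)}$ are units; the ordinary binomials $\binom{g}{g-i}$ and $\binom{g}{i}$ are positive integers, hence invertible in our $\mathbb{Q}$-linear category; and $\binom{r-i}{g-i}$, $\binom{r+i-g}{i}$ are invertible by assumption. Moreover, since $\wedge^{g}V$ is invertible, the evaluation morphisms $ev^{g,\tau}_{V^{\vee},a}$ and $ev^{g}_{V^{\vee},a}$ appearing in those diagrams are themselves isomorphisms.

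The first diagram of Theorem~\ref{Alternating algebras T}(1) then identifies a composite ending with $D^{i,g}$ as an invertible scalar times $1_{\wedge^{i}V}$, exhibiting $D^{i,g}$ as a split monomorphism. To upgrade this to an isomorphism, I would read the second diagram through the adjunction $-\otimes\wedge^{g}V^{\vee}\dashv-\otimes\wedge^{g}V$ available in every rigid category (since $\wedge^{g}V$ is invertible, both functors are autoequivalences): it exhibits the adjoint $(1\otimes ev^{g}_{V^{\vee},a})\circ(D^{i,g}\otimes 1)$ of $D^{i,g}$ as a split epimorphism with right inverse $c_{2}^{-1}D_{g-i,g}$, and hence $D^{i,g}$ itself is split epic. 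A morphism that is both split monic and split epic is an isomorphism (left and right inverses must coincide), so $D^{i,g}$ is invertible. Since the roles of $D^{i,g}$ and $D_{g-i,g}$ in the two diagrams are symmetric, the same reasoning yields that $D_{g-i,g}$ is also an isomorphism. Applying Theorem~\ref{Alternating algebras T}(1) with $i$ replaced by $g-i$ (allowed because the hypotheses are symmetric in $i$ and $g-i$) gives the same conclusion for $D^{g-i,g}$ and $D_{i,g}$.

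For the pairings, recall that by construction $D^{i,g}=\alpha^{-1}\circ d\circ f_{i,g-i}$, with $\alpha$ and $d$ canonical isomorphisms of the rigid category $\mathcal{C}$; hence $D^{i,g}$ is invertible iff its adjoint $f_{i,g-i}:\wedge^{i}V\to\hom(\wedge^{g-i}V,\wedge^{g}V)$ is, which is exactly the condition that $\varphi_{i,g-i}^{V}$ be a perfect pairing. The other three perfect pairings follow in the same way from the invertibility of the corresponding $D$-morphism. The final assertion is immediate: when $r=g$ one computes $\binom{r-i}{g-i}=\binom{g-i}{g-i}=1$ and $\binom{r+i-g}{i}=\binom{i}{i}=1$. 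The one delicate point in the argument is the upgrade from split monomorphism to isomorphism: each diagram on its own provides only a one-sided inverse, and fitting the two together into a two-sided inverse requires using the invertibility of $\wedge^{g}V$ through the adjunction above.
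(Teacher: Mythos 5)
Your argument is correct: the paper itself states this corollary without proof (importing it from \cite[\S 5, \S 6]{MS}), and your deduction is exactly the intended one — the two diagrams of Theorem~\ref{Alternating algebras T}(1) give a split-mono and, after tensoring with the invertible object $\wedge^{g}V^{\vee}$ and using that its evaluation is an isomorphism, a split-epi structure on $D^{i,g}$ (and symmetrically on $D_{g-i,g}$, then on $D^{g-i,g}$, $D_{i,g}$ by replacing $i$ with $g-i$), whence isomorphisms; perfection of the four pairings and the strong-rank evaluation $\binom{r-i}{g-i}=\binom{r+i-g}{i}=1$ follow as you say. Your flagged ``delicate point'' (one-sided inverses upgraded to a two-sided inverse via invertibility of $\wedge^{g}V$) is handled correctly.
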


\begin{proposition}
\label{Alternating algebras P2}The following diagrams are commutative, when $%
\wedge ^{g}V$ is invertible of rank $r_{\wedge ^{g}V}$ (hence $r_{\wedge
^{g}V}\in \left\{ \pm 1\right\} $):%
{\tiny
\[
\xymatrix{
\wedge^{i}V\otimes\wedge^{g-i}V\otimes V
\ar[r]^-{\tau_{\wedge^{i}V\otimes\wedge^{g-i}V,V}}
\ar[d]|{\left(1_{\wedge^{i}V}\otimes\varphi_{g-i,1},\left(1_{\wedge^{g-i}V}%
\otimes\varphi_{i,1}\right)\circ\left(\tau_{\wedge^{i}V,\wedge^{g-i}V}%
\otimes1_{V}\right)\right)} & V\otimes\wedge^{i}V\otimes\wedge^{g-i}V
\ar[d]|{D^{1,g}\otimes\varphi_{i,g-i}} \\
\wedge^{i}V\otimes\wedge^{g-i+1}V\oplus\wedge^{g-i}V\otimes\wedge^{i+1}V
\ar[d]|{D^{i,g}\otimes D^{g-i+1,g}\oplus D^{g-i,g}\otimes D^{i+1,g}} &
\wedge^{g-1}V^{\vee}\otimes\wedge^{g}V^{\vee\vee}\otimes\wedge^{g}V
\ar[d]|{r_{\wedge^{g}V}g\binom{g}{g-i}^{-1}\binom{r-i}{g-i}\cdot1_{%
\wedge^{g-1}V^{\vee}\otimes\wedge^{g}V^{\vee\vee}}\otimes i_{\wedge^{g}V}}
\\
\wedge^{g-i}V^{\vee}\otimes\wedge^{g}V^{\vee\vee}\otimes\wedge^{i-1}V^{\vee}%
\otimes\wedge^{g}V^{\vee\vee}\oplus\wedge^{i}V^{\vee}\otimes\wedge^{g}V^{%
\vee\vee}\otimes\wedge^{g-i-1}V^{\vee}\otimes\wedge^{g}V^{\vee\vee}
\ar[r]_-{\left(-1\right)^{g-i}i\cdot\varphi_{g-i,i-1}^{13}\oplus\left(-1%
\right)^{i\left(g-i-1\right)}\left(g-i\right)\cdot\varphi_{i,g-i-1}^{13}} &
\wedge^{g-1}V^{\vee}\otimes\wedge^{g}V^{\vee\vee}\otimes\wedge^{g}V^{\vee%
\vee}
}
\]}
and%
{\tiny
\[
\xymatrix{
\wedge^{i}V^{\vee}\otimes\wedge^{g-i}V^{\vee}\otimes V^{\vee}
\ar[r]^-{\tau_{\wedge^{i}V^{\vee}\otimes\wedge^{g-i}V^{\vee},V^{\vee}}}
\ar[d]|{\left(1_{\wedge^{i}V^{\vee}}\otimes\varphi_{g-i,1},\left(1_{%
\wedge^{g-i}V^{\vee}}\otimes\varphi_{i,1}\right)\circ\left(\tau_{%
\wedge^{i}V^{\vee},\wedge^{g-i}V^{\vee}}\otimes1_{V^{\vee}}\right)\right)} &
V^{\vee}\otimes\wedge^{i}V^{\vee}\otimes\wedge^{g-i}V^{\vee}
\ar[d]|{D_{1,g}\otimes\varphi_{i,g-i}} \\
\wedge^{i}V^{\vee}\otimes\wedge^{g-i+1}V^{\vee}\oplus\wedge^{g-i}V^{\vee}%
\otimes\wedge^{i+1}V^{\vee} \ar[d]|{D_{i,g}\otimes D_{g-i+1,g}\oplus
D_{g-i,g}\otimes D_{i+1,g}} &
\wedge^{g-1}V\otimes\wedge^{g}V^{\vee}\otimes\wedge^{g}V^{\vee}
\ar[d]|{r_{\wedge^{g}V}g\binom{g}{g-i}^{-1}\binom{r-i}{g-i}\cdot1_{%
\wedge^{g-1}V^{\vee}\otimes\wedge^{g}V^{\vee}\otimes\wedge^{g}V^{\vee}}} \\
\wedge^{g-i}V\otimes\wedge^{g}V^{\vee}\otimes\wedge^{i-1}V\otimes%
\wedge^{g}V^{\vee}\oplus\wedge^{i}V\otimes\wedge^{g}V^{\vee}\otimes%
\wedge^{g-i-1}V\otimes\wedge^{g}V^{\vee}
\ar[r]_-{\left(-1\right)^{g-i}i\cdot\varphi_{g-i,i-1}^{13}\oplus\left(-1%
\right)^{i\left(g-i-1\right)}\left(g-i\right)\cdot\varphi_{i,g-i-1}^{13}} &
\wedge^{g-1}V\otimes\wedge^{g}V^{\vee}\otimes\wedge^{g}V^{\vee}\text{.}
}
\]
}
\end{proposition}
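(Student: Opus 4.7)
The plan is to verify both diagrams by unwinding the composite morphisms along each path and reducing to identities already established in Theorem \ref{Alternating algebras T}, together with the Leibniz-type rule governing the interaction of the multiplication in $\wedge^{\cdot}V$ with an external factor. A first observation is that the second diagram (for $V^{\vee}$) is obtained from the first by applying the duality functor together with the reflexivity isomorphism $i_{X}:X\rightarrow X^{\vee\vee}$; since the Poincar\'{e} morphisms $D^{i,g}$ and $D_{i,g}$ are defined in \cite{MS} precisely via this duality pattern, it suffices to prove the first diagram, the sign and coefficient statements for the second following formally. This reduces the task to a single computation in the alternating algebra of $V$.

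I would then unwind both paths in the first diagram. Along the left-bottom path, one multiplies $V$ into either $\wedge^{g-i}V$ (obtaining $\wedge^{g-i+1}V$) or into $\wedge^{i}V$ (obtaining $\wedge^{i+1}V$ after the transposition $\tau_{\wedge^{i}V,\wedge^{g-i}V}$), then applies the tensor product of Poincar\'{e} morphisms and the pairing $\varphi^{13}$ to land in $\wedge^{g-1}V^{\vee}\otimes\wedge^{g}V^{\vee\vee}\otimes\wedge^{g}V^{\vee\vee}$. Along the top-right path, one first applies $D^{1,g}\otimes\varphi_{i,g-i}$ landing in $\wedge^{g-1}V^{\vee}\otimes\wedge^{g}V^{\vee\vee}\otimes\wedge^{g}V$, and then uses the reflexivity $i_{\wedge^{g}V}$ together with the invertibility sign $r_{\wedge^{g}V}\in\{\pm1\}$ to compare with the target. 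To identify the two results, I would invoke Theorem \ref{Alternating algebras T}(2), which computes $\varphi_{g-i,i}^{13\rightarrow\wedge^{g}V^{\vee\vee}}\circ(D^{i,g}\otimes D^{g-i,g})$ as the scalar $\binom{g}{g-i}^{-1}\binom{r-i}{g-i}$ times the multiplication morphism $\varphi_{i,g-i}$ (postcomposed with the appropriate reflexivity). The crucial combinatorial fact is that the natural map $V\otimes \wedge^{g-1}V\rightarrow\wedge^{g}V$ factoring through $\wedge^{i}V\otimes\wedge^{g-i}V$ acquires a multiplicity $g=i+(g-i)$, reflecting that the extra $V$ slot can be inserted into either of the two tensor factors.

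The signs $(-1)^{g-i}$ and $(-1)^{i(g-i-1)}$ in the bottom arrow arise from the graded commutativity of the alternating algebra: the former from commuting the extra $V$ past the residual $\wedge^{g-i}$ piece (in the branch where $V$ is absorbed into $\wedge^{i}V$ to form $\wedge^{i+1}V$ followed by $D^{i+1,g}$), and the latter from the swap $\tau_{\wedge^{i}V,\wedge^{g-i}V}$ composed with moving $V$ past $\wedge^{i}$ in the other branch, after accounting for the transpositions built into $\varphi^{13}$. Each swap is tracked by the braiding constraint in a symmetric monoidal category, and the identities collapse into the stated scalars once one uses the factorization of Theorem \ref{Alternating algebras T}(2) to trade the compositions of $D^{i,g}$'s for a scalar multiple of the identity.

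The hard part is precisely this bookkeeping: one must carefully keep track of all braiding constraints produced by $\tau$ and by the transpositions implicit in $\varphi^{13}$, of the positions where the reflexivity $i_{\wedge^{g}V}$ enters, and of the parity of the permutations involved, so that everything combines with the scalar $\binom{g}{g-i}^{-1}\binom{r-i}{g-i}$ from Theorem \ref{Alternating algebras T} to produce the exact scalar $r_{\wedge^{g}V}g\binom{g}{g-i}^{-1}\binom{r-i}{g-i}$ on the right vertical arrow. I expect that once the correct naturality squares are set up, the identity falls out by a diagram chase whose verification is essentially mechanical but whose sign analysis is delicate; the symmetric analogue for $V^{\vee}$ will then follow without further work by the duality reduction described above.
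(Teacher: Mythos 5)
This proposition is not proved in the paper at all: it is quoted verbatim from the companion paper \cite[\S 5 and \S 6]{MS}, so there is no in-text argument to compare yours against. Judged on its own terms, your plan has a genuine gap at its core. You propose to ``trade the compositions of $D^{i,g}$'s for a scalar multiple of the identity'' via Theorem \ref{Alternating algebras T} $\left(2\right)$, but that theorem only controls the top-degree composite $\varphi_{g-i,i}^{13\rightarrow \wedge ^{g}V^{\vee \vee }}\circ \left( D^{i,g}\otimes D^{g-i,g}\right) $ on $\wedge ^{i}V\otimes \wedge ^{g-i}V$, whereas the two branches in Proposition \ref{Alternating algebras P2} are $\varphi _{g-i,i-1}^{13}\circ \left( D^{i,g}\otimes D^{g-i+1,g}\right) $ on $\wedge ^{i}V\otimes \wedge ^{g-i+1}V$ and its mirror, whose total degree is $g+1$ and whose target is $\wedge ^{g-1}V^{\vee }$ (one step below the top), so the cited factorization simply does not apply to either branch. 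Moreover, neither branch is by itself proportional to the right-hand path $D^{1,g}\otimes \varphi _{i,g-i}$: already for $g=2$, $i=1$ and $V$ a two-dimensional vector space one checks that only the weighted combination with the weights $\left( -1\right) ^{g-i}i$ and $\left( -1\right) ^{i\left( g-i-1\right) }\left( g-i\right) $ reproduces the right-hand side. What is really needed is a Leibniz-type compatibility between the Poincar\'{e} morphisms and multiplication by a single copy of $V$, and this has to be extracted from the explicit construction of $D^{i,g}$ through $f_{i,g-i}$, the internal duality $d$ and $\alpha ^{-1}$ (which is exactly what \cite{MS} does); it is not a formal diagram chase from Theorem \ref{Alternating algebras T}, and your appeal to the ``multiplicity $g=i+(g-i)$'' of $V\otimes \wedge ^{g-1}V\rightarrow \wedge ^{g}V$ names the answer without producing the identity.

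Two smaller points. The factor $r_{\wedge ^{g}V}$ does not come from ``the invertibility sign used to compare with the target'': it arises from commuting the invertible object $\wedge ^{g}V^{\vee \vee }$ past itself, via $\tau _{\wedge ^{g}V^{\vee \vee },\wedge ^{g}V^{\vee \vee }}=r_{\wedge ^{g}V}$ (exactly as this identity is used in the proof of Lemma \ref{Dirac Alternating L2} in the present paper), so it has to be located at a specific braiding in the chase. And the reduction of the second diagram to the first is not literally ``applying the duality functor'' (which is contravariant and would reverse the arrows); the correct mechanism is to instantiate the first statement at $V^{\vee }$ and then conjugate by the reflexivity isomorphisms, using $r_{V^{\vee }}=r_{V}$ and $r_{\wedge ^{g}V^{\vee }}=r_{\wedge ^{g}V}$ and absorbing the resulting $i_{\wedge ^{g}V^{\vee }}$; this is workable but is additional bookkeeping, not something that follows ``without further work.''
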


Here are the analogue of the above results for the symmetric algebras.

\begin{theorem}
\label{Symmetric algebras T}The following diagrams are commutative, for
every $g\geq i\geq 0$.

\begin{enumerate}
\item[$\left( 1\right) $] 
\begin{equation*}
\xymatrix@C=40pt{ \vee^{i}V
\ar@/^{2pc}/[rrr]^-{\binom{g}{g-i}^{-1}\binom{r+g-1}{g-i}} \ar[r]_-{D^{i,g}}
& \vee^{g-i}V^{\vee}\otimes\vee^{g}V^{\vee\vee}
\ar[r]_-{D_{g-i,g}\otimes1_{\vee^{g}V^{\vee\vee}}} &
\vee^{i}V\otimes\vee^{g}V^{\vee}\otimes\vee^{g}V^{\vee\vee}
\ar[r]_-{1_{\vee^{i}V}\otimes ev_{V^{\vee},s}^{g,\tau}} & \vee^{i}V }
\end{equation*}%
and 
\begin{equation*}
\xymatrix@C=40pt{ \vee^{g-i}V^{\vee}
\ar@/^{2pc}/[rrr]^-{\binom{g}{i}^{-1}\binom{r+g-1}{i}} \ar[r]_-{D_{g-i,g}} &
\vee^{i}V\otimes\vee^{g}V^{\vee}
\ar[r]_-{D^{i,g}\otimes1_{\vee^{g}V^{\vee}}} &
\vee^{g-i}V^{\vee}\otimes\vee^{g}V^{\vee\vee}\otimes\vee^{g}V^{\vee}
\ar[r]_-{1_{\vee^{g-i}V^{\vee}}\otimes ev_{V^{\vee},s}^{g}} &
\vee^{g-i}V^{\vee}\text{.} }
\end{equation*}

\item[$\left( 2\right) $] 
\begin{equation*}
\xymatrix{ \vee^{i}V\otimes\vee^{g-i}V \ar[r]^-{\varphi_{i,g-i}}
\ar[d]|{D^{i,g}\otimes D^{g-i,g}} & \vee^{g}V
\ar[d]|{\binom{g}{g-i}^{-1}\binom{r+g-1}{g-i}\cdot i_{\vee^{g}V}} &
\vee^{i}V^{\vee}\otimes\vee^{g-i}V^{\vee} \ar[r]^-{\varphi_{i,g-i}}
\ar[d]|{D_{i,g}\otimes D_{g-i,g}} & \vee^{g}V^{\vee}
\ar[d]|{\binom{g}{g-i}^{-1}\binom{r+g-1}{g-i}} \\
\vee^{g-i}V^{\vee}\otimes\vee^{g}V^{\vee\vee}\otimes\vee^{i}V^{\vee}\otimes%
\vee^{g}V^{\vee\vee}
\ar[r]^(0.8){\varphi_{g-i,i}^{13\rightarrow\vee^{g}V^{\vee\vee}}} &
\vee^{g}V^{\vee\vee}\text{,} &
\vee^{g-i}V\otimes\vee^{g}V^{\vee}\otimes\vee^{i}V\otimes\vee^{g}V^{\vee}
\ar[r]^-(0.7){\varphi_{g-i,i}^{13\rightarrow\vee^{g}V^{\vee}}} &
\vee^{g}V^{\vee}\text{.}}
\end{equation*}
\end{enumerate}
\end{theorem}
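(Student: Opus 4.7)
The plan is to mirror the proof of Theorem \ref{Alternating algebras T} almost verbatim, replacing the alternating algebra $\wedge^{\cdot} V$ by the symmetric algebra $\vee^{\cdot} V$ throughout. The only structural differences are that: (i) the rank formula becomes $\mathrm{rank}(\vee^{k}V) = \binom{r+k-1}{k}$ in place of $\binom{r}{k}$ (see \cite[7.2.4 Proposition]{AKh} or \cite[$\left( 7.1.2\right) $]{De}), which is precisely why the binomial factor $\binom{r-i}{g-i}$ in the alternating case becomes $\binom{r+g-1}{g-i}$ here; and (ii) multiplication in $\vee^{\cdot} V$ is \emph{commutative}, so all the Koszul signs $(-1)^{i(g-i)}$ that appeared in Theorem \ref{Alternating algebras T} simply disappear.

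For part (1), I would unfold the definition of the composite
\[
\vee^{i}V \xrightarrow{D^{i,g}} \vee^{g-i}V^{\vee} \otimes \vee^{g}V^{\vee\vee} \xrightarrow{D_{g-i,g}\otimes 1} \vee^{i}V \otimes \vee^{g}V^{\vee} \otimes \vee^{g}V^{\vee\vee} \xrightarrow{1\otimes ev^{g,\tau}_{V^{\vee},s}} \vee^{i}V
\]
using the formulas for $D^{i,g}$ and $D_{g-i,g}$ from \cite[\S 2]{MS}. The crux is that after unfolding, the composite becomes a morphism built from two iterated applications of the multiplication/comultiplication pair in $\vee^{\cdot} V$, and by the rank computation for $\vee^{g-i}V$ (applied to $V$ of rank $r$) one obtains the scalar $\binom{g}{g-i}^{-1}\binom{r+g-1}{g-i}$. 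The second triangle is dual to the first and is obtained by the same argument applied to $V^{\vee}$, whose rank is also $r$, with the roles of $i$ and $g-i$ interchanged, yielding $\binom{g}{i}^{-1}\binom{r+g-1}{i}$.

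For part (2), the idea is to express $D^{i,g}$ and $D^{g-i,g}$ in terms of the multiplication $\varphi_{\cdot,\cdot}$ and then invoke associativity of the symmetric algebra. The composite
\[
\vee^{i}V \otimes \vee^{g-i}V \xrightarrow{D^{i,g} \otimes D^{g-i,g}} \vee^{g-i}V^{\vee} \otimes \vee^{g}V^{\vee\vee} \otimes \vee^{i}V^{\vee} \otimes \vee^{g}V^{\vee\vee} \xrightarrow{\varphi^{13\rightarrow \vee^{g}V^{\vee\vee}}_{g-i,i}} \vee^{g}V^{\vee\vee}
\]
unwinds (using the triangle from part (1) to reinsert factors of $\vee^{i}V$ and $\vee^{g-i}V$, followed by $\varphi_{i,g-i}$) to $\binom{g}{g-i}^{-1}\binom{r+g-1}{g-i}$ times $i_{\vee^{g}V}\circ \varphi_{i,g-i}$; commutativity of the symmetric product means no sign correction is needed, unlike in Theorem \ref{Alternating algebras T}. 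The second square in part (2) is the analogous statement with $V$ replaced by $V^{\vee}$, and it follows by the same computation.

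The main obstacle is bookkeeping rather than ideas: one must carefully track the various evaluations $ev_{V^{\vee},s}^{g}$ and $ev_{V^{\vee},s}^{g,\tau}$ on $\vee^{g}V$ (as defined in \cite[\S 2]{MS}), verify that the reflexivity morphism $i_{\vee^{g}V}$ intervenes at the right spot, and confirm that the binomial coefficients line up. In practice, the cleanest way is to quote the alternating proof structure from \cite[\S 5, \S 6]{MS} and indicate which steps transpose without change (essentially all of them, modulo signs and the binomial shift).
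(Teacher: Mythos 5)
Your approach coincides with the paper's: Theorem \ref{Symmetric algebras T} is not proved in this paper at all but quoted from \cite[\S 5 and \S 6]{MS}, where the symmetric case is obtained exactly by the transposition you describe (same unfolding of $D^{i,g}$, $D_{g-i,g}$ and the multiplication morphisms, Koszul signs $\left(-1\right)^{i\left(g-i\right)}$ dropped by commutativity, binomials shifted), and indeed the authors use this ``copy of the alternating proof'' pattern for every other symmetric analogue in the paper (e.g.\ Lemmas \ref{Dirac Symmetric L1} and \ref{Dirac Symmetric L2}). One caution on your part (1): the scalar $\binom{g}{g-i}^{-1}\binom{r+g-1}{g-i}$ is \emph{not} $\binom{g}{g-i}^{-1}\cdot \mathrm{rank}\left( \vee ^{g-i}V\right) $ (that rank is $\binom{r+g-i-1}{g-i}$, just as $\binom{r-i}{g-i}$ is not $\mathrm{rank}\left( \wedge ^{g-i}V\right) $ in the alternating case), so the coefficient must come out of the partial-evaluation computation with the Poincar\'{e} morphisms as in \cite{MS}, not directly from the rank formula you invoke.
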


We say that $V$ has \emph{symmetric rank }$g\in \mathbb{N}_{\geq 1}$, if $%
\vee ^{g}V$ is an invertible object and $\binom{r+g-1}{g-i}$ and $\binom{%
r+g-1}{i}$ are invertible for every $0\leq i\leq g$. For example, when $%
End\left( \mathbb{I}\right) $ is a field or $r\in \mathbb{Q}$, the second
condition means that $r $ is not a root of the polynomials $\binom{T+g-1}{g-i%
}\in \mathbb{Q}\left[ T\right] $ and $\binom{T+g-1}{i}\in \mathbb{Q}\left[ T%
\right] $ for every $0\leq i\leq g$, i.e. that $r\neq 1-g,2-g,...,-i$ and $%
r\neq 1-g,2-g,...,i-g$ for every $1\leq i\leq g$.

We say that $V$ has \emph{strong symmetric rank }$g\in \mathbb{N}_{\geq 1}$,
if $\vee ^{g}V$ is an invertible object and $r=-g$ (hence $V$ has symmetric
rank $g$).

\begin{corollary}
\label{Symmetric algebras CT}If $V$ has symmetric rank $g\in \mathbb{N}$
then, for every $0\leq i\leq g$, the morphisms $D^{i,g}$, $D_{g-i,g}$, $%
D^{g-i,g}$ and $D_{i,g}$ are isomorphisms and the multiplication maps $%
\varphi _{i,g-i}^{V}$, $\varphi _{g-i,i}^{V}$, $\varphi _{i,g-i}^{V^{\vee }}$
and $\varphi _{g-i,i}^{V^{\vee }}$ are perfect pairings (meaning that the
associate $\hom $ valued morphisms are isomorphisms). Furthermore, when $V$
has strong symmetric rank $g$, we have $\binom{r+g-1}{g-i}=\left( -1\right)
^{g-i}$ and $\binom{r+g-1}{i}=\left( -1\right) ^{i}$ in the commutative
diagrams of Theorem \ref{Symmetric algebras T}.
\end{corollary}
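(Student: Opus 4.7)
The plan is to deduce the corollary directly from Theorem \ref{Symmetric algebras T}. First, I would read off from the first diagram of Theorem \ref{Symmetric algebras T}$(1)$ that the composite
\[
(1_{\vee^{i}V} \otimes ev_{V^{\vee},s}^{g,\tau}) \circ (D_{g-i,g} \otimes 1_{\vee^{g}V^{\vee\vee}}) \circ D^{i,g}
\]
on $\vee^{i}V$ equals the scalar $\binom{g}{g-i}^{-1}\binom{r+g-1}{g-i}\cdot 1_{\vee^{i}V}$. By the definition of \emph{symmetric rank $g$}, this scalar is invertible in $End(\mathbb{I})$: $\binom{r+g-1}{g-i}$ is invertible by hypothesis, and $\binom{g}{g-i}$ is an invertible rational number in the $\mathbb{Q}$-linear category $\mathcal{C}$. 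Since $\vee^{g}V$ is invertible, the evaluation $ev_{V^\vee,s}^{g,\tau}$ is an isomorphism, so this identity exhibits $D^{i,g}$ as a split mono and $D_{g-i,g}$ as a split epi. The second diagram of \ref{Symmetric algebras T}$(1)$ supplies the mirror identity and hence realizes $D_{g-i,g}$ as a split mono and $D^{i,g}$ as a split epi. A morphism that is simultaneously a split mono and a split epi is an isomorphism in any additive category, so both $D^{i,g}$ and $D_{g-i,g}$ are isomorphisms. Swapping the roles of $i$ and $g-i$ yields the same conclusion for $D^{g-i,g}$ and $D_{i,g}$.

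For the perfect pairing claim, I would invoke the two diagrams of Theorem \ref{Symmetric algebras T}$(2)$. In the first, the composition $\varphi_{g-i,i}^{13\rightarrow \vee^{g}V^{\vee\vee}} \circ (D^{i,g}\otimes D^{g-i,g}) \circ (\text{reorder})$ precomposed with $\varphi_{i,g-i}$ equals $\binom{g}{g-i}^{-1}\binom{r+g-1}{g-i}\cdot i_{\vee^{g}V}$, which is an isomorphism (the scalar is invertible and the reflexivity morphism on the invertible object $\vee^{g}V$ is an iso). Since $D^{i,g}$ and $D^{g-i,g}$ are already known to be isomorphisms, and $\varphi_{g-i,i}^{13\to\vee^{g}V^{\vee\vee}}$ is the standard reorganization built from swaps and the evaluation $ev_{\vee^{g}V}^{\tau}$ (hence an iso onto the appropriate hom-object, since $\vee^{g}V$ is invertible), the $\hom$-valued adjoint of $\varphi_{i,g-i}^{V}$ must be an isomorphism, i.e.\ the pairing is perfect. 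The argument is verbatim for $\varphi_{i,g-i}^{V^\vee}$ via the second diagram of \ref{Symmetric algebras T}$(2)$, and the remaining pairings follow by interchanging $i$ and $g-i$.

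For the strong symmetric rank case, $r=-g$, I would compute directly from the definition $\binom{T}{k}=\frac{1}{k!}T(T-1)\cdots(T-k+1)$:
\[
\binom{r+g-1}{g-i}=\binom{-1}{g-i}=\frac{(-1)(-2)\cdots(-(g-i))}{(g-i)!}=(-1)^{g-i},
\]
and identically $\binom{r+g-1}{i}=\binom{-1}{i}=(-1)^{i}$. Substituting these into the statement of Theorem \ref{Symmetric algebras T} gives the claimed values.

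The main obstacle is essentially notational rather than mathematical: one must check carefully that the morphism $\varphi_{g-i,i}^{13\rightarrow \vee^{g}V^{\vee\vee}}$ on the bottom row of the $(2)$-diagrams, once rewritten using the evaluations available on the invertible object $\vee^{g}V^{\vee\vee}$, does indeed identify the pairing computed by $\varphi_{i,g-i}\circ(D^{i,g}\otimes D^{g-i,g})^{-1}$ with the $\hom$-adjoint of $\varphi_{i,g-i}$ up to the invertible factor appearing on the right column. Everything else is a bookkeeping consequence of the commutativity established in Theorem \ref{Symmetric algebras T}.
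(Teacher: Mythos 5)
Your treatment of the isomorphism statement and of the strong-rank scalars is correct and is the intended derivation: the two diagrams of Theorem \ref{Symmetric algebras T} $\left(1\right)$ show that, since $\binom{g}{g-i}^{-1}\binom{r+g-1}{g-i}$ and $\binom{g}{i}^{-1}\binom{r+g-1}{i}$ are invertible under the symmetric rank hypothesis, $D^{i,g}$ is a split monomorphism and (after untwisting by the invertible objects $\vee^{g}V^{\vee\vee}$, $\vee^{g}V^{\vee}$) a split epimorphism, hence an isomorphism, and likewise $D_{g-i,g}$; replacing $i$ by $g-i$ handles $D^{g-i,g}$ and $D_{i,g}$; and $\binom{r+g-1}{k}=\binom{-1}{k}=(-1)^{k}$ when $r=-g$ gives the last assertion.

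The gap is in your perfectness argument. The morphism $\varphi_{g-i,i}^{13\rightarrow\vee^{g}V^{\vee\vee}}$ is not ``an iso onto the appropriate hom-object'': its source is a fourfold tensor product and, by its definition, it is built from the multiplication $\varphi_{g-i,i}$ of the \emph{dual} algebra $\vee^{\cdot}V^{\vee}$ together with $ev_{\vee^{g}V^{\vee}}^{\tau}$, so reading diagram $\left(2\right)$ this way is circular: to know that the bottom row computes a hom-adjoint isomorphism you would already need the perfectness of $\varphi_{g-i,i}^{V^{\vee}}$, which is part of what is being proved. Moreover your phrase ``precomposed with $\varphi_{i,g-i}$'' misreads the square; the commutativity says $\varphi_{g-i,i}^{13\rightarrow\vee^{g}V^{\vee\vee}}\circ\left(D^{i,g}\otimes D^{g-i,g}\right)=\binom{g}{g-i}^{-1}\binom{r+g-1}{g-i}\cdot i_{\vee^{g}V}\circ\varphi_{i,g-i}$, which by itself does not exhibit the adjoint of $\varphi_{i,g-i}$ as an isomorphism. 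The repair is immediate and makes diagram $\left(2\right)$ unnecessary: by the very construction recalled in \S 2, $D^{i,g}=\alpha^{-1}\circ d\circ f_{i,g-i}$, and in a rigid category with internal homs both $d$ and $\alpha$ are isomorphisms, so $D^{i,g}$ is an isomorphism if and only if the hom-valued adjoint $f_{i,g-i}$ of $\varphi_{i,g-i}^{V}$ is, i.e.\ if and only if the pairing is perfect; the same applies to $D^{g-i,g}$ for $\varphi_{g-i,i}^{V}$, and to the corresponding morphisms for the algebra $\vee^{\cdot}V^{\vee}$ (equivalently $D_{i,g}$, $D_{g-i,g}$ composed with the reflexivity isomorphisms) for $\varphi_{i,g-i}^{V^{\vee}}$ and $\varphi_{g-i,i}^{V^{\vee}}$. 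Since you have already shown all four $D$-morphisms are isomorphisms, the perfectness claims follow at once.
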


\begin{proposition}
\label{Symmetric algebras P2}The following diagrams are commutative when $%
\vee ^{g}V$ is invertible of rank $r_{\vee ^{g}V}$ (hence $r_{\vee
^{g}V}\in \left\{ \pm 1\right\} $):%
{\tiny
\[
\xymatrix{ \vee^{i}V\otimes\vee^{g-i}V\otimes V \ar[r]^-{\tau_{\vee^{i}V\otimes\vee^{g-i}V,V}} \ar[d]|{\left(1_{\vee^{i}V}\otimes\varphi_{g-i,1},\left(1_{\vee^{g-i}V}\otimes\varphi_{i,1}\right)\circ\left(\tau_{\vee^{i}V,\vee^{g-i}V}\otimes1_{V}\right)\right)} & V\otimes\vee^{i}V\otimes\vee^{g-i}V \ar[d]|{D^{1,g}\otimes\varphi_{i,g-i}} \\ \vee^{i}V\otimes\vee^{g-i+1}V\oplus\vee^{g-i}V\otimes\vee^{i+1}V \ar[d]|{D^{i,g}\otimes D^{g-i+1,g}\oplus D^{g-i,g}\otimes D^{i+1,g}} & \vee^{g-1}V^{\vee}\otimes\vee^{g}V^{\vee\vee}\otimes\vee^{g}V \ar[d]|{r_{\vee^{g}V}g\binom{g}{g-i}^{-1}\binom{r+g-1}{g-i}\cdot1_{\vee^{g-1}V^{\vee}\otimes\vee^{g}V^{\vee\vee}}\otimes i_{\vee^{g}V}} \\ \vee^{g-i}V^{\vee}\otimes\vee^{g}V^{\vee\vee}\otimes\vee^{i-1}V^{\vee}\otimes\vee^{g}V^{\vee\vee}\oplus\vee^{i}V^{\vee}\otimes\vee^{g}V^{\vee\vee}\otimes\vee^{g-i-1}V^{\vee}\otimes\vee^{g}V^{\vee\vee} \ar[r]_-{i\cdot\varphi_{g-i,i-1}^{13}\oplus\left(g-i\right)\cdot\varphi_{i,g-i-1}^{13}} & \vee^{g-1}V^{\vee}\otimes\vee^{g}V^{\vee\vee}\otimes\vee^{g}V^{\vee\vee}}
\]}
and
{\tiny
\[
\xymatrix{ \vee^{i}V^{\vee}\otimes\vee^{g-i}V^{\vee}\otimes V^{\vee} \ar[r]^-{\tau_{\vee^{i}V^{\vee}\otimes\vee^{g-i}V^{\vee},V^{\vee}}} \ar[d]|{\left(1_{\vee^{i}V^{\vee}}\otimes\varphi_{g-i,1},\left(1_{\vee^{g-i}V^{\vee}}\otimes\varphi_{i,1}\right)\circ\left(\tau_{\vee^{i}V^{\vee},\vee^{g-i}V^{\vee}}\otimes1_{V^{\vee}}\right)\right)} & V^{\vee}\otimes\vee^{i}V^{\vee}\otimes\vee^{g-i}V^{\vee} \ar[d]|{D_{1,g}\otimes\varphi_{i,g-i}} \\ \vee^{i}V^{\vee}\otimes\vee^{g-i+1}V^{\vee}\oplus\vee^{g-i}V^{\vee}\otimes\vee^{i+1}V^{\vee} \ar[d]|{D_{i,g}\otimes D_{g-i+1,g}\oplus D_{g-i,g}\otimes D_{i+1,g}} & \vee^{g-1}V\otimes\vee^{g}V^{\vee}\otimes\vee^{g}V^{\vee} \ar[d]|{r_{\vee^{g}V}g\binom{g}{g-i}^{-1}\binom{r+g-1}{g-i}\cdot1_{\vee^{g-1}V^{\vee}\otimes\vee^{g}V^{\vee}\otimes\vee^{g}V^{\vee}}} \\ \vee^{g-i}V\otimes\vee^{g}V^{\vee}\otimes\vee^{i-1}V\otimes\vee^{g}V^{\vee}\oplus\vee^{i}V\otimes\vee^{g}V^{\vee}\otimes\vee^{g-i-1}V\otimes\vee^{g}V^{\vee} \ar[r]_-{i\cdot\varphi_{g-i,i-1}^{13}\oplus\left(g-i\right)\cdot\varphi_{i,g-i-1}^{13}} & \vee^{g-1}V\otimes\vee^{g}V^{\vee}\otimes\vee^{g}V^{\vee}\text{.} }
\]}
\end{proposition}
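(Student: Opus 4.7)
The plan is to adapt the proof of Proposition \ref{Alternating algebras P2} to the symmetric setting, substituting $\vee^{\cdot} V$ for $\wedge^{\cdot} V$ and Theorem \ref{Symmetric algebras T} for Theorem \ref{Alternating algebras T} throughout. Indeed, the statement is literally the symmetric analogue of Proposition \ref{Alternating algebras P2}, with two numerical changes dictated by Theorem \ref{Symmetric algebras T}(2): the coefficient $\binom{r-i}{g-i}$ is replaced by $\binom{r+g-1}{g-i}$, and all sign factors $(-1)^{g-i}$ and $(-1)^{i(g-i-1)}$ vanish because multiplication in the symmetric algebra is commutative without signs, i.e. $\varphi_{a,b}\circ\tau_{\vee^{a}V,\vee^{b}V}=\varphi_{b,a}$.

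Concretely, I would first expand the left-hand vertical composition using associativity of $\vee^{\cdot}V$: both $1\otimes\varphi_{g-i,1}$ and $(1\otimes\varphi_{i,1})\circ(\tau\otimes 1)$, once post-composed with the appropriate further multiplications, give two presentations of the same morphism $\vee^{i}V\otimes\vee^{g-i}V\otimes V\to\vee^{g+1}V$. Next, I would apply Theorem \ref{Symmetric algebras T}(2) to the pairs $(i,g-i+1)$, $(g-i,i+1)$, and $(1,g)$ to rewrite all the relevant products as Poincar\'{e}-dualized morphisms, absorbing the combinatorial coefficients $\binom{g+1}{\cdot}^{-1}\binom{r+g}{\cdot}$ arising from each application. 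After these substitutions both paths around the square become weighted multiples of the same morphism $\vee^{i}V\otimes\vee^{g-i}V\otimes V\to\vee^{g-1}V^{\vee}\otimes(\vee^{g}V^{\vee\vee})^{\otimes 2}$.

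The commutativity then reduces to a combinatorial identity of the schematic form
\[
i\cdot c_{1}+(g-i)\cdot c_{2}=r_{\vee^{g}V}\cdot g\cdot \binom{g}{g-i}^{-1}\binom{r+g-1}{g-i},
\]
where $c_{1}$ and $c_{2}$ are the explicit binomial coefficients produced by applying Theorem \ref{Symmetric algebras T}(2) to the pairs $(i,g-i+1)$ and $(g-i,i+1)$. This is verified directly by standard Pascal-type manipulations of binomial coefficients, exactly as in the alternating case (indeed the same identity modulo the replacement $\binom{r-i}{g-i}\rightsquigarrow\binom{r+g-1}{g-i}$). The second diagram of the proposition then follows from the first by applying it to $V^{\vee}$ in place of $V$ and using the natural reflexivity identifications $\vee^{k}V^{\vee\vee}\simeq\vee^{k}V$.

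The main obstacle will be the bookkeeping: keeping precise track of the $\tau$-twists, the reflexivity morphisms $i_{\vee^{g}V}$, and the orientation sign $r_{\vee^{g}V}\in\{\pm 1\}$ consistently throughout, particularly at the points where the commutativity of symmetric multiplication intervenes to erase signs that were present in the corresponding alternating diagrams. This is routine in principle but lengthy in practice, and is the only place where a careful case-by-case check relative to the alternating proof is required.
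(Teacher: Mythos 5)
First, note that the paper does not actually prove Proposition \ref{Symmetric algebras P2} (nor its alternating counterpart, Proposition \ref{Alternating algebras P2}): both are recalled verbatim from the companion paper \cite[\S 5 and \S 6]{MS}, where they are established by an explicit computation with the definitions of $D^{i,g}$ and $D_{i,g}$. So there is no in-paper proof of the alternating case for you to ``adapt''; your strategy of transposing that argument to the symmetric setting is reasonable in spirit (the paper itself does this for later lemmas), but your sketch as written contains two genuine gaps. The first is the appeal to Theorem \ref{Symmetric algebras T}$\left(2\right)$ ``for the pairs $(i,g-i+1)$, $(g-i,i+1)$ and $(1,g)$'': these pairs sum to $g+1$, so the corresponding instances of the theorem would involve $\vee^{g+1}V$, $\vee^{g+1}V^{\vee\vee}$ and the morphisms $D^{\cdot,g+1}$, none of which occur in the diagram and none of which are controlled by the hypothesis that $\vee^{g}V$ is invertible. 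The morphisms actually appearing are $D^{i,g}$, $D^{g-i+1,g}$, $D^{g-i,g}$, $D^{i+1,g}$ and $D^{1,g}$, all relative to the same $g$, and Theorem \ref{Symmetric algebras T} gives no direct statement relating $\varphi_{g-i,1}$ (a product landing in $\vee^{g-i+1}V$, not in the top piece) to these duality maps.

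The second, more serious, gap is the claimed reduction to a scalar identity $i\cdot c_{1}+(g-i)\cdot c_{2}=r_{\vee^{g}V}\,g\binom{g}{g-i}^{-1}\binom{r+g-1}{g-i}$. The proposition is a Leibniz-type identity of \emph{morphisms}: the left-then-bottom path is the weighted sum of two contraction morphisms (the extra factor $V$ is multiplied into the second, respectively the first, symmetric factor before dualizing), and these two summands are not individually proportional to the right-hand path --- already in the model case of (super) vector spaces they are linearly independent operators, and only their weighted sum equals the right-hand side. Hence one cannot ``compare coefficients of a common morphism'' and verify a binomial identity; an argument at the level of morphisms is unavoidable, e.g.\ lifting everything to the tensor algebra, writing the products and the $D$-morphisms in terms of the symmetrizing idempotents, evaluations and coevaluations, and tracking the permutations explicitly, which is exactly what is done in \cite{MS}. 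Your concluding remark that the second diagram follows from the first applied to $V^{\vee}$ is essentially right, but it too requires unwinding the reflexivity morphisms (note that $i_{\vee^{g}V}$ appears in the first diagram and not in the second), so it is not purely formal either. As it stands the proposal describes the expected shape of the answer rather than a proof.
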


\section{\label{Dirac and Laplace operators}Dirac and Laplace operators}

If we are given $\psi :X\otimes Y\rightarrow Z$, we may
consider%
\begin{equation*}
\partial _{\psi }^{n}:=1_{\otimes ^{n-1}X}\otimes \psi :\otimes ^{n}X\otimes
Y\rightarrow \otimes ^{n-1}X\otimes Z\text{ for }n\geq 1
\end{equation*}%
and then we define%
\begin{eqnarray*}
\partial _{\psi ,a}^{n} &:&\wedge ^{n}X\otimes Y\overset{i_{X,a}^{n}\otimes
1_{Y}}{\rightarrow }\otimes ^{n}X\otimes Y\overset{\partial _{\psi }^{n}}{%
\rightarrow }\otimes ^{n-1}X\otimes Z\overset{p_{X,a}^{n-1}\otimes 1_{Z}}{%
\rightarrow }\wedge ^{n-1}X\otimes Z\text{,} \\
\partial _{\psi ,s}^{n} &:&\vee ^{n}X\otimes Y\overset{i_{X,s}^{n}\otimes
1_{Y}}{\rightarrow }\otimes ^{n}X\otimes Y\overset{\partial _{\psi }^{n}}{%
\rightarrow }\otimes ^{n-1}X\otimes Z\overset{p_{X,s}^{n-1}\otimes 1_{Z}}{%
\rightarrow }\vee ^{n-1}X\otimes Z\text{.}
\end{eqnarray*}%
Here we write $i_{X,\ast }^{n}$ and $p_{X,\ast }^{n}$ for the canonical
injective and, respectively, surjective morphisms arising from the
idempotent defining the alternating when $\ast =a$ and the symmetric when $%
\ast =s$ powers.

In particular, when $X=Y$, we have%
\begin{equation*}
\Delta _{\psi }^{n}=\partial _{\psi }^{n-1}=1_{\otimes ^{n-2}X}\otimes \psi
:\otimes ^{n}X\rightarrow \otimes ^{n-2}X\otimes Z\text{ for }n\geq 2
\end{equation*}%
inducing%
\begin{eqnarray*}
\Delta _{\psi ,a}^{n} &:&\wedge ^{n}X\overset{i_{X,a}^{n}}{\rightarrow }%
\otimes ^{n}X\overset{\Delta _{\psi }^{n}}{\rightarrow }\otimes
^{n-2}X\otimes Z\overset{p_{X,a}^{n-2}\otimes 1_{Z}}{\rightarrow }\wedge
^{n-2}X\otimes Z\text{,} \\
\Delta _{\psi ,s}^{n} &:&\vee ^{n}X\overset{i_{X,s}^{n}}{\rightarrow }%
\otimes ^{n}X\overset{\Delta _{\psi }^{n}}{\rightarrow }\otimes
^{n-2}X\otimes Z\overset{p_{X,s}^{n-2}\otimes 1_{Z}}{\rightarrow }\vee
^{n-2}X\otimes Z\text{.}
\end{eqnarray*}

\bigskip

We may lift these morphisms to the tensor products\ as follows. Let $%
\varepsilon $ (resp. $1$)\ be the sign character (resp. trivial)\ character
of the symmetric group and, if $\chi \in \left\{ \varepsilon ,1\right\} $
and $R\subset S_{k}$ is any subset, define%
\begin{equation*}
e_{R}^{\chi }:=\frac{1}{\#R}\tsum\nolimits_{\delta \in R}\chi ^{-1}\left(
\delta \right) \delta \in \mathbb{Q}\left[ S_{k}\right] \text{.}
\end{equation*}%
In particular, taking $R=S_{k}$ gives the idempotents $e_{X,a}^{k}:=e_{R}^{%
\varepsilon }$\ and $e_{X,s}^{k}:=e_{R}^{1}$\ defining the alternating and
symmetric $k$-powers of any object $X$. We have that $\partial _{\psi }^{n}$
(resp. $\Delta _{\psi }^{n}$)\ is equivariant for the action of $%
S_{n-1}=S_{\left\{ 1,...,n-1\right\} }\subset S_{n}$ (resp. $%
S_{n-2}=S_{\left\{ 1,...,n-2\right\} }\subset S_{n}$). Furthermore, if we
choose, for every $p\in \left\{ 1,...,n\right\} =:I_{n}$ (resp. $\left(
p,q\right) \in I_{n}\times I_{n}$ with $p\neq q$), elements $\delta
_{p}^{n}\in S_{n}$ (resp. $\delta _{p,q}^{n}\in S_{n}$) such that $\delta
_{p}^{n}\left( p\right) =n$ (resp. $\delta _{p,q}^{n-1,n}\left( p,q\right)
=\left( n-1,n\right) $), then $R_{S_{n-1}\backslash S_{n}}:=\left\{ \delta
_{p}^{n}:p\in I_{n}\right\} $ (resp. $R_{S_{n-2}\backslash S_{n}}:=\left\{
\delta _{p,q}^{n-1,n}:\left( p,q\right) \in I_{n}\times I_{n},p\neq
q\right\} $) is a set of coset representatives for $S_{n-1}\backslash S_{n}$
(resp. $S_{n-2}\backslash S_{n}$). Using these facts it is not difficult to
check that, setting%
\begin{eqnarray}
&&\widetilde{\partial }_{\psi ,\ast}^{n}:=\partial _{\psi
}^{n}\circ e_{R_{S_{n-1}\backslash S_{n}}}^{\chi }=\frac{1}{n}%
\tsum\nolimits_{p=1}^{n}\chi ^{-1}\left( \delta _{p}^{n}\right) \cdot \left(
1_{\otimes ^{n-1}X}\otimes \psi \right) \circ \left( \delta _{p}^{n}\otimes
1_{Y}\right) \text{,}  \label{Dirac F1} \\
&&\widetilde{\Delta }_{\psi ,\ast}^{n}:=\Delta _{\psi }^{n}\circ
e_{R_{S_{n-2}\backslash S_{n}}}^{\chi }=\frac{1}{n\left( n-1\right) }%
\tsum\nolimits_{p,q\in I_{n}:p\neq q}\chi ^{-1}\left( \delta
_{p,q}^{n-1,n}\right) \cdot \left( 1_{\otimes ^{n-2}X}\otimes \psi \right)
\circ \delta _{p,q}^{n-1,n}\text{,}  \label{Dirac F2}
\end{eqnarray}%
where $\ast\in\{a,s\}$ depending, respectively, on whether $\chi$ is $\varepsilon$ or $1$. This gives morphisms making the following diagrams commutative:%
\begin{equation}
\xymatrix{ \otimes^{n}X\otimes Y \ar[r]^-{\widetilde{\partial}_{\psi,a}^{n}} \ar[d]|{p_{X,a}^{n}\otimes1_{Y}} & \otimes^{n-1}X\otimes Z \ar[d]|{p_{X,a}^{n-1}\otimes1_{Z}} & \otimes^{n}X\otimes Y \ar[r]^-{\widetilde{\partial}_{\psi,a}^{n}} \ar[d]|{p_{X,s}^{n}\otimes1_{Y}} & \otimes^{n-1}X\otimes Z \ar[d]|{p_{X,s}^{n-1}\otimes1_{Z}} \\ \wedge^{n}X\otimes Y \ar[r]^-{\partial_{\psi,a}^{n}} & \wedge^{n-1}X\otimes Z\text{,} & \vee^{n}X\otimes Y \ar[r]^-{\partial_{\psi,s}^{n}} & \vee^{n-1}X\otimes Z\text{,} \\ \otimes^{n}X \ar[r]^-{\widetilde{\Delta}_{\psi,a}^{n}} \ar[d]|{p_{X,a}^{n}} & \otimes^{n-2}X\otimes Z \ar[d]|{p_{X,a}^{n-2}\otimes1_{Z}} & \otimes^{n}X \ar[r]^-{\widetilde{\Delta}_{\psi,a}^{n}} \ar[d]|{p_{X,s}^{n}} & \otimes^{n-2}X\otimes Z \ar[d]|{p_{X,s}^{n-2}\otimes1_{Z}} \\ \wedge^{n}X \ar[r]^-{\Delta_{\psi,a}^{n}} & \wedge^{n-2}X\otimes Z\text{,} & \vee^{n}X \ar[r]^-{\Delta_{\psi,s}^{n}} & \vee^{n-2}X\otimes Z\text{.}}  \label{Dirac D1}
\end{equation}

\bigskip

When $\psi $ is alternating or symmetric, we can refine $\left( \text{\ref%
{Dirac F2}}\right) $ as follows.

\begin{lemma}
\label{Dirac L1}Suppose that $\psi :X\otimes X\rightarrow Z$ is such that $%
\psi \circ \tau _{X,X}=-\psi $ (resp. $\psi \circ \tau _{X,X}=\psi $). Then $%
\Delta _{\psi ,s}^{n}=0$ (resp. $\Delta _{\psi ,a}^{n}=0$) and $\Delta
_{\psi ,a}^{n}$ (resp. $\Delta _{\psi ,s}^{n}$) is induced by%
\begin{equation*}
\widehat{\Delta }_{\psi ,\ast}^{n}:=\frac{2}{n\left( n-1\right) }%
\tsum\nolimits_{p,q\in I_{n}:p<q}\chi ^{-1}\left( \delta
_{p,q}^{n-1,n}\right) \cdot \left( 1_{\otimes ^{n-2}X}\otimes \psi \right)
\circ \delta _{p,q}^{n-1,n}
\end{equation*}%
where $\chi =\varepsilon $ (resp. $\chi =1$), $\ast=a$ (resp. $\ast
=s$) and $\delta _{p,q}^{n-1,n}\left( p,q\right) =\left(
n-1,n\right) $.
\end{lemma}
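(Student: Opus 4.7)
The plan is to compare, after composition with the projection $p_{X,\ast_{\chi}}^{n-2}\otimes 1_{Z}$, the right hand side of $(\ref{Dirac F2})$ with $\widehat{\Delta}_{\psi,\ast_{\chi}}^{n}$ by pairing the term indexed by $(p,q)$ with the term indexed by $(q,p)$. The geometric input is that the two coset representatives $\delta_{p,q}^{n-1,n}$ and $\delta_{q,p}^{n-1,n}$ differ, modulo the left action of $S_{n-2}=S_{\{1,\dots,n-2\}}$, precisely by the transposition $\sigma_{n-1,n}$ exchanging the last two positions: both $\delta_{p,q}^{n-1,n}$ and $\sigma_{n-1,n}\cdot\delta_{q,p}^{n-1,n}$ send the ordered pair $(p,q)$ to $(n-1,n)$.

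First I would use the hypothesis on $\psi$, together with the identification $\sigma_{n-1,n}=1_{\otimes^{n-2}X}\otimes\tau_{X,X}$, to compute
\[
(1_{\otimes^{n-2}X}\otimes\psi)\circ\sigma_{n-1,n}=\epsilon(\psi)\cdot(1_{\otimes^{n-2}X}\otimes\psi),
\]
where $\epsilon(\psi)=-1$ if $\psi$ is alternating and $+1$ if $\psi$ is symmetric. Next I would observe that an element $\mu\in S_{n-2}$ acting on the first $n-2$ factors intertwines $(1_{\otimes^{n-2}X}\otimes\psi)$ with its corresponding action on $\otimes^{n-2}X\otimes Z$, and is transformed by $p_{X,\ast_{\chi}}^{n-2}\otimes 1_{Z}$ into multiplication by $\chi(\mu)$; this precisely cancels the factor $\chi^{-1}(\mu)$ that appears when the coset identity above is lifted to an honest equality in $S_{n}$.

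The upshot is that, after splitting $\sum_{p\neq q}=\sum_{p<q}+\sum_{p>q}$ in $(\ref{Dirac F2})$ and reindexing the second sum via $(p,q)\leftrightarrow(q,p)$, each of the $\binom{n}{2}$ pairs $\{(p,q),(q,p)\}$ contributes, after composition with $p_{X,\ast_{\chi}}^{n-2}\otimes 1_{Z}$, a single term weighted by the scalar $1+\epsilon(\psi)\epsilon(\chi)$, where $\epsilon(\chi):=\chi^{-1}(\sigma_{n-1,n})\in\{\pm1\}$. When this scalar equals $2$ (namely, $\psi$ alternating with $\chi=\varepsilon$, or $\psi$ symmetric with $\chi=1$) the result is exactly $(p_{X,\ast_{\chi}}^{n-2}\otimes 1_{Z})\circ\widehat{\Delta}_{\psi,\ast_{\chi}}^{n}$, and $(\ref{Dirac D1})$ then identifies $\Delta_{\psi,\ast_{\chi}}^{n}$ with the morphism induced by $\widehat{\Delta}_{\psi,\ast_{\chi}}^{n}$. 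When the scalar is $0$ (the two opposite cases) the composite vanishes, forcing $\Delta_{\psi,\ast_{\chi}}^{n}=0$.

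The main obstacle is the careful bookkeeping of the $S_{n-2}$-ambiguity in the choice of the representatives $\delta_{p,q}^{n-1,n}$: the sum in $(\ref{Dirac F2})$ is well defined as a morphism on $\otimes^{n}X$ only up to this ambiguity, and it is exactly the projection onto the $\chi$-isotypic summand that turns the ambiguity into the same character $\chi$ that appears as a weight in the sum, allowing the pairing argument to close.
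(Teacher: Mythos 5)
Your argument is correct and is essentially the paper's intended proof: pairing the $(p,q)$ and $(q,p)$ terms of $\left( \text{\ref{Dirac F2}}\right) $ via the transposition $\tau _{n-1,n}$, with the $S_{n-2}$-ambiguity in the coset representatives absorbed by the character $\chi $ after composing with $p_{X,\ast _{\chi }}^{n-2}\otimes 1_{Z}$, is exactly the content of Remark \ref{Dirac R1} applied to $f=\Delta _{\psi }^{n}$, and your use of $\left( \text{\ref{Dirac D1}}\right) $ together with the fact that $p_{X,\ast _{\chi }}^{n}$ is an epimorphism to conclude either the induced-morphism statement or the vanishing is the intended final step.
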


\begin{proof}
The proof, based on $\left( \text{\ref{Dirac F2}}\right) $ and the
subsequent Remark \ref{Dirac R1}, is left to the reader.
\end{proof}

\bigskip

\begin{remark}
\label{Dirac R1}Suppose that we are given actions of $S_{n}$ on $A$ and of $%
S_{n-2}$ on $B$ and that $f:A\rightarrow B$ is an $S_{n-2}$-equivariant map,
for an integer $n\geq 2$. Then we have, setting $\tau _{n-1,n}:=\left(
n-1,n\right) $,%
\begin{eqnarray*}
&&e_{S_{n-2}}^{\chi }\circ f\circ e_{R_{S_{n-2}\backslash S_{n}}}^{\chi
}:=e_{S_{n-2}}^{\chi }\circ \frac{1}{n\left( n-1\right) }\tsum\nolimits_{p,q%
\in I_{n}:p\neq q}\chi ^{-1}\left( \delta _{p,q}^{n-1,n}\right) \cdot f\circ
\delta _{p,q}^{n-1,n} \\
&&\text{ \ }=e_{S_{n-2}}^{\chi }\circ \frac{1}{n\left( n-1\right) }%
\tsum\nolimits_{p,q\in I_{n}:p<q}\chi ^{-1}\left( \delta
_{p,q}^{n-1,n}\right) \cdot \left( f+\chi ^{-1}\left( \tau _{n-1,n}\right)
\cdot f\circ \tau _{n-1,n}\right) \circ \delta _{p,q}^{n-1,n}\text{.}
\end{eqnarray*}
\end{remark}

\bigskip

Suppose now that we are given $\psi _{1}:X\otimes Y\rightarrow Z$ and $\psi
_{2}:X\otimes Z\rightarrow Y\otimes W$ and $\psi :X\otimes X\rightarrow W$.
They induce%
\begin{eqnarray*}
&&\wedge ^{n}X\otimes Y\overset{\partial _{\psi _{1},a}^{n}}{\rightarrow }%
\wedge ^{n-1}X\otimes Z\overset{\partial _{\psi _{2},a}^{n-1}}{\rightarrow }%
\wedge ^{n-2}X\otimes Y\otimes W\text{,} \\
&&\vee ^{n}X\otimes Y\overset{\partial _{\psi _{1},s}^{n}}{\rightarrow }\vee
^{n-1}X\otimes Z\overset{\partial _{\psi _{2},s}^{n-1}}{\rightarrow }\vee
^{n-2}X\otimes Y\otimes W
\end{eqnarray*}%
and%
\begin{eqnarray*}
\Delta _{\psi ,a}^{n} &:&\wedge ^{n}X\rightarrow \wedge ^{n-2}X\otimes W%
\text{,} \\
\Delta _{\psi ,s}^{n} &:&\vee ^{n}X\rightarrow \vee ^{n-2}X\otimes W\text{.}
\end{eqnarray*}

\begin{lemma}
\label{Dirac L2}Suppose that $\psi :X\otimes X\rightarrow W$ is such that $%
\psi \circ \tau _{X,X}=\nu _{\ast}\cdot \psi $, where $\nu _{a}:=-1$%
, $\nu _{s}:=1$ and $\ast\in \left\{ a,s\right\} $, and that, for
some $\rho \in End\left( \mathbb{I}\right) $, the following diagram is
commutative:%
\begin{equation*}
\xymatrix@C=110pt{ X\otimes X\otimes Y \ar[r]^-{\left(1_{X}\otimes\psi_{1},\left(1_{X}\otimes\psi_{1}\right)\circ\left(\tau_{X,X}\otimes1_{Y}\right)\right)} \ar[d]_{\psi\otimes1_{Y}} & X\otimes Z\oplus X\otimes Z \ar[d]^{\psi_{2}\oplus\nu_{\ast}\cdot\psi_{2}} \\ W\otimes Y \ar[r]^-{\rho\cdot\tau_{W,Y}} & Y\otimes W\text{.} }
\end{equation*}%
Then, when $\ast =a$, the following diagram is commutative%
\begin{equation*}
\xymatrix@C=70pt{ \wedge^{n}X\otimes Y \ar[r]^-{\partial_{\psi_{1},a}^{n}} \ar[d]_{\Delta_{\psi,a}^{n}\otimes1_{Y}} & \wedge^{n-1}X\otimes Z \ar[d]^{\partial_{\psi_{2},a}^{n-1}} \\ \wedge^{n-2}X\otimes W\otimes Y \ar[r]^-{\frac{\rho}{2}\cdot1_{\wedge^{n-2}X}\otimes\tau_{W,Y}} & \wedge^{n-2}X\otimes Y\otimes W }
\end{equation*}%
and, when $\ast =s$, the following diagram is commutative:%
\begin{equation*}
\xymatrix@C=70pt{ \vee^{n}X\otimes Y \ar[r]^-{\partial_{\psi_{1},s}^{n}} \ar[d]_{\Delta_{\psi,s}^{n}\otimes1_{Y}} & \vee^{n-1}X\otimes Z \ar[d]^{\partial_{\psi_{2},s}^{n-1}} \\ \vee^{n-2}X\otimes W\otimes Y \ar[r]^-{\frac{\rho}{2}\cdot1_{\vee^{n-2}X}\otimes\tau_{W,Y}} & \vee^{n-2}X\otimes Y\otimes W\text{.} }
\end{equation*}
\end{lemma}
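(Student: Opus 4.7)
The plan is to lift the claimed identity to the tensor power $\otimes^n X\otimes Y$, expand both compositions as sums indexed by pairs of distinct positions in $I_n=\{1,\ldots,n\}$, and then apply the hypothesis pair by pair. Since $p_{X,\ast_\chi}^n$ admits the section $i_{X,\ast_\chi}^n$, precomposing the desired equality with $p_{X,\ast_\chi}^n\otimes 1_Y$ and invoking the diagrams in $(\ref{Dirac D1})$ reduces the statement to
\[
(p_{X,\ast_\chi}^{n-2}\otimes 1_{Y\otimes W})\circ\widetilde{\partial}_{\psi_2,\ast_\chi}^{n-1}\circ\widetilde{\partial}_{\psi_1,\ast_\chi}^n \;=\; \tfrac{\rho}{2}\,(p_{X,\ast_\chi}^{n-2}\otimes\tau_{W,Y})\circ(\widehat{\Delta}_{\psi,\ast_\chi}^n\otimes 1_Y)
\]
at the tensor-power level, where Lemma \ref{Dirac L1} has been used to replace $\Delta_{\psi,\ast_\chi}^n$ by $\widehat{\Delta}_{\psi,\ast_\chi}^n$; this is legitimate because of the symmetry hypothesis $\psi\circ\tau_{X,X}=\nu_{\ast_\chi}\,\psi$. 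The key point is that both sides are equivariant for the action of $S_{n-2}$ on the first $n-2$ tensor factors, so differences of morphisms by $S_{n-2}$-permutations disappear once $p_{X,\ast_\chi}^{n-2}$ is applied.

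Iterating $(\ref{Dirac F1})$, the left-hand side expands as $\tfrac{1}{n(n-1)}\sum_{p\neq q}\chi^{-1}(\delta_p^n)\,\chi^{-1}(\delta_{q_0}^{n-1})\cdot T_{p,q}$, with the sum running over ordered pairs of distinct positions in $I_n$, $q_0\in I_{n-1}$ denoting the image of $q$ under $\delta_p^n$, and $T_{p,q}$ the morphism that moves the $X$'s in positions $p$ and $q$ to positions $n-1$ and $n$, pairs the last $X$ with $Y$ via $\psi_1$ to produce $Z$, and then pairs this $Z$ with the $X$ now in position $n-1$ via $\psi_2$. A direct permutation computation shows that $\delta_p^n\cdot\delta_{q_0}^{n-1}$ agrees with $\delta_{p,q}^{n-1,n}$ modulo $S_{n-2}$, and that swapping $(p,q)\mapsto(q,p)$ inserts precisely the transposition $(n-1,n)$; hence $\chi^{-1}(\delta_q^n)\chi^{-1}(\delta_{p_0}^{n-1})=\nu_{\ast_\chi}\,\chi^{-1}(\delta_p^n)\chi^{-1}(\delta_{q_0}^{n-1})$ modulo $S_{n-2}$. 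Since $T_{q,p}$ is obtained from $T_{p,q}$ by precomposing with $\tau_{X,X}$ on positions $\{p,q\}$, grouping the two orderings of each unordered pair $\{p,q\}$ yields, modulo the $S_{n-2}$-ambiguity,
\[
T_{p,q}+\nu_{\ast_\chi}\,T_{q,p} \;=\; \bigl[\psi_2\circ(1_X\otimes\psi_1)+\nu_{\ast_\chi}\,\psi_2\circ(1_X\otimes\psi_1)\circ(\tau_{X,X}\otimes 1_Y)\bigr]\ \text{at positions}\ \{p,q\}.
\]

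The hypothesis identifies the right-hand bracket with $\rho\cdot\tau_{W,Y}\circ(\psi\otimes 1_Y)$, which is precisely $\rho\cdot S_{p,q}$, where $S_{p,q}$ is the term appearing in the expansion $\tfrac{\rho}{2}(1\otimes\tau_{W,Y})\circ(\widehat{\Delta}_{\psi,\ast_\chi}^n\otimes 1_Y)=\tfrac{\rho}{n(n-1)}\sum_{p<q}\chi^{-1}(\delta_{p,q}^{n-1,n})\,S_{p,q}$ furnished by Lemma \ref{Dirac L1}. Summing over unordered pairs $p<q$ and applying $p_{X,\ast_\chi}^{n-2}\otimes 1_{Y\otimes W}$ (which absorbs the $S_{n-2}$-ambiguity and converts each $\chi^{-1}(\delta_p^n\delta_{q_0}^{n-1})$ into $\chi^{-1}(\delta_{p,q}^{n-1,n})$) matches the two expressions with the common normalization $\tfrac{\rho}{n(n-1)}$, establishing the reduced identity and hence the lemma for both cases $\ast_\chi=a$ and $\ast_\chi=s$. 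The main obstacle is the bookkeeping in the middle paragraph: verifying that $\chi^{-1}(\delta_p^n)\chi^{-1}(\delta_{q_0}^{n-1})$ agrees with $\chi^{-1}(\delta_{p,q}^{n-1,n})$ modulo $S_{n-2}$, that the swap $(p,q)\leftrightarrow(q,p)$ picks up exactly $\nu_{\ast_\chi}$, and that the sum over the two orderings reproduces the left-hand side of the hypothesis; once these combinatorial facts are secured, the match of normalizations is automatic.
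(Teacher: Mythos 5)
Your proposal is correct and follows essentially the same route as the paper's proof: expand $\widetilde{\partial}_{\psi_2,\ast_\chi}^{n-1}\circ\widetilde{\partial}_{\psi_1,\ast_\chi}^{n}$ over ordered pairs of positions, pair each $(p,q)$ with $(q,p)$ using $S_{n-2}$-equivariance (the paper's Remark \ref{Dirac R1}), apply the hypothesis to each grouped pair to produce $\rho\cdot\tau_{W,Y}\circ(\psi\otimes 1_Y)$, identify the result with $\tfrac{\rho}{2}\widehat{\Delta}_{\psi,\ast_\chi}^{n}\otimes 1_Y$ via Lemma \ref{Dirac L1}, and descend through the diagrams $\left(\text{\ref{Dirac D1}}\right)$ using that $p_{X,\ast_\chi}^{n}\otimes 1_Y$ is an epimorphism. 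The only cosmetic difference is that the paper fixes explicit representatives $\delta_q^{n}=(q,n)$ so that the products $\delta_p^{n-1}\delta_q^{n}$ form a set of coset representatives for $S_{n-2}\backslash S_n$, whereas you argue with arbitrary choices modulo $S_{n-2}$ absorbed by the projector; both versions of the bookkeeping are valid.
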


\begin{proof}
We compute, using the notations in $\left( \text{\ref{Dirac F1}}\right) $,%
\begin{eqnarray}
\widetilde{\partial }_{\psi _{2},\ast }^{n-1}\circ \widetilde{%
\partial }_{\psi _{1},\ast }^{n} &=&\frac{1}{n\left( n-1\right) }%
\dsum\nolimits_{\substack{ q=1,...,n  \\ p=1,...,n-1}}\chi ^{-1}\left(
\delta _{p}^{n-1}\delta _{q}^{n}\right) \cdot \left( 1_{\otimes
^{n-2}X}\otimes \psi _{2}\right) \circ \left( \delta _{p}^{n-1}\otimes
1_{Z}\right)  \notag \\
&&\circ \left( 1_{\otimes ^{n-1}X}\otimes \psi _{1}\right) \circ \left(
\delta _{q}^{n}\otimes 1_{Y}\right)  \notag \\
&=&\frac{1}{n\left( n-1\right) }\dsum\nolimits_{\substack{ q=1,...,n  \\ %
p=1,...,n-1}}\chi ^{-1}\left( \delta _{p}^{n-1}\delta _{q}^{n}\right) \cdot
\left( 1_{\otimes ^{n-2}X}\otimes \psi _{2}\right) \circ \left( 1_{\otimes
^{n-1}X}\otimes \psi _{1}\right)  \notag \\
&&\circ \left( \delta _{p}^{n-1}\otimes 1_{X\otimes Y}\right) \circ \left(
\delta _{q}^{n}\otimes 1_{Y}\right) \text{.}  \label{Dirac L2 F1}
\end{eqnarray}%
Here $\delta _{p}^{n-1}\otimes 1_{X\otimes Y}$ acts on $\otimes ^{n}X\otimes
Y$ as $\delta _{p}^{n-1}\otimes 1_{Y}$, where now $\delta _{p}^{n-1}\in
S_{n-1}=S_{\left\{ 1,...,n-1\right\} }\subset S_{n}$ is viewed in $S_{n}$,
so that $\left( \delta _{p}^{n-1}\otimes 1_{X\otimes Y}\right) \circ \left(
\delta _{q}^{n}\otimes 1_{Y}\right) =\delta _{p}^{n-1}\delta _{q}^{n}\otimes
1_{Y}$. We now remark that we may choose $\delta _{q}^{n}$ so that $\delta
_{q}^{n}\left( p\right) =p$ if $p\in \left\{ 1,...,n-1\right\} -\left\{
q\right\} $ and then we find%
\begin{equation*}
\delta _{p}^{n-1}\delta _{q}^{n}\left( p,q\right) =\delta _{p}^{n-1}\left(
p,n\right) =\left( n-1,n\right) \text{, if }p\in \left\{ 1,...,n-1\right\}
-\left\{ q\right\} \text{.}
\end{equation*}%
On the other hand, we may further assume that $\delta _{q}^{n}\left(
n\right) =q$ (with $\delta _{q}^{n}=\left( q,n\right) $ both the imposed
conditions are indeed satisfied). Then we find 
\begin{equation*}
\delta _{q}^{n-1}\delta _{q}^{n}\left( n,q\right) =\delta _{q}^{n-1}\left(
q,n\right) =\left( n-1,n\right) \text{, if }q\in \left\{ 1,...,n-1\right\} 
\text{.}
\end{equation*}%
Summarizing, setting $\delta _{p,q}^{n-1,n}:=\delta _{p}^{n-1}\delta
_{q}^{n} $ if $p\in \left\{ 1,...,n-1\right\} -\left\{ q\right\} $ and $%
\delta _{n,q}^{n-1,n}:=\delta _{q}^{n-1}\delta _{q}^{n}$ if $q\in \left\{
1,...,n-1\right\} $, we see that%
\begin{equation*}
\left\{ \left( p,q\right) \in I_{n}\times I_{n},p\neq q\right\} =\left\{
\left( p,q\right) :p\in I_{n-1}-\left\{ q\right\} \right\} \sqcup \left\{
\left( n,q\right) :q\in I_{n-1}\right\}
\end{equation*}%
and then, since $\delta _{p,q}^{n-1,n}\left( p,q\right) =\left( n-1,n\right) 
$, we have%
\begin{equation*}
R_{S_{n-2}\backslash S_{n}}=\left\{ \delta _{p,q}^{n-1,n}:\left( p,q\right)
\in I_{n}\times I_{n},p\neq q\right\} \text{.}
\end{equation*}

Setting $f:=\left( 1_{\otimes ^{n-2}X}\otimes \psi _{2}\right) \circ \left(
1_{\otimes ^{n-1}X}\otimes \psi _{1}\right) $ it follows from $\left( \text{%
\ref{Dirac L2 F1}}\right) $ and the above discussion that we have%
\begin{equation}
\widetilde{\partial }_{\psi _{2},\ast }^{n-1}\circ \widetilde{%
\partial }_{\psi _{1},\ast }^{n}=\frac{1}{n\left( n-1\right) }%
\tsum\nolimits_{p,q\in I_{n}:p\neq q}\chi ^{-1}\left( \delta
_{p,q}^{n-1,n}\right) \cdot f\circ \left( \delta _{p,q}^{n-1,n}\otimes
1_{Y}\right) \text{.}  \label{Dirac L2 F2}
\end{equation}%
Noticing that $f$ is $S_{n-2}$-equivariant we may apply Remark \ref{Dirac R1}
to get%
\begin{eqnarray*}
&&e_{S_{n-2}}^{\chi }\circ \widetilde{\partial }_{\psi _{2},\ast }^{n-1}\circ \widetilde{\partial }_{\psi _{1},\ast }^{n} \\
&&\text{ }=e_{S_{n-2}}^{\chi }\circ \frac{1}{n\left( n-1\right) }%
\tsum\nolimits_{p,q\in I_{n}:p<q}\chi ^{-1}\left( \delta
_{p,q}^{n-1,n}\right) \cdot \left( f+\chi ^{-1}\left( \tau _{n-1,n}\right)
\cdot f\circ \tau _{n-1,n}\right) \circ \left( \delta _{p,q}^{n-1,n}\otimes
1_{Y}\right) \text{.}
\end{eqnarray*}

We now remark that the relation%
\begin{equation*}
\psi _{2}\circ \left( 1_{X}\otimes \psi _{1}\right) +\nu _{\ast }\cdot \psi _{2}\circ \left( 1_{X}\otimes \psi _{1}\right) \circ \left(
\tau _{X,X}\otimes 1_{Y}\right) =\rho \cdot \tau _{W,Y}\circ \left( \psi
\otimes 1_{Y}\right)
\end{equation*}%
gives, thanks to $\nu _{\ast }=\chi ^{-1}\left( \tau _{n-1,n}\right) 
$,%
\begin{eqnarray*}
f+\chi ^{-1}\left( \tau _{n-1,n}\right) \cdot f\circ \tau _{n-1,n} &=&\left(
1_{\otimes ^{n-2}X}\otimes \psi _{2}\right) \circ \left( 1_{\otimes
^{n-1}X}\otimes \psi _{1}\right) \\
&&+\nu _{\ast }\cdot \left( 1_{\otimes ^{n-2}X}\otimes \psi
_{2}\right) \circ \left( 1_{\otimes ^{n-1}X}\otimes \psi _{1}\right) \circ
\left( \tau _{n-1,n}\otimes 1_{Y}\right) \\
&=&\rho \cdot \left( 1_{\otimes ^{n-2}X}\otimes \tau _{W,Y}\right) \circ
\left( 1_{\otimes ^{n-2}X}\otimes \psi \otimes 1_{Y}\right) \text{.}
\end{eqnarray*}%
Hence $\left( \text{\ref{Dirac L2 F2}}\right) $ gives%
\begin{eqnarray}
&&e_{S_{n-2}}^{\chi }\circ \widetilde{\partial }_{\psi _{2},\ast }^{n-1}\circ \widetilde{\partial }_{\psi _{1},\ast }^{n}=e_{S_{n-2}}^{\chi }\circ \left( 1_{\otimes ^{n-2}X}\otimes \tau
_{W,Y}\right)  \notag \\
&&\text{ \ \ \ \ \ \ }\circ \frac{\rho }{n\left( n-1\right) }%
\tsum\nolimits_{p,q\in I_{n}:p<q}\chi ^{-1}\left( \delta
_{p,q}^{n-1,n}\right) \cdot \left( 1_{\otimes ^{n-2}X}\otimes \psi \otimes
1_{Y}\right) \circ \left( \delta _{p,q}^{n-1,n}\otimes 1_{Y}\right) \text{.}
\label{Dirac L2 F3}
\end{eqnarray}%
We have $e_{S_{n-2}}^{\chi }=e_{X,\ast }^{n-2}\otimes 1_{T}=\left(
p_{X,\ast }^{n-2}\otimes 1_{T}\right) \circ \left( i_{X,\ast }^{n-2}\otimes 1_{T}\right) $ and $i_{X,\ast }^{n-2}\otimes 1_{T}$
is a monomorphism $\left( \text{\ref{Dirac L2 F3}}\right) $, where $%
T=Y\otimes W$ on the left hand side while $T=W\otimes Y$ on the right hand
side of $\left( \text{\ref{Dirac L2 F3}}\right) $. Hence $\left( \text{\ref%
{Dirac L2 F3}}\right) $ gives, with the notations of Lemma \ref{Dirac L1},%
\begin{eqnarray*}
&&2\cdot \left( p_{X,\ast }^{n-2}\otimes 1_{Y\otimes W}\right) \circ 
\widetilde{\partial }_{\psi _{2},\ast }^{n-1}\circ \widetilde{%
\partial }_{\psi _{1},\ast }^{n}=\rho \cdot \left( p_{X,\ast }^{n-2}\otimes 1_{W\otimes Y}\right) \circ \left( 1_{\otimes
^{n-2}X}\otimes \tau _{W,Y}\right) \circ \left( \widehat{\Delta }_{\psi
,\ast }^{n}\otimes 1_{Y}\right) \\
&&\text{ }=\rho \cdot \left( 1_{n-2}\otimes \tau _{W,Y}\right) \circ \left(
p_{X,\ast }^{n-2}\otimes 1_{W\otimes Y}\right) \circ \left( \widehat{%
\Delta }_{\psi ,\ast }^{n}\otimes 1_{Y}\right) \text{,}
\end{eqnarray*}%
where $1_{n-2}=1_{\wedge ^{n-2}X}$ when $\ast =a$ or, respectively, $%
1_{n-2}=1_{\vee ^{n-2}X}$ when $\ast =s$. Now the claim follows from 
$\left( \text{\ref{Dirac D1}}\right) $, which gives%
\begin{equation*}
\left( p_{X,\ast }^{n-2}\otimes 1_{Y\otimes W}\right) \circ 
\widetilde{\partial }_{\psi _{2},\ast }^{n-1}\circ \widetilde{%
\partial }_{\psi _{1},\ast }^{n}=\partial _{\psi _{2},\ast }^{n-1}\circ \left( p_{X,\ast }^{n-1}\otimes 1_{Z}\right) \circ 
\widetilde{\partial }_{\psi _{1},\ast }^{n}=\partial _{\psi
_{2},\ast }^{n-1}\circ \partial _{\psi _{1},\ast }^{n}\circ
\left( p_{X,\ast }^{n}\otimes 1_{Y}\right) \text{,}
\end{equation*}%
and Lemma \ref{Dirac L1}, which gives%
\begin{equation*}
\left( p_{X,\ast }^{n-2}\otimes 1_{W\otimes Y}\right) \circ \left( 
\widehat{\Delta }_{\psi ,\ast }^{n}\otimes 1_{Y}\right) =\left(
\Delta _{\psi ,\ast }^{n}\otimes 1_{Y}\right) \circ \left( p_{X,\ast
}^{n}\otimes 1_{Y}\right) \text{,}
\end{equation*}%
because $p_{X,\ast }^{n}\otimes 1_{Y}$ is an epimorphism.
\end{proof}

\bigskip

Suppose now that we are given a perfect pairing $\psi :X\otimes
X\rightarrow \mathbb{I}$, meaning that the associated $\hom $ valued
morphism $f_{\psi }:X\rightarrow X^{\vee }$ is an isomorphism. Then $\left(
X,\psi \right) $ is a dual pair for $X$ and we have the Casimir element $%
C_{\psi }:\mathbb{I}\rightarrow X\otimes X$. It follows from well known
properties of the Casimir element that we have the following commutative
diagrams:%
\begin{eqnarray}
&&1_{X}:X\overset{C_{\psi }\otimes 1_{X}}{\rightarrow }X\otimes X\otimes X%
\overset{1_{X}\otimes \psi }{\rightarrow }X\text{,}  \label{Dirac FCasimir 1}
\\
&&1_{X}:X\overset{1_{X}\otimes C_{\psi }}{\rightarrow }X\otimes X\otimes X%
\overset{\psi \otimes 1_{X}}{\rightarrow }X\text{.}  \label{Dirac FCasimir 2}
\end{eqnarray}%
Suppose that we have $\psi \circ \tau _{X,X}=\chi \left( \tau _{X,X}\right)
\cdot \psi $, where $\chi \in \left\{ 1,\varepsilon \right\} $\footnote{%
We remark that, assuming $2$ is invertible in $Hom\left( X\otimes X,\mathbb{I%
}\right) $, we may always write $\psi $ as the direct sum of its alternating
and symmetric part, defined respectively by the formulas $\psi _{a}:=\frac{%
\psi -\psi \circ \tau _{X,X}}{2}$ and $\psi _{s}:=\frac{\psi +\psi \circ
\tau _{X,X}}{2}$. This means that $\psi =\psi _{a}\oplus \psi _{s}$, up to
the identification $Hom\left( X\otimes X,\mathbb{I}\right) =Hom\left( \wedge
^{2}X,\mathbb{I}\right) \oplus Hom\left( \vee ^{2}X,\mathbb{I}\right) $ and
the above assumption is always achieved by $\psi _{a}$ and $\psi _{s}$.}.
Recall that we write $r_X=\operatorname{rank}(X):=\psi \circ \tau _{X,X}\circ C_{\psi
}$. Then we have $r_{X}=\chi \left( \tau _{X,X}\right) \cdot \psi \circ C_{\psi }$, implying that
the following diagram is commutative:%
\begin{equation}
\chi \left( \tau _{X,X}\right) r_{X}:\mathbb{I}\overset{C_{\psi }}{%
\rightarrow }X\otimes X\overset{\psi }{\rightarrow }\mathbb{I}\text{.}
\label{Dirac FCasimir 3}
\end{equation}%
We may consider%
\begin{equation*}
C_{\psi }^{n}:=1_{\otimes ^{n}X}\otimes C_{\psi }:\otimes ^{n}X\rightarrow
\otimes ^{n+2}X\text{ for }n\geq 0
\end{equation*}%
and then we define%
\begin{eqnarray*}
C_{\psi ,a}^{n} &:&\wedge ^{n}X\overset{i_{X,a}^{n}}{\rightarrow }\otimes
^{n}X\overset{C_{\psi }^{n}}{\rightarrow }\otimes ^{n+2}X\overset{%
p_{X,a}^{n+2}}{\rightarrow }\wedge ^{n+2}X\text{,} \\
C_{\psi ,s}^{n} &:&\vee ^{n}X\overset{i_{X,s}^{n}}{\rightarrow }\otimes ^{n}X%
\overset{C_{\psi }^{n}}{\rightarrow }\otimes ^{n+2}X\overset{p_{X,s}^{n+2}}{%
\rightarrow }\vee ^{n+2}X\text{.}
\end{eqnarray*}%
Since $C_{\psi }^{n}$ is $S_{n}$-equivariant, the following diagrams are
commutative:%
\begin{equation}
\xymatrix{ \otimes^{n}X \ar[r]^-{C_{\psi}^{n}} \ar[d]|{p_{X,a}^{n}} & \otimes^{n+2}X \ar[d]|{p_{X,a}^{n+2}} & \otimes^{n}X \ar[r]^-{C_{\psi}^{n}} \ar[d]|{p_{X,s}^{n}} & \otimes^{n+2}X \ar[d]|{p_{X,s}^{n+2}} \\ \wedge^{n}X \ar[r]^-{C_{\psi,a}^{n}} & \wedge^{n+2}X\text{,} & \vee^{n}X \ar[r]^-{C_{\psi,s}^{n}} & \vee^{n+2}X\text{.}}  \label{Dirac D2}
\end{equation}

\begin{lemma}
\label{Dirac L3}Suppose that $\psi :X\otimes X\rightarrow \mathbb{I}$ is a
perfect pairing such that $\psi \circ \tau _{X,X}=\nu _{\ast }\cdot
\psi $, where $\nu _{a}:=-1$, $\nu _{s}:=1$ and $\ast \in \left\{
a,s\right\} $. Then we have the formulas $\Delta _{\psi ,\ast}^{2}\circ C_{\psi ,\ast}^{0}=\nu _{\ast }r_{X}$, $3\Delta
_{\psi ,\ast}^{3}\circ C_{\psi ,}^{1}=\left( 2+\nu
_{\ast}r_{X}\right) \cdot 1_{X}$\ and, for every $n\geq 2$,%
\begin{equation*}
\frac{\left( n+2\right) \left( n+1\right) }{2}\cdot \Delta _{\psi ,\ast}^{n+2}\circ C_{\psi ,\ast}^{n}-\frac{n\left( n-1\right) }{2%
}\cdot C_{\psi ,\ast }^{n-2}\circ \Delta _{\psi ,\ast }^{n}=\left( 2n+\nu _{\ast }r_{X}\right) \cdot 1_{(\ast) ^{n}X}\text{,}
\end{equation*}%
where $(\ast)^{n}X:=\wedge ^{n}X$ for $\ast =a$, $(\ast)^{n}X:=\vee ^{n}X$ for $\ast=s$ and $r_{X}:=\mathrm{rank}\left(
X\right) $.
\end{lemma}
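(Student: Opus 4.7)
The plan is to reduce to a tensor-level identity and then expand the operators via formula $(\ref{Dirac F2})$, splitting the resulting sum of ordered pairs in $I_{n+2}$ into three cases according to how the pair meets the two Casimir slots $\{n+1, n+2\}$ produced by $C_\psi^n$.

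I first pre-compose the desired identity with the epimorphism $p_{X, \ast _{\chi}}^n\colon \otimes^n X \to \ast _{\chi}^n X$. Using the commutativity in $(\ref{Dirac D1})$ and $(\ref{Dirac D2})$, this reduces to showing
\[
p_{X,\ast _{\chi}}^n \circ \Bigl[\tfrac{(n+2)(n+1)}{2}\, \widetilde{\Delta}_\psi^{n+2} \circ C_\psi^n \;-\; \tfrac{n(n-1)}{2}\, C_\psi^{n-2} \circ \widetilde{\Delta}_\psi^n\Bigr] = (2n + \nu _{\ast _{\chi}} r_X)\, p_{X,\ast _{\chi}}^n,
\]
where the $C\widetilde\Delta$ term is interpreted as $0$ when $n<2$; the two boundary formulas in the statement appear as the specializations $n=0$ and $n=1$. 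Expanding $\widetilde{\Delta}_\psi^{n+2}\circ C_\psi^n$ according to $(\ref{Dirac F2})$ and choosing coset representatives $\delta_{p,q}$ conveniently, I partition the ordered pairs. When $\{p,q\}=\{n+1,n+2\}$ the two terms produce $\psi\circ C_\psi$ and $\psi\circ \tau_{X,X}\circ C_\psi$, each equal to $\nu _{\ast _{\chi}} r_X$ by $(\ref{Dirac FCasimir 3})$, contributing $2\nu _{\ast _{\chi}} r_X\cdot 1_{\otimes^n X}$. When $\{p,q\}$ meets $\{n+1,n+2\}$ in exactly one element (giving $4n$ ordered pairs), a suitable $\delta$ contracts one original slot against one Casimir leg via $\psi$, and $(\ref{Dirac FCasimir 1})$ or $(\ref{Dirac FCasimir 2})$ identifies the remaining Casimir leg with that original slot up to an overall sign; a direct check shows that in every sub-case $\chi^{-1}(\delta)$ is cancelled by the $\nu _{\ast _{\chi}}$ introduced when reordering $\psi$ or the legs of $C_\psi$, so this case contributes $4n\cdot 1_{\otimes^n X}$. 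The remaining $n(n-1)$ pairs have $\{p,q\}=\{a,b\}\subset I_n$; with $\delta=(a,n+1)(b,n+2)$, each such pair contributes an operator $P_{a,b}\colon \otimes^n X\to \otimes^n X$ that contracts positions $a,b$ with $\psi$ and then inserts $C_\psi$ at positions $(a,b)$.

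A parallel expansion of $C_\psi^{n-2}\circ \widetilde{\Delta}_\psi^n$ gives $\frac{1}{n(n-1)}\sum_{a\neq b\in I_n}\Psi_{a,b}^{\mathrm{end}}$, where $\Psi_{a,b}^{\mathrm{end}}$ performs the same $\psi$-contraction but places $C_\psi$ at positions $(n-1,n)$. The crux is the identity $p_{X,\ast _{\chi}}^n\circ P_{a,b}=p_{X,\ast _{\chi}}^n\circ \Psi_{a,b}^{\mathrm{end}}$ for every $a\neq b\in I_n$: the two tensors differ by a position permutation together with (possibly) a swap of the two Casimir legs, which is absorbed by the symmetrizer when $\ast _{\chi}=s$ and by the antisymmetrizer combined with $\tau_{X,X}\circ C_\psi=\nu _{\ast _{\chi}} C_\psi$ when $\ast _{\chi}=a$. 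Granting this identification the Case~1 contributions cancel between the two sides, leaving exactly $(2n+\nu _{\ast _{\chi}} r_X)\,p_{X,\ast _{\chi}}^n$, which completes the proof. The main obstacle is precisely the sign bookkeeping in the above case analysis: in every sub-case one must verify that the signs from $\chi^{-1}(\delta)$, the $\psi$-eigenvalue $\nu _{\ast _{\chi}}$, and the $C_\psi$-symmetry conspire to $+1$, with particular care required when $\{a,b\}$ overlaps $\{n-1,n\}$ so that $\delta_{a,b}^{n-1,n}$ is not a product of disjoint transpositions. The formula should be regarded as the abstract incarnation of the classical commutation $[\Delta,\lvert x\rvert^2]=4E+2d$ between the Laplacian and multiplication by a quadratic form on a polynomial ring.
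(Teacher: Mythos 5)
Your proposal is correct and takes essentially the same route as the paper's proof: reduce to the tensor level through the epimorphism $p_{X,\ast_{\chi}}^{n}$ and the diagrams $\left(\text{\ref{Dirac D1}}\right)$, $\left(\text{\ref{Dirac D2}}\right)$, expand via coset representatives, split the index pairs according to their overlap with the two Casimir slots, and use $\left(\text{\ref{Dirac FCasimir 1}}\right)$--$\left(\text{\ref{Dirac FCasimir 3}}\right)$ to extract $2n+\nu_{\ast_{\chi}}r_{X}$ while matching the disjoint pairs with $C_{\psi}^{n-2}\circ\Delta_{\psi}^{n}$. The only difference is in bookkeeping: you sum over all ordered pairs with transposition-type representatives and absorb the residual permutation of the disjoint case under $p_{X,\ast_{\chi}}^{n}$ using $\tau_{X,X}\circ C_{\psi}=\nu_{\ast_{\chi}}\cdot C_{\psi}$, whereas the paper's choice of representatives makes that identification hold already at the tensor level and produces $2n\cdot e_{R_{S_{n-1}\backslash S_{n}}}^{\chi}$ in place of your $2n\cdot 1_{\otimes^{n}X}$, which amounts to the same thing after composing with $p_{X,\ast_{\chi}}^{n}$.
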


\begin{proof}
We have, employing the notations in Lemma \ref{Dirac L1},%
\begin{eqnarray}
&&\frac{\left( n+2\right) \left( n+1\right) }{2}\cdot \widehat{\Delta }%
_{\psi ,\ast }^{n+2}\circ C_{\psi }^{n}  \notag \\
&&\text{ \ }=\tsum\nolimits_{p,q\in I_{n+2}:p<q}\chi ^{-1}\left( \delta
_{p,q}^{n+1,n+2}\right) \cdot \left( 1_{\otimes ^{n}X}\otimes \psi \right)
\circ \delta _{p,q}^{n+1,n+2}\circ \left( 1_{\otimes ^{n}X}\otimes C_{\psi
}\right) \text{,}  \label{Dirac L3 F1} \\
&&\frac{n\left( n-1\right) }{2}\cdot C_{\psi }^{n-2}\circ \widehat{\Delta }%
_{\psi ,\ast }^{n}  \notag \\
&&\text{ \ }=\tsum\nolimits_{p,q\in I_{n}:p<q}\chi ^{-1}\left( \delta
_{p,q}^{n-1,n}\right) \cdot \left( 1_{\otimes ^{n-2}X}\otimes C_{\psi
}\right) \circ \left( 1_{\otimes ^{n-2}X}\otimes \psi \right) \circ \delta
_{p,q}^{n-1,n}\text{.}  \label{Dirac L3 F2}
\end{eqnarray}%
We claim that we have $\widehat{\Delta }_{\psi ,\ast }^{2}\circ
C_{\psi }^{0}=\nu _{\ast }r_{X}$, $3\widehat{\Delta }_{\psi ,\ast
}^{3}\circ C_{\psi }^{1}=\left( 2+\nu _{\ast }r_{X}\right)
\cdot 1_{X}$ and, for every $n\geq 2$,%
\begin{equation*}
\frac{\left( n+2\right) \left( n+1\right) }{2}\cdot \widehat{\Delta }_{\psi
,\ast }^{n+2}\circ C_{\psi }^{n}-\frac{n\left( n-1\right) }{2}\cdot
C_{\psi }^{n-2}\circ \widehat{\Delta }_{\psi ,\ast }^{n}=2n\cdot
e_{R_{S_{n-1}\backslash S_{n}}}^{\chi }+\nu _{\ast }r_{X}\cdot
1_{\otimes ^{n}X}\text{.}
\end{equation*}%
It will follow from Lemma \ref{Dirac L1}$\ $and $\left( \text{\ref{Dirac D2}}%
\right) $ that we have%
\begin{equation*}
\Delta _{\psi ,\ast}^{2}\circ C_{\psi ,\ast }^{0}=\nu
_{\ast }r_{X},3\Delta _{\psi ,\ast }^{3}\circ C_{\psi ,\ast
}^{1}=\left( 2+\nu _{\ast }r_{X}\right) \cdot 1_{X}\ 
\end{equation*}%
and, for every $n\geq 2$,%
\begin{eqnarray*}
&&\left( \frac{\left( n+2\right) \left( n+1\right) }{2}\cdot \Delta _{\psi
,\ast }^{n+2}\circ C_{\psi ,\ast }^{n}-\frac{n\left(
n-1\right) }{2}\cdot C_{\psi ,\ast }^{n-2}\circ \Delta _{\psi ,\ast
}^{n}\right) \circ p_{X,\ast }^{n} \\
&&\text{ }=p_{X,\ast }^{n}\circ \left( \frac{\left( n+2\right)
\left( n+1\right) }{2}\cdot \widehat{\Delta }_{\psi ,\ast }^{n+2}\circ C_{\psi }^{n}-\frac{n\left( n-1\right) }{2}\cdot C_{\psi
}^{n-2}\circ \widehat{\Delta }_{\psi ,\ast }^{n}\right) \\
&&\text{ }=2n\cdot p_{X,\ast }^{n}\circ e_{R_{S_{n-1}\backslash
S_{n}}}^{\chi }+\nu _{\ast }r_{X}\cdot p_{X,\ast }^{n}=\left( 2n+\nu _{\ast}r_{X}\right) \cdot p_{X,\ast _{\chi
}}^{n}\text{.}
\end{eqnarray*}%
Here in the last equality we have employed the relation $p_{X,\ast _{\chi
}}^{n}\circ e_{R_{S_{n-1}\backslash S_{n}}}^{\chi }=p_{X,\ast}^{n}$%
, which is proved noticing that, since $i_{X,\ast}^{n}$ is a
monomorphism, it is equivalent to $e_{X,\ast}^{n}\circ
e_{R_{S_{n-1}\backslash S_{n}}}^{\chi }=e_{X,\ast}^{n}$; this last
relation follows from the relations $e_{G}^{\chi }=e_{G}^{\chi }e_{H}^{\chi
} $ and $e_{H}^{\chi }e_{R_{H\backslash G}}^{\chi }=e_{G}^{\chi }$, implying 
$e_{G}^{\chi }e_{R_{H\backslash G}}^{\chi }=e_{G}^{\chi }e_{H}^{\chi
}e_{R_{H\backslash G}}^{\chi }=e_{G}^{\chi }e_{G}^{\chi }=e_{G}^{\chi }$, which can be easily checked in the group algebras. Then the
claim will follow from the fact that $p_{X,\ast}^{n}$ is an
epimorphism.

When $n=0$ in $\left( \text{\ref{Dirac L3 F1}}\right) $, we have $C_{\psi
}^{0}=C_{\psi }$, $\widehat{\Delta }_{\psi ,\ast}^{2}=\psi $ and
the equality $\widehat{\Delta }_{\psi ,\ast}^{2}\circ C_{\psi
}^{0}=\nu _{\ast}r_{X}$ follows from $\left( \text{\ref{Dirac
FCasimir 3}}\right) $. When $n=1$ in $\left( \text{\ref{Dirac L3 F1}}\right) 
$, we have%
\begin{eqnarray}
3\widehat{\Delta }_{\psi ,\ast}^{3}\circ C_{\psi }^{1} &=&\chi
^{-1}\left( \tau _{\left( 123\right) }\right) \cdot \left( 1_{X}\otimes \psi
\right) \circ \tau _{\left( 123\right) }\circ \left( 1_{X}\otimes C_{\psi
}\right)  \notag \\
&&+\chi ^{-1}\left( \tau _{\left( 12\right) }\right) \cdot \left(
1_{X}\otimes \psi \right) \circ \tau _{\left( 12\right) }\circ \left(
1_{X}\otimes C_{\psi }\right)  \notag \\
&&+\left( 1_{X}\otimes \psi \right) \circ \left( 1_{X}\otimes C_{\psi
}\right) \text{,}  \label{Dirac L3 F3}
\end{eqnarray}%
because we may take $\delta _{1,2}^{2,3}=\tau _{\left( 123\right) }$, $%
\delta _{1,3}^{2,3}=\tau _{\left( 12\right) }$ and $\delta _{2,3}^{2,3}=1$,
where $\tau _{\sigma }$ denotes the morphism attached to the permutation $%
\sigma $. We have $\tau _{\left( 123\right) }=\tau _{X\otimes X,X}$ and $%
\left( \psi \otimes 1_{X}\right) \circ \left( 1_{X}\otimes C_{\psi }\right)
=1_{X}$ by $\left( \text{\ref{Dirac FCasimir 2}}\right) $. Hence we deduce
the equality%
\begin{eqnarray}
&&\chi ^{-1}\left( \tau _{\left( 123\right) }\right) \cdot \left(
1_{X}\otimes \psi \right) \circ \tau _{\left( 123\right) }\circ \left(
1_{X}\otimes C_{\psi }\right) =\left( 1_{X}\otimes \psi \right) \circ \tau
_{X\otimes X,X}\circ \left( 1_{X}\otimes C_{\psi }\right)  \notag \\
&=&\left( \psi \otimes 1_{X}\right) \circ \left( 1_{X}\otimes C_{\psi
}\right) =1_{X}\text{.}  \label{Dirac L3 F4}
\end{eqnarray}%
Consider the following diagram:%
\begin{equation*}
\xymatrix{ \ar@{}[dr]|(0.55){(\tau)} & X\otimes X\otimes X \ar[d]^{\tau_{X,X\otimes X}} & \\ X \ar[r]^-{C_{\psi}\otimes1_{X}} \ar@/^{0.75pc}/[ur]^-{1_{X}\otimes C_{\psi}} & X\otimes X\otimes X \ar[r]^-{\nu_{\ast}\cdot1_{X}\otimes\psi} \ar[d]_{1_{X}\otimes\tau_{X,X}} & X \\ & X\otimes X\otimes X \ar@/_{0.75pc}/[ur]_-{1_{X}\otimes\psi} & \ar@{}[ul]|(0.55){(\tau)} }
\end{equation*}%
The region $\left( A\right) $ is commutative thanks to our assumption $\psi
\circ \tau _{X,X}=\nu _{\ast}\cdot \psi $. Noticing that $\tau
_{\left( 12\right) }=\left( 1_{X}\otimes \tau _{X,X}\right) \circ \tau
_{X,X\otimes X}$ and that $\left( 1_{X}\otimes \psi \right) \circ \left(
C_{\psi }\otimes 1_{X}\right) =1_{X}$ by $\left( \text{\ref{Dirac FCasimir 1}%
}\right) $, we deduce the equality:%
\begin{eqnarray}
&&\chi ^{-1}\left( \tau _{\left( 12\right) }\right) \cdot \left(
1_{X}\otimes \psi \right) \circ \tau _{\left( 12\right) }\circ \left(
1_{X}\otimes C_{\psi }\right) =\chi ^{-1}\left( \tau _{\left( 12\right)
}\right) \cdot \left( 1_{X}\otimes \psi \right) \circ \left( 1\otimes \tau
_{X,X}\right) \circ \tau _{X,X\otimes X}\circ \left( 1_{X}\otimes C_{\psi
}\right)  \notag \\
&&\text{ }=\chi ^{-1}\left( \tau _{\left( 12\right) }\right) \nu _{\ast
}\cdot \left( 1_{X}\otimes \psi \right) \circ \left( C_{\psi
}\otimes 1_{X}\right) =1_{X}\text{.}  \label{Dirac L3 F5}
\end{eqnarray}%
Inserting $\left( \text{\ref{Dirac L3 F4}}\right) $, $\left( \text{\ref%
{Dirac L3 F5}}\right) $ and $\left( \text{\ref{Dirac FCasimir 3}}\right) $
in $\left( \text{\ref{Dirac L3 F3}}\right) $ gives $3\widehat{\Delta }_{\psi
,\ast}^{3}\circ C_{\psi }^{1}=\left( 2+\nu _{\ast }r_{X}\right) \cdot 1_{X}$.

Suppose now that $n\geq 2$. We remark that we have%
\begin{eqnarray}
\left\{ \left( p,q\right) \in I_{n+2}\times I_{n+2}:p<q\right\} &=&\left\{
\left( p,q\right) \in I_{n}\times I_{n}:p<q\right\}  \notag \\
&&\sqcup \left( I_{n}\times \left\{ n+1\right\} \right) \sqcup \left(
I_{n}\times \left\{ n+2\right\} \right) \sqcup \left\{ \left( n+1,n+2\right)
\right\} \text{.}  \label{Dirac L3 F6}
\end{eqnarray}%
We may assume that we are given our choice of $\delta _{p,q}^{n-1,n}\in
S_{n}$ and we choose the elements $\delta _{p,q}^{n+1,n+2}\in S_{n+2}$ as
follows. First of all we view $S_{n}\subset S_{n+2}$ in the natural way, via 
$I_{n}\subset I_{n+2}$, and we choose elements $\delta _{p}^{n}\in S_{n}$
such that $\delta _{p}^{n}\left( p\right) =n$. If $\left( p,q\right) \in
I_{n}\times I_{n}$ and $p<q$, we set $\delta _{p,q}^{n+1,n+2}:=\tau _{\left(
n-1,n+1\right) \left( n,n+2\right) }\circ \delta _{p,q}^{n-1,n}$ and then we
define $\delta _{p,n+1}^{n+1,n+2}:=\tau _{\left( n,n+1,n+2\right) }\circ
\delta _{p}^{n}$, $\delta _{p,n+2}^{n+1,n+2}:=\tau _{\left( n,n+1\right)
}\circ \delta _{p}^{n}$ and $\delta _{n+1,n+2}^{n+1,n+2}=1$, noticing that,
in every case, we have the required relation $\delta _{p,q}^{n+1,n+2}\left(
p,q\right) =\left( n+1,n+2\right) $ satisfied. Thanks to $\left( \text{\ref%
{Dirac L3 F6}}\right) $, we may rewrite $\left( \text{\ref{Dirac L3 F1}}%
\right) $ as follows:%
\begin{eqnarray}
&&\text{ }\frac{\left( n+2\right) \left( n+1\right) }{2}\cdot \widehat{%
\Delta }_{\psi ,\ast}^{n+2}\circ C_{\psi
}^{n}=\tsum\nolimits_{p,q\in I_{n}:p<q}\chi ^{-1}\left( \delta
_{p,q}^{n+1,n+2}\right) \cdot \left( 1_{\otimes ^{n}X}\otimes \psi \right)
\circ \delta _{p,q}^{n+1,n+2}\circ \left( 1_{\otimes ^{n}X}\otimes C_{\psi
}\right)  \notag \\
&&\text{ \ \ \ \ \ \ }+\tsum\nolimits_{p\in I_{n}}\chi ^{-1}\left( \delta
_{p,n+1}^{n+1,n+2}\right) \cdot \left( 1_{\otimes ^{n}X}\otimes \psi \right)
\circ \delta _{p,n+1}^{n+1,n+2}\circ \left( 1_{\otimes ^{n}X}\otimes C_{\psi
}\right)  \notag \\
&&\text{ \ \ \ \ \ \ }+\tsum\nolimits_{p\in I_{n}}\chi ^{-1}\left( \delta
_{p,n+2}^{n+1,n+2}\right) \cdot \left( 1_{\otimes ^{n}X}\otimes \psi \right)
\circ \delta _{p,n+2}^{n+1,n+2}\circ \left( 1_{\otimes ^{n}X}\otimes C_{\psi
}\right)  \notag \\
&&\text{ \ \ \ \ \ \ }+\chi ^{-1}\left( \delta _{n+1,n+2}^{n+1,n+2}\right)
\cdot \left( 1_{\otimes ^{n}X}\otimes \psi \right) \circ \delta
_{n+1,n+2}^{n+1,n+2}\circ \left( 1_{\otimes ^{n}X}\otimes C_{\psi }\right) 
\notag \\
&&\text{ \ \ \ }=\tsum\nolimits_{p,q\in I_{n}:p<q}\chi ^{-1}\left( \delta
_{p,q}^{n-1,n}\right) \cdot \left( 1_{\otimes ^{n}X}\otimes \psi \right)
\circ \tau _{\left( n-1,n+1\right) \left( n,n+2\right) }\circ \left(
1_{\otimes ^{n}X}\otimes C_{\psi }\right) \circ \delta _{p,q}^{n-1,n}
\label{Dirac L3 F7 a} \\
&&\text{ \ \ \ \ \ \ }+\tsum\nolimits_{p\in I_{n}}\chi ^{-1}\left( \delta
_{p}^{n}\right) \chi ^{-1}\left( \tau _{\left( n,n+1,n+2\right) }\right)
\cdot \left( 1_{\otimes ^{n}X}\otimes \psi \right) \circ \tau _{\left(
n,n+1,n+2\right) }\circ \left( 1_{\otimes ^{n}X}\otimes C_{\psi }\right)
\circ \delta _{p}^{n}  \label{Dirac L3 F7 b} \\
&&\text{ \ \ \ \ \ \ }+\tsum\nolimits_{p\in I_{n}}\chi ^{-1}\left( \delta
_{p}^{n}\right) \chi ^{-1}\left( \tau _{\left( n,n+1\right) }\right) \cdot
\left( 1_{\otimes ^{n}X}\otimes \psi \right) \circ \tau _{\left(
n,n+1\right) }\circ \left( 1_{\otimes ^{n}X}\otimes C_{\psi }\right) \circ
\delta _{p}^{n}  \label{Dirac L3 F7 c} \\
&&\text{ \ \ \ \ \ \ }+\left( 1_{\otimes ^{n}X}\otimes \psi \right) \circ
\left( 1_{\otimes ^{n}X}\otimes C_{\psi }\right) \text{.}
\label{Dirac L3 F7 d}
\end{eqnarray}%
Making the substitution $\left( n-1,n,n+1,n+2\right) =\left( 1,2,3,4\right) $%
, we may write $\tau _{\left( n-1,n+1\right) \left( n,n+2\right)
}=1_{\otimes ^{n-2}X}\otimes \tau _{\left( 13\right) \left( 24\right) }$,
where $\tau _{\left( 13\right) \left( 24\right) }=\tau _{X\otimes X,X\otimes
X}$ is acting on the last four factors $X\otimes X\otimes X\otimes X$ of $%
\otimes ^{n+2}X$. Then the relation%
\begin{equation*}
\left( 1_{\otimes ^{2}X}\otimes \psi \right) \circ \tau _{X\otimes
X,X\otimes X}\circ \left( 1_{\otimes ^{2}X}\otimes C_{\psi }\right) =\left(
1_{\otimes ^{2}X}\otimes \psi \right) \circ \left( C_{\psi }\otimes
1_{\otimes ^{2}X}\right) =\left( C_{\psi }\otimes \psi \right) =C_{\psi
}\circ \psi
\end{equation*}%
implies that we have%
\begin{equation*}
\left( 1_{\otimes ^{n}X}\otimes \psi \right) \circ \tau _{\left(
n-1,n+1\right) \left( n,n+2\right) }\circ \left( 1_{\otimes ^{n}X}\otimes
C_{\psi }\right) =\left( 1_{\otimes ^{n-2}X}\otimes C_{\psi }\right) \circ
\left( 1_{\otimes ^{n-2}X}\otimes \psi \right) \text{.}
\end{equation*}%
Hence it follows from $\left( \text{\ref{Dirac L3 F2}}\right) $ that we have:%
\begin{equation}
\left( \text{\ref{Dirac L3 F7 a}}\right) =\frac{n\left( n-1\right) }{2}\cdot
C_{\psi }^{n-2}\circ \widehat{\Delta }_{\psi ,\ast}^{n}\text{.}
\label{Dirac L3 F8}
\end{equation}%
We are now going to compute the sums $\left( \text{\ref{Dirac L3 F7 b}}%
\right) $ and $\left( \text{\ref{Dirac L3 F7 c}}\right) $. Making the
substitution $\left( n,n+1,n+2\right) =\left( 1,2,3\right) $, we may write $%
\tau _{\left( n,n+1,n+2\right) }=1_{\otimes ^{n-1}X}\otimes \tau _{\left(
123\right) }$ (resp. $\tau _{\left( n,n+1\right) }=1_{\otimes
^{n-1}X}\otimes \tau _{\left( 12\right) }$), where $\tau _{\left( 123\right)
}$ (resp. $\tau _{\left( 12\right) }$)\ is acting on the last three factors $%
X\otimes X\otimes X$ of $\otimes ^{n+2}X$. It follows from $\left( \text{\ref%
{Dirac L3 F4}}\right) $ and $\left( \text{\ref{Dirac L3 F5}}\right) $)\ that
we have, respectively,%
\begin{eqnarray*}
&&\chi ^{-1}\left( \tau _{\left( n,n+1,n+2\right) }\right) \cdot \left(
1_{\otimes ^{n}X}\otimes \psi \right) \circ \tau _{\left( n,n+1,n+2\right)
}\circ \left( 1_{\otimes ^{n}X}\otimes C_{\psi }\right) =1_{\otimes ^{n}X}%
\text{,} \\
&&\chi ^{-1}\left( \tau _{\left( n,n+1\right) }\right) \cdot \left(
1_{\otimes ^{n}X}\otimes \psi \right) \circ \tau _{\left( n,n+1\right)
}\circ \left( 1_{\otimes ^{n}X}\otimes C_{\psi }\right) =1_{\otimes ^{n}X}%
\text{.}
\end{eqnarray*}%
Hence we find%
\begin{equation}
\left( \text{\ref{Dirac L3 F7 b}}\right) =\left( \text{\ref{Dirac L3 F7 c}}%
\right) =\tsum\nolimits_{p\in I_{n}}\chi ^{-1}\left( \delta _{p}^{n}\right)
\delta _{p}^{n}=n\cdot e_{R_{S_{n-1}\backslash S_{n}}}^{\chi }\text{.}
\label{Dirac L3 F9}
\end{equation}%
Finally, it follows from $\left( \text{\ref{Dirac FCasimir 3}}\right) $ that
we have%
\begin{equation}
\left( \text{\ref{Dirac L3 F7 d}}\right) =\nu _{\ast}r_{X}\cdot
1_{\otimes ^{n}X}\text{.}  \label{Dirac L3 F10}
\end{equation}

It now follows from $\left( \text{\ref{Dirac L3 F8}}\right) $, $\left( \text{%
\ref{Dirac L3 F9}}\right) $ and $\left( \text{\ref{Dirac L3 F10}}\right) $
that we have, as claimed,%
\begin{eqnarray*}
&&\frac{\left( n+2\right) \left( n+1\right) }{2}\cdot \widehat{\Delta }%
_{\psi ,\ast}^{n+2}\circ C_{\psi }^{n}=\left( \text{\ref{Dirac L3
F7 a}}\right) +\left( \text{\ref{Dirac L3 F7 b}}\right) +\left( \text{\ref%
{Dirac L3 F7 c}}\right) +\left( \text{\ref{Dirac L3 F7 d}}\right) \\
&&\text{ \ \ \ \ }=\frac{n\left( n-1\right) }{2}\cdot C_{\psi }^{n-2}\circ 
\widehat{\Delta }_{\psi ,\ast}^{n}+2n\cdot e_{R_{S_{n-1}\backslash
S_{n}}}^{\chi }+\nu _{\ast}r_{X}\cdot 1_{\otimes ^{n}X}\text{.}
\end{eqnarray*}
\end{proof}

\bigskip

The following definition will be useful in the following subsections.

\begin{definition}
We say the a morphism $f:M\rightarrow M$ is diagonalizable if there is an
isomorphism $M\simeq \tbigoplus\nolimits_{\lambda \in End\left( \mathbb{I}%
\right) }M_{f,\lambda }$ such that $M_{f,\lambda }=0$ for almost every $%
\lambda $ and $f\simeq \tbigoplus\nolimits_{\lambda \in End\left( \mathbb{I}%
\right) }f_{\lambda }$ via this isomorphism, with $f_{\lambda }=\lambda
:M_{f,\lambda }\rightarrow M_{f,\lambda }$ the multiplication by $\lambda
\in End\left( \mathbb{I}\right) $. In this case, we call the set%
\begin{equation*}
\sigma \left( f\right) :=\left\{ \lambda :M_{f,\lambda }\neq 0\right\}
\subset End\left( \mathbb{I}\right)
\end{equation*}%
the spectrum of $f$.
\end{definition}

\bigskip

It will be also convenient to introduce the following definition.

\begin{definition}
\label{Dirac definition positive rank}If $S\subset End\left( \mathbb{I}%
\right) $ we say that $S$ is strictly positive (resp. positive, strictly
negative or negative)\ and we write $S>0$ (resp. $S\geq 0$, $S<0$ or $%
S\leq 0$)\ to mean that there exists an ordered field $\left( K,\geq \right) 
$ such that $S\subset K\subset End\left( \mathbb{I}\right) $ and $s>0$
(resp. $s\geq 0$, $s<0$ or $s\leq 0$)\ in $K$ for every $s\in S$. If $s\in
End\left( \mathbb{I}\right) $, we write $s>0$ (resp. $S\geq 0$, $S<0$ or $%
S\leq 0$) to mean that $S>0$ (resp. $S\geq 0$, $S<0$ or $S\leq 0$)\ with $%
S=\left\{ s\right\} $.
\end{definition}

\subsection{Laplace operators attached to $\mathbb{I}$-valued perfect
alternating pairings}

We suppose in this subsection that we are given $\psi :X\otimes
X\rightarrow \mathbb{I}$ which is perfect, i.e. such that the associated $%
\hom $ valued morphism is an isomorphism, and alternating, i.e. $\psi \circ
\tau _{X,X}=-\psi $. It follows from Lemma \ref{Dirac L1} that we have $%
\Delta _{\psi ,s}^{n}=0$ and, hence, we concentrate on $\Delta _{\psi
,a}^{n} $. We set $r_{X}:=\mathrm{rank}\left( X\right) $ in the subsequent
discussion.

\begin{proposition}
\label{Dirac P1}When $r_{X}<0$ we have that $\Delta _{\psi ,a}^{n+2}\circ
C_{\psi ,a}^{n}$ when $n\geq 0$ (resp. $C_{\psi ,a}^{n-2}\circ \Delta _{\psi
,a}^{n}$ when $n\geq 2$) is diagonalizable, with spectrum%
\begin{equation*}
\sigma \left( \Delta _{\psi ,a}^{n+2}\circ C_{\psi ,a}^{n}\right) >0\text{
(resp. }\sigma \left( C_{\psi ,a}^{n-2}\circ \Delta _{\psi ,a}^{n}\right)
\geq 0\text{).}
\end{equation*}
\end{proposition}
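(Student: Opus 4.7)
The plan is to induct on $n$, using the relation in Lemma \ref{Dirac L3} to transport diagonalizability between the two endomorphisms of $\wedge^n X$ given by $A_n := \Delta^{n+2}_{\psi,a} \circ C^n_{\psi,a}$ and $B_n := C^{n-2}_{\psi,a} \circ \Delta^n_{\psi,a}$. Specializing Lemma \ref{Dirac L3} to $\ast_\chi = a$ (so $\nu_a = -1$) yields the base values $A_0 = -r_X \cdot 1_{\mathbb{I}}$ and $A_1 = \tfrac{2-r_X}{3} \cdot 1_X$, and for $n \geq 2$ the identity
\[
A_n = \frac{n(n-1)}{(n+2)(n+1)}\, B_n + \frac{2(2n-r_X)}{(n+2)(n+1)} \cdot 1_{\wedge^n X}.
\]
Under the hypothesis $r_X < 0$, a fixed ordered subfield $K \subset \mathrm{End}(\mathbb{I})$ witnesses that $-r_X$, $2-r_X$, $2n-r_X$ are strictly positive; the base cases $A_0, A_1$ are thus positive scalars, and the relation shows that $A_n$ and $B_n$ share the same eigenspaces, with spectra related by an affine map whose slope is non-negative and whose constant term is strictly positive.

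The main ingredient is the following \emph{swap lemma}: for $f : M \to N$ and $g : N \to M$ in any $\mathbb{Q}$-linear pseudo-abelian category, if $fg$ is diagonalizable with every eigenvalue invertible in $\mathrm{End}(\mathbb{I})$, then $gf$ is diagonalizable with $\sigma(gf) \subseteq \{0\} \cup \sigma(fg)$. Its proof is explicit: starting from $M = \bigoplus_{\lambda \in \sigma(fg)} M_\lambda$ with canonical $\iota_\lambda$, $\pi_\lambda$, the identity $\pi_\lambda \circ fg \circ \iota_\mu = \lambda\, \delta_{\lambda,\mu}\, 1_{M_\lambda}$ forces $e_\lambda := \lambda^{-1}\, g\iota_\lambda \pi_\lambda f$ to be pairwise orthogonal idempotents of $N$ on which $gf$ acts as multiplication by $\lambda$, while
\[
\sum_\lambda \lambda\, e_\lambda \;=\; g \bigl(\tsum_\lambda \iota_\lambda \pi_\lambda\bigr) f \;=\; gf
\]
forces $gf$ to act as $0$ on the complementary idempotent $e_0 := 1_N - \sum_\lambda e_\lambda$; pseudo-abelianness splits $N$ into the corresponding direct sum.

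With the swap lemma in hand, suppose inductively that $A_n$ is diagonalizable with $\sigma(A_n) > 0$, so every eigenvalue is invertible. Applying the lemma to $f = \Delta^{n+2}_{\psi,a}$, $g = C^n_{\psi,a}$ produces $B_{n+2} = g\circ f$ diagonalizable with $\sigma(B_{n+2}) \subseteq \{0\} \cup \sigma(A_n) \geq 0$. The recursion at index $n+2$ reads
\[
A_{n+2} = \frac{(n+2)(n+1)}{(n+4)(n+3)}\, B_{n+2} + \frac{2(2(n+2)-r_X)}{(n+4)(n+3)} \cdot 1_{\wedge^{n+2} X},
\]
with both coefficients strictly positive when $r_X < 0$; hence $A_{n+2}$ inherits the eigenspaces of $B_{n+2}$ and every eigenvalue is bounded below by the strictly positive constant term, closing the induction. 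The claim for $B_n$ (with $n \geq 2$) follows at once by applying the swap lemma instead to $f = \Delta^n_{\psi,a}$, $g = C^{n-2}_{\psi,a}$: by what has just been established, $fg = A_{n-2}$ is diagonalizable with strictly positive spectrum, so $B_n = gf$ is diagonalizable with $\sigma(B_n) \geq 0$.

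The main obstacle is the swap lemma itself: pseudo-abelianness is essential to realize each $e_\lambda$ as a direct summand, and the hypothesis $0 \notin \sigma(fg)$ is used in a crucial way, since it rules out the potentially nontrivial nilpotent contribution $g\iota_0 \pi_0 f$ that would otherwise survive in $gf$. Once the lemma is available, the rest is bookkeeping with the recurrence of Lemma \ref{Dirac L3} and the sign of the scalars forced by the assumption $r_X < 0$.
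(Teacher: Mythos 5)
Your proposal is correct and follows essentially the same route as the paper: the base cases and the recurrence come from Lemma \ref{Dirac L3}, and the induction hinges on the same ``swap'' step, which the paper handles by the abstract-nonsense splitting $\wedge^{n}X\simeq \ker\left( C_{\psi ,a}^{n-2}\circ \delta _{\psi ,a}^{n}\right) \oplus \wedge ^{n-2}X$ with $C_{\psi ,a}^{n-2}\circ \delta _{\psi ,a}^{n}\simeq 0\oplus \left( \delta _{\psi ,a}^{n}\circ C_{\psi ,a}^{n-2}\right) $. Your explicit idempotents $e_{\lambda }=\lambda ^{-1}\,g\,\iota _{\lambda }\pi _{\lambda }\,f$ simply spell out that splitting, and your bookkeeping with the unnormalized operators (instead of the paper's $\delta _{\psi ,a}^{n}=\tfrac{n(n-1)}{2}\Delta _{\psi ,a}^{n}$) is only cosmetic.
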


\begin{proof}
It will be convenient to set $\delta _{\psi ,a}^{n}:=\frac{n\left(
n-1\right) }{2}\cdot \Delta _{\psi ,a}^{n}$, so that Lemma \ref{Dirac L3}
gives $\delta _{\psi ,a}^{2}\circ C_{\psi ,a}^{0}=-r_{X}$, $\delta _{\psi
,a}^{3}\circ C_{\psi ,a}^{1}=\left( 2-r_{X}\right) \cdot 1_{X}$\ and, for
every $n\geq 2$,%
\begin{equation}
\delta _{\psi ,a}^{n+2}\circ C_{\psi ,a}^{n}-C_{\psi ,a}^{n-2}\circ \delta
_{\psi ,a}^{n}=\left( 2n-r_{X}\right) \cdot 1_{\wedge ^{n}X}\text{.}
\label{Dirac P1 F1}
\end{equation}%
In particular, we see that $\delta _{\psi ,a}^{n+2}\circ C_{\psi ,a}^{n}$ is
diagonalizable for $n=0,1$ with $\sigma \left( \delta _{\psi ,a}^{2}\circ
C_{\psi ,a}^{0}\right) =\left\{ -r_{X}\right\} >0$ and $\sigma \left( \delta
_{\psi ,a}^{3}\circ C_{\psi ,a}^{1}\right) =\left\{ 2-r_{X}\right\} >0$. We
can now assume that $n\geq 2$ and that, by induction, $\delta _{\psi
,a}^{n}\circ C_{\psi ,a}^{n-2}$ is diagonalizable with spectrum $\sigma
\left( \delta _{\psi ,a}^{n}\circ C_{\psi ,a}^{n-2}\right) >0$ and we claim
that this implies both that $\Delta _{\psi ,a}^{n+2}\circ C_{\psi ,a}^{n}$
is diagonalizable with spectrum $>0$ and that $C_{\psi ,a}^{n-2}\circ \Delta
_{\psi ,a}^{n}$ is diagonalizable with spectrum $\geq 0$. Here and in the
following, the ordered field $\left( K,\geq \right) $ in the definition of
being positive is always taken to be the one appearing in the definition of $%
-r_{X}>0$.

Since $\delta _{\psi ,a}^{n}\circ C_{\psi ,a}^{n-2}$ is diagonalizable with
spectrum $>0$, we have that $\delta _{\psi ,a}^{n}\circ C_{\psi ,a}^{n-2}$
is an isomorphism. It now follows from an abstract non-sense that there is a
biproduct decomposition%
\begin{equation*}
\wedge ^{n}X\simeq \ker \left( C_{\psi ,a}^{n-2}\circ \delta _{\psi
,a}^{n}\right) \oplus \wedge ^{n-2}X
\end{equation*}%
such that%
\begin{equation}
C_{\psi ,a}^{n-2}\circ \delta _{\psi ,a}^{n}\simeq 0\oplus \left( \delta
_{\psi ,a}^{n}\circ C_{\psi ,a}^{n-2}\right) \text{.}  \label{Dirac P1 F2}
\end{equation}%
Since $\delta _{\psi ,a}^{n}\circ C_{\psi ,a}^{n-2}$ is diagonalizable with
spectrum $\sigma \left( \delta _{\psi ,a}^{n}\circ C_{\psi ,a}^{n-2}\right)
>0$, it follows from $\left( \text{\ref{Dirac P1 F2}}\right) $ that $\sigma
\left( C_{\psi ,a}^{n-2}\circ \delta _{\psi ,a}^{n}\right) $ is
diagonalizable with spectrum%
\begin{equation*}
\sigma \left( C_{\psi ,a}^{n-2}\circ \delta _{\psi ,a}^{n}\right) \subset
\left\{ 0\right\} \cup \sigma \left( \delta _{\psi ,a}^{n}\circ C_{\psi
,a}^{n-2}\right) \geq 0\text{.}
\end{equation*}%
It now follows from $\left( \text{\ref{Dirac P1 F1}}\right) $ that $\delta
_{\psi ,a}^{n+2}\circ C_{\psi ,a}^{n}$ is diagonalizable with spectrum%
\begin{equation*}
\sigma \left( \Delta _{\psi ,a}^{n+2}\circ C_{\psi ,a}^{n}\right) \subset
\left\{ \lambda +\left( 2n-r_{X}\right) :\lambda \in \sigma \left( C_{\psi
,a}^{n-2}\circ \delta _{\psi ,a}^{n}\right) \right\} >0\text{.}
\end{equation*}
\end{proof}

\bigskip

\begin{corollary}
\label{Dirac C1}When $r_{X}<0$ we have that, for every $n\geq 2$, the
Laplace operator $\Delta _{\psi ,a}^{n}$ has a section $s_{\psi
,a}^{n-2}:\wedge ^{n-2}X\rightarrow \wedge ^{n}X$ such that $\Delta _{\psi
,a}^{n}\circ s_{\psi ,a}^{n-2}=1_{\wedge ^{n-2}X}$ and, in particular, $\ker
\left( \Delta _{\psi ,a}^{n}\right) $ exists.
\end{corollary}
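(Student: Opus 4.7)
The plan is to invert $\Delta_{\psi,a}^{n}\circ C_{\psi,a}^{n-2}$ using Proposition \ref{Dirac P1}, construct the section by post-composing $C_{\psi,a}^{n-2}$ with this inverse, and then extract the kernel from the pseudo-abelian structure of $\mathcal{C}$.

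First, for $n\geq 2$, I apply Proposition \ref{Dirac P1} (in the form giving diagonalizability of $\Delta_{\psi,a}^{n+2}\circ C_{\psi,a}^{n}$, with the indexing shift $n\mapsto n-2$) to conclude that
\[
\Delta_{\psi,a}^{n}\circ C_{\psi,a}^{n-2}:\wedge^{n-2}X\longrightarrow \wedge^{n-2}X
\]
is diagonalizable with spectrum $\sigma\subset End(\mathbb{I})$ satisfying $\sigma>0$. By Definition \ref{Dirac definition positive rank}, this means $\sigma$ lies in the strictly positive cone of some ordered field $K\subset End(\mathbb{I})$; every strictly positive element of $K$ is invertible in $K$ and therefore in $End(\mathbb{I})$, so each eigenvalue acts as an isomorphism on its summand. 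Hence $\Delta_{\psi,a}^{n}\circ C_{\psi,a}^{n-2}$ is itself an isomorphism.

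Next, I set
\[
s_{\psi,a}^{n-2}:=C_{\psi,a}^{n-2}\circ \bigl(\Delta_{\psi,a}^{n}\circ C_{\psi,a}^{n-2}\bigr)^{-1}:\wedge^{n-2}X\longrightarrow \wedge^{n}X,
\]
which by construction satisfies $\Delta_{\psi,a}^{n}\circ s_{\psi,a}^{n-2}=1_{\wedge^{n-2}X}$. Thus $\Delta_{\psi,a}^{n}$ is a split epimorphism.

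Finally, to obtain the kernel, I consider the endomorphism
\[
e_{n}:=1_{\wedge^{n}X}-s_{\psi,a}^{n-2}\circ \Delta_{\psi,a}^{n}:\wedge^{n}X\longrightarrow \wedge^{n}X.
\]
Using $\Delta_{\psi,a}^{n}\circ s_{\psi,a}^{n-2}=1_{\wedge^{n-2}X}$, one checks that $e_{n}^{2}=e_{n}$. Since $\mathcal{C}$ is pseudo-abelian, $e_{n}$ has an image; I claim this image is $\ker(\Delta_{\psi,a}^{n})$. Indeed $\Delta_{\psi,a}^{n}\circ e_{n}=0$, so the image of $e_{n}$ lies in $\ker(\Delta_{\psi,a}^{n})$, and conversely any $f:T\to \wedge^{n}X$ with $\Delta_{\psi,a}^{n}\circ f=0$ satisfies $e_{n}\circ f=f$, so $f$ factors through the image of $e_{n}$.

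The only substantive input is Proposition \ref{Dirac P1}; the construction of the section and the extraction of the kernel are both formal, with the mild subtlety being the translation of ``spectrum $>0$'' into invertibility in $End(\mathbb{I})$. No further computation is required.
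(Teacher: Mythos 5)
Your proof is correct and follows exactly the paper's route: Proposition \ref{Dirac P1} gives that $\Delta_{\psi,a}^{n}\circ C_{\psi,a}^{n-2}$ is diagonalizable with strictly positive spectrum, hence an isomorphism, and the section $C_{\psi,a}^{n-2}\circ(\Delta_{\psi,a}^{n}\circ C_{\psi,a}^{n-2})^{-1}$ together with the splitting of the resulting idempotent in the pseudo-abelian category yields $\ker(\Delta_{\psi,a}^{n})$. The paper states this in one line, leaving implicit precisely the details (invertibility of positive spectral values and the idempotent argument) that you spell out.
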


\begin{proof}
Indeed $\Delta _{\psi ,a}^{n}\circ C_{\psi ,a}^{n-2}$ is diagonalizable with
spectrum $\sigma \left( \Delta _{\psi ,a}^{n}\circ C_{\psi ,a}^{n-2}\right)
>0$ by Proposition \ref{Dirac P1}\ and, in particular, it is an isomorphism.
\end{proof}

\subsection{Laplace operators attached to $\mathbb{I}$-valued perfect
symmetric pairings}

We suppose in this subsection that we are given $\psi :X\otimes
X\rightarrow \mathbb{I}$ which is perfect, i.e. such that the associated $%
\hom $ valued morphism is an isomorphism, and symmetric, i.e. $\psi \circ
\tau _{X,X}=\psi $. It follows from Lemma \ref{Dirac L1} that we have $%
\Delta _{\psi ,a}^{n}=0$ and, hence, we concentrate on $\Delta _{\psi
,s}^{n} $. We set $r_{X}:=\mathrm{rank}\left( X\right) $ in the subsequent
discussion.

\begin{proposition}
\label{Dirac P2}When $r_{X}>0$ we have that $\Delta _{\psi ,s}^{n+2}\circ
C_{\psi ,s}^{n}$ when $n\geq 0$ (resp. $C_{\psi ,s}^{n-2}\circ \Delta _{\psi
,s}^{n}$ when $n\geq 2$) is diagonalizable, with spectrum%
\begin{equation*}
\sigma \left( \Delta _{\psi ,a}^{n+2}\circ C_{\psi ,a}^{n}\right) >0\text{
(resp. }\sigma \left( C_{\psi ,a}^{n-2}\circ \Delta _{\psi ,a}^{n}\right)
\geq 0\text{).}
\end{equation*}
\end{proposition}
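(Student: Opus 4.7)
The proof will mirror that of Proposition \ref{Dirac P1}, the only changes being the signs that come from $\nu_{s}=+1$ (rather than $\nu_{a}=-1$) in Lemma \ref{Dirac L3}. The plan is to set $\delta_{\psi,s}^{n}:=\tfrac{n(n-1)}{2}\cdot\Delta_{\psi,s}^{n}$ so that Lemma \ref{Dirac L3}, specialized to $\ast_{\chi}=s$, becomes the base identities
\[
\delta_{\psi,s}^{2}\circ C_{\psi,s}^{0}=r_{X},\qquad \delta_{\psi,s}^{3}\circ C_{\psi,s}^{1}=(2+r_{X})\cdot 1_{X},
\]
together with the recursion
\[
\delta_{\psi,s}^{n+2}\circ C_{\psi,s}^{n}-C_{\psi,s}^{n-2}\circ\delta_{\psi,s}^{n}=(2n+r_{X})\cdot 1_{\vee^{n}X},\qquad n\geq 2.
\]
Since $r_{X}>0$, each coefficient $2n+r_{X}$ lies in the same ordered field $(K,\geq)$ witnessing $r_{X}>0$ and is strictly positive there.

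I would then proceed by induction on $n$, working inside this fixed ordered field $K$. The base cases $n=0,1$ give $\sigma(\delta_{\psi,s}^{n+2}\circ C_{\psi,s}^{n})=\{r_{X}\}$ and $\{2+r_{X}\}$ respectively, both strictly positive, so both $\Delta_{\psi,s}^{n+2}\circ C_{\psi,s}^{n}$ are diagonalizable with spectrum $>0$ for these values of $n$. For the inductive step, assume $\delta_{\psi,s}^{n}\circ C_{\psi,s}^{n-2}$ is diagonalizable with spectrum $>0$. In particular this composite is an isomorphism, which furnishes a section and hence (by the same abstract pseudo-abelian splitting argument used in the alternating case) a biproduct decomposition
\[
\vee^{n}X\simeq\ker(C_{\psi,s}^{n-2}\circ\delta_{\psi,s}^{n})\oplus\vee^{n-2}X
\]
under which $C_{\psi,s}^{n-2}\circ\delta_{\psi,s}^{n}\simeq 0\oplus(\delta_{\psi,s}^{n}\circ C_{\psi,s}^{n-2})$. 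Consequently $C_{\psi,s}^{n-2}\circ\delta_{\psi,s}^{n}$ is diagonalizable with spectrum contained in $\{0\}\cup\sigma(\delta_{\psi,s}^{n}\circ C_{\psi,s}^{n-2})\geq 0$, establishing the second claim.

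Substituting this into the recursion shows that
\[
\delta_{\psi,s}^{n+2}\circ C_{\psi,s}^{n}=C_{\psi,s}^{n-2}\circ\delta_{\psi,s}^{n}+(2n+r_{X})\cdot 1_{\vee^{n}X}
\]
is a sum of two commuting diagonalizable operators — one of them a scalar multiple of the identity — hence itself diagonalizable, with
\[
\sigma(\Delta_{\psi,s}^{n+2}\circ C_{\psi,s}^{n})\subset\{\lambda+(2n+r_{X}):\lambda\in\sigma(C_{\psi,s}^{n-2}\circ\delta_{\psi,s}^{n})\}>0,
\]
closing the induction. There is no real obstacle here beyond bookkeeping: all the substantive work was done in Lemma \ref{Dirac L3}, and the only point to verify carefully is that the splitting lemma used in the alternating case is purely categorical (it needs only that $\delta_{\psi,s}^{n}\circ C_{\psi,s}^{n-2}$ be invertible together with pseudo-abelianness of $\mathcal{C}$) and so applies verbatim to the symmetric powers. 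The sign change from $\nu_{a}=-1$ to $\nu_{s}=+1$ is exactly what turns the hypothesis $r_{X}<0$ of Proposition \ref{Dirac P1} into the hypothesis $r_{X}>0$ here.
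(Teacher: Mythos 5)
Your proposal is correct and follows essentially the same route as the paper: the paper's proof likewise sets $\delta_{\psi,s}^{n}:=\frac{n(n-1)}{2}\cdot\Delta_{\psi,s}^{n}$, invokes Lemma \ref{Dirac L3} with $\nu_{s}=1$ to obtain the identities $\delta_{\psi,s}^{2}\circ C_{\psi,s}^{0}=r_{X}$, $\delta_{\psi,s}^{3}\circ C_{\psi,s}^{1}=(2+r_{X})\cdot 1_{X}$ and the recursion $\delta_{\psi,s}^{n+2}\circ C_{\psi,s}^{n}-C_{\psi,s}^{n-2}\circ\delta_{\psi,s}^{n}=(2n+r_{X})\cdot 1_{\vee^{n}X}$, and then repeats the induction of Proposition \ref{Dirac P1} verbatim. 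Your spelled-out inductive step (the biproduct splitting and the spectrum bookkeeping) is exactly the argument the paper refers back to, so there is nothing to correct.
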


\begin{proof}
Setting $\delta _{\psi ,s}^{n}:=\frac{n\left( n-1\right) }{2}\cdot \Delta
_{\psi ,s}^{n}$, Lemma \ref{Dirac L3} gives the equalities $\delta _{\psi
,s}^{2}\circ C_{\psi ,s}^{0}=r_{X}$, $\delta _{\psi ,s}^{3}\circ C_{\psi
,s}^{1}=\left( 2+r_{X}\right) \cdot 1_{X}$\ and, for every $n\geq 2$,%
\begin{equation*}
\delta _{\psi ,s}^{n+2}\circ C_{\psi ,s}^{n}-C_{\psi ,s}^{n-2}\circ \delta
_{\psi ,s}^{n}=\left( 2n+r_{X}\right) \cdot 1_{\vee ^{n}X}\text{.}
\end{equation*}%
Then the proof is just a copy of those of Proposition \ref{Dirac P1}.
\end{proof}

\bigskip

The following corollary may be deduced from Proposition \ref{Dirac P2} in
the same way as Corollary \ref{Dirac C1} was deduced from Proposition \ref%
{Dirac P1}.

\begin{corollary}
\label{Dirac C2}When $r_{X}>0$ we have that, for every $n\geq 2$, the
Laplace operator $\Delta _{\psi ,s}^{n}$ has a section $s_{\psi
,s}^{n-2}:\vee ^{n-2}X\rightarrow \vee ^{n}X$ such that $\Delta _{\psi
,s}^{n}\circ s_{\psi ,s}^{n-2}=1_{\vee ^{n-2}X}$ and, in particular, $\ker
\left( \Delta _{\psi ,s}^{n}\right) $ exists.
\end{corollary}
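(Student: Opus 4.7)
The plan is to mirror verbatim the passage from Proposition \ref{Dirac P1} to Corollary \ref{Dirac C1}, replacing the alternating setting by the symmetric one. The whole point is that Proposition \ref{Dirac P2} supplies the same diagonalizability with strictly positive spectrum for $\Delta_{\psi,s}^{n}\circ C_{\psi,s}^{n-2}$ that Proposition \ref{Dirac P1} supplied in the alternating case, and nothing else about the sign conventions intervenes.

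Concretely, I would proceed as follows. First apply Proposition \ref{Dirac P2} with index $n-2\geq 0$ in place of $n$ to obtain that the composite
\[
f_{n}:=\Delta_{\psi,s}^{n}\circ C_{\psi,s}^{n-2}\colon \vee^{n-2}X\longrightarrow \vee^{n-2}X
\]
is diagonalizable with $\sigma(f_{n})>0\subset\mathrm{End}(\mathbb{I})$. Since the ordered field witnessing positivity does not contain $0$, none of the eigenvalues vanishes, and therefore $f_{n}$ is an isomorphism; explicitly, writing $f_{n}\simeq\bigoplus_{\lambda\in\sigma(f_{n})}\lambda\cdot 1_{M_{f_{n},\lambda}}$ in the biproduct decomposition of $\vee^{n-2}X$, the inverse is $f_{n}^{-1}\simeq\bigoplus\lambda^{-1}\cdot 1_{M_{f_{n},\lambda}}$.

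Now define
\[
s_{\psi,s}^{n-2}:=C_{\psi,s}^{n-2}\circ f_{n}^{-1}\colon \vee^{n-2}X\longrightarrow \vee^{n}X.
\]
Then $\Delta_{\psi,s}^{n}\circ s_{\psi,s}^{n-2}=f_{n}\circ f_{n}^{-1}=1_{\vee^{n-2}X}$, proving the first claim.

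The existence of $\ker(\Delta_{\psi,s}^{n})$ then follows from the pseudo-abelian hypothesis on $\mathcal{C}$ by the standard split-idempotent argument: the endomorphism $p:=s_{\psi,s}^{n-2}\circ\Delta_{\psi,s}^{n}\colon \vee^{n}X\to\vee^{n}X$ satisfies $p^{2}=s_{\psi,s}^{n-2}\circ(\Delta_{\psi,s}^{n}\circ s_{\psi,s}^{n-2})\circ\Delta_{\psi,s}^{n}=p$, so $e:=1_{\vee^{n}X}-p$ is a complementary idempotent which splits in $\mathcal{C}$, and its image is canonically identified with $\ker(\Delta_{\psi,s}^{n})$ because $\Delta_{\psi,s}^{n}\circ e=\Delta_{\psi,s}^{n}-(\Delta_{\psi,s}^{n}\circ s_{\psi,s}^{n-2})\circ\Delta_{\psi,s}^{n}=0$ and any morphism $g\colon T\to\vee^{n}X$ with $\Delta_{\psi,s}^{n}\circ g=0$ factors through $e\circ g=g-s_{\psi,s}^{n-2}\circ\Delta_{\psi,s}^{n}\circ g=g$. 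There is no real obstacle here; the only subtlety, already handled inside Proposition \ref{Dirac P2}, is that the ordered-field positivity is preserved under the inductive step, which is why the section actually exists rather than merely a weak inverse.
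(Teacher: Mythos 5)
Your proposal is correct and follows essentially the same route as the paper, which deduces Corollary \ref{Dirac C2} from Proposition \ref{Dirac P2} exactly as Corollary \ref{Dirac C1} was deduced from Proposition \ref{Dirac P1}: the composite $\Delta_{\psi,s}^{n}\circ C_{\psi,s}^{n-2}$ is diagonalizable with strictly positive spectrum, hence an isomorphism, and precomposing $C_{\psi,s}^{n-2}$ with its inverse gives the section. Your explicit split-idempotent argument for the existence of $\ker\left(\Delta_{\psi,s}^{n}\right)$ is just the standard step the paper leaves implicit.
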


\subsection{Laplace operators attached to perfect pairings valued in squares
of invertible objects}

We suppose in this subsection that we are given a perfect pairing $\psi
:X\otimes X\rightarrow Z$, i.e. such that $f_{\psi }:X\rightarrow \hom
\left( X,Z\right) $ is an isomorphism, and that we have $Z\simeq \mathbb{L}%
^{\otimes 2}$, where $\mathbb{L}$ an invertible object. We assume that $\psi 
$ is alternating or symmetric, i.e. $\psi \circ \tau _{X,X}=\chi \left( \tau
_{X,X}\right) \cdot \psi $, where $\chi \in \left\{ \varepsilon ,1\right\} $%
. As above, we define $\ast:=a$ when $\chi =\varepsilon $ and $\ast
:=s$ when $\chi =1$ and we write $\ast^{k}X:=\wedge ^{k}X$
when $\chi =\varepsilon $ and $\ast^{k}X:=\vee ^{k}X$ when $\chi =1$%
. It follows from Lemma \ref{Dirac L1} that we have $\Delta _{\psi ,\ast
}^{n}=0$ if $\left\{ \ast \right\} =\left\{ a,s\right\} -\left\{ \ast \right\} $ and , hence, we concentrate on $\Delta _{\psi ,\ast }^{n} $. We set $r_{X}:=\mathrm{rank}\left( X\right) $ and $r_{\mathbb{L}}:=%
\mathrm{rank}\left( \mathbb{L}\right) $ in the subsequent discussion, so
that $r_{\mathbb{L}}\in \left\{ \pm 1\right\} $.

\bigskip

Let $\tau _{\delta _{k}}:\otimes ^{k}\left( X\otimes \mathbb{L}\right) 
\overset{\sim }{\rightarrow }\left( \otimes ^{k}X\right) \otimes \mathbb{L}%
^{\otimes k}$ be the isomorphism induced by the permutation $\delta _{k}\in
S_{2k}$ such that $\delta _{k}\left( 2i-1\right) =i$ and $\delta _{k}\left(
2i\right) =k+i$ for every $i\in I_{k}$. It is not difficult to show, using 
\cite[7.2 Lemme]{De}, that one has%
\begin{eqnarray}
e_{X\otimes \mathbb{L},a}^{k} &\simeq &e_{X,a}^{k}\otimes 1_{\mathbb{L}%
^{\otimes k}}\text{ and }e_{X\otimes \mathbb{L},s}^{k}\simeq
e_{X,s}^{k}\otimes 1_{\mathbb{L}^{\otimes k}}\text{ if }r_{\mathbb{L}}=1%
\text{,}  \label{Traces invertible elements 1} \\
e_{X\otimes \mathbb{L},s}^{k} &\simeq &e_{X,a}^{k}\otimes 1_{\mathbb{L}%
^{\otimes k}}\text{ and }e_{X\otimes \mathbb{L},a}^{k}\simeq
e_{X,s}^{k}\otimes 1_{\mathbb{L}^{\otimes k}}\text{ if }r_{\mathbb{L}}=-1%
\text{.}  \label{Traces invertible elements -1}
\end{eqnarray}

\begin{lemma}
\label{Dirac L4}Suppose that $\varphi :X\otimes X\rightarrow \mathbb{L}%
^{\otimes 2}$ is alternating (resp. symmetric) and consider the composite%
\begin{equation*}
\varphi _{\mathbb{L}^{-1}}:\left( X\otimes \mathbb{L}^{-1}\right) \otimes
\left( X\otimes \mathbb{L}^{-1}\right) \overset{1_{X}\otimes \tau _{\mathbb{L%
}^{-1},X}\otimes 1_{\mathbb{L}^{-1}}}{\rightarrow }X\otimes X\otimes \mathbb{%
L}^{\otimes -2}\overset{\varphi \otimes 1_{\mathbb{L}^{\otimes -2}}}{%
\rightarrow }\mathbb{L}^{\otimes 2}\otimes \mathbb{L}^{\otimes -2}\overset{%
ev_{\mathbb{L}^{\otimes -2}}}{\rightarrow }\mathbb{I}\text{.}
\end{equation*}

\begin{itemize}
\item[$\left( a\right) $] If $r_{\mathbb{L}}=1$, the morphism $\varphi _{%
\mathbb{L}^{-1}}$ is alternating (resp. symmetric) and the following
diagrams are commutative%
\begin{equation*}
\xymatrix@C=30pt{ \wedge^{n}\left(X\otimes\mathbb{L}^{-1}\right) \ar[r]^-{\Delta_{\varphi_{\mathbb{L}^{-1}},a}^{n}} \ar[d]|{\tau_{\delta_{n}}} & \wedge^{n-2}\left(X\otimes\mathbb{L}^{-1}\right) \ar[d]|{\left(1_{\wedge^{n-2}X}\otimes\tau_{\mathbb{L}^{\otimes-\left(n-2\right)},\mathbb{L}^{\otimes2}}\otimes1_{\mathbb{L}^{\otimes-2}}\right)\circ\tau_{\delta_{n-2}}} & \vee^{n}\left(X\otimes\mathbb{L}^{-1}\right) \ar[r]^-{\Delta_{\varphi_{\mathbb{L}^{-1}},s}^{n}} \ar[d]|{\tau_{\delta_{n}}} & \vee^{n-2}\left(X\otimes\mathbb{L}^{-1}\right) \ar[d]|{\left(1_{\vee^{n-2}X}\otimes\tau_{\mathbb{L}^{\otimes-\left(n-2\right)},\mathbb{L}^{\otimes2}}\otimes1_{\mathbb{L}^{\otimes-2}}\right)\circ\tau_{\delta_{n-2}}} \\ \left(\wedge^{n}X\right)\otimes\mathbb{L}^{\otimes-n} \ar[r]^-{\Delta_{\varphi,a}^{n}\otimes1_{\mathbb{L}^{\otimes-n}}} & \left(\wedge^{n-2}X\right)\otimes\mathbb{L}^{\otimes2}\otimes\mathbb{L}^{\otimes-n}\text{,} & \left(\vee^{n}X\right)\otimes\mathbb{L}^{\otimes-n} \ar[r]^-{\Delta_{\varphi,s}^{n}\otimes1_{\mathbb{L}^{\otimes-n}}} & \left(\vee^{n-2}X\right)\otimes\mathbb{L}^{\otimes2}\otimes\mathbb{L}^{\otimes-n}\text{,}}
\end{equation*}%
where $\tau _{\delta _{k}}:\wedge ^{k}\left( X\otimes \mathbb{L}\right) 
\overset{\sim }{\rightarrow }\left( \wedge ^{k}X\right) \otimes \mathbb{L}%
^{\otimes k}$ and $\tau _{\delta _{k}}:\vee ^{k}\left( X\otimes \mathbb{L}%
\right) \overset{\sim }{\rightarrow }\left( \vee ^{k}X\right) \otimes 
\mathbb{L}^{\otimes k}$ are the isomorphisms induced by $\left( \text{\ref%
{Traces invertible elements 1}}\right) $.

\item[$\left( b\right) $] If $r_{\mathbb{L}}=-1$ the morphism $\varphi _{%
\mathbb{L}^{-1}}$ is symmetric (resp. alternating) and the following
diagrams are commutative:%
\begin{equation*}
\xymatrix@C=30pt{ \vee^{n}\left(X\otimes\mathbb{L}^{-1}\right) \ar[r]^-{\Delta_{\varphi_{\mathbb{L}^{-1}},s}^{n}} \ar[d]|{\tau_{\delta_{n}}} & \vee^{n-2}\left(X\otimes\mathbb{L}^{-1}\right) \ar[d]|{\left(1_{\wedge^{n-2}X}\otimes\tau_{\mathbb{L}^{\otimes-\left(n-2\right)},\mathbb{L}^{\otimes2}}\otimes1_{\mathbb{L}^{\otimes-2}}\right)\circ\tau_{\delta_{n-2}}} & \wedge^{n}\left(X\otimes\mathbb{L}^{-1}\right) \ar[r]^-{\Delta_{\varphi_{\mathbb{L}^{-1}},a}^{n}} \ar[d]|{\tau_{\delta_{n}}} & \wedge^{n-2}\left(X\otimes\mathbb{L}^{-1}\right) \ar[d]|{\left(1_{\vee^{n-2}X}\otimes\tau_{\mathbb{L}^{\otimes-\left(n-2\right)},\mathbb{L}^{\otimes2}}\otimes1_{\mathbb{L}^{\otimes-2}}\right)\circ\tau_{\delta_{n-2}}} \\ \left(\wedge^{n}X\right)\otimes\mathbb{L}^{\otimes-n} \ar[r]^-{\Delta_{\varphi,a}^{n}\otimes1_{\mathbb{L}^{\otimes-n}}} & \left(\wedge^{n-2}X\right)\otimes\mathbb{L}^{\otimes2}\otimes\mathbb{L}^{\otimes-n}\text{,} & \left(\vee^{n}X\right)\otimes\mathbb{L}^{\otimes-n} \ar[r]^-{\Delta_{\varphi,s}^{n}\otimes1_{\mathbb{L}^{\otimes-n}}} & \left(\vee^{n-2}X\right)\otimes\mathbb{L}^{\otimes2}\otimes\mathbb{L}^{\otimes-n}\text{,}}
\end{equation*}%
where $\tau _{\delta _{k}}:\vee ^{k}\left( X\otimes \mathbb{L}\right) 
\overset{\sim }{\rightarrow }\left( \wedge ^{k}X\right) \otimes \mathbb{L}%
^{\otimes k}$ and $\tau _{\delta _{k}}:\wedge ^{k}\left( X\otimes \mathbb{L}%
\right) \overset{\sim }{\rightarrow }\left( \vee ^{k}X\right) \otimes 
\mathbb{L}^{\otimes k}$ are the isomorphisms induced by $\left( \text{\ref%
{Traces invertible elements -1}}\right) $.

\item[$\left( c\right) $] Writing $f_{\varphi }:X\rightarrow \hom \left( X,%
\mathbb{L}^{\otimes 2}\right) $ and $f_{\varphi _{\mathbb{L}^{-1}}}:X\otimes 
\mathbb{L}^{-1}\rightarrow \left( X\otimes \mathbb{L}^{-1}\right) ^{\vee }$
for the associated morphisms we have that $f_{\varphi }$ is an isomorphism
is and only if $f_{\varphi _{\mathbb{L}^{-1}}}$ is an isomorphism.
\end{itemize}
\end{lemma}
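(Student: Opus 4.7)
The plan is to handle the three assertions in turn, beginning with the symmetry type of $\varphi_{\mathbb{L}^{-1}}$, then the compatibility with the Laplace operator via $\tau_{\delta_n}$, and finally the perfectness statement in part (c).

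For the symmetry in (a) and (b), I would compute $\varphi_{\mathbb{L}^{-1}}\circ \tau_{X\otimes \mathbb{L}^{-1},X\otimes \mathbb{L}^{-1}}$ directly. The swap on $(X\otimes \mathbb{L}^{-1})\otimes (X\otimes \mathbb{L}^{-1})$ factors, after conjugation by the rearrangement $1_X\otimes \tau_{\mathbb{L}^{-1},X}\otimes 1_{\mathbb{L}^{-1}}$, as $\tau_{X,X}\otimes \tau_{\mathbb{L}^{-1},\mathbb{L}^{-1}}$. The symmetry of $\varphi$ gives the factor $\chi(\tau_{X,X})$ on the $X$ component, while on the invertible object $\mathbb{L}^{-1}$ one has the standard identity $\tau_{\mathbb{L}^{-1},\mathbb{L}^{-1}}=r_{\mathbb{L}^{-1}}\cdot 1=r_{\mathbb{L}}\cdot 1$. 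Combining, $\varphi_{\mathbb{L}^{-1}}\circ \tau=r_{\mathbb{L}}\cdot \chi(\tau_{X,X})\cdot \varphi_{\mathbb{L}^{-1}}$; so the symmetry type is preserved when $r_{\mathbb{L}}=1$ and flipped when $r_{\mathbb{L}}=-1$, matching the assertions.

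For the commutative diagrams, the strategy is to pull the computation up to the tensor powers using the projectors $p^n_{X\otimes \mathbb{L}^{-1},\ast}$, $p^n_{X,\ast}$ and the identities of type $\eqref{Dirac F2}$ (or the refinement from Lemma \ref{Dirac L1}). Under the isomorphism $\tau_{\delta_n}:\otimes^n(X\otimes \mathbb{L}^{-1})\xrightarrow{\sim}(\otimes^n X)\otimes \mathbb{L}^{\otimes -n}$, the lifted $\widetilde{\Delta}_{\varphi_{\mathbb{L}^{-1}},\ast}^n$ decomposes into two pieces that act independently on the $X$ and $\mathbb{L}^{-1}$ factors: on the $X$-part one recognises $\widetilde{\Delta}_{\varphi,\ast}^n$, while on the $\mathbb{L}^{-1}$-part the evaluation $\text{ev}_{\mathbb{L}^{\otimes -2}}$ absorbs exactly two of the $n$ copies of $\mathbb{L}^{-1}$ and replaces them by $\mathbb{L}^{\otimes 2}$, leaving $\mathbb{L}^{\otimes -(n-2)}\otimes \mathbb{L}^{\otimes 2}$ (in that order after $\tau_{\delta_{n-2}}$). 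Bringing the residual $\mathbb{L}^{\otimes 2}$ to its target position requires exactly the permutation $1_{\ast^{n-2}X}\otimes\tau_{\mathbb{L}^{\otimes-(n-2)},\mathbb{L}^{\otimes 2}}\otimes 1_{\mathbb{L}^{\otimes -2}}$, which is where the vertical map in the target appears. Passing this identity through the projector using \eqref{Traces invertible elements 1} (when $r_{\mathbb{L}}=1$) or \eqref{Traces invertible elements -1} (when $r_{\mathbb{L}}=-1$) gives the diagrams, with the symmetry type of the descended Laplace operator matching what was established in the previous step.

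For part (c), the morphism $f_{\varphi_{\mathbb{L}^{-1}}}$ is obtained from $f_\varphi$ by tensoring with $1_{\mathbb{L}^{-1}}$ and composing with the canonical isomorphisms $\hom(X,\mathbb{L}^{\otimes 2})\otimes \mathbb{L}^{-1}\simeq \hom(X,\mathbb{L})\simeq \hom(X\otimes \mathbb{L}^{-1},\mathbb{I})=(X\otimes \mathbb{L}^{-1})^\vee$ provided by the invertibility of $\mathbb{L}$. Since tensoring with an invertible object and composing with canonical isomorphisms are equivalences on $\mathcal{C}$, $f_{\varphi_{\mathbb{L}^{-1}}}$ is an isomorphism if and only if $f_\varphi$ is.

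The main obstacle, and the only bookkeeping that is at all subtle, is tracking the permutation signs in the second step: one has to verify that the swap $\tau_{\mathbb{L}^{\otimes -(n-2)},\mathbb{L}^{\otimes 2}}$ that appears on the right is exactly the one produced by reassembling the $\mathbb{L}^{-1}$-factors after the evaluation, and that no additional sign is swallowed into the projector under the type-changing identification \eqref{Traces invertible elements -1} when $r_{\mathbb{L}}=-1$. Here the formulas \eqref{Traces invertible elements 1} and \eqref{Traces invertible elements -1} do the work, since they already encode the exchange between $e_{X\otimes \mathbb{L},a}^k$ and $e_{X\otimes \mathbb{L},s}^k$ caused by the sign of the self-swap of an odd-rank invertible object.
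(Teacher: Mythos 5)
Your proposal is correct and follows essentially the same route as the paper: the symmetry type is obtained by conjugating the swap through the rearrangement and using $\tau_{\mathbb{L}^{-1},\mathbb{L}^{-1}}=r_{\mathbb{L}}$, and the diagrams are proved by checking the corresponding identity on tensor powers (the paper's ``tedious computation'' of the permutation rearrangement) and then descending through the idempotents via $\left(\text{\ref{Traces invertible elements 1}}\right)$ and $\left(\text{\ref{Traces invertible elements -1}}\right)$. Your sketch for part $\left(c\right)$, which the paper leaves to the reader, is also the intended factorization of $f_{\varphi_{\mathbb{L}^{-1}}}$ through $f_{\varphi}\otimes 1_{\mathbb{L}^{-1}}$ and canonical isomorphisms.
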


\begin{proof}
$\left( a\text{-}b\right) $ We first claim that the following diagram is
commutative:%
\begin{equation}
\xymatrix{ \otimes^{n}\left(X\otimes\mathbb{L}^{-1}\right) \ar[r]^-{\Delta_{\varphi_{\mathbb{L}^{-1}}}^{n}} \ar[d]_{\tau_{\delta_{n}}} & \otimes^{n-2}\left(X\otimes\mathbb{L}^{-1}\right) \ar@/^{0.75pc}/[dr]^{\tau_{\delta_{n-2}}} & \\ \left(\otimes^{n}X\right)\otimes\mathbb{L}^{\otimes-n} \ar[r]_-{\Delta_{\varphi}^{n}\otimes1_{\mathbb{L}^{\otimes-n}}} & \left(\otimes^{n-2}X\right)\otimes\mathbb{L}^{\otimes2}\otimes\mathbb{L}^{\otimes-n} \ar[r]_-{1_{\otimes^{n-2}X}\otimes\tau_{\mathbb{L}^{\otimes2},\mathbb{L}^{\otimes-\left(n-2\right)}}\otimes1_{\mathbb{L}^{\otimes-2}}} & \left(\otimes^{n-2}X\right)\otimes\mathbb{L}^{\otimes-\left(n-2\right)} }  \label{Dirac L4 D}
\end{equation}%
A tedious computation reveals that:%
\begin{equation}
\left( \tau _{\delta _{n-2}}\otimes 1_{X}\otimes \tau _{\mathbb{L}%
^{-1},X}\otimes 1_{\mathbb{L}^{-1}}\right) =\left( 1_{\otimes
^{n-2}X}\otimes \tau _{\otimes ^{2}X,\mathbb{L}^{\otimes -\left( n-2\right)
}}\otimes 1_{\mathbb{L}^{\otimes -2}}\right) \circ \tau _{\delta _{n}}
\label{Dirac L4 F}
\end{equation}%
Hence we have:%
\begin{eqnarray*}
\tau _{\delta _{n-2}}\circ \Delta _{\varphi _{\mathbb{L}^{-1}}}^{n-2}
&=&\tau _{\delta _{n-2}}\circ \left( 1_{\otimes ^{n-2}\left( X\otimes 
\mathbb{L}^{-1}\right) }\otimes \varphi _{\mathbb{L}^{-1}}\right) =\tau
_{\delta _{n-2}}\circ \left( 1_{\otimes ^{n-2}\left( X\otimes \mathbb{L}%
^{-1}\right) }\otimes \varphi \otimes 1_{\mathbb{L}^{\otimes -2}}\right) \\
&&\circ \left( 1_{\otimes ^{n-2}\left( X\otimes \mathbb{L}^{-1}\right)
}\otimes 1_{X}\otimes \tau _{\mathbb{L}^{-1},X}\otimes 1_{\mathbb{L}%
^{-1}}\right) \\
&=&\left( 1_{\left( \otimes ^{n-2}X\right) \otimes \mathbb{L}^{\otimes
-\left( n-2\right) }}\otimes \varphi \otimes 1_{\mathbb{L}^{\otimes
-2}}\right) \circ \left( \tau _{\delta _{n-2}}\otimes 1_{X}\otimes \tau _{%
\mathbb{L}^{-1},X}\otimes 1_{\mathbb{L}^{-1}}\right) \text{ (by }\left( 
\text{\ref{Dirac L4 F}}\right) \text{)} \\
&=&\left( 1_{\left( \otimes ^{n-2}X\right) \otimes \mathbb{L}^{\otimes
-\left( n-2\right) }}\otimes \varphi \otimes 1_{\mathbb{L}^{\otimes
-2}}\right) \circ \left( 1_{\otimes ^{n-2}X}\otimes \tau _{\otimes ^{2}X,%
\mathbb{L}^{\otimes -\left( n-2\right) }}\otimes 1_{\mathbb{L}^{\otimes
-2}}\right) \circ \tau _{\delta _{n}} \\
&=&\left( 1_{\otimes ^{n-2}X}\otimes \tau _{\mathbb{L}^{\otimes 2},\mathbb{L}%
^{\otimes -\left( n-2\right) }}\otimes 1_{\mathbb{L}^{\otimes -2}}\right)
\circ \left( 1_{\otimes ^{n-2}X}\otimes \varphi \otimes 1_{\mathbb{L}%
^{\otimes -n}}\right) \circ \tau _{\delta _{n}} \\
&=&\left( 1_{\otimes ^{n-2}X}\otimes \tau _{\mathbb{L}^{\otimes 2},\mathbb{L}%
^{\otimes -\left( n-2\right) }}\otimes 1_{\mathbb{L}^{\otimes -2}}\right)
\circ \left( \Delta _{\varphi }^{n}\otimes 1_{\mathbb{L}^{\otimes
-n}}\right) \circ \tau _{\delta _{n}}\text{,}
\end{eqnarray*}%
showing that $\left( \text{\ref{Dirac L4 D}}\right) $ is commutative. The
claimed commutative diagrams in $\left( a\right) $ and $\left( b\right) $
now follows from $\left( \text{\ref{Traces invertible elements 1}}\right) $, 
$\left( \text{\ref{Traces invertible elements -1}}\right) $ and the
commutativity of $\left( \text{\ref{Dirac L4 D}}\right) $.

We view $1_{X}\otimes \tau _{\mathbb{L}^{-1},X}\otimes 1_{\mathbb{L}%
^{-1}}=\tau _{\left( 23\right) }$ and $\tau _{X\otimes \mathbb{L}%
^{-1},X\otimes \mathbb{L}^{-1}}=\tau _{\left( 13\right) \left( 24\right) }$
as induced by permutations in $S_{4}$ and then, noticing that $\left(
23\right) \left( 13\right) \left( 24\right) =\left( 1243\right) =\left(
12\right) \left( 34\right) \left( 23\right) $ and that we have $\varphi
\circ \tau _{X,X}=\chi \left( \tau _{X,X}\right) \cdot \psi $ with $\chi
=\varepsilon $ (resp. $\chi =1$), we find%
\begin{eqnarray*}
\varphi _{\mathbb{L}^{-1}}\circ \tau _{X\otimes \mathbb{L}^{-1},X\otimes 
\mathbb{L}^{-1}} &=&ev_{\mathbb{L}^{\otimes -2}}\circ \left( \varphi \otimes
1_{\mathbb{L}^{\otimes -2}}\right) \circ \left( 1_{X}\otimes \tau _{\mathbb{L%
}^{-1},X}\otimes 1_{\mathbb{L}^{-1}}\right) \circ \tau _{X\otimes \mathbb{L}%
^{-1},X\otimes \mathbb{L}^{-1}} \\
&=&ev_{\mathbb{L}^{\otimes -2}}\circ \left( \varphi \otimes 1_{\mathbb{L}%
^{\otimes -2}}\right) \circ \tau _{\left( 23\right) }\circ \tau _{\left(
13\right) \left( 24\right) }=e\circ \left( \varphi \otimes 1_{\mathbb{L}%
^{\otimes -2}}\right) \circ \tau _{\left( 1243\right) } \\
&=&ev_{\mathbb{L}^{\otimes -2}}\circ \left( \varphi \otimes 1_{\mathbb{L}%
^{\otimes -2}}\right) \circ \tau _{\left( 12\right) }\circ \tau _{\left(
34\right) }\circ \tau _{\left( 23\right) } \\
&=&ev_{\mathbb{L}^{\otimes -2}}\circ \left( \varphi \otimes 1_{\mathbb{L}%
^{\otimes -2}}\right) \circ \left( \tau _{X,X}\otimes \tau _{\mathbb{L}^{-1},%
\mathbb{L}^{-1}}\right) \circ \left( 1_{X}\otimes \tau _{\mathbb{L}%
^{-1},X}\otimes 1_{\mathbb{L}^{-1}}\right) \\
&=&\chi \left( \tau _{X,X}\right) \cdot ev_{\mathbb{L}^{\otimes -2}}\circ
\left( \varphi \otimes \tau _{\mathbb{L}^{-1},\mathbb{L}^{-1}}\right) \circ
\left( 1_{X}\otimes \tau _{\mathbb{L}^{-1},X}\otimes 1_{\mathbb{L}%
^{-1}}\right) \text{.}
\end{eqnarray*}%
It follows from \cite[7.2 Lemme]{De} that we have $\tau _{\mathbb{L}^{-1},%
\mathbb{L}^{-1}}=r_{\mathbb{L}}$, so that we find%
\begin{equation*}
\varphi _{\mathbb{L}^{-1}}\circ \tau _{X\otimes \mathbb{L}^{-1},X\otimes 
\mathbb{L}^{-1}}=r_{\mathbb{L}}\chi \left( \tau _{X,X}\right) \cdot \varphi
_{\mathbb{L}^{-1}}\text{.}
\end{equation*}

$\left( c\right) $ This is left to the reader.
\end{proof}

\bigskip

\begin{proposition}
\label{Dirac P3}When $\chi \left( \tau _{X,X}\right) r_{X}>0$ we have that,
for every $n\geq 2$, the Laplace operator%
\begin{equation*}
\Delta _{\psi ,\ast}^{n}:\ast^{n}X\rightarrow \ast ^{n-2}X
\end{equation*}%
has a section $s_{\psi ,\ast}^{n-2}:\ast^{n-2}X\rightarrow
\ast^{n}X$ such that $\Delta _{\psi ,\ast}^{n}\circ
s_{\psi ,\ast}^{n-2}=1_{\ast^{n-2}X}$ and, in particular, $%
\ker \left( \Delta _{\psi ,\ast}^{n}\right) $ exists.
\end{proposition}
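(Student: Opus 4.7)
The plan is to reduce Proposition \ref{Dirac P3} to the case of an $\mathbb{I}$-valued perfect pairing, where Corollaries \ref{Dirac C1} and \ref{Dirac C2} already supply a section. The device is Lemma \ref{Dirac L4}, which translates $\psi:X\otimes X\rightarrow \mathbb{L}^{\otimes 2}$ into the $\mathbb{I}$-valued pairing $\psi_{\mathbb{L}^{-1}}:(X\otimes\mathbb{L}^{-1})\otimes(X\otimes\mathbb{L}^{-1})\rightarrow\mathbb{I}$. By Lemma \ref{Dirac L4}(c), $\psi_{\mathbb{L}^{-1}}$ is again perfect, so I am in the situation of the previous two subsections.

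Next I would check that the hypothesis $\chi(\tau_{X,X})r_{X}>0$ gives exactly the sign condition needed on $X':=X\otimes\mathbb{L}^{-1}$. Note that $r_{X'}=r_{X}\cdot r_{\mathbb{L}}$ and, by Lemma \ref{Dirac L4}(a,b), $\psi_{\mathbb{L}^{-1}}$ has symmetry sign $r_{\mathbb{L}}\chi(\tau_{X,X})$. When $r_{\mathbb{L}}=1$ the pairing $\psi_{\mathbb{L}^{-1}}$ keeps the symmetry type of $\psi$, and $r_{X'}=r_{X}$ has the sign $\chi(\tau_{X,X})r_{X}\cdot\chi(\tau_{X,X})$, which is of the correct sign to apply Corollary \ref{Dirac C1} (alternating case, $r_{X'}<0$) or Corollary \ref{Dirac C2} (symmetric case, $r_{X'}>0$). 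When $r_{\mathbb{L}}=-1$ the symmetry of $\psi_{\mathbb{L}^{-1}}$ is flipped while $r_{X'}=-r_{X}$ also changes sign, and a direct check in each of the four cases again shows the hypothesis of Corollary \ref{Dirac C1} or Corollary \ref{Dirac C2} is met. Applied to $X'$, this produces a section $s':\ast^{n-2}X'\rightarrow\ast^{n}X'$ of $\Delta_{\psi_{\mathbb{L}^{-1}},\ast}^{n}$, where $\ast\in\{a,s\}$ is the symmetry type of $\psi_{\mathbb{L}^{-1}}$.

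The third step is to transport $s'$ back along the isomorphisms $\tau_{\delta_k}:\ast^{k}(X\otimes\mathbb{L}^{-1})\overset{\sim}{\rightarrow}(\ast_{\chi}^{k}X)\otimes\mathbb{L}^{\otimes -k}$ provided by Lemma \ref{Dirac L4}(a,b) (the swap of $\ast$ and $\ast_{\chi}$ when $r_{\mathbb{L}}=-1$ being already absorbed in step two). The commutative squares in Lemma \ref{Dirac L4}(a,b) identify, up to the twist $1\otimes\tau_{\mathbb{L}^{\otimes-(n-2)},\mathbb{L}^{\otimes 2}}\otimes 1$, the operator $\Delta_{\psi_{\mathbb{L}^{-1}},\ast}^{n}$ with $\Delta_{\psi,\ast_{\chi}}^{n}\otimes 1_{\mathbb{L}^{\otimes-n}}$. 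Since tensoring with the invertible object $\mathbb{L}^{\otimes n}$ is an autoequivalence, a section of $\Delta_{\psi,\ast_{\chi}}^{n}\otimes 1_{\mathbb{L}^{\otimes-n}}$ yields a section $s_{\psi,\ast_{\chi}}^{n-2}$ of $\Delta_{\psi,\ast_{\chi}}^{n}$ itself, with $\Delta_{\psi,\ast_{\chi}}^{n}\circ s_{\psi,\ast_{\chi}}^{n-2}=1_{\ast_{\chi}^{n-2}X}$.

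Finally, the existence of $\ker(\Delta_{\psi,\ast_{\chi}}^{n})$ is immediate from the existence of the section, since $e:=1_{\ast_{\chi}^{n}X}-s_{\psi,\ast_{\chi}}^{n-2}\circ\Delta_{\psi,\ast_{\chi}}^{n}$ is idempotent and $\mathcal{C}$ is pseudo-abelian, so $\mathrm{Im}(e)=\ker(\Delta_{\psi,\ast_{\chi}}^{n})$ exists in $\mathcal{C}$. The only mildly delicate point I anticipate is the bookkeeping in step two: matching, in each of the four combinations of $\chi$ and $r_{\mathbb{L}}$, the correct symmetry type of $\psi_{\mathbb{L}^{-1}}$ against the sign of $r_{X'}$ so that the single hypothesis $\chi(\tau_{X,X})r_{X}>0$ suffices; once this case analysis is done, the rest is a formal transport argument.
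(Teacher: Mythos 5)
Your proposal is correct and follows essentially the same route as the paper's proof: reduce via $\psi_{\mathbb{L}^{-1}}$ and Lemma \ref{Dirac L4} to an $\mathbb{I}$-valued perfect pairing on $X\otimes\mathbb{L}^{-1}$, observe that the sign bookkeeping amounts to $\varepsilon_{X\otimes\mathbb{L}^{-1},X\otimes\mathbb{L}^{-1}}\,r_{X\otimes\mathbb{L}^{-1}}=\chi\left(\tau_{X,X}\right)r_{X}>0$, apply Corollary \ref{Dirac C1} or \ref{Dirac C2}, and transport the section back through the isomorphisms of Lemma \ref{Dirac L4} and tensoring with the invertible $\mathbb{L}^{\otimes n}$. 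Your case analysis in step two and the idempotent argument for the kernel are exactly the paper's (the latter made explicit), so there is nothing to correct.
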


\begin{proof}
If $\sigma :Z\overset{\sim }{\rightarrow }\mathbb{L}^{\otimes 2}$ is our
given isomorphism, we have that $\Delta _{\sigma \circ \psi ,\ast }^{n}=\left( 1_{\wedge ^{n-2}X}\otimes \sigma \right) \circ \Delta _{\psi
,\ast}^{n}$ has a section if and only if $\Delta _{\psi ,a}^{n}$
has a section: hence we may assume that $Z=\mathbb{L}^{\otimes 2}$. We can
now consider the composite:%
\begin{equation*}
\psi _{\mathbb{L}^{-1}}:\left( X\otimes \mathbb{L}^{-1}\right) \otimes
\left( X\otimes \mathbb{L}^{-1}\right) \overset{1_{X}\otimes \tau _{\mathbb{L%
}^{-1},X}\otimes 1_{\mathbb{L}^{-1}}}{\rightarrow }X\otimes X\otimes \mathbb{%
L}^{\otimes -2}\overset{\psi \otimes 1_{\mathbb{L}^{\otimes -2}}}{%
\rightarrow }\mathbb{L}^{\otimes 2}\otimes \mathbb{L}^{\otimes -2}\overset{%
ev_{\mathbb{L}^{\otimes -2}}}{\rightarrow }\mathbb{I}\text{.}
\end{equation*}%
When $r_{\mathbb{L}}=1$ (resp. $r_{\mathbb{L}}=-1$), Lemma \ref{Dirac L4} $%
\left( a\right) $ (resp. $\left( b\right) $)\ shows that $\Delta _{\psi
,\ast}^{n}$ has a section if and only if $\Delta _{\varphi _{%
\mathbb{L}^{-1}},\ast}^{n}$ (resp. $\Delta _{\varphi _{\mathbb{L}%
^{-1}},\ast }^{n}$) has a section and that $\varphi _{\mathbb{L}^{-1}}$
satisfies $\varphi _{\mathbb{L}^{-1}}\circ \tau _{X\otimes \mathbb{L}%
^{-1},X\otimes \mathbb{L}^{-1}}=\chi \left( \tau _{X,X}\right) \cdot \varphi
_{\mathbb{L}^{-1}}$ (resp. $\varphi _{\mathbb{L}^{-1}}\circ \tau _{X\otimes 
\mathbb{L}^{-1},X\otimes \mathbb{L}^{-1}}=-\chi \left( \tau _{X,X}\right)
\cdot \varphi _{\mathbb{L}^{-1}}$). It follows from this last relation that,
if we define $\varepsilon _{X\otimes \mathbb{L}^{-1},X\otimes \mathbb{L}%
^{-1}}$ by the rule $\varphi _{\mathbb{L}^{-1}}\circ \tau _{X\otimes \mathbb{%
L}^{-1},X\otimes \mathbb{L}^{-1}}=\varepsilon _{X\otimes \mathbb{L}%
^{-1},X\otimes \mathbb{L}^{-1}}\cdot \varphi _{\mathbb{L}^{-1}}$, then we
have $\varepsilon _{X\otimes \mathbb{L}^{-1},X\otimes \mathbb{L}^{-1}}=\chi
\left( \tau _{X,X}\right) $ (resp. $\varepsilon _{X\otimes \mathbb{L}%
^{-1},X\otimes \mathbb{L}^{-1}}=-\chi \left( \tau _{X,X}\right) $) and%
\begin{equation*}
\varepsilon _{X\otimes \mathbb{L}^{-1},X\otimes \mathbb{L}^{-1}}r_{X\otimes 
\mathbb{L}^{-1}}=\varepsilon _{X\otimes \mathbb{L}^{-1},X\otimes \mathbb{L}%
^{-1}}r_{X}r_{\mathbb{L}}=\chi \left( \tau _{X,X}\right) r_{X}>0\text{.}
\end{equation*}%
It follows that we may apply to $\psi _{\mathbb{L}^{-1}}$ Corollary \ref%
{Dirac C1}, when $\varepsilon _{X\otimes \mathbb{L}^{-1},X\otimes \mathbb{L}%
^{-1}}=-1$, or Corollary \ref{Dirac C2}, when $\varepsilon _{X\otimes 
\mathbb{L}^{-1},X\otimes \mathbb{L}^{-1}}=1$, to deduce that $\Delta
_{\varphi _{\mathbb{L}^{-1}},\ast}^{n}$ has a section.
\end{proof}

\section{Laplace and Dirac operators for the alternating algebras}

In this section we assume that we are given an object $V\in \mathcal{C}$such that $\wedge ^{g}V$ is invertible. If $X$ is an object we set 
$r_{X}:=\mathrm{rank}\left( X\right) $, so that $r_{\wedge ^{g}V}\in \left\{
\pm 1\right\} $, and we use the shorthand $r:=r_{V}$.

\subsection{\label{Subsection LD Alternating Lemmas}Preliminary lemmas}

We define%
\begin{equation*}
\psi _{i,1}^{V}:\wedge ^{i}V\otimes V\overset{\varphi _{i,1}}{\rightarrow }%
\wedge ^{i+1}V\overset{D^{i+1,g}}{\rightarrow }\wedge ^{g-i-1}V^{\vee
}\otimes \wedge ^{g}V^{\vee \vee }\text{,}
\end{equation*}%
and%
\begin{eqnarray*}
\overline{\psi }_{g-i,g-i-1}^{V} &:&\wedge ^{g-i}V\otimes \wedge
^{g-i-1}V^{\vee }\overset{D^{g-i,g}\otimes 1_{\wedge ^{g-i-1}V^{\vee }}}{%
\rightarrow }\wedge ^{i}V^{\vee }\otimes \wedge ^{g}V^{\vee \vee }\otimes
\wedge ^{g-i-1}V^{\vee }\overset{\varphi _{i,g-i-1}^{13}}{\rightarrow }%
\wedge ^{g-1}V^{\vee }\otimes \wedge ^{g}V^{\vee \vee } \\
&&\overset{D_{g-1,g}\otimes 1_{\wedge ^{g}V^{\vee \vee }}}{\rightarrow }%
V\otimes \wedge ^{g}V^{\vee }\otimes \wedge ^{g}V^{\vee \vee }\overset{%
1_{V}\otimes ev_{V^{\vee },a}^{g,\tau }}{\rightarrow }V\text{.}
\end{eqnarray*}

We may also consider%
\begin{equation*}
\psi _{g-i,1}^{V}:\wedge ^{g-i}V\otimes V\overset{\varphi _{g-i,1}}{%
\rightarrow }\wedge ^{g-i+1}V\overset{D^{g-i+1,g}}{\rightarrow }\wedge
^{i-1}V^{\vee }\otimes \wedge ^{g}V^{\vee \vee }\text{,}
\end{equation*}%
and%
\begin{eqnarray*}
\overline{\psi }_{i,i-1}^{V} &:&\wedge ^{i}V\otimes \wedge ^{i-1}V^{\vee }%
\overset{D^{i,g}\otimes 1_{\wedge ^{i-1}V^{\vee }}}{\rightarrow }\wedge
^{g-i}V^{\vee }\otimes \wedge ^{g}V^{\vee \vee }\otimes \wedge ^{i-1}V^{\vee
}\overset{\varphi _{g-i,i-1}^{13}}{\rightarrow }\wedge ^{g-1}V^{\vee
}\otimes \wedge ^{g}V^{\vee \vee } \\
&&\overset{D_{g-1,g}\otimes 1_{\wedge ^{g}V^{\vee \vee }}}{\rightarrow }%
V\otimes \wedge ^{g}V^{\vee }\otimes \wedge ^{g}V^{\vee \vee }\overset{%
1_{V}\otimes ev_{V^{\vee },a}^{g,\tau }}{\rightarrow }V\text{.}
\end{eqnarray*}

\begin{lemma}
\label{Dirac Alternating L1}Setting%
\begin{eqnarray*}
&&\rho _{V}^{i,g-i}:=\left( -1\right) ^{\left( g-1\right) }r_{\wedge ^{g}V}%
\binom{g}{g-1}^{-1}\binom{g}{g-i}^{-1}\binom{r-1}{g-1}\binom{r-i}{g-i}g\text{%
,} \\
&&\nu _{V}^{g-i,1}:=\left( -1\right) ^{g-i}i\text{ and }\nu
_{V}^{i,1}:=\left( -1\right) ^{i\left( g-i-1\right) }\left( g-i\right)
\end{eqnarray*}%
the following diagram is commutative:%
\begin{equation*}
\xymatrix@C=170pt{ \wedge^{i}V\otimes\wedge^{g-i}V\otimes V \ar[r]^-{\left(1_{\wedge^{i}V}\otimes\psi_{g-i,1},\left(1_{\wedge^{g-i}V}\otimes\psi_{i,1}\right)\circ\left(\tau_{\wedge^{i}V,\wedge^{g-i}V}\otimes1_{V}\right)\right)} \ar[d]|{\varphi_{i,g-i}\otimes1_{V}} & \wedge^{i}V\otimes\wedge^{i-1}V^{\vee}\otimes\wedge^{g}V^{\vee\vee}\oplus\wedge^{g-i}V\otimes\wedge^{g-i-1}V^{\vee}\otimes\wedge^{g}V^{\vee\vee} \ar[d]|{\nu_{V}^{g-i,1}\cdot\overline{\psi}_{i,i-1}\otimes1_{\wedge^{g}V^{\vee\vee}}\oplus\nu_{V}^{i,1}\cdot\overline{\psi}_{g-i,g-i-1}\otimes1_{\wedge^{g}V^{\vee\vee}}} \\ \wedge^{g}V\otimes V \ar[r]^-{\rho_{V}^{i,g-i}\cdot\tau_{\wedge^{g}V^{\vee\vee},V}\circ\left(i_{\wedge^{g}V}\otimes1_{V}\right)} & V\otimes\wedge^{g}V^{\vee\vee}\text{.} }
\end{equation*}
\end{lemma}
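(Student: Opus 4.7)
The plan is to derive this diagram from the first commutative diagram of Proposition \ref{Alternating algebras P2}, whose two paths both land in $\wedge^{g-1} V^\vee \otimes \wedge^g V^{\vee\vee} \otimes \wedge^g V^{\vee\vee}$. I will compose both paths with
\[
\bigl(1_V \otimes ev_{V^\vee, a}^{g,\tau} \otimes 1_{\wedge^g V^{\vee\vee}}\bigr) \circ \bigl(D_{g-1,g} \otimes 1_{\wedge^g V^{\vee\vee}} \otimes 1_{\wedge^g V^{\vee\vee}}\bigr),
\]
which lands in $V \otimes \wedge^g V^{\vee\vee}$, and then verify that the two resulting paths coincide respectively with the top--right and the left--bottom paths of the lemma's diagram.

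On the top--right path of Proposition \ref{Alternating algebras P2}, the factor $D^{g-i+1,g}$ (respectively $D^{i+1,g}$) appearing in $D^{i,g} \otimes D^{g-i+1,g}$ (respectively $D^{g-i,g} \otimes D^{i+1,g}$) combines with the preceding $1_{\wedge^{i}V} \otimes \varphi_{g-i,1}$ (respectively $(1_{\wedge^{g-i}V} \otimes \varphi_{i,1}) \circ (\tau \otimes 1_V)$) to produce $1 \otimes \psi_{g-i,1}$ (respectively $(1 \otimes \psi_{i,1}) \circ (\tau \otimes 1)$), matching the top edge of the lemma. The remaining $D^{i,g}$, $\varphi^{13}_{g-i,i-1}$, $D_{g-1,g}$ and $ev_{V^\vee,a}^{g,\tau}$ (respectively $D^{g-i,g}$, $\varphi^{13}_{i,g-i-1}$, $D_{g-1,g}$ and $ev_{V^\vee,a}^{g,\tau}$), assembled in this order, are by definition the composition $\overline{\psi}_{i,i-1} \otimes 1_{\wedge^g V^{\vee\vee}}$ (respectively $\overline{\psi}_{g-i,g-i-1} \otimes 1_{\wedge^g V^{\vee\vee}}$). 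The scalars $(-1)^{g-i} i$ and $(-1)^{i(g-i-1)}(g-i)$ from Proposition \ref{Alternating algebras P2} are precisely $\nu_V^{g-i,1}$ and $\nu_V^{i,1}$.

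For the left--bottom path, the key step is to apply the first commutative diagram of Theorem \ref{Alternating algebras T} Part $(1)$ with index $1$ to the $V$-slot, which identifies
\[
V \xrightarrow{D^{1,g}} \wedge^{g-1} V^\vee \otimes \wedge^g V^{\vee\vee} \xrightarrow{D_{g-1,g} \otimes 1} V \otimes \wedge^g V^\vee \otimes \wedge^g V^{\vee\vee} \xrightarrow{1 \otimes ev_{V^\vee,a}^{g,\tau}} V
\]
with multiplication by $(-1)^{g-1} \binom{g}{g-1}^{-1} \binom{r-1}{g-1}$. Combining this scalar with $r_{\wedge^g V}\, g\, \binom{g}{g-i}^{-1} \binom{r-i}{g-i}$ already present in Proposition \ref{Alternating algebras P2} yields exactly $\rho_V^{i,g-i}$. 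Two applications of naturality of $\tau$ (first with respect to $\varphi_{i,g-i}$ and then with respect to $i_{\wedge^g V}$) rewrite the resulting composite as $\rho_V^{i,g-i} \cdot \tau_{\wedge^g V^{\vee\vee}, V} \circ (i_{\wedge^g V} \otimes 1_V) \circ (\varphi_{i,g-i} \otimes 1_V)$, i.e.\ the left edge followed by the bottom edge of the lemma. The main obstacle throughout is bookkeeping: tracking the several Poincar\'e duality isomorphisms $D^{\cdot,g}$ and $D_{\cdot,g}$ and verifying that all the scalars combine to $\rho_V^{i,g-i}$. No new categorical input beyond Proposition \ref{Alternating algebras P2} and Theorem \ref{Alternating algebras T} Part $(1)$ is needed.
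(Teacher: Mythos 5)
Your proposal is correct and follows essentially the same route as the paper's proof: the paper likewise postcomposes the first diagram of Proposition \ref{Alternating algebras P2} with $\left(1_{V}\otimes ev_{V^{\vee},a}^{g,\tau}\otimes 1_{\wedge^{g}V^{\vee\vee}}\right)\circ\left(D_{g-1,g}\otimes 1\right)$, identifies one side with the lemma's right-hand composite via the definitions of $\psi_{g-i,1}$, $\psi_{i,1}$, $\overline{\psi}_{i,i-1}$, $\overline{\psi}_{g-i,g-i-1}$ (with the scalars of Proposition \ref{Alternating algebras P2} giving the $\nu$'s), and simplifies the other side using Theorem \ref{Alternating algebras T} $(1)$ with index $1$ plus naturality of $\tau$ to obtain $\rho_{V}^{i,g-i}$. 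Only note that what you call the ``top--right path'' of Proposition \ref{Alternating algebras P2} is its left--bottom path (and vice versa), and that matching $\varphi^{13}_{g-i,i-1}$ with $\varphi^{13}_{g-i,i-1}\otimes 1_{\wedge^{g}V^{\vee\vee}}$ requires the small bookkeeping the paper records in a footnote.
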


\begin{proof}
Set%
\begin{eqnarray*}
&&a:=\left( -1\right) ^{g-i}i\cdot \left( \varphi _{g-i,i-1}^{13}\otimes
1_{\wedge ^{g}V^{\vee \vee }}\right) \circ \left( D^{i,g}\otimes 1_{\wedge
^{i-1}V^{\vee }\otimes \wedge ^{g}V^{\vee \vee }}\right) \circ \left(
1_{\wedge ^{i}V}\otimes D^{g-i+1,g}\right) \circ \left( 1_{\wedge
^{i}V}\otimes \varphi _{g-i,1}\right)  \\
&&\text{ \ \ }=\left( -1\right) ^{g-i}i\cdot \left( \varphi
_{g-i,i-1}^{13}\otimes 1_{\wedge ^{g}V^{\vee \vee }}\right) \circ \left(
D^{i,g}\otimes D^{g-i+1,g}\right) \circ \left( 1_{\wedge ^{i}V}\otimes
\varphi _{g-i,1}\right) \text{,} \\
&&b:=\left( -1\right) ^{i\left( g-i-1\right) }\left( g-i\right) \cdot \left(
\varphi _{i,g-i-1}^{13}\otimes 1_{\wedge ^{g}V^{\vee \vee }}\right) \circ
\left( D^{g-i,g}\otimes 1_{\wedge ^{g-i-1}V^{\vee }\otimes \wedge
^{g}V^{\vee \vee }}\right) \circ \left( 1_{\wedge ^{g-i}V}\otimes
D^{i+1,g}\right)  \\
&&\text{ \ \ \ \ \ \ }\circ \left( 1_{\wedge ^{g-i}V}\otimes \varphi
_{i,1}\right) \circ \left( \tau _{\wedge ^{i}V,\wedge ^{g-i}V}\otimes
1_{V}\right)  \\
&&\text{ \ \ }=\left( -1\right) ^{i\left( g-i-1\right) }\left( g-i\right)
\cdot \left( \varphi _{i,g-i-1}^{13}\otimes 1_{\wedge ^{g}V^{\vee \vee
}}\right) \circ \left( D^{g-i,g}\otimes D^{i+1,g}\right)  \\
&&\text{ \ \ \ \ \ \ }\circ \left( 1_{\wedge ^{g-i}V}\otimes \varphi
_{i,1}\right) \circ \left( \tau _{\wedge ^{i}V,\wedge ^{g-i}V}\otimes
1_{V}\right) \text{,} \\
&&\phi :=\left( 1_{V}\otimes ev_{V^{\vee },a}^{g,\tau }\otimes 1_{\wedge
^{g}V^{\vee \vee }}\right) \circ \left( D_{g-1,g}\otimes 1_{\wedge
^{g}V^{\vee \vee }\otimes \wedge ^{g}V^{\vee \vee }}\right) \text{.}
\end{eqnarray*}%
Then we have $\left( \overline{\psi }_{i,i-1}^{V}\otimes 1_{\wedge
^{g}V^{\vee \vee }}\right) \circ \left( 1_{\wedge ^{i}V}\otimes \psi
_{g-i,1}^{V}\right) =\phi \circ a$ and $\left( \overline{\psi }%
_{g-i,g-i-1}^{V}\otimes 1_{\wedge ^{g}V^{\vee \vee }}\right) \circ \left(
1_{\wedge ^{g-i}V}\otimes \psi _{i,1}^{V}\right) \circ \left( \tau _{\wedge
^{i}V,\wedge ^{g-i}V}\otimes 1_{V}\right) =\phi \circ b$. With these
notations, it follows from Proposition \ref{Alternating algebras P2} that we
have, setting $\rho :=r_{\wedge ^{g}V}g\binom{g}{g-i}^{-1}\binom{r-i}{g-i}$%
\footnote{%
The morphism denoted by $\varphi _{g-i,i-1}^{13}$ (resp. $\varphi
_{i,g-i-1}^{13}$)\ in Proposition \ref{Alternating algebras P2} is the one
here denoted by $\varphi _{g-i,i-1}^{13}\otimes 1_{\wedge ^{g}V^{\vee \vee }}
$ (resp. $\varphi _{i,g-i-1}^{13}\otimes 1_{\wedge ^{g}V^{\vee \vee }}$).},%
\begin{equation}
a+b=\rho \cdot \left( 1_{\wedge ^{g-1}V^{\vee }\otimes \wedge ^{g}V^{\vee
\vee }}\otimes i_{\wedge ^{g}V}\right) \circ \left( D^{1,g}\otimes \varphi
_{i,g-i}\right) \circ \tau _{\wedge ^{i}V\otimes \wedge ^{g-i}V,V}\text{.}
\label{Dirac Alternating L1 F1}
\end{equation}%
Hence we find%
\begin{eqnarray*}
&&\left( \overline{\psi }_{i,i-1}^{V}\otimes 1_{\wedge ^{g}V^{\vee \vee
}}\right) \circ \left( 1_{\wedge ^{i}V}\otimes \psi _{g-i,1}^{V}\right)
+\left( \overline{\psi }_{g-i,g-i-1}^{V}\otimes 1_{\wedge ^{g}V^{\vee \vee
}}\right) \circ \left( 1_{\wedge ^{g-i}V}\otimes \psi _{i,1}^{V}\right)
\circ \left( \tau _{\wedge ^{i}V,\wedge ^{g-i}V}\otimes 1_{V}\right)  \\
&&\text{ \ }=\phi \circ \left( a+b\right) \text{ (by }\left( \text{\ref%
{Dirac Alternating L1 F1}}\right) \text{)}=\rho \cdot \phi \circ \left(
1_{\wedge ^{g-1}V^{\vee }\otimes \wedge ^{g}V^{\vee \vee }}\otimes i_{\wedge
^{g}V}\right) \circ \left( D^{1,g}\otimes \varphi _{i,g-i}\right) \circ \tau
_{\wedge ^{i}V\otimes \wedge ^{g-i}V,V} \\
&&\text{ \ }=\rho \cdot \left( 1_{V}\otimes ev_{V^{\vee },a}^{g,\tau
}\otimes 1_{\wedge ^{g}V^{\vee \vee }}\right) \circ \left( D_{g-1,g}\otimes
1_{\wedge ^{g}V^{\vee \vee }\otimes \wedge ^{g}V^{\vee \vee }}\right) \circ
\left( 1_{\wedge ^{g-1}V^{\vee }\otimes \wedge ^{g}V^{\vee \vee }}\otimes
i_{\wedge ^{g}V}\right)  \\
&&\text{ \ \ \ \ \ }\circ \left( D^{1,g}\otimes \varphi _{i,g-i}\right)
\circ \tau _{\wedge ^{i}V\otimes \wedge ^{g-i}V,V} \\
&&\text{ \ }=\rho \cdot \left( 1_{V}\otimes ev_{V^{\vee },a}^{g,\tau
}\otimes 1_{\wedge ^{g}V^{\vee \vee }}\right) \circ \left( D_{g-1,g}\otimes
1_{\wedge ^{g}V^{\vee \vee }\otimes \wedge ^{g}V^{\vee \vee }}\right) \circ
\left( 1_{\wedge ^{g-1}V^{\vee }\otimes \wedge ^{g}V^{\vee \vee }}\otimes
i_{\wedge ^{g}V}\right)  \\
&&\text{ \ \ \ \ \ }\circ \left( D^{1,g}\otimes 1_{\wedge ^{g}V}\right)
\circ \left( 1_{V}\otimes \varphi _{i,g-i}\right) \circ \tau _{\wedge
^{i}V\otimes \wedge ^{g-i}V,V} \\
&&\text{ \ }=\rho \cdot \left( 1_{V}\otimes ev_{V^{\vee },a}^{g,\tau
}\otimes 1_{\wedge ^{g}V^{\vee \vee }}\right) \circ \left( D_{g-1,g}\otimes
1_{\wedge ^{g}V^{\vee \vee }\otimes \wedge ^{g}V^{\vee \vee }}\right) \circ
\left( D^{1,g}\otimes 1_{\wedge ^{g}V^{\vee \vee }}\right)  \\
&&\text{ \ \ \ \ \ }\circ \left( 1_{V}\otimes i_{\wedge ^{g}V}\right) \circ
\left( 1_{V}\otimes \varphi _{i,g-i}\right) \circ \tau _{\wedge ^{i}V\otimes
\wedge ^{g-i}V,V}\text{ (by subsequent }\left( \text{\ref{Dirac Alternating
L1 F2}}\right) \text{)} \\
&&\text{ \ }=\left( -1\right) ^{\left( g-1\right) }\binom{g}{g-1}^{-1}\binom{%
r-1}{g-1}\rho \cdot \left( 1_{V}\otimes i_{\wedge ^{g}V}\right) \circ \left(
1_{V}\otimes \varphi _{i,g-i}\right) \circ \tau _{\wedge ^{i}V\otimes \wedge
^{g-i}V,V} \\
&&\text{ \ }=\left( -1\right) ^{\left( g-1\right) }\binom{g}{g-1}^{-1}\binom{%
r-1}{g-1}\rho \cdot \left( 1_{V}\otimes i_{\wedge ^{g}V}\right) \circ \tau
_{\wedge ^{g}V,V}\circ \left( \varphi _{i,g-i}\otimes 1_{V}\right)  \\
&&\text{ \ }=\left( -1\right) ^{\left( g-1\right) }\binom{g}{g-1}^{-1}\binom{%
r-1}{g-1}\rho \cdot \tau _{\wedge ^{g}V^{\vee \vee },V}\circ \left(
i_{\wedge ^{g}V}\otimes 1_{V}\right) \circ \left( \varphi _{i,g-i}\otimes
1_{V}\right) \text{,}
\end{eqnarray*}%
where we have employed the equality%
\begin{equation}
\left( 1_{V}\otimes ev_{V^{\vee },a}^{g,\tau }\right) \circ \left(
D_{g-1,g}\otimes 1_{\wedge ^{g}V^{\vee \vee }}\right) \circ D^{1,g}=\left(
-1\right) ^{\left( g-1\right) }\binom{g}{g-1}^{-1}\binom{r-1}{g-1}
\label{Dirac Alternating L1 F2}
\end{equation}%
from Theorem \ref{Alternating algebras T} $\left( 1\right) $.
\end{proof}

\bigskip

We now consider the following morphisms. We have%
\begin{equation*}
\psi _{i,g-i-1}^{V}:\wedge ^{i}V\otimes \wedge ^{g-i-1}V\overset{\varphi
_{i,g-i-1}}{\rightarrow }\wedge ^{g-1}V\overset{D^{g-1,g}}{\rightarrow }%
V^{\vee }\otimes \wedge ^{g}V^{\vee \vee }
\end{equation*}%
and%
\begin{eqnarray*}
\overline{\psi }_{g-i,1}^{V} &:&\wedge ^{g-i}V\otimes V^{\vee }\overset{%
D^{g-i,g}\otimes 1_{V^{\vee }}}{\rightarrow }\wedge ^{i}V^{\vee }\otimes
\wedge ^{g}V^{\vee \vee }\otimes V^{\vee }\overset{\varphi _{i,1}^{13}}{%
\rightarrow }\wedge ^{i+1}V^{\vee }\otimes \wedge ^{g}V^{\vee \vee } \\
&&\overset{D_{i+1,g}\otimes 1_{\wedge ^{g}V^{\vee \vee }}}{\rightarrow }%
\wedge ^{g-i-1}V\otimes \wedge ^{g}V^{\vee }\otimes \wedge ^{g}V^{\vee \vee }%
\overset{1_{\wedge ^{g-i-1}V}\otimes ev_{V^{\vee },a}^{g,\tau }}{\rightarrow 
}\wedge ^{g-i-1}V\text{.}
\end{eqnarray*}

On the other hand we have%
\begin{equation*}
\psi _{g-i,i-1}^{V}:\wedge ^{g-i}V\otimes \wedge ^{i-1}V\overset{\varphi
_{g-i,i-1}}{\rightarrow }\wedge ^{g-1}V\overset{D^{g-1,g}}{\rightarrow }%
V^{\vee }\otimes \wedge ^{g}V^{\vee \vee }
\end{equation*}%
and%
\begin{eqnarray*}
\overline{\psi }_{i,1}^{V} &:&\wedge ^{i}V\otimes V^{\vee }\overset{%
D^{i,g}\otimes 1_{V^{\vee }}}{\rightarrow }\wedge ^{g-i}V^{\vee }\otimes
\wedge ^{g}V^{\vee \vee }\otimes V^{\vee }\overset{\varphi _{g-i,1}^{13}}{%
\rightarrow }\wedge ^{g-i+1}V^{\vee }\otimes \wedge ^{g}V^{\vee \vee } \\
&&\overset{D_{g-i+1,g}\otimes 1_{\wedge ^{g}V^{\vee \vee }}}{\rightarrow }%
\wedge ^{i-1}V\otimes \wedge ^{g}V^{\vee }\otimes \wedge ^{g}V^{\vee \vee }%
\overset{1_{\wedge ^{i-1}V}\otimes ev_{V^{\vee },a}^{g,\tau }}{\rightarrow }%
\wedge ^{i-1}V\text{.}
\end{eqnarray*}

The proof of the following result is a bit more involved and we will leave
some of the details to the reader.

\begin{lemma}
\label{Dirac Alternating L2}Setting%
\begin{eqnarray*}
&&\rho _{V^{\vee }}^{g-i,i}:=\left( -1\right) ^{\left( g-1\right) }\binom{g}{%
g-1}^{-1}\binom{g}{g-i}^{-1}\binom{g}{i}^{-1}\binom{r-1}{g-1}\binom{r-i}{g-i}%
\binom{r+i-g}{i}g\text{,} \\
&&\nu _{V^{\vee }}^{i,1}:=\left( -1\right) ^{\left( i+1\right) \left(
g-i\right) }r_{\wedge ^{g}V}\binom{g}{i}^{-1}\binom{r+i-g}{i}i\text{ and} \\
&&\nu _{V^{\vee }}^{g-i,1}:=\left( -1\right) ^{i}\binom{g}{g-i}^{-1}\binom{%
r-i}{g-i}\left( g-i\right) \text{,}
\end{eqnarray*}%
the following diagram is commutative:%
\begin{equation*}
\xymatrix@C=180pt{ \wedge^{g-i}V\otimes\wedge^{i}V\otimes V^{\vee} \ar[r]^-{\left(1_{\wedge^{g-i}V}\otimes\overline{\psi}_{i,1},\left(1_{\wedge^{i}V}\otimes\overline{\psi}_{g-i,1}\right)\circ\left(\tau_{\wedge^{g-i}V,\wedge^{i}V}\otimes1_{V^{\vee}}\right)\right)} \ar[d]|{\varphi_{g-i,i}\otimes1_{V^{\vee}}} & \wedge^{g-i}V\otimes\wedge^{i-1}V\oplus\wedge^{i}V\otimes\wedge^{g-i-1}V \ar[d]^{\nu_{V^{\vee}}^{i,1}\cdot\psi_{g-i,i-1}\oplus\nu_{V^{\vee}}^{g-i,1}\cdot\psi_{i,g-i-1}} \\ \wedge^{g}V\otimes V^{\vee} \ar[r]^-{\rho_{V^{\vee}}^{g-i,i}\cdot\left(1_{V^{\vee}}\otimes i_{\wedge^{g}V}\right)\circ\tau_{\wedge^{g}V,V^{\vee}}} & V^{\vee}\otimes\wedge^{g}V^{\vee\vee}\text{.} }
\end{equation*}
\end{lemma}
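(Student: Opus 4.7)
The plan is to mimic the proof of Lemma \ref{Dirac Alternating L1} verbatim, but working on the dual side. The source $\wedge^{g-i}V \otimes \wedge^{i}V \otimes V^{\vee}$ is transported into the $V^{\vee}$-world by the Poincar\'e duality isomorphisms $D^{g-i,g}$ and $D^{i,g}$ of Corollary \ref{Alternating algebras CT}, so that the second commutative diagram of Proposition \ref{Alternating algebras P2} (the one starting at $\wedge^{i}V^{\vee}\otimes\wedge^{g-i}V^{\vee}\otimes V^{\vee}$) applies directly to control the sum of the two summands of the top row.

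More concretely, I would first expand the definitions of $\overline{\psi}_{i,1}^{V}$, $\overline{\psi}_{g-i,1}^{V}$, $\psi_{g-i,i-1}^{V}$, $\psi_{i,g-i-1}^{V}$ and factor out a common ``tail''
$$\phi := (1_{V^{\vee}} \otimes ev_{V^{\vee},a}^{g,\tau} \otimes 1_{\wedge^{g}V^{\vee\vee}}) \circ (D_{g-1,g} \otimes 1_{\wedge^{g}V^{\vee\vee}\otimes \wedge^{g}V^{\vee\vee}})$$
so that the two summands appearing on the top-right of the target diagram acquire the shape $\phi\circ a$ and $\phi\circ b$ for two morphisms $a,b$ whose sum will be controlled by the second diagram of Proposition \ref{Alternating algebras P2}. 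The scalars $\nu_{V^{\vee}}^{i,1}$ and $\nu_{V^{\vee}}^{g-i,1}$ arise here as the combined coefficients of $(-1)^{g-i}i$, $(-1)^{i(g-i-1)}(g-i)$ from Proposition \ref{Alternating algebras P2} together with the inversion scalars $\binom{g}{i}^{-1}\binom{r+i-g}{i}$, $\binom{g}{g-i}^{-1}\binom{r-i}{g-i}$ from Theorem \ref{Alternating algebras T} $(1)$ that are produced when the various $D^{?,g}$, $D_{?,g}$ are composed and simplified (this is also where the factor $r_{\wedge^{g}V}=r_{\wedge^{g}V^{\vee}}$ enters into $\nu_{V^{\vee}}^{i,1}$).

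Next, I would apply the second diagram of Proposition \ref{Alternating algebras P2} to identify $a+b$ with a single morphism factoring through $\varphi_{i,g-i}^{V^{\vee}}$, up to the invertible factors $\wedge^{g}V^{\vee}$ and a transposition $\tau$. Composing with $\phi$ and using the second diagram of Theorem \ref{Alternating algebras T} $(1)$ (which identifies the tail $(1_{V^{\vee}}\otimes ev_{V^{\vee},a}^{g,\tau}) \circ (D_{g-1,g}\otimes 1)\circ D^{1,g}$ with the scalar $(-1)^{g-1}\binom{g}{g-1}^{-1}\binom{r-1}{g-1}$), and the second square of Theorem \ref{Alternating algebras T} $(2)$ (which identifies $\varphi_{i,g-i}^{V^{\vee}}\circ (D^{g-i,g}\otimes D^{i,g})$ with $\binom{g}{g-i}^{-1}\binom{r-i}{g-i}\cdot \varphi_{g-i,i}^{V}$ up to $i_{\wedge^{g}V}$), yields precisely the scalar $\rho_{V^{\vee}}^{g-i,i}$ and the twist $\tau_{\wedge^{g}V,V^{\vee}}$ on the bottom of the diagram.

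The main obstacle will be the bookkeeping: tracking the many Poincar\'e duality isomorphisms, their signs, and their interaction with the symmetry constraints $\tau$ on the tensor factors, especially since the ``swap'' that Proposition \ref{Alternating algebras P2} introduces lives on the $V^{\vee}$ side while our target diagram lives on the $V$ side, forcing one to conjugate back through the $D$'s. The signs and binomial coefficients must be collected attentively, but the argument is routine by analogy with the proof of Lemma \ref{Dirac Alternating L1}, and indeed the authors announce that some details are left to the reader.
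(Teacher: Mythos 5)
Your overall route coincides with the paper's: transport the composites to the $V^{\vee}$ side through the Poincar\'e duality morphisms, apply the second diagram of Proposition \ref{Alternating algebras P2}, and contract with Theorem \ref{Alternating algebras T} $(1)$--$(2)$ to assemble the scalars. However, two concrete points are wrong or missing. First, the common tail you propose to factor out, $\phi =\left( 1_{V^{\vee }}\otimes ev_{V^{\vee },a}^{g,\tau }\otimes 1\right) \circ \left( D_{g-1,g}\otimes 1\right) $, is the tail of Lemma \ref{Dirac Alternating L1} and does not occur here: in the present lemma the composites have the shape $\psi _{g-i,i-1}^{V}\circ \left( 1_{\wedge ^{g-i}V}\otimes \overline{\psi }_{i,1}^{V}\right) $, whose outermost factor is $D^{g-1,g}$, with the evaluations sitting in the middle. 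The actual mechanism is to insert, on the source side, the scalar identities from Theorem \ref{Alternating algebras T} $(1)$, e.g. $\left( 1_{\wedge ^{g-i}V}\otimes ev_{V^{\vee },a}^{g,\tau }\right) \circ \left( D_{i,g}\otimes 1\right) \circ D^{g-i,g}=\left( -1\right) ^{i\left( g-i\right) }\binom{g}{i}^{-1}\binom{r+i-g}{i}$, so as to rewrite each summand as $D^{g-1,g}\circ \left( 1\otimes ev\otimes ev\right) \circ \left( \cdot \right) \circ \left( D^{g-i,g}\otimes D^{i,g}\otimes 1_{V^{\vee }}\right) $ at the cost of exactly those binomial factors; this, not a common tail $\phi$, is where the binomials in the $\nu _{V^{\vee }}$'s come from.

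Second, and more seriously, you misattribute the factor $r_{\wedge ^{g}V}$ in $\nu _{V^{\vee }}^{i,1}$ to the inversion scalars of Theorem \ref{Alternating algebras T} $(1)$; those contain no $r_{\wedge ^{g}V}$, and no such asymmetry appears in Lemma \ref{Dirac Alternating L1}, so an argument ``by analogy'' cannot produce it. After conjugation the two summands match the two terms $a_{0},b_{0}$ of Proposition \ref{Alternating algebras P2} only up to different permutations of the tensor factors; untangling these permutations leaves an extra braiding $\tau _{\wedge ^{g}V^{\vee \vee },\wedge ^{g}V^{\vee \vee }}$ acting on the \emph{first} summand only, which by \cite[7.2 Lemme]{De} equals $r_{\wedge ^{g}V}$ --- this is exactly why $\nu _{V^{\vee }}^{i,1}$ carries $r_{\wedge ^{g}V}$ while $\nu _{V^{\vee }}^{g-i,1}$ does not. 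Moreover, the coefficient $\rho =r_{\wedge ^{g}V}g\binom{g}{g-i}^{-1}\binom{r-i}{g-i}$ coming from Proposition \ref{Alternating algebras P2} contains $r_{\wedge ^{g}V}$, whereas $\rho _{V^{\vee }}^{g-i,i}$ does not: the cancellation uses $\left( ev_{V^{\vee },a}^{g}\right) ^{-1}=r_{\wedge ^{g}V}\cdot \tau _{\wedge ^{g}V^{\vee },\wedge ^{g}V^{\vee \vee }}\circ C_{\wedge ^{g}V^{\vee }}$, which injects a second $r_{\wedge ^{g}V}$ when the bottom row is simplified. Without tracking these two appearances of the self-braiding of the invertible object $\wedge ^{g}V^{\vee \vee }$, your plan cannot recover the stated coefficients; the permutation bookkeeping is precisely the non-routine content of this lemma, not a repetition of the computation in Lemma \ref{Dirac Alternating L1}.
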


\begin{proof}
By definition%
\begin{eqnarray}
&&\psi _{g-i,i-1}^{V}\circ \left( 1_{\wedge ^{i}V}\otimes \overline{\psi }%
_{i,1}^{V}\right) =D^{g-1,g}\circ \varphi _{g-i,i-1}\circ \left( 1_{\wedge
^{g-i}V\otimes \wedge ^{i-1}V}\otimes ev_{V^{\vee },a}^{g,\tau }\right)  
\notag \\
&&\text{ \ \ \ \ }\circ \left( 1_{\wedge ^{g-i}V}\otimes D_{g-i+1,g}\otimes
1_{\wedge ^{g}V^{\vee \vee }}\right) \circ \left( 1_{\wedge ^{g-i}V}\otimes
\varphi _{g-i,1}^{13}\right) \circ \left( 1_{\wedge ^{g-i}V}\otimes
D^{i,g}\otimes 1_{V^{\vee }}\right) \text{,}
\label{Dirac Alternating L1 F1 def} \\
&&\psi _{i,g-i-1}^{V}\circ \left( 1_{\wedge ^{i}V}\otimes \overline{\psi }%
_{g-i,1}^{V}\right) \circ \left( \tau _{\wedge ^{g-i}V,\wedge ^{i}V}\otimes
1_{V^{\vee }}\right) =D^{g-1,g}\circ \varphi _{i,g-i-1}\circ \left(
1_{\wedge ^{i}V\otimes \wedge ^{g-i-1}V}\otimes ev_{V^{\vee },a}^{g,\tau
}\right)   \notag \\
&&\text{ \ \ \ \ }\circ \left( 1_{\wedge ^{i}V}\otimes D_{i+1,g}\otimes
1_{\wedge ^{g}V^{\vee \vee }}\right) \circ \left( 1_{\wedge ^{i}V}\otimes
\varphi _{i,1}^{13}\right) \circ \left( 1_{\wedge ^{i}V}\otimes
D^{g-i,g}\otimes 1_{V^{\vee }}\right) \circ \left( \tau _{\wedge
^{g-i}V,\wedge ^{i}V}\otimes 1_{V^{\vee }}\right) \text{.}
\label{Dirac Alternating L1 F2 def}
\end{eqnarray}%
It follows from Theorem \ref{Alternating algebras T} $\left( 1\right) $ that
we have, setting $\mu _{g-i,g}:=\binom{g}{g-i}^{-1}\binom{r-i}{g-i}$ and $%
\mu _{i,g}:=\binom{g}{i}^{-1}\binom{r+i-g}{i}$:%
\begin{eqnarray}
\left( -1\right) ^{i\left( g-i\right) }\mu _{i,g}\cdot 1_{\wedge ^{g-i}V}
&=&\left( 1_{\wedge ^{g-i}V}\otimes ev_{V^{\vee },a}^{g,\tau }\right) \circ
\left( D_{i,g}\otimes 1_{\wedge ^{g}V^{\vee \vee }}\right) \circ D^{g-i,g}%
\text{,}  \label{Dirac Alternating L2 F1} \\
\left( -1\right) ^{i\left( g-i\right) }\mu _{g-i,g}\cdot 1_{\wedge ^{i}V}
&=&\left( 1_{\wedge ^{i}V}\otimes ev_{V^{\vee },a}^{g,\tau }\right) \circ
\left( D_{g-i,g}\otimes 1_{\wedge ^{g}V^{\vee \vee }}\right) \circ D^{i,g}%
\text{.}  \label{Dirac Alternating L2 F2}
\end{eqnarray}%
Inserting $\left( \text{\ref{Dirac Alternating L2 F1}}\right) $ in the
definition $\left( \text{\ref{Dirac Alternating L1 F1 def}}\right) $, one
checks that:%
\begin{eqnarray}
&&\left( -1\right) ^{i\left( g-i\right) }\left( -1\right) ^{g-i}\mu
_{i,g}i\cdot \psi _{g-i,i-1}^{V}\circ \left( 1_{\wedge ^{i}V}\otimes 
\overline{\psi }_{i,1}^{V}\right)   \notag \\
&&\text{ }=D^{g-1,g}\circ \left( 1_{\wedge ^{g-1}V}\otimes ev_{V^{\vee
},a}^{g,\tau }\otimes ev_{V^{\vee },a}^{g,\tau }\right) \circ a\circ \left(
D^{g-i,g}\otimes D^{i,g}\otimes 1_{V^{\vee }}\right) \text{,}
\label{Dirac Alternating L2 F3}
\end{eqnarray}%
where%
\begin{eqnarray*}
&&a:=\left( -1\right) ^{g-i}i\cdot \left( \varphi _{g-i,i-1}\otimes
1_{\wedge ^{g}V^{\vee }\otimes \wedge ^{g}V^{\vee \vee }\otimes \wedge
^{g}V^{\vee }\otimes \wedge ^{g}V^{\vee \vee }}\right)  \\
&&\text{ \ \ }\circ \left( 1_{\wedge ^{g-i}V}\otimes \tau _{\wedge
^{g}V^{\vee }\otimes \wedge ^{g}V^{\vee \vee },\wedge ^{i-1}V}\otimes
1_{\wedge ^{g}V^{\vee }\otimes \wedge ^{g}V^{\vee \vee }}\right)  \\
&&\text{ \ \ }\circ \left( D_{i,g}\otimes 1_{\wedge ^{g}V^{\vee \vee
}}\otimes D_{g-i+1,g}\otimes 1_{\wedge ^{g}V^{\vee \vee }}\right) \circ
\left( 1_{\wedge ^{i}V^{\vee }\otimes \wedge ^{g}V^{\vee \vee }}\otimes
\varphi _{g-i,1}^{13}\right) 
\end{eqnarray*}%
Similarly, inserting $\left( \text{\ref{Dirac Alternating L2 F2}}\right) $
in the definition $\left( \text{\ref{Dirac Alternating L1 F2 def}}\right) $,
one finds:%
\begin{eqnarray}
&&\left( -1\right) ^{i\left( g-i\right) }\left( -1\right) ^{i\left(
g-i-1\right) }\mu _{g-i,g}\left( g-i\right) \cdot \psi _{i,g-i-1}^{V}\circ
\left( 1_{\wedge ^{i}V}\otimes \overline{\psi }_{g-i,1}^{V}\right) \circ
\left( \tau _{\wedge ^{g-i}V,\wedge ^{i}V}\otimes 1_{V^{\vee }}\right)  
\notag \\
&&\text{ }=D^{g-1,g}\circ \left( 1_{\wedge ^{g-1}V}\otimes ev_{V^{\vee
},a}^{g,\tau }\otimes ev_{V^{\vee },a}^{g,\tau }\right) \circ b\circ \left(
D^{g-i,g}\otimes D^{i,g}\otimes 1_{V^{\vee }}\right) 
\label{Dirac Alternating L2 F4}
\end{eqnarray}%
where%
\begin{eqnarray*}
&&b:=\left( -1\right) ^{i\left( g-i-1\right) }\left( g-i\right) \cdot \left(
\varphi _{i,g-i-1}\otimes 1_{\wedge ^{g}V^{\vee }\otimes \wedge ^{g}V^{\vee
\vee }\otimes \wedge ^{g}V^{\vee }\otimes \wedge ^{g}V^{\vee \vee }}\right) 
\\
&&\text{ \ \ \ \ \ }\circ \left( 1_{\wedge ^{i}V}\otimes \tau _{\wedge
^{g}V^{\vee }\otimes \wedge ^{g}V^{\vee \vee },\wedge ^{g-i-1}V}\otimes
1_{\wedge ^{g}V^{\vee }\otimes \wedge ^{g}V^{\vee \vee }}\right) \circ
\left( D_{g-i,g}\otimes 1_{\wedge ^{g}V^{\vee \vee }}\otimes
D_{i+1,g}\otimes 1_{\wedge ^{g}V^{\vee \vee }}\right)  \\
&&\text{ \ \ \ \ \ }\circ \left( 1_{\wedge ^{g-i}V^{\vee }\otimes \wedge
^{g}V^{\vee \vee }}\otimes \varphi _{i,1}^{13}\right) \circ \left( \tau
_{\wedge ^{i}V^{\vee }\otimes \wedge ^{g}V^{\vee \vee },\wedge ^{g-i}V^{\vee
}\otimes \wedge ^{g}V^{\vee \vee }}\otimes 1_{V^{\vee }}\right) 
\end{eqnarray*}%
and we have used a similar commutative diagram in the last equality.

By Proposition \ref{Alternating algebras P2} (second diagram)\ we have,
setting $\rho :=r_{\wedge ^{g}V}g\binom{g}{g-i}^{-1}\binom{r-i}{g-i}$,%
\begin{equation}
\rho \cdot \left( D_{1,g}\otimes \varphi _{i,g-i}\right) \circ \tau _{\wedge
^{i}V^{\vee }\otimes \wedge ^{g-i}V^{\vee },V^{\vee }}=a_{0}+b_{0}\text{,}
\label{Dirac Alternating L2 F5}
\end{equation}%
where%
\begin{eqnarray*}
&&a_{0}:=\left( -1\right) ^{g-i}i\cdot \left( \varphi _{g-i,i-1}\otimes
1_{\wedge ^{g}V^{\vee }\otimes \wedge ^{g}V^{\vee }}\right) \circ \left(
1_{\wedge ^{g-i}V}\otimes \tau _{\wedge ^{g}V^{\vee },\wedge ^{i-1}V}\otimes
1_{\wedge ^{g}V^{\vee }}\right) \\
&&\text{ \ \ \ \ \ \ }\circ \left( D_{i,g}\otimes D_{g-i+1,g}\right) \circ
\left( 1_{\wedge ^{i}V^{\vee }}\otimes \varphi _{g-i,1}\right) \text{,} \\
&&b_{0}:=\left( -1\right) ^{i\left( g-i-1\right) }\left( g-i\right) \cdot
\left( \varphi _{i,g-i-1}\otimes 1_{\wedge ^{g}V^{\vee }\otimes \wedge
^{g}V^{\vee }}\right) \circ \left( 1_{\wedge ^{i}V}\otimes \tau _{\wedge
^{g}V^{\vee },\wedge ^{g-i-1}V}\otimes 1_{\wedge ^{g}V^{\vee }}\right) \\
&&\text{ \ \ \ \ \ \ }\circ \left( D_{g-i,g}\otimes D_{i+1,g}\right) \circ
\left( 1_{\wedge ^{g-i}V^{\vee }}\otimes \varphi _{i,1}\right) \circ \left(
\tau _{\wedge ^{i}V^{\vee },\wedge ^{g-i}V^{\vee }}\otimes 1_{V^{\vee
}}\right) \text{.}
\end{eqnarray*}

Consider the morphism%
\begin{equation*}
\tau _{\left( 235\right) }:\wedge ^{i}V^{\vee }\otimes \wedge ^{g-i}V^{\vee
}\otimes V^{\vee }\otimes \wedge ^{g}V^{\vee \vee }\otimes \wedge
^{g}V^{\vee \vee }\rightarrow \wedge ^{i}V^{\vee }\otimes \wedge ^{g}V^{\vee
\vee }\otimes \wedge ^{g-i}V^{\vee }\otimes \wedge ^{g}V^{\vee \vee }\otimes
V^{\vee }
\end{equation*}%
attached to the permutation $\left( 235\right) $. After a tedious
computation one can verify the following relations:%
\begin{eqnarray}
&&\tau _{\left( 35\right) }\circ a\circ \tau _{\left( 235\right) }=\tau
_{\left( 34\right) }\circ \left( a_{0}\otimes 1_{\wedge ^{g}V^{\vee \vee
}\otimes \wedge ^{g}V^{\vee \vee }}\right) \text{,}
\label{Dirac Alternating L2 F5 a} \\
&&b\circ \tau _{\left( 235\right) }=\tau _{\left( 34\right) }\circ \left(
b_{0}\otimes 1_{\wedge ^{g}V^{\vee \vee }\otimes \wedge ^{g}V^{\vee \vee
}}\right) \text{,}  \label{Dirac Alternating L2 F5 b} \\
&&\tau _{\left( 345\right) }\circ \left( D_{1,g}\otimes \varphi
_{i,g-i}^{13}\right) \circ \tau _{\wedge ^{i}V^{\vee }\otimes \wedge
^{g}V^{\vee \vee }\otimes \wedge ^{g-i}V^{\vee }\otimes \wedge ^{g}V^{\vee
\vee },V^{\vee }}\circ \tau _{\left( 235\right) }=\tau _{\left( 34\right)
}\circ \left( c_{0}\otimes 1_{\wedge ^{g}V^{\vee \vee }\otimes \wedge
^{g}V^{\vee \vee }}\right) \text{.}  \label{Dirac Alternating L2 F5 c}
\end{eqnarray}

Thanks to $\left( \text{\ref{Dirac Alternating L2 F5 a}}\right) $, $\left( 
\text{\ref{Dirac Alternating L2 F5 b}}\right) $ and $\left( \text{\ref{Dirac
Alternating L2 F5 c}}\right) $, the equality $\left( \text{\ref{Dirac
Alternating L2 F5}}\right) $ gives%
\begin{equation}
\rho \cdot \tau _{\left( 345\right) }\circ \left( D_{1,g}\otimes \varphi
_{i,g-i}^{13}\right) \circ \tau _{\wedge ^{i}V^{\vee }\otimes \wedge
^{g}V^{\vee \vee }\otimes \wedge ^{g-i}V^{\vee }\otimes \wedge ^{g}V^{\vee
\vee },V^{\vee }}=\tau _{\left( 35\right) }\circ a+b\text{.}
\label{Dirac Alternating L2 F6}
\end{equation}%
Finally, we need to remark that we have the following commutative diagram
(by a computation of the involved permutations):%
\begin{equation*}
\xymatrix{ \wedge^{g-1}V\otimes\wedge^{g}V^{\vee}\otimes\wedge^{g}V^{\vee\vee}\otimes\wedge^{g}V^{\vee}\otimes\wedge^{g}V^{\vee\vee} \ar[r]^-{\tau_{\left(35\right)}} \ar[d]|{1_{\wedge^{g-1}V\otimes\wedge^{g}V^{\vee}}\otimes\tau_{\wedge^{g}V^{\vee\vee},\wedge^{g}V^{\vee}}\otimes1_{\wedge^{g}V^{\vee\vee}}} & \wedge^{g-1}V\otimes\wedge^{g}V^{\vee}\otimes\wedge^{g}V^{\vee\vee}\otimes\wedge^{g}V^{\vee}\otimes\wedge^{g}V^{\vee\vee} \ar[d]|{1_{\wedge^{g-1}V\otimes\wedge^{g}V^{\vee}}\otimes\tau_{\wedge^{g}V^{\vee\vee},\wedge^{g}V^{\vee}}\otimes1_{\wedge^{g}V^{\vee\vee}}} \\ \wedge^{g-1}V\otimes\wedge^{g}V^{\vee}\otimes\wedge^{g}V^{\vee}\otimes\wedge^{g}V^{\vee\vee}\otimes\wedge^{g}V^{\vee\vee} \ar[r]^-{\tau_{\wedge^{g}V^{\vee\vee},\wedge^{g}V^{\vee\vee}}} & \wedge^{g-1}V\otimes\wedge^{g}V^{\vee}\otimes\wedge^{g}V^{\vee}\otimes\wedge^{g}V^{\vee\vee}\otimes\wedge^{g}V^{\vee\vee}\text{.} }
\end{equation*}%
Since $\wedge ^{g}V^{\vee \vee }$ is invertible, it follows from \cite[7.2
Lemme]{De}$\ $that we have $\tau _{\wedge ^{g}V^{\vee \vee },\wedge
^{g}V^{\vee \vee }}=r_{\wedge ^{g}V^{\vee \vee }}=r_{\wedge ^{g}V}$ in the
above diagram, implying that $\tau _{\left( 35\right) }=r_{\wedge ^{g}V}$ as
well. Hence $\left( \text{\ref{Dirac Alternating L2 F6}}\right) $ becomes%
\begin{equation}
\rho \cdot \tau _{\left( 345\right) }\circ \left( D_{1,g}\otimes \varphi
_{i,g-i}^{13}\right) \circ \tau _{\wedge ^{i}V^{\vee }\otimes \wedge
^{g}V^{\vee \vee }\otimes \wedge ^{g-i}V^{\vee }\otimes \wedge ^{g}V^{\vee
\vee },V^{\vee }}=r_{\wedge ^{g}V}\cdot a+b\text{.}
\label{Dirac Alternating L2 F7}
\end{equation}

We can now compute:%
\begin{eqnarray}
&&\left( -1\right) ^{i\left( g-i\right) }\left( -1\right) ^{g-i}r_{\wedge
^{g}V}\mu _{i,g}i\cdot \psi _{g-i,i-1}^{V}\circ \left( 1_{\wedge
^{i}V}\otimes \overline{\psi }_{i,1}^{V}\right)   \notag \\
&&+\left( -1\right) ^{i\left( g-i\right) }\left( -1\right) ^{i\left(
g-i-1\right) }\mu _{g-i,g}\left( g-i\right) \cdot \psi _{i,g-i-1}^{V}\circ
\left( 1_{\wedge ^{i}V}\otimes \overline{\psi }_{g-i,1}^{V}\right)   \notag
\\
&&\text{ \ \ \ \ }\circ \left( \tau _{\wedge ^{g-i}V,\wedge ^{i}V}\otimes
1_{V^{\vee }}\right) \text{ (by }\left( \text{\ref{Dirac Alternating L2 F3}}%
\right) \text{ and }\left( \text{\ref{Dirac Alternating L2 F4}}\right) \text{%
)}  \notag \\
&&\text{ }=D^{g-1,g}\circ \left( 1_{\wedge ^{g-1}V}\otimes ev_{V^{\vee
},a}^{g,\tau }\otimes ev_{V^{\vee },a}^{g,\tau }\right) \circ r_{\wedge
^{g}V}\cdot a\circ \left( D^{g-i,g}\otimes D^{i,g}\otimes 1_{V^{\vee
}}\right)   \notag \\
&&\text{ \ \ \ \ }+D^{g-1,g}\circ \left( 1_{\wedge ^{g-1}V}\otimes
ev_{V^{\vee },a}^{g,\tau }\otimes ev_{V^{\vee },a}^{g,\tau }\right) \circ
b\circ \left( D^{g-i,g}\otimes D^{i,g}\otimes 1_{V^{\vee }}\right)   \notag
\\
&&\text{ }=D^{g-1,g}\circ \left( 1_{\wedge ^{g-1}V}\otimes ev_{V^{\vee
},a}^{g,\tau }\otimes ev_{V^{\vee },a}^{g,\tau }\right) \circ \left(
r_{\wedge ^{g}V}\cdot a+b\right) \circ \left( D^{g-i,g}\otimes
D^{i,g}\otimes 1_{V^{\vee }}\right) \text{ (by }\left( \text{\ref{Dirac
Alternating L2 F7}}\right) \text{)}  \notag \\
&&\text{ }=\rho \cdot D^{g-1,g}\circ \left( 1_{\wedge ^{g-1}V}\otimes
ev_{V^{\vee },a}^{g,\tau }\otimes ev_{V^{\vee },a}^{g,\tau }\right) \circ
\tau _{\left( 345\right) }\circ \left( D_{1,g}\otimes \varphi
_{i,g-i}^{13}\right)   \notag \\
&&\text{ \ \ \ \ }\circ \tau _{\wedge ^{i}V^{\vee }\otimes \wedge
^{g}V^{\vee \vee }\otimes \wedge ^{g-i}V^{\vee }\otimes \wedge ^{g}V^{\vee
\vee },V^{\vee }}\circ \left( D^{g-i,g}\otimes D^{i,g}\otimes 1_{V^{\vee
}}\right) \text{ (by a formal computation)}  \notag \\
&&\text{ }=\rho \cdot D^{g-1,g}\circ \left( 1_{\wedge ^{g-1}V}\otimes
ev_{V^{\vee },a}^{g,\tau }\otimes ev_{V^{\vee },a}^{g,\tau }\right) \circ
\left( D_{1,g}\otimes 1_{\wedge ^{g}V^{\vee \vee }\otimes \wedge ^{g}V^{\vee
}\otimes \wedge ^{g}V^{\vee \vee }}\right) \circ \tau _{\left( 234\right) } 
\notag \\
&&\text{ \ \ \ \ }\circ \left( 1_{V^{\vee }}\otimes \varphi
_{i,g-i}^{13}\right) \circ \left( 1_{V^{\vee }}\otimes D^{g-i,g}\otimes
D^{i,g}\right) \circ \tau _{\wedge ^{g-i}V\otimes \wedge ^{i}V,V^{\vee }}%
\text{.}  \label{Dirac Alternating L2 F8}
\end{eqnarray}%
We remark that we have, by definition, $r_{\wedge ^{g}V^{\vee }}=ev_{V^{\vee
},a}^{g}\circ \tau _{\wedge ^{g}V^{\vee },\wedge ^{g}V^{\vee \vee }}\circ
C_{\wedge ^{g}V^{\vee }}$ and, since $\wedge ^{g}V^{\vee }$ is invertible, $%
r_{\wedge ^{g}V}=r_{\wedge ^{g}V^{\vee }}=r_{\wedge ^{g}V^{\vee }}^{-1}$ and
we deduce $\left( ev_{V^{\vee },a}^{g}\right) ^{-1}=r_{\wedge ^{g}V}\cdot
\tau _{\wedge ^{g}V^{\vee },\wedge ^{g}V^{\vee \vee }}\circ C_{\wedge
^{g}V^{\vee }}$. This gives the first of the subsequent equalities, while
the second follows from a standard property of the Casimir element:%
\begin{eqnarray}
&&\left( 1_{\wedge ^{g}V^{\vee \vee }}\otimes ev_{V^{\vee },a}^{g,\tau
}\right) \circ \left( \left( ev_{V^{\vee },a}^{g}\right) ^{-1}\otimes
1_{\wedge ^{g}V^{\vee \vee }}\right)   \notag \\
&&\text{ \ \ \ }=r_{\wedge ^{g}V}\cdot \left( 1_{\wedge ^{g}V^{\vee \vee
}}\otimes ev_{V^{\vee },a}^{g,\tau }\right) \circ \left( \tau _{\wedge
^{g}V^{\vee },\wedge ^{g}V^{\vee \vee }}\otimes 1_{\wedge ^{g}V^{\vee \vee
}}\right) \circ \left( C_{\wedge ^{g}V^{\vee }}\otimes 1_{\wedge ^{g}V^{\vee
\vee }}\right)   \notag \\
&&\text{ \ \ \ }=r_{\wedge ^{g}V}\cdot 1_{\wedge ^{g}V^{\vee \vee }}\text{.}
\label{Dirac Alternating L2 F9}
\end{eqnarray}%
Thanks to Theorem \ref{Alternating algebras T} $\left( 1\right) $, we know
that $\left( 1_{V^{\vee }}\otimes ev_{V^{\vee }}^{g}\right) \circ \left(
D^{g-1,g}\otimes 1_{\wedge ^{g}V^{\vee }}\right) \circ D_{1,g}=\left(
-1\right) ^{\left( g-1\right) }\mu _{g-1,g}$ with $\mu _{g-1,g}:=\binom{g}{%
g-1}^{-1}\binom{r-1}{g-1}$. Employing this relation in the second of the
subsequent equalities, we find%
\begin{eqnarray}
&&D^{g-1,g}\circ \left( 1_{\wedge ^{g-1}V}\otimes ev_{V^{\vee },a}^{g,\tau
}\otimes ev_{V^{\vee },a}^{g,\tau }\right) \circ \left( D_{1,g}\otimes
1_{\wedge ^{g}V^{\vee \vee }\otimes \wedge ^{g}V^{\vee }\otimes \wedge
^{g}V^{\vee \vee }}\right)   \notag \\
&&\text{ }=\left( 1_{V^{\vee }\otimes \wedge ^{g}V^{\vee \vee }}\otimes
ev_{V^{\vee },a}^{g,\tau }\otimes ev_{V^{\vee },a}^{g,\tau }\right) \circ
\left( D^{g-1,g}\otimes 1_{\wedge ^{g}V^{\vee }\otimes \wedge ^{g}V^{\vee
\vee }\otimes \wedge ^{g}V^{\vee }\otimes \wedge ^{g}V^{\vee \vee }}\right) 
\notag \\
&&\text{ \ \ \ \ }\circ \left( D_{1,g}\otimes 1_{\wedge ^{g}V^{\vee \vee
}\otimes \wedge ^{g}V^{\vee }\otimes \wedge ^{g}V^{\vee \vee }}\right)  
\notag \\
&&\text{ }=\left( -1\right) ^{\left( g-1\right) }\mu _{g-1,g}\cdot \left(
1_{V^{\vee }\otimes \wedge ^{g}V^{\vee \vee }}\otimes ev_{V^{\vee
},a}^{g,\tau }\otimes ev_{V^{\vee },a}^{g,\tau }\right) \circ \left(
1_{V^{\vee }}\otimes \left( ev_{V^{\vee }}^{g}\right) ^{-1}\otimes 1_{\wedge
^{g}V^{\vee \vee }\otimes \wedge ^{g}V^{\vee }\otimes \wedge ^{g}V^{\vee
\vee }}\right)   \notag \\
&&\text{ }=\left( -1\right) ^{\left( g-1\right) }\mu _{g-1,g}\cdot \left(
1_{V^{\vee }\otimes \wedge ^{g}V^{\vee \vee }}\otimes ev_{V^{\vee
},a}^{g,\tau }\right) \circ \left( 1_{V^{\vee }}\otimes \left( ev_{V^{\vee
},a}^{g}\right) ^{-1}\otimes 1_{\wedge ^{g}V^{\vee \vee }}\otimes
ev_{V^{\vee },a}^{g,\tau }\right)   \notag \\
&&\text{ }=\left( -1\right) ^{\left( g-1\right) }\mu _{g-1,g}\cdot \left(
1_{V^{\vee }\otimes \wedge ^{g}V^{\vee \vee }}\otimes ev_{V^{\vee
},a}^{g,\tau }\right) \circ \left( 1_{V^{\vee }}\otimes \left( ev_{V^{\vee
},a}^{g}\right) ^{-1}\otimes 1_{\wedge ^{g}V^{\vee \vee }}\right)   \notag \\
&&\text{ \ \ \ \ }\circ \left( 1_{V^{\vee }\otimes \wedge ^{g}V^{\vee \vee
}}\otimes ev_{V^{\vee },a}^{g,\tau }\right) \text{ (by }\left( \text{\ref%
{Dirac Alternating L2 F9}}\right) \text{)}  \notag \\
&&\text{ }=\left( -1\right) ^{\left( g-1\right) }\mu _{g-1,g}r_{\wedge
^{g}V}\cdot 1_{V^{\vee }\otimes \wedge ^{g}V^{\vee \vee }}\otimes
ev_{V^{\vee },a}^{g,\tau }\text{.}  \label{Dirac Alternating L2 F10}
\end{eqnarray}%
We also have, thanks to the relation $\varphi _{i,g-i}^{13\rightarrow \wedge
^{g}V^{\vee \vee }}\circ \left( D^{g-i,g}\otimes D^{i,g}\right) =\mu
_{i,g}\cdot i_{\wedge ^{g}V}\circ \varphi _{g-i,i}$ with $\mu _{i,g}:=\binom{%
g}{i}^{-1}\binom{r+i-g}{i}$\ arising from Theorem \ref{Alternating algebras
T} $\left( 2\right) $:%
\begin{eqnarray}
&&\left( 1_{V^{\vee }\otimes \wedge ^{g}V^{\vee \vee }}\otimes ev_{V^{\vee
},a}^{g,\tau }\right) \circ \tau _{\left( 234\right) }\circ \left(
1_{V^{\vee }}\otimes \varphi _{i,g-i}^{13}\right) \circ \left( 1_{V^{\vee
}}\otimes D^{g-i,g}\otimes D^{i,g}\right)   \notag \\
&&\text{ }=\left( 1_{V^{\vee }}\otimes ev_{V^{\vee },a}^{g,\tau }\otimes
1_{\wedge ^{g}V^{\vee \vee }}\right) \circ \left( 1_{V^{\vee }}\otimes
\varphi _{i,g-i}^{13}\right) \circ \left( 1_{V^{\vee }}\otimes
D^{g-i,g}\otimes D^{i,g}\right)   \notag \\
&&\text{ }=\left( 1_{V^{\vee }}\otimes \varphi _{i,g-i}^{13\rightarrow
\wedge ^{g}V^{\vee \vee }}\right) \circ \left( 1_{V^{\vee }}\otimes
D^{g-i,g}\otimes D^{i,g}\right) =\mu _{i,g}\cdot \left( 1_{V^{\vee }}\otimes
i_{\wedge ^{g}V}\right) \circ \left( 1_{V^{\vee }}\otimes \varphi
_{g-i,i}\right) \text{.}  \label{Dirac Alternating L2 F11}
\end{eqnarray}%
Inserting $\left( \text{\ref{Dirac Alternating L2 F10}}\right) $ and $\left( 
\text{\ref{Dirac Alternating L2 F11}}\right) $ in $\left( \text{\ref{Dirac
Alternating L2 F8}}\right) $ gives the claim after a small computation.
\end{proof}

\subsection{\label{Subsection LD Alternating}Laplace and Dirac operators}

We now specialize the above discussion to the case $g=2i$, i.e. $i=g-i$, and
we simply write $L$ for the invertible object $\wedge ^{g}V^{\vee \vee }$
and set $L^{-1}:=\wedge ^{g}V^{\vee }$. We write \textrm{Alt}$^{n}\left(
M\right) :=\wedge ^{n}M$ and \textrm{Sym}$^{n}\left( M\right) :=\vee ^{n}M$
when $M$ is an alternating power of $V$. Attached to the multiplication map $%
\wedge ^{i}V\otimes \wedge ^{i}V\overset{\varphi _{i,i}}{\rightarrow }\wedge
^{g}V\overset{i_{\wedge ^{g}V}}{\rightarrow }L$ there are the Laplace
operators%
\begin{align*}
\Delta _{i_{\wedge ^{g}V}\circ \varphi _{i,i},a}^{n} &:\mathrm{Alt}%
^{n}\left( \wedge ^{i}V\right) \rightarrow \mathrm{Alt}^{n-2}\left( \wedge
^{i}V\right) \otimes L\text{,} &
\Delta _{i_{\wedge ^{g}V}\circ \varphi _{i,i},s}^{n} &:\mathrm{Sym}%
^{n}\left( \wedge ^{i}V\right) \rightarrow \mathrm{Sym}^{n-2}\left( \wedge
^{i}V\right) \otimes L
\end{align*}%
and, since $\varphi _{i,i}\circ \tau _{\wedge ^{i}V,\wedge ^{i}V}=\left(
-1\right) ^{i}\varphi _{i,i}$, by Lemma \ref{Dirac L1}\ we have $\Delta
_{i_{\wedge ^{g}V}\circ \varphi _{i,i},s}^{n}=0$ (resp. $\Delta _{i_{\wedge
^{g}V}\circ \varphi _{i,i},a}^{n}=0$) when $i$ is odd (resp. $i$ is even).
Hence, we set $\Delta ^{n}:=\Delta _{i_{\wedge ^{g}V}\circ \varphi
_{i,i},a}^{n}$ (resp. $\Delta ^{n}:=\Delta _{i_{\wedge ^{g}V}\circ \varphi
_{i,i},s}^{n}$) when $i$ is odd (resp. even).

Looking at the pairings defined before Lemma \ref{Dirac Alternating L1}, we
note that we have $\psi _{i,1}^{V}=\psi _{g-i,1}^{V}$ and $\overline{\psi }%
_{g-i,g-i-1}^{V}=\overline{\psi }_{i,i-1}^{V}$, while looking at the
pairings defined before \ref{Dirac Alternating L2}, we remark the equalities 
$\psi _{i,g-i-1}^{V}=\psi _{g-i,i-1}^{V}$ and $\overline{\psi }_{g-i,1}^{V}=%
\overline{\psi }_{i,1}^{V}$.

\bigskip

Suppose first that $i$ is odd. Then we define the following Dirac operators,
for every integer $n\geq 1$:%
\begin{equation*}
\begin{array}{llll}
\partial _{1}^{n}:=\partial _{\psi _{g-i,1}^{V},a}^{n}: & \mathrm{Alt}%
^{n}\left( \wedge ^{i}V\right) \otimes V & \rightarrow & \mathrm{Alt}%
^{n-1}\left( \wedge ^{i}V\right) \otimes \wedge ^{i-1}V^{\vee }\otimes L%
\text{,} \\ 
\overline{\partial }_{i-1}^{n}:=\partial _{\overline{\psi }%
_{i,i-1}^{V},a}^{n}: & \mathrm{Alt}^{n}\left( \wedge ^{i}V\right) \otimes
\wedge ^{i-1}V^{\vee } & \rightarrow & \mathrm{Alt}^{n-1}\left( \wedge
^{i}V\right) \otimes V\text{,} \\ 
\partial _{i-1}^{n}:=\partial _{\psi _{g-i,i-1}^{V},a}^{n}: & \mathrm{Alt}%
^{n}\left( \wedge ^{i}V\right) \otimes \wedge ^{i-1}V & \rightarrow & 
\mathrm{Alt}^{n-1}\left( \wedge ^{i}V\right) \otimes V^{\vee }\otimes L\text{%
,} \\ 
\overline{\partial }_{1}^{n}:=\partial _{\overline{\psi }_{i,1}^{V},a}^{n}:
& \mathrm{Alt}^{n}\left( \wedge ^{i}V\right) \otimes V^{\vee } & \rightarrow
& \mathrm{Alt}^{n-1}\left( \wedge ^{i}V\right) \otimes \wedge ^{i-1}V\text{.}%
\end{array}%
\end{equation*}

\begin{theorem}
\label{Dirac Alternating T1}Suppose that $i$ is odd, so that $\Delta ^{n}:%
\mathrm{Alt}^{n}\left( \wedge ^{i}V\right) \rightarrow \mathrm{Alt}%
^{n-2}\left( \wedge ^{i}V\right) \otimes L$ and set%
\begin{eqnarray*}
\rho ^{i}:=\left( -1\right) ^{i+1}r_{L}\binom{g}{g-1}^{-1}\binom{g}{i}^{-1}%
\binom{r-1}{g-1}\binom{r-i}{i}\frac{g}{i} =r_{L}\binom{g}{g-1}^{-1}\binom{g}{i}^{-1}\binom{r-1}{g-1}\binom{%
r-i}{i}\frac{g}{i}\text{.}
\end{eqnarray*}

\begin{itemize}
\item[$\left( 1\right) $] The following diagram is commutative:%
\begin{equation*}
\xymatrix{ \mathrm{Sym}^{n}\left(\wedge^{i}V\right)\otimes V \ar[r]^-{\partial_{1}^{n}} \ar[d]_{\Delta^{n}\otimes1_{V}} & \mathrm{Sym}^{n-1}\left(\wedge^{i}V\right)\otimes\wedge^{i-1}V^{\vee}\otimes L \ar[d]^{\overline{\partial}_{i-1}^{n-1}\otimes1_{L}} \\ \mathrm{Sym}^{n-2}\left(\wedge^{i}V\right)\otimes L\otimes V \ar[r]_-{\frac{\rho^{i}}{2}\cdot1_{\mathrm{Sym}^{n-2}\left(\wedge^{i}V\right)}\otimes\tau_{L,V}} & \mathrm{Sym}^{n-2}\left(\wedge^{i}V\right)\otimes V\otimes L\text{.} }
\end{equation*}

\item[$\left( 2\right) $] When $r_{L}=1$, the first of the following
diagrams is commutative and it becomes equivalent to the second diagram when
we further assume that $\binom{r-i}{i}\in End\left( \mathbb{I}\right) $ is a
non-zero divisor:%
\begin{equation*}
\xymatrix{ \mathrm{Sym}^{n}\left(\wedge^{i}V\right)\otimes V^{\vee} \ar[r]^-{\overline{\partial}_{1}^{n}} \ar[d]|{\Delta^{n}\otimes1_{V^{\vee}}} & \mathrm{Sym}^{n-1}\left(\wedge^{i}V\right)\otimes\wedge^{i-1}V \ar[d]|{\binom{r-i}{i}\cdot\partial_{i-1}^{n-1}} & \mathrm{Sym}^{n}\left(\wedge^{i}V\right)\otimes V^{\vee} \ar[r]^-{\overline{\partial}_{1}^{n}} \ar[d]|{\Delta^{n}\otimes1_{V^{\vee}}} & \mathrm{Sym}^{n-1}\left(\wedge^{i}V\right)\otimes\wedge^{i-1}V \ar[d]|{\partial_{i-1}^{n-1}} \\ \mathrm{Sym}^{n-2}\left(\wedge^{i}V\right)\otimes L\otimes V^{\vee} \ar[r]_-{\binom{r-i}{i}\frac{\rho^{i}}{2}\cdot1_{\mathrm{Sym}^{n-2}\left(\wedge^{i}V\right)}\otimes\tau_{L,V^{\vee}}} & \mathrm{Sym}^{n-2}\left(\wedge^{i}V\right)\otimes V^{\vee}\otimes L\text{,} & \mathrm{Sym}^{n-2}\left(\wedge^{i}V\right)\otimes L\otimes V^{\vee} \ar[r]_-{\frac{\rho^{i}}{2}\cdot1_{\mathrm{Sym}^{n-2}\left(\wedge^{i}V\right)}\otimes\tau_{L,V^{\vee}}} & \mathrm{Sym}^{n-2}\left(\wedge^{i}V\right)\otimes V^{\vee}\otimes L\text{.}}
\end{equation*}

\item[$\left( 3\right) $] Suppose that $L\simeq \mathbb{L}^{\otimes 2}$ for
some invertible object $\mathbb{L}$, that $r_{\wedge ^{i}V}<0$ (see
definition \ref{Dirac definition positive rank})\ and that $V$ has
alternating rank $g$. Then there are morphisms%
\begin{eqnarray*}
s_{\Delta }^{n-2} &:&\mathrm{Alt}^{n-2}\left( \wedge ^{i}V\right) \otimes
L\rightarrow \mathrm{Alt}^{n}\left( \wedge ^{i}V\right) \text{ for }n\geq 2%
\text{,} \\
s_{\overline{\partial }_{i-1}}^{n-1} &:&\mathrm{Alt}^{n-1}\left( \wedge
^{i}V\right) \otimes V\rightarrow \mathrm{Alt}^{n}\left( \wedge ^{i}V\right)
\otimes \wedge ^{i-1}V^{\vee }\text{ for }n\geq 1\text{,} \\
s_{\partial _{i-1}}^{n-1} &:&\mathrm{Alt}^{n-1}\left( \wedge ^{i}V\right)
\otimes V^{\vee }\otimes L\rightarrow \mathrm{Alt}^{n}\left( \wedge
^{i}V\right) \otimes \wedge ^{i-1}V\text{ for }n\geq 1
\end{eqnarray*}%
such that%
\begin{eqnarray*}
\Delta ^{n}\circ s_{\Delta }^{n-2}=1_{\mathrm{Alt}^{n-2}\left( \wedge
^{i}V\right) \otimes L}\text{, }\quad\overline{\partial }_{i-1}^{n}\circ s_{%
\overline{\partial }_{i-1}}^{n-1}=1_{\mathrm{Alt}^{n-1}\left( \wedge
^{i}V\right) \otimes V} \\
\text{and }\quad\partial _{i-1}^{n}\circ s_{\partial _{i-1}}^{n-1}=1_{\mathrm{%
Alt}^{n-1}\left( \wedge ^{i}V\right) \otimes V^{\vee }\otimes L}.
\end{eqnarray*}%
In particular, the following objects exist:%
\begin{eqnarray*}
\ker \left( \Delta ^{n}\right) \subset \mathrm{Alt}^{n}\left( \wedge
^{i}V\right) \text{,}\quad
\ker \left( \overline{\partial }_{i-1}^{n}\right) \subset \mathrm{Alt}%
^{n}\left( \wedge ^{i}V\right) \otimes \wedge ^{i-1}V^{\vee }\text{,}\quad
\ker \left( \partial _{i-1}^{n}\right) \subset \mathrm{Alt}^{n}\left( \wedge
^{i}V\right) \otimes \wedge ^{i-1}V\text{.}
\end{eqnarray*}
\end{itemize}
\end{theorem}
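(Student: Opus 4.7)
The plan is to reduce the theorem to Lemma \ref{Dirac L2}, Proposition \ref{Dirac P3}, and the preliminary computations already carried out in Subsection \ref{Subsection LD Alternating Lemmas}. The hypotheses $g=2i$ and $i$ odd have two effects: first, the pairings appearing twice in Lemmas \ref{Dirac Alternating L1} and \ref{Dirac Alternating L2} coincide (e.g.\ $\psi_{g-i,1}^V=\psi_{i,1}^V$ and $\overline{\psi}_{i,i-1}^V=\overline{\psi}_{g-i,g-i-1}^V$); second, $\psi:=i_{\wedge^g V}\circ\varphi_{i,i}:\wedge^i V\otimes\wedge^i V\to L$ becomes alternating, since $\varphi_{i,i}\circ\tau_{\wedge^i V,\wedge^i V}=(-1)^i\varphi_{i,i}=-\varphi_{i,i}$. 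Consequently the two summands in each of these preliminary lemmas collapse to a difference of the form needed by Lemma \ref{Dirac L2} in the alternating case $*_\chi=a$, $\nu_a=-1$.

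Parts $(1)$ and $(2)$ then follow by applying Lemma \ref{Dirac L2} with $X=\wedge^i V$, $\psi=i_{\wedge^g V}\circ\varphi_{i,i}$, and $(\psi_1,\psi_2)=(\psi_{g-i,1}^V,\overline{\psi}_{i,i-1}^V\otimes 1_L)$ for $(1)$, respectively $(\overline{\psi}_{i,1}^V,\psi_{g-i,i-1}^V)$ for $(2)$. The input commutative square is Lemma \ref{Dirac Alternating L1} (resp.\ Lemma \ref{Dirac Alternating L2}) specialized to $g=2i$, together with $r_L=1$ (in $(2)$) forced by $L\simeq\mathbb{L}^{\otimes 2}$. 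A direct coefficient collapse matches the global scalar with $\rho^i/2$. In $(2)$ a common factor $\binom{r-i}{i}$ arises from both $\nu_{V^\vee}^{i,1}$ and $\nu_{V^\vee}^{g-i,1}$, which explains the first diagram; since $V$ has alternating rank $g$, $\binom{r-i}{i}$ is invertible and therefore a non-zero divisor, so it can be cancelled to yield the second diagram.

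For part $(3)$, the pairing $\psi=i_{\wedge^g V}\circ\varphi_{i,i}:\wedge^i V\otimes\wedge^i V\to L\simeq\mathbb{L}^{\otimes 2}$ is perfect by Corollary \ref{Alternating algebras CT} (since $V$ has alternating rank $g=2i$) and alternating because $i$ is odd; the positivity hypothesis of Proposition \ref{Dirac P3} reads $\chi(\tau_{\wedge^i V,\wedge^i V})r_{\wedge^i V}=-r_{\wedge^i V}>0$, which is exactly the assumption $r_{\wedge^i V}<0$. So Proposition \ref{Dirac P3} produces $s_\Delta^{n-2}$ with $\Delta^n\circ s_\Delta^{n-2}=1$. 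Next, $\rho^i\in\mathrm{End}(\mathbb{I})$ is invertible, since its integer-binomial denominators are invertible in the $\mathbb{Q}$-linear category and the binomials $\binom{r-1}{g-1},\binom{r-i}{i}$ are invertible by the alternating rank $g$ assumption. Using $(1)$ with $n$ replaced by $n+1$, define
\begin{equation*}
s_{\overline{\partial}_{i-1}}^{n-1}\otimes 1_L:=\tfrac{2}{\rho^i}\,\partial_1^{n+1}\circ(s_\Delta^{n-1}\otimes 1_V)\circ(1\otimes\tau_{L,V}^{-1}),
\end{equation*}
so that $(\overline{\partial}_{i-1}^n\otimes 1_L)\circ(s_{\overline{\partial}_{i-1}}^{n-1}\otimes 1_L)=1$ by $(1)$ and the identity $\Delta^{n+1}\circ s_\Delta^{n-1}=1$; tensoring with $L^{-1}$ and contracting $L\otimes L^{-1}\simeq\mathbb{I}$ yields $s_{\overline{\partial}_{i-1}}^{n-1}$, a section of $\overline{\partial}_{i-1}^n$. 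An entirely analogous construction from $(2)$ gives $s_{\partial_{i-1}}^{n-1}$. Finally, since each of $\Delta^n,\overline{\partial}_{i-1}^n,\partial_{i-1}^n$ admits a section, the idempotent $1-s\circ f$ splits off the kernel as a direct summand in the pseudo-abelian category $\mathcal{C}$.

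The main obstacle is the coefficient bookkeeping: verifying that the sign factors $(-1)^{g-1},(-1)^{i(g-i)},(-1)^{i(g-i-1)},(-1)^{(i+1)(g-i)}$ and binomials $\binom{g}{g-1},\binom{g}{i},\binom{r-1}{g-1},\binom{r-i}{i},\binom{r+i-g}{i}$ of Lemmas \ref{Dirac Alternating L1} and \ref{Dirac Alternating L2} collapse precisely to $\rho^i/2$, with the correct overall sign, under the specialization $g=2i$, $i$ odd, $r_L=1$. All conceptual content is already isolated in Lemma \ref{Dirac L2} and Proposition \ref{Dirac P3}; the remainder is a specialization and assembly argument.
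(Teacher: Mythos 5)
Your proposal follows essentially the same route as the paper's own proof: specialize Lemmas \ref{Dirac Alternating L1} and \ref{Dirac Alternating L2} to $g=2i$ (with $i$ odd forcing the two summands to differ by a sign and $i_{\wedge^{g}V}\circ\varphi_{i,i}$ to be alternating), feed the resulting squares into Lemma \ref{Dirac L2}, and for part $(3)$ combine Corollary \ref{Alternating algebras CT}, Proposition \ref{Dirac P3} and the section-transport plus idempotent-splitting argument in the pseudo-abelian category. The only points left implicit are exactly the scalar bookkeeping the paper carries out (the collapse of $\rho_{V}^{i,g-i}$, $\rho_{V^{\vee}}^{g-i,i}$ and the $\nu$'s to $\rho^{i}/2$ and the common factor $\binom{r-i}{i}$), together with a harmless conflation of hypotheses in $(2)$, where the cancellation of $\binom{r-i}{i}$ should be justified by the stated non-zero-divisor assumption rather than by the alternating-rank-$g$ hypothesis, which is only assumed in $(3)$.
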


\begin{proof}
$\left( 1\text{-}2\right) $ Looking at the quantities $\nu _{V}^{g-i,1}$ and 
$\nu _{V}^{i,1}$ (resp. $\nu _{V^{\vee }}^{i,1}$ and $\nu _{V^{\vee
}}^{g-i,1}$) from Lemma \ref{Dirac Alternating L1} (resp. Lemma \ref{Dirac
Alternating L2}) when $i=g-i$, we see that $\nu _{V}^{g-i,1}=\left(
-1\right) ^{i}\cdot \nu _{V}^{i,1}$ (resp. $\nu _{V^{\vee }}^{i,1}=\left(
-1\right) ^{i}r_{\wedge ^{g}V}\cdot \nu _{V^{\vee }}^{g-i,1}$). Since $i$ is
odd, it follows that $\nu _{V}^{g-i,1}=-\nu _{V}^{i,1}$ (resp. $\nu
_{V^{\vee }}^{i,1}=-r_{\wedge ^{g}V}\cdot \nu _{V^{\vee }}^{g-i,1}$). We
have that $i_{\wedge ^{g}V}\circ \varphi _{i,i}$ is alternating, so that we
may apply Lemma \ref{Dirac L2} to deduce the claimed commutativity in $%
\left( 1\right) $ (resp. the first commutative diagram in $\left( 2\right) $
when $r_{L}=r_{\wedge ^{g}V}=1$): we have indeed $\nu _{V}^{g-i,1}\in 
\mathbb{Q}^{\times }$ and%
\begin{equation*}
\rho _{V}^{i,g-i}/\nu _{V}^{g-i,1}=\rho ^{i}\text{ (resp. }\rho _{V^{\vee
}}^{g-i,i}=-r_{\wedge ^{g}V}\binom{g}{i}^{-1}\binom{r-i}{i}i\rho ^{i}\text{
and }\nu _{V^{\vee }}^{i,1}=r_{\wedge ^{g}V}\binom{g}{i}^{-1}\binom{r-i}{i}i%
\text{)}
\end{equation*}%
and the commutativity of $\left( 2\right) $ is deduced simplifying by $%
r_{\wedge ^{g}V}\binom{g}{i}^{-1}i$. If $\binom{r-i}{i}\in End\left( \mathbb{%
I}\right) $ is a non-zero divisor we may further simplify to get the second
commutative diagram in $\left( 2\right) $.

$\left( 3\right) $ Indeed $L\simeq \mathbb{L}^{\otimes 2}$ implies $r_{L}=1$
and, since $V$ has alternating rank $g$, by Corollary \ref{Alternating
algebras CT} $i_{\wedge ^{g}V}\circ \varphi _{i,i}$ is a perfect alternating
pairing. Since $r_{\wedge ^{i}V}<0$, Lemma \ref{Dirac P3} gives the
existence of $s_{\Delta }^{n-2}$ and $\ker \left( \Delta ^{n}\right) $. We
also remark that, since $\binom{r-i}{g-i}=\binom{r-i}{i}\in End\left( 
\mathbb{I}\right) $ and $\binom{r-1}{g-1}\in End\left( \mathbb{I}\right) $\
are invertible (once again because $V$ has alternating rank $g$), it follows
that $\pm \frac{\rho ^{i}}{2}$ is invertible, that $\alpha :=\frac{\rho ^{i}%
}{2}\cdot \left( 1_{\mathrm{Alt}^{n-2}\left( \wedge ^{i}V\right) }\otimes
\tau _{L,V}\right) $ is an isomorphism and, hence, that $f:=\alpha \circ
\left( \Delta ^{n}\otimes 1_{V}\right) $ has a section $s:=\left( s_{\Delta
}^{n-2}\otimes 1_{V}\right) \circ \alpha ^{-1}$ such that%
\begin{equation*}
f\circ s=\alpha \circ \left( \Delta ^{n}\otimes 1_{V}\right) \circ \left(
s_{\Delta }^{n-2}\otimes 1_{V}\right) \circ \alpha ^{-1}=\alpha \circ \alpha
^{-1}=1_{\mathrm{Alt}^{n-2}\left( \wedge ^{i}V\right) \otimes V^{\vee
}\otimes L}\text{.}
\end{equation*}%
Similarly $f:=\left( -\frac{\rho ^{i}}{2}\cdot 1_{\mathrm{Alt}^{n-2}\left(
\wedge ^{i}V\right) }\otimes \tau _{L,V^{\vee }}\right) \circ \left( \Delta
^{n}\otimes 1_{V^{\vee }}\right) $ has a section. We can now apply the
following simple remark to the commutative diagram in $\left( 1\right) $
(resp. the second commutative diagram in $\left( 2\right) $). Suppose that
we are given%
\begin{equation*}
f:X\overset{f_{1}}{\rightarrow }Y\overset{f_{2}}{\rightarrow }Z
\end{equation*}%
and that $s:Z\rightarrow X$ is a morphism such that $f\circ s=1_{Z}$. Then,
setting $s_{2}:=f_{1}\circ s$, we see that%
\begin{equation*}
f_{2}\circ s_{2}=f_{2}\circ f_{1}\circ s=f\circ s=1_{Z}\text{,}
\end{equation*}%
implying that $f_{2}$ has a section. But then there is an associated
idempotent $e_{2}:=s_{2}\circ f_{2}$ and $\ker \left( f_{2}\right) =\ker
\left( e_{2}\right) $ exists because $V$ is pseudo-abelian. This gives the
existence of a section of $\overline{\partial }_{i-1}^{n}\otimes 1_{L}$,
hence of $\overline{\partial }_{i-1}^{n}$ and $\ker \left( \overline{%
\partial }_{i-1}^{n}\right) $\ because $L$ is invertible, and of $%
s_{\partial _{i-1}}^{n-1}$ and $\ker \left( \partial _{i-1}\right) $.
\end{proof}

\bigskip

Suppose now that $i$ is even. Then we define the following Dirac operators,
for every integer $n\geq 1$:%
\begin{equation*}
\begin{array}{llll}
\partial _{1}^{n}:=\partial _{\psi _{g-i,1}^{V},s}^{n}: & \mathrm{Sym}%
^{n}\left( \wedge ^{i}V\right) \otimes V & \rightarrow & \mathrm{Sym}%
^{n-1}\left( \wedge ^{i}V\right) \otimes \wedge ^{i-1}V^{\vee }\otimes L%
\text{,} \\ 
\overline{\partial }_{i-1}^{n}:=\partial _{\overline{\psi }%
_{i,i-1}^{V},s}^{n}: & \mathrm{Sym}^{n}\left( \wedge ^{i}V\right) \otimes
\wedge ^{i-1}V^{\vee } & \rightarrow & \mathrm{Sym}^{n-1}\left( \wedge
^{i}V\right) \otimes V\text{,} \\ 
\partial _{i-1}^{n}:=\partial _{\psi _{g-i,i-1}^{V},s}^{n}: & \mathrm{Sym}%
^{n}\left( \wedge ^{i}V\right) \otimes \wedge ^{i-1}V & \rightarrow & 
\mathrm{Sym}^{n-1}\left( \wedge ^{i}V\right) \otimes V^{\vee }\otimes L\text{%
,} \\ 
\overline{\partial }_{1}^{n}:=\partial _{\overline{\psi }_{i,1}^{V},s}^{n}:
& \mathrm{Sym}^{n}\left( \wedge ^{i}V\right) \otimes V^{\vee } & \rightarrow
& \mathrm{Sym}^{n-1}\left( \wedge ^{i}V\right) \otimes \wedge ^{i-1}V\text{.}%
\end{array}%
\end{equation*}

\begin{theorem}
\label{Dirac Alternating T2}Suppose that $i$ is even, so that $\Delta ^{n}:\mathrm{Sym}^{n}\left( \wedge
^{i}V\right) \rightarrow \mathrm{Sym}^{n-2}\left( \wedge ^{i}V\right)
\otimes L$ and set%
\begin{eqnarray*}
&&\rho ^{i}:=\left( -1\right) ^{i+1}r_{L}\binom{g}{g-1}^{-1}\binom{g}{i}^{-1}%
\binom{r-1}{g-1}\binom{r-i}{i}\frac{g}{i} \\
&&\text{ }=-r_{L}\binom{g}{g-1}^{-1}\binom{g}{i}^{-1}\binom{r-1}{g-1}\binom{%
r-i}{i}\frac{g}{i}\text{.}
\end{eqnarray*}

\begin{itemize}
\item[$\left( 1\right) $] The following diagram is commutative:%
\begin{equation*}
\xymatrix{ \mathrm{Sym}^{n}\left(\wedge^{i}V\right)\otimes V \ar[r]^-{\partial_{1}^{n}} \ar[d]_{\Delta^{n}\otimes1_{V}} & \mathrm{Sym}^{n-1}\left(\wedge^{i}V\right)\otimes\wedge^{i-1}V^{\vee}\otimes L \ar[d]^{\overline{\partial}_{i-1}^{n}\otimes1_{L}} \\ \mathrm{Sym}^{n-2}\left(\wedge^{i}V\right)\otimes L\otimes V \ar[r]_-{\frac{\rho^{i}}{2}\cdot1_{\mathrm{Sym}^{n-2}\left(\wedge^{i}V\right)}\otimes\tau_{L,V}} & \mathrm{Sym}^{n-2}\left(\wedge^{i}V\right)\otimes V\otimes L\text{.} }
\end{equation*}

\item[$\left( 2\right) $] When $r_{L}=1$, the first of the following
diagrams is commutative and it becomes equivalent to the second diagram when
we further assume that $\binom{r-i}{i}\in End\left( \mathbb{I}\right) $ is a
non-zero divisor:%
\begin{equation*}
\xymatrix{ \mathrm{Sym}^{n}\left(\wedge^{i}V\right)\otimes V^{\vee} \ar[r]^-{\overline{\partial}_{1}^{n}} \ar[d]|{\Delta^{n}\otimes1_{V^{\vee}}} & \mathrm{Sym}^{n-1}\left(\wedge^{i}V\right)\otimes\wedge^{i-1}V \ar[d]|{\binom{r-i}{i}\cdot\partial_{i-1}^{n}} & \mathrm{Sym}^{n}\left(\wedge^{i}V\right)\otimes V^{\vee} \ar[r]^-{\overline{\partial}_{1}^{n}} \ar[d]|{\Delta^{n}\otimes1_{V^{\vee}}} & \mathrm{Sym}^{n-1}\left(\wedge^{i}V\right)\otimes\wedge^{i-1}V \ar[d]|{\partial_{i-1}^{n}} \\ \mathrm{Sym}^{n-2}\left(\wedge^{i}V\right)\otimes L\otimes V^{\vee} \ar[r]_-{\binom{r-i}{i}\frac{\rho^{i}}{2}\cdot1_{\mathrm{Sym}^{n-2}\left(\wedge^{i}V\right)}\otimes\tau_{L,V^{\vee}}} & \mathrm{Sym}^{n-2}\left(\wedge^{i}V\right)\otimes V^{\vee}\otimes L\text{,} & \mathrm{Sym}^{n-2}\left(\wedge^{i}V\right)\otimes L\otimes V^{\vee} \ar[r]_-{\frac{\rho^{i}}{2}\cdot1_{\mathrm{Sym}^{n-2}\left(\wedge^{i}V\right)}\otimes\tau_{L,V^{\vee}}} & \mathrm{Sym}^{n-2}\left(\wedge^{i}V\right)\otimes V^{\vee}\otimes L\text{.}}
\end{equation*}

\item[$\left( 3\right) $] Suppose that $L\simeq \mathbb{L}^{\otimes 2}$ for
some invertible object $\mathbb{L}$, that $r_{\wedge ^{i}V}>0$ (see
definition \ref{Dirac definition positive rank})\ and that $V$ has
alternating rank $g$. Then there are morphisms%
\begin{eqnarray*}
s_{\Delta }^{n-2} &:&\mathrm{Sym}^{n-2}\left( \wedge ^{i}V\right) \otimes
L\rightarrow \mathrm{Sym}^{n}\left( \wedge ^{i}V\right) \text{ for }n\geq 2%
\text{,} \\
s_{\overline{\partial }_{i-1}}^{n-1} &:&\mathrm{Sym}^{n-1}\left( \wedge
^{i}V\right) \otimes V\rightarrow \mathrm{Sym}^{n}\left( \wedge ^{i}V\right)
\otimes \wedge ^{i-1}V^{\vee }\text{ for }n\geq 1\text{,} \\
s_{\partial _{i-1}}^{n-1} &:&\mathrm{Sym}^{n-1}\left( \wedge ^{i}V\right)
\otimes V^{\vee }\otimes L\rightarrow \mathrm{Sym}^{n}\left( \wedge
^{i}V\right) \otimes \wedge ^{i-1}V\text{ for }n\geq 1
\end{eqnarray*}%
such that%
\begin{eqnarray*}
&&\Delta ^{n}\circ s_{\Delta }^{n-2}=1_{\mathrm{Sym}^{n-2}\left( \wedge
^{i}V\right) \otimes L},\overline{\partial }_{i-1}^{n}\circ s_{\overline{%
\partial }_{i-1}}^{n-1}=1_{\mathrm{Sym}^{n-1}\left( \wedge ^{i}V\right)
\otimes V} \\
&&\text{and }\partial _{i-1}^{n}\circ s_{\partial _{i-1}}^{n-1}=1_{\mathrm{%
Sym}^{n-1}\left( \wedge ^{i}V\right) \otimes V^{\vee }\otimes L}.
\end{eqnarray*}%
In particular, the following objects exist:%
\begin{eqnarray*}
&&\ker \left( \Delta ^{n}\right) \subset \mathrm{Sym}^{n}\left( \wedge
^{i}V\right) \text{,} \\
&&\ker \left( \overline{\partial }_{i-1}^{n}\right) \subset \mathrm{Sym}%
^{n}\left( \wedge ^{i}V\right) \otimes \wedge ^{i-1}V^{\vee }\text{,} \\
&&\ker \left( \partial _{i-1}^{n}\right) \subset \mathrm{Sym}^{n}\left( \wedge
^{i}V\right) \otimes \wedge ^{i-1}V\text{.}
\end{eqnarray*}
\end{itemize}
\end{theorem}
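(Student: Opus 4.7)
The theorem is the even-$i$ counterpart to Theorem \ref{Dirac Alternating T1}, and the plan is to follow exactly the same template. The key structural change is that when $i$ is even one has $\varphi_{i,i}\circ\tau_{\wedge^{i}V,\wedge^{i}V}=(-1)^{i}\varphi_{i,i}=\varphi_{i,i}$, so $\psi:=i_{\wedge^{g}V}\circ\varphi_{i,i}$ is symmetric rather than alternating. By Lemma \ref{Dirac L1} only the symmetric Laplace operator is nonzero, which justifies the definition $\Delta^{n}:=\Delta_{i_{\wedge^{g}V}\circ\varphi_{i,i},s}^{n}$ and dictates that throughout the argument we apply Lemma \ref{Dirac L2} in its symmetric guise ($\ast_\chi=s$, $\nu_{s}=1$), in contrast to the alternating specialization used for the odd case.

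For parts (1) and (2) the starting point is the specialization of Lemmas \ref{Dirac Alternating L1} and \ref{Dirac Alternating L2} to $g=2i$, where the identifications $\psi_{i,1}^{V}=\psi_{g-i,1}^{V}$ and $\overline{\psi}_{i,i-1}^{V}=\overline{\psi}_{g-i,g-i-1}^{V}$ (and their dual analogues) collapse the two branches in the upper-right map into $(\nu_{1}+\nu_{2})$ times a single Dirac-type morphism. Applying Lemma \ref{Dirac L2} with $\nu_{s}=1$ then requires $\nu_{1}=\nu_{2}$. For Lemma \ref{Dirac Alternating L1} a direct computation with $i$ even gives $\nu_{V}^{g-i,1}=\nu_{V}^{i,1}=i$, and $\rho_{V}^{i,g-i}/\nu_{V}^{g-i,1}=\rho^{i}$, proving (1). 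For Lemma \ref{Dirac Alternating L2} one finds $\nu_{V^{\vee}}^{i,1}=r_{\wedge^{g}V}\cdot\nu_{V^{\vee}}^{g-i,1}$, which forces the hypothesis $r_{L}=1$ of (2); the part of $\nu_{V^{\vee}}^{g-i,1}$ that survives as a common coefficient is precisely $\binom{r-i}{i}$, explaining the visible $\binom{r-i}{i}$ in the first diagram of (2), and a routine check yields $\rho_{V^{\vee}}^{g-i,i}/(\binom{g}{i}^{-1}i)=\binom{r-i}{i}\rho^{i}$, matching the bottom of that diagram. The passage to the second diagram when $\binom{r-i}{i}$ is a non-zero divisor is then immediate.

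For part (3) the hypothesis that $V$ has alternating rank $g$ implies via Corollary \ref{Alternating algebras CT} that $\psi=i_{\wedge^{g}V}\circ\varphi_{i,i}:\wedge^{i}V\otimes\wedge^{i}V\to L$ is a perfect (symmetric) pairing. Since $L\simeq\mathbb{L}^{\otimes 2}$ and $\chi(\tau_{\wedge^{i}V,\wedge^{i}V})\cdot r_{\wedge^{i}V}=r_{\wedge^{i}V}>0$, Proposition \ref{Dirac P3} supplies the section $s_{\Delta}^{n-2}$ of $\Delta^{n}$. Sections of $\overline{\partial}_{i-1}^{n}$ and $\partial_{i-1}^{n}$ are then extracted from the commutative diagrams of (1) and (2) by the elementary device recalled at the end of the proof of Theorem \ref{Dirac Alternating T1}: if $f_{2}\circ f_{1}$ admits a section $s$, then $f_{1}\circ s$ is a section of $f_{2}$. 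Under the alternating rank hypothesis combined with $r_{L}=1$, the scalars $\binom{r-1}{g-1}$, $\binom{r-i}{i}$, $g/i$ are invertible, so $\tfrac{\rho^{i}}{2}\cdot 1\otimes\tau$ is an isomorphism, making the composite on the clockwise path a morphism with a section; the invertibility of $L$ then lets us descend from a section of $\overline{\partial}_{i-1}^{n}\otimes 1_{L}$ to one of $\overline{\partial}_{i-1}^{n}$, and similarly for $\partial_{i-1}^{n}$ via the second diagram of (2). Existence of the three kernels follows because in a pseudo-abelian category the idempotent associated to any split epimorphism has a kernel. The only real difficulty is the parity and binomial-coefficient bookkeeping in passing from the general Lemmas \ref{Dirac Alternating L1}--\ref{Dirac Alternating L2} to the specialization $g=2i$, $i$ even; all the conceptual content is already packaged in Lemma \ref{Dirac L2} and Proposition \ref{Dirac P3}.
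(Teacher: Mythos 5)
Your proposal is correct and takes essentially the same route as the paper: specialize Lemmas \ref{Dirac Alternating L1} and \ref{Dirac Alternating L2} at $g=2i$ with $i$ even so that the two $\nu$-coefficients coincide, apply the symmetric case of Lemma \ref{Dirac L2} (with the coefficient checks $\rho _{V}^{i,g-i}/\nu _{V}^{g-i,1}=\rho ^{i}$ and $\rho _{V^{\vee }}^{g-i,i}=r_{\wedge ^{g}V}\binom{g}{i}^{-1}\binom{r-i}{i}i\,\rho ^{i}$ matching the paper's), and deduce part $(3)$ from Proposition \ref{Dirac P3} — now for a perfect \emph{symmetric} pairing, whence the hypothesis $r_{\wedge ^{i}V}>0$ — together with the section-transfer and pseudo-abelian kernel argument already used in Theorem \ref{Dirac Alternating T1}. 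Your write-up in fact spells out the bookkeeping that the paper compresses into ``identical to the proof of Theorem \ref{Dirac Alternating T1}''.
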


\begin{proof}
$\left( 1\text{-}2\right) $ As in the proof of Theorem \ref{Dirac
Alternating T1} we have $\nu _{V}^{g-i,1}=\left( -1\right) ^{i}\cdot \nu
_{V}^{i,1}$ (resp. $\nu _{V^{\vee }}^{i,1}=\left( -1\right) ^{i}r_{\wedge
^{g}V}\cdot \nu _{V^{\vee }}^{g-i,1}$). Since $i$ is even, it follows that $%
\nu _{V}^{g-i,1}=\nu _{V}^{i,1}$ (resp. $\nu _{V^{\vee }}^{i,1}=r_{\wedge
^{g}V}\cdot \nu _{V^{\vee }}^{g-i,1}$). Then the proof is identical to the
proof of Theorem \ref{Dirac Alternating T1}, noticing that we have once
again $\rho _{V}^{i,g-i}/\nu _{V}^{g-i,1}=\rho ^{i}$ and $\nu _{V^{\vee
}}^{i,1}=r_{\wedge ^{g}V}\binom{g}{i}^{-1}\binom{r-i}{i}i$, but now $\rho
_{V^{\vee }}^{g-i,i}=r_{\wedge ^{g}V}\binom{g}{i}^{-1}\binom{r-i}{i}i\rho
^{i}$, justifying the change of sign in the second commutative diagram of $%
\left( 2\right) $\ with respect to that of Theorem \ref{Dirac Alternating T1}%
.

$\left( 3\right) $ The proof is identical to the proof of Theorem \ref{Dirac
Alternating T1}, noticing that here we need to assume $r_{\wedge ^{i}V}>0$
in order to apply Lemma \ref{Dirac P3} because now $i_{\wedge ^{g}V}\circ
\varphi _{i,i}$ is a perfect symmetric pairing.
\end{proof}

\section{Laplace and Dirac operators for the symmetric algebras}

In this section we assume that we are given an object $V\in \mathcal{C}$ such that $\vee ^{g}V$ is invertible. If $X$ is an object we set $%
r_{X}:=\mathrm{rank}\left( X\right) $, so that $r_{\vee ^{g}V}\in \left\{
\pm 1\right\} $, and we use the shorthand $r:=r_{V}$.

\subsection{\label{Subsection LD Symmetric Lemmas}Preliminary lemmas}

We define%
\begin{equation*}
\psi _{i,1}^{V}:\vee ^{i}V\otimes V\overset{\varphi _{i,1}}{\rightarrow }%
\vee ^{i+1}V\overset{D^{i+1,g}}{\rightarrow }\vee ^{g-i-1}V^{\vee }\otimes
\vee ^{g}V^{\vee \vee }\text{,}
\end{equation*}%
and%
\begin{eqnarray*}
\overline{\psi }_{g-i,g-i-1}^{V} &:&\vee ^{g-i}V\otimes \vee ^{g-i-1}V^{\vee
}\overset{D^{g-i,g}\otimes 1_{\vee ^{g-i-1}V^{\vee }}}{\rightarrow }\vee
^{i}V^{\vee }\otimes \vee ^{g}V^{\vee \vee }\otimes \vee ^{g-i-1}V^{\vee }%
\overset{\varphi _{i,g-i-1}^{13}}{\rightarrow }\vee ^{g-1}V^{\vee }\otimes
\vee ^{g}V^{\vee \vee } \\
&&\overset{D_{g-1,g}\otimes 1_{\vee ^{g}V^{\vee \vee }}}{\rightarrow }%
V\otimes \vee ^{g}V^{\vee }\otimes \vee ^{g}V^{\vee \vee }\overset{%
1_{V}\otimes ev_{V^{\vee },a}^{g,\tau }}{\rightarrow }V\text{.}
\end{eqnarray*}

We may also consider%
\begin{equation*}
\psi _{g-i,1}^{V}:\vee ^{g-i}V\otimes V\overset{\varphi _{g-i,1}}{%
\rightarrow }\vee ^{g-i+1}V\overset{D^{g-i+1,g}}{\rightarrow }\vee
^{i-1}V^{\vee }\otimes \vee ^{g}V^{\vee \vee }\text{,}
\end{equation*}%
and%
\begin{eqnarray*}
\overline{\psi }_{i,i-1}^{V} &:&\vee ^{i}V\otimes \vee ^{i-1}V^{\vee }%
\overset{D^{i,g}\otimes 1_{\vee ^{i-1}V^{\vee }}}{\rightarrow }\vee
^{g-i}V^{\vee }\otimes \vee ^{g}V^{\vee \vee }\otimes \vee ^{i-1}V^{\vee }%
\overset{\varphi _{g-i,i-1}^{13}}{\rightarrow }\vee ^{g-1}V^{\vee }\otimes
\vee ^{g}V^{\vee \vee } \\
&&\overset{D_{g-1,g}\otimes 1_{\vee ^{g}V^{\vee \vee }}}{\rightarrow }%
V\otimes \vee ^{g}V^{\vee }\otimes \vee ^{g}V^{\vee \vee }\overset{%
1_{V}\otimes ev_{V^{\vee },a}^{g,\tau }}{\rightarrow }V\text{.}
\end{eqnarray*}

\begin{lemma}
\label{Dirac Symmetric L1}Setting%
\begin{eqnarray*}
&&\rho _{V}^{i,g-i}:=r_{\vee ^{g}V}\binom{g}{g-1}^{-1}\binom{g}{g-i}^{-1}%
\binom{r+g-1}{g-1}\binom{r+g-1}{g-i}g\text{,} \\
&&\nu _{V}^{g-i,1}:=i\text{ and }\nu _{V}^{i,1}:=g-i
\end{eqnarray*}%
the following diagram is commutative:%
\begin{equation*}
\xymatrix@C=170pt{ \vee^{i}V\otimes\vee^{g-i}V\otimes V \ar[r]^-{\left(1_{\vee^{i}V}\otimes\psi_{g-i,1},\left(1_{\vee^{g-i}V}\otimes\psi_{i,1}\right)\circ\left(\tau_{\vee^{i}V,\vee^{g-i}V}\otimes1_{V}\right)\right)} \ar[d]|{\varphi_{i,g-i}\otimes1_{V}} & \vee^{i}V\otimes\vee^{i-1}V^{\vee}\otimes\vee^{g}V^{\vee\vee}\oplus\vee^{g-i}V\otimes\vee^{g-i-1}V^{\vee}\otimes\vee^{g}V^{\vee\vee} \ar[d]|{\nu_{V}^{g-i,1}\cdot\overline{\psi}_{i,i-1}\otimes1_{\vee^{g}V^{\vee\vee}}\oplus\nu_{V}^{i,1}\cdot\overline{\psi}_{g-i,g-i-1}\otimes1_{\vee^{g}V^{\vee\vee}}} \\ \vee^{g}V\otimes V \ar[r]^-{\rho_{V}^{i,g-i}\cdot\tau_{\vee^{g}V^{\vee\vee},V}\circ\left(i_{\vee^{g}V}\otimes1_{V}\right)} & V\otimes\vee^{g}V^{\vee\vee}\text{.} }
\end{equation*}
\end{lemma}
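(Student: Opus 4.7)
The plan is to follow, essentially verbatim, the argument used for Lemma \ref{Dirac Alternating L1}, replacing each appeal to the alternating structural results by their symmetric counterparts. The key difference is bookkeeping of signs: in the symmetric world the factors $(-1)^{g-i}$, $(-1)^{i(g-i-1)}$ and $(-1)^{i(g-i)}$ that appear in Proposition \ref{Alternating algebras P2} and Theorem \ref{Alternating algebras T}$(1)$ are absent (cf.\ Proposition \ref{Symmetric algebras P2} and Theorem \ref{Symmetric algebras T}$(1)$), and the combinatorial factors $\binom{r-i}{g-i}$, $\binom{r-1}{g-1}$ are replaced by $\binom{r+g-1}{g-i}$, $\binom{r+g-1}{g-1}$. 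This accounts precisely for $\nu_{V}^{g-i,1}=i$, $\nu_{V}^{i,1}=g-i$ and for the shape of $\rho_{V}^{i,g-i}$.

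First I introduce the auxiliary morphisms
\begin{eqnarray*}
a &:=& i\cdot \bigl(\varphi _{g-i,i-1}^{13}\otimes 1_{\vee^{g}V^{\vee\vee}}\bigr)\circ \bigl(D^{i,g}\otimes D^{g-i+1,g}\bigr)\circ \bigl(1_{\vee^{i}V}\otimes \varphi_{g-i,1}\bigr),\\
b &:=& (g-i)\cdot \bigl(\varphi _{i,g-i-1}^{13}\otimes 1_{\vee^{g}V^{\vee\vee}}\bigr)\circ \bigl(D^{g-i,g}\otimes D^{i+1,g}\bigr)\\
& & \circ \bigl(1_{\vee^{g-i}V}\otimes \varphi_{i,1}\bigr)\circ \bigl(\tau_{\vee^{i}V,\vee^{g-i}V}\otimes 1_{V}\bigr),\\
\phi &:=& \bigl(1_{V}\otimes ev_{V^{\vee},s}^{g,\tau}\otimes 1_{\vee^{g}V^{\vee\vee}}\bigr)\circ \bigl(D_{g-1,g}\otimes 1_{\vee^{g}V^{\vee\vee}\otimes \vee^{g}V^{\vee\vee}}\bigr),
\end{eqnarray*}
so that unfolding the definitions of $\overline{\psi}_{i,i-1}^{V}$ and $\overline{\psi}_{g-i,g-i-1}^{V}$ yields
$(\overline{\psi}_{i,i-1}^{V}\otimes 1_{\vee^{g}V^{\vee\vee}})\circ (1_{\vee^{i}V}\otimes \psi_{g-i,1}^{V})=\phi\circ a$ and $(\overline{\psi}_{g-i,g-i-1}^{V}\otimes 1_{\vee^{g}V^{\vee\vee}})\circ (1_{\vee^{g-i}V}\otimes \psi_{i,1}^{V})\circ (\tau_{\vee^{i}V,\vee^{g-i}V}\otimes 1_{V})=\phi\circ b$.

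The next step is to invoke the first commutative diagram of Proposition \ref{Symmetric algebras P2}, which precisely identifies
\[
a+b \;=\; \rho\cdot \bigl(1_{\vee^{g-1}V^{\vee}\otimes \vee^{g}V^{\vee\vee}}\otimes i_{\vee^{g}V}\bigr)\circ \bigl(D^{1,g}\otimes \varphi_{i,g-i}\bigr)\circ \tau_{\vee^{i}V\otimes \vee^{g-i}V,V}
\]
with $\rho:=r_{\vee^{g}V}g\binom{g}{g-i}^{-1}\binom{r+g-1}{g-i}$. Substituting into $\phi\circ(a+b)$ and sliding $i_{\vee^{g}V}$ through the evaluation map gives a factor coming from the composite $(1_{V}\otimes ev_{V^{\vee},s}^{g,\tau})\circ (D_{g-1,g}\otimes 1_{\vee^{g}V^{\vee\vee}})\circ D^{1,g}$, which by Theorem \ref{Symmetric algebras T}$(1)$ (applied with $i=1$) equals $\binom{g}{g-1}^{-1}\binom{r+g-1}{g-1}$.

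Multiplying the two coefficients produces exactly $\rho_{V}^{i,g-i}$, and the functorial naturality of $\tau$ converts $(1_{V}\otimes i_{\vee^{g}V})\circ \tau_{\vee^{g}V,V}\circ (\varphi_{i,g-i}\otimes 1_{V})$ into $\tau_{\vee^{g}V^{\vee\vee},V}\circ (i_{\vee^{g}V}\otimes 1_{V})\circ (\varphi_{i,g-i}\otimes 1_{V})$, as required. The only real work is the careful chasing of the symmetries and the verification of the identity $a+b=\ldots$; everything else is parallel to the alternating proof and the main obstacle is purely bookkeeping, confirming that no extra signs appear in the symmetric setting so that $\nu_{V}^{g-i,1}=i$ and $\nu_{V}^{i,1}=g-i$ are the correct multiplicities.
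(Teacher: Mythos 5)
Your proposal is correct and is essentially the paper's own argument: the paper proves this lemma by declaring it a copy of the proof of Lemma \ref{Dirac Alternating L1} with Theorem \ref{Alternating algebras T} and Proposition \ref{Alternating algebras P2} replaced by Theorem \ref{Symmetric algebras T} and Proposition \ref{Symmetric algebras P2}, which is exactly the substitution you carry out. Your sign bookkeeping (the disappearance of $\left(-1\right)^{g-i}$, $\left(-1\right)^{i\left(g-i-1\right)}$, $\left(-1\right)^{g-1}$) and the identification of the coefficients $\binom{g}{g-1}^{-1}\binom{r+g-1}{g-1}$ and $r_{\vee^{g}V}g\binom{g}{g-i}^{-1}\binom{r+g-1}{g-i}$ match what the cited symmetric statements give, so the constants $\nu_{V}^{g-i,1}=i$, $\nu_{V}^{i,1}=g-i$ and $\rho_{V}^{i,g-i}$ come out as claimed.
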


\begin{proof}
The proof is just a copy of that of Lemma \ref{Dirac Alternating L1},
replacing the use of Theorem \ref{Alternating algebras T} (resp. Proposition %
\ref{Alternating algebras P2}) with Theorem \ref{Symmetric algebras T}
(resp. Proposition \ref{Symmetric algebras P2}).
\end{proof}

\bigskip

We now consider the following morphisms. We have%
\begin{equation*}
\psi _{i,g-i-1}^{V}:\vee ^{i}V\otimes \vee ^{g-i-1}V\overset{\varphi
_{i,g-i-1}}{\rightarrow }\vee ^{g-1}V\overset{D^{g-1,g}}{\rightarrow }%
V^{\vee }\otimes \vee ^{g}V^{\vee \vee }
\end{equation*}%
and%
\begin{eqnarray*}
\overline{\psi }_{g-i,1}^{V} &:&\vee ^{g-i}V\otimes V^{\vee }\overset{%
D^{g-i,g}\otimes 1_{V^{\vee }}}{\rightarrow }\vee ^{i}V^{\vee }\otimes \vee
^{g}V^{\vee \vee }\otimes V^{\vee }\overset{\varphi _{i,1}^{13}}{\rightarrow 
}\vee ^{i+1}V^{\vee }\otimes \vee ^{g}V^{\vee \vee } \\
&&\overset{D_{i+1,g}\otimes 1_{\vee ^{g}V^{\vee \vee }}}{\rightarrow }\vee
^{g-i-1}V\otimes \vee ^{g}V^{\vee }\otimes \vee ^{g}V^{\vee \vee }\overset{%
1_{\vee ^{g-i-1}V}\otimes ev_{V^{\vee },a}^{g,\tau }}{\rightarrow }\vee
^{g-i-1}V\text{.}
\end{eqnarray*}

On the other hand we have%
\begin{equation*}
\psi _{g-i,i-1}^{V}:\vee ^{g-i}V\otimes \vee ^{i-1}V\overset{\varphi
_{g-i,i-1}}{\rightarrow }\vee ^{g-1}V\overset{D^{g-1,g}}{\rightarrow }%
V^{\vee }\otimes \vee ^{g}V^{\vee \vee }
\end{equation*}%
and%
\begin{eqnarray*}
\overline{\psi }_{i,1}^{V} &:&\vee ^{i}V\otimes V^{\vee }\overset{%
D^{i,g}\otimes 1_{V^{\vee }}}{\rightarrow }\vee ^{g-i}V^{\vee }\otimes \vee
^{g}V^{\vee \vee }\otimes V^{\vee }\overset{\varphi _{g-i,1}^{13}}{%
\rightarrow }\vee ^{g-i+1}V^{\vee }\otimes \vee ^{g}V^{\vee \vee } \\
&&\overset{D_{g-i+1,g}\otimes 1_{\vee ^{g}V^{\vee \vee }}}{\rightarrow }\vee
^{i-1}V\otimes \vee ^{g}V^{\vee }\otimes \vee ^{g}V^{\vee \vee }\overset{%
1_{\vee ^{i-1}V}\otimes ev_{V^{\vee },a}^{g,\tau }}{\rightarrow }\vee ^{i-1}V%
\text{.}
\end{eqnarray*}

\begin{lemma}
\label{Dirac Symmetric L2}Setting%
\begin{eqnarray*}
&&\rho _{V^{\vee }}^{g-i,i}:=\binom{g}{g-1}^{-1}\binom{g}{g-i}^{-1}\binom{g}{%
i}^{-1}\binom{r+g-1}{g-1}\binom{r+g-1}{g-i}\binom{r+g-1}{i}g\text{,} \\
&&\nu _{V^{\vee }}^{i,1}:=r_{\vee ^{g}V}\binom{g}{i}^{-1}\binom{r+g-1}{i}i%
\text{ and} \\
&&\nu _{V^{\vee }}^{g-i,1}:=\binom{g}{g-i}^{-1}\binom{r+g-1}{g-i}\left(
g-i\right) \text{,}
\end{eqnarray*}%
the following diagram is commutative:%
\begin{equation*}
\xymatrix@C=180pt{ \vee^{g-i}V\otimes\vee^{i}V\otimes V^{\vee} \ar[r]^-{\left(1_{\vee^{g-i}V}\otimes\overline{\psi}_{i,1},\left(1_{\vee^{i}V}\otimes\overline{\psi}_{g-i,1}\right)\circ\left(\tau_{\vee^{g-i}V,\vee^{i}V}\otimes1_{V^{\vee}}\right)\right)} \ar[d]|{\varphi_{g-i,i}\otimes1_{V^{\vee}}} & \vee^{g-i}V\otimes\vee^{i-1}V\oplus\vee^{i}V\otimes\vee^{g-i-1}V \ar[d]^{\nu_{V^{\vee}}^{i,1}\cdot\psi_{g-i,i-1}\oplus\nu_{V^{\vee}}^{g-i,1}\cdot\psi_{i,g-i-1}} \\ \vee^{g}V\otimes V^{\vee} \ar[r]^-{\rho_{V^{\vee}}^{g-i,i}\cdot\left(1_{V^{\vee}}\otimes i_{\vee^{g}V}\right)\circ\tau_{\vee^{g}V,V^{\vee}}} & V^{\vee}\otimes\vee^{g}V^{\vee\vee}\text{.} }
\end{equation*}
\end{lemma}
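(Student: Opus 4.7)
The plan is to mimic the proof of Lemma \ref{Dirac Alternating L2} step by step, with the alternating idempotents/evaluations replaced by their symmetric counterparts and invoking Theorem \ref{Symmetric algebras T} and Proposition \ref{Symmetric algebras P2} in place of Theorem \ref{Alternating algebras T} and Proposition \ref{Alternating algebras P2}. Concretely, one first expands the two morphisms $\psi_{g-i,i-1}^{V}\circ(1_{\vee^{i}V}\otimes\overline{\psi}_{i,1}^{V})$ and $\psi_{i,g-i-1}^{V}\circ(1_{\vee^{i}V}\otimes\overline{\psi}_{g-i,1}^{V})\circ(\tau_{\vee^{g-i}V,\vee^{i}V}\otimes 1_{V^{\vee}})$ by unfolding the definitions of the $\psi$'s and $\overline{\psi}$'s, and then uses the identities from Theorem \ref{Symmetric algebras T}(1), namely
\[
\mu_{i,g}\cdot 1_{\vee^{g-i}V}=(1_{\vee^{g-i}V}\otimes ev_{V^{\vee},s}^{g,\tau})\circ(D_{i,g}\otimes 1_{\vee^{g}V^{\vee\vee}})\circ D^{g-i,g},
\]
with $\mu_{i,g}:=\binom{g}{i}^{-1}\binom{r+g-1}{i}$, and the analogue with $i$ replaced by $g-i$, to pull the evaluation factors to the outside. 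Crucially, these symmetric identities carry no extra sign $(-1)^{i(g-i)}$, so the whole calculation is sign-free compared to the alternating case.

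Next I would write both results as a common morphism of the shape
\[
D^{g-1,g}\circ(1_{\vee^{g-1}V}\otimes ev_{V^{\vee},s}^{g,\tau}\otimes ev_{V^{\vee},s}^{g,\tau})\circ X\circ(D^{g-i,g}\otimes D^{i,g}\otimes 1_{V^{\vee}}),
\]
for the appropriate morphisms $X\in\{a,b\}$ built from $\varphi_{g-i,i-1}^{13}$ or $\varphi_{i,g-i-1}^{13}$ together with transposition morphisms, exactly as in \eqref{Dirac Alternating L2 F3}--\eqref{Dirac Alternating L2 F4}. The sum $a+b$ is then rewritten via Proposition \ref{Symmetric algebras P2} (the second diagram): after applying appropriate transpositions $\tau_{(235)}$, $\tau_{(34)}$, $\tau_{(345)}$ on the outer tensor slots, this yields the clean identity
\[
\rho\cdot\tau_{(345)}\circ(D_{1,g}\otimes\varphi_{i,g-i}^{13})\circ\tau_{\vee^{i}V^{\vee}\otimes\vee^{g}V^{\vee\vee}\otimes\vee^{g-i}V^{\vee}\otimes\vee^{g}V^{\vee\vee},V^{\vee}}=r_{\vee^{g}V}\cdot a+b,
\]
where $\rho:=r_{\vee^{g}V}g\binom{g}{g-i}^{-1}\binom{r+g-1}{g-i}$ and the prefactor $r_{\vee^{g}V}$ arises from the swap of two copies of $\vee^{g}V^{\vee\vee}$ via \cite[7.2 Lemme]{De}.

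Finally, one simplifies the composition $D^{g-1,g}\circ(1\otimes ev^{g,\tau}\otimes ev^{g,\tau})\circ(D_{1,g}\otimes 1)$ on the $(V^{\vee}\otimes\vee^{g}V^{\vee\vee})$-factor by repeated application of the first diagram in Theorem \ref{Symmetric algebras T}(1) (which produces the factor $\binom{g}{g-1}^{-1}\binom{r+g-1}{g-1}$) together with the Casimir-type collapse identity
\[
(1_{\vee^{g}V^{\vee\vee}}\otimes ev_{V^{\vee},s}^{g,\tau})\circ((ev_{V^{\vee},s}^{g})^{-1}\otimes 1_{\vee^{g}V^{\vee\vee}})=r_{\vee^{g}V}\cdot 1_{\vee^{g}V^{\vee\vee}},
\]
and uses the Theorem \ref{Symmetric algebras T}(2) relation $\varphi_{i,g-i}^{13\to\vee^{g}V^{\vee\vee}}\circ(D^{g-i,g}\otimes D^{i,g})=\mu_{i,g}\cdot i_{\vee^{g}V}\circ\varphi_{g-i,i}$. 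Collecting the accumulated scalar factors and comparing with the definitions of $\rho_{V^{\vee}}^{g-i,i}$, $\nu_{V^{\vee}}^{i,1}$, $\nu_{V^{\vee}}^{g-i,1}$ yields the commutativity of the claimed diagram. The main obstacle is bookkeeping: one must carefully verify that the permutation identities $\tau_{(235)}$, $\tau_{(34)}$, $\tau_{(345)}$ used to convert $a,b$ to the $a_{0}+b_{0}$ form of Proposition \ref{Symmetric algebras P2} indeed commute through the Poincaré morphisms, and that the total scalar arising from chaining all the $\mu$-factors agrees with $\rho_{V^{\vee}}^{g-i,i}/\nu_{V^{\vee}}^{g-i,1}$ (and similarly for the other summand). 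Absent sign complications, this is a direct transcription of the alternating argument and no new ideas are needed.
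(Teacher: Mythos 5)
Your proposal is correct and coincides with the paper's own argument: the paper's proof of Lemma \ref{Dirac Symmetric L2} is literally ``a copy of that of Lemma \ref{Dirac Alternating L2}'', substituting Theorem \ref{Symmetric algebras T} and Proposition \ref{Symmetric algebras P2} for their alternating counterparts, which is exactly the transcription (with the signs $(-1)^{i(g-i)}$ etc.\ disappearing) that you carry out. Your intermediate identities and scalar bookkeeping match the symmetric analogues of \eqref{Dirac Alternating L2 F1}--\eqref{Dirac Alternating L2 F11}, so nothing further is needed.
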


\begin{proof}
Again the proof is a copy of that of Lemma \ref{Dirac Alternating L2}.
\end{proof}

\subsection{\label{Subsection LD Symmetric}Laplace and Dirac operators}

We now specialize the above discussion to the case $g=2i$, i.e. $i=g-i$, and
we simply write $L$ for the invertible object $\vee ^{g}V^{\vee \vee }$
and set $L^{-1}:=\vee ^{g}V^{\vee }$. We write \textrm{Alt}$^{n}\left(
M\right) :=\wedge ^{n}M$ and \textrm{Sym}$^{n}\left( M\right) :=\vee ^{n}M$
when $M$ is a symmetric power of $V$. Attached to
the multiplication map $\vee ^{i}V\otimes \vee ^{i}V\overset{\varphi _{i,i}}{%
\rightarrow }\vee ^{g}V\overset{i_{\vee ^{g}V}}{\rightarrow }L$ there are
the Laplace operators%
\begin{eqnarray*}
\Delta _{i_{\vee ^{g}V}\circ \varphi _{i,i},a}^{n} &:&\mathrm{Alt}^{n}\left(
\vee ^{i}V\right) \rightarrow \mathrm{Alt}^{n-2}\left( \vee ^{i}V\right)
\otimes L\text{,} \\
\Delta _{i_{\vee ^{g}V}\circ \varphi _{i,i},s}^{n} &:&\mathrm{Sym}^{n}\left(
\vee ^{i}V\right) \rightarrow \mathrm{Sym}^{n-2}\left( \vee ^{i}V\right)
\otimes L
\end{eqnarray*}%
and, since $\varphi _{i,i}\circ \tau _{\vee ^{i}V,\vee ^{i}V}=\varphi _{i,i}$%
, by Lemma \ref{Dirac L1}\ we have $\Delta _{i_{\vee ^{g}V}\circ \varphi
_{i,i},a}^{n}=0$. Hence we will only consider $\Delta ^{n}:=\Delta _{i_{\vee
^{g}V}\circ \varphi _{i,i},s}^{n}$.

Looking at the pairings defined before Lemma \ref{Dirac Symmetric L1}, we
note that we have $\psi _{i,1}^{V}=\psi _{g-i,1}^{V}$ and $\overline{\psi }%
_{g-i,g-i-1}^{V}=\overline{\psi }_{i,i-1}^{V}$, while looking at the
pairings defined before \ref{Dirac Symmetric L2}, we remark the equalities $%
\psi _{i,g-i-1}^{V}=\psi _{g-i,i-1}^{V}$ and $\overline{\psi }_{g-i,1}^{V}=%
\overline{\psi }_{i,1}^{V}$.

\bigskip

Then we define the following Dirac operators, for every integer $n\geq 1$:%
\begin{equation*}
\begin{array}{llll}
\partial _{1}^{n}:=\partial _{\psi _{g-i,1}^{V},s}^{n}: & \mathrm{Sym}%
^{n}\left( \vee ^{i}V\right) \otimes V & \rightarrow & \mathrm{Sym}%
^{n-1}\left( \vee ^{i}V\right) \otimes \vee ^{i-1}V^{\vee }\otimes L\text{,}
\\ 
\overline{\partial }_{i-1}^{n}:=\partial _{\overline{\psi }%
_{i,i-1}^{V},s}^{n}: & \mathrm{Sym}^{n}\left( \vee ^{i}V\right) \otimes \vee
^{i-1}V^{\vee } & \rightarrow & \mathrm{Sym}^{n-1}\left( \vee ^{i}V\right)
\otimes V\text{,} \\ 
\partial _{i-1}^{n}:=\partial _{\psi _{g-i,i-1}^{V},s}^{n}: & \mathrm{Sym}%
^{n}\left( \vee ^{i}V\right) \otimes \vee ^{i-1}V & \rightarrow & \mathrm{Sym%
}^{n-1}\left( \vee ^{i}V\right) \otimes V^{\vee }\otimes L\text{,} \\ 
\overline{\partial }_{1}^{n}:=\partial _{\overline{\psi }_{i,1}^{V},s}^{n}:
& \mathrm{Sym}^{n}\left( \vee ^{i}V\right) \otimes V^{\vee } & \rightarrow & 
\mathrm{Sym}^{n-1}\left( \vee ^{i}V\right) \otimes \vee ^{i-1}V\text{.}%
\end{array}%
\end{equation*}%
In the same way as we have deduced Theorem \ref{Dirac Alternating T1} from
Lemmas \ref{Dirac Alternating L1} and \ref{Dirac Alternating L2}, the
following result can be deduced from Lemmas \ref{Dirac Symmetric L1} and \ref%
{Dirac Symmetric L2}.

\begin{theorem}
\label{Dirac Symmetric T1}Set%
\begin{equation*}
\rho ^{i}:=r_{\vee ^{g}V}\binom{g}{g-1}^{-1}\binom{g}{i}^{-1}\binom{r+g-1}{%
g-1}\binom{r+g-1}{i}\frac{g}{i}\text{.}
\end{equation*}

\begin{itemize}
\item[$\left( 1\right) $] The following diagram is commutative:%
\begin{equation*}
\xymatrix{ \mathrm{Sym}^{n}\left(\vee^{i}V\right)\otimes V \ar[r]^-{\partial_{1}^{n}} \ar[d]_{\Delta^{n}\otimes1_{V}} & \mathrm{Sym}^{n-1}\left(\vee^{i}V\right)\otimes\vee^{i-1}V^{\vee}\otimes L \ar[d]^{\overline{\partial}_{i-1}^{n-1}\otimes1_{L}} \\ \mathrm{Sym}^{n-2}\left(\vee^{i}V\right)\otimes L\otimes V \ar[r]_-{\frac{\rho^{i}}{2}\cdot1_{\mathrm{Sym}^{n-2}\left(\vee^{i}V\right)}\otimes\tau_{L,V}} & \mathrm{Sym}^{n-2}\left(\vee^{i}V\right)\otimes V\otimes L\text{.} }
\end{equation*}

\item[$\left( 2\right) $] When $r_{L}=1$, the first of the following
diagrams is commutative and it becomes equivalent to the second diagram when
we further assume that $\binom{r+g-1}{i}\in End\left( \mathbb{I}\right) $ is
a non-zero divisor:%
\begin{equation*}
\xymatrix{ \mathrm{Sym}^{n}\left(\vee^{i}V\right)\otimes V^{\vee} \ar[r]^-{\overline{\partial}_{1}^{n}} \ar[d]|{\Delta^{n}\otimes1_{V^{\vee}}} & \mathrm{Sym}^{n-1}\left(\vee^{i}V\right)\otimes\vee^{i-1}V \ar[d]|{\binom{r+g-1}{i}\cdot\partial_{i-1}^{n-1}} & \mathrm{Sym}^{n}\left(\vee^{i}V\right)\otimes V^{\vee} \ar[r]^-{\overline{\partial}_{1}^{n}} \ar[d]|{\Delta^{n}\otimes1_{V^{\vee}}} & \mathrm{Sym}^{n-1}\left(\vee^{i}V\right)\otimes\vee^{i-1}V \ar[d]|{\partial_{i-1}^{n-1}} \\ \mathrm{Sym}^{n-2}\left(\vee^{i}V\right)\otimes L\otimes V^{\vee} \ar[r]_-{\binom{r+g-1}{i}\frac{\rho^{i}}{2}\cdot1_{\mathrm{Sym}^{n-2}\left(\vee^{i}V\right)}\otimes\tau_{L,V^{\vee}}} & \mathrm{Sym}^{n-2}\left(\vee^{i}V\right)\otimes V^{\vee}\otimes L\text{,} & \mathrm{Sym}^{n-2}\left(\vee^{i}V\right)\otimes L\otimes V^{\vee} \ar[r]_-{\frac{\rho^{i}}{2}\cdot1_{\mathrm{Sym}^{n-2}\left(\vee^{i}V\right)}\otimes\tau_{L,V^{\vee}}} & \mathrm{Sym}^{n-2}\left(\vee^{i}V\right)\otimes V^{\vee}\otimes L\text{.}}
\end{equation*}

\item[$\left( 3\right) $] Suppose that $L\simeq \mathbb{L}^{\otimes 2}$ for
some invertible object $\mathbb{L}$, that $r_{\vee ^{i}V}>0$ (see definition %
\ref{Dirac definition positive rank})\ and that $V$ has symmetric rank $g$.
Then there are morphisms%
\begin{eqnarray*}
s_{\Delta }^{n-2} &:&\mathrm{Sym}^{n-2}\left( \vee ^{i}V\right) \otimes
L\rightarrow \mathrm{Sym}^{n}\left( \vee ^{i}V\right) \text{ for }n\geq 2%
\text{,} \\
s_{\overline{\partial }_{i-1}}^{n-1} &:&\mathrm{Sym}^{n-1}\left( \vee
^{i}V\right) \otimes V\rightarrow \mathrm{Sym}^{n}\left( \vee ^{i}V\right)
\otimes \vee ^{i-1}V^{\vee }\text{ for }n\geq 1\text{,} \\
s_{\partial _{i-1}}^{n-1} &:&\mathrm{Sym}^{n-1}\left( \vee ^{i}V\right)
\otimes V^{\vee }\otimes L\rightarrow \mathrm{Sym}^{n}\left( \vee
^{i}V\right) \otimes \vee ^{i-1}V\text{ for }n\geq 1
\end{eqnarray*}%
such that%
\begin{eqnarray*}
&&\Delta ^{n}\circ s_{\Delta }^{n-2}=1_{\mathrm{Sym}^{n-2}\left( \vee
^{i}V\right) \otimes L}\text{, }\overline{\partial }_{i-1}^{n}\circ s_{%
\overline{\partial }_{i-1}}^{n-1}=1_{\mathrm{Sym}^{n-1}\left( \vee
^{i}V\right) \otimes V} \\
&&\text{and }\partial _{i-1}^{n}\circ s_{\partial _{i-1}}^{n-1}=1_{\mathrm{%
Sym}^{n-1}\left( \vee ^{i}V\right) \otimes V^{\vee }\otimes L}.
\end{eqnarray*}%
In particular, the following objects exist:%
\begin{eqnarray*}
&&\ker \left( \Delta ^{n}\right) \subset \mathrm{Sym}^{n}\left( \vee
^{i}V\right) \text{,} \\
&&\ker \left( \overline{\partial }_{i-1}^{n}\right) \subset \mathrm{Sym}%
^{n}\left( \vee ^{i}V\right) \otimes \vee ^{i-1}V^{\vee }\text{,} \\
&&\ker \left( \partial _{i-1}^{n}\right) \subset \mathrm{Sym}^{n}\left( \vee
^{i}V\right) \otimes \vee ^{i-1}V\text{.}
\end{eqnarray*}
\end{itemize}
\end{theorem}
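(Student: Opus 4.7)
The plan is to deduce this result by closely mirroring the proof of Theorem \ref{Dirac Alternating T1}, but substituting Lemmas \ref{Dirac Symmetric L1} and \ref{Dirac Symmetric L2} for their alternating counterparts and keeping track of the symmetric rather than alternating part of the pairing $\psi := i_{\vee^{g}V} \circ \varphi_{i,i} : \vee^{i}V \otimes \vee^{i}V \to L$. Note that $\varphi_{i,i}$ satisfies $\varphi_{i,i} \circ \tau_{\vee^{i}V, \vee^{i}V} = \varphi_{i,i}$, so $\psi$ is symmetric, i.e.\ $\chi = 1$ and $\ast_\chi = s$ in the formalism of Section \ref{Dirac and Laplace operators}; in particular $\nu_{\ast_\chi} = 1$ and only the symmetric Laplace operator $\Delta^{n}$ survives. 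A recurring simplification is that $g = 2i$ forces $\nu_{V}^{g-i,1} = \nu_{V}^{i,1} = i$ in Lemma \ref{Dirac Symmetric L1}, and $\nu_{V^{\vee}}^{i,1} = r_{\vee^{g}V}\,\nu_{V^{\vee}}^{g-i,1}$ in Lemma \ref{Dirac Symmetric L2}, which matches the symmetric symmetry condition exactly (no sign discrepancies arise, unlike the alternating case where parity of $i$ enters).

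For parts (1) and (2) the plan is to apply Lemma \ref{Dirac L2} with $X = \vee^{i}V$, $W = L$, and with $(\psi_1, \psi_2)$ the two factorizations provided by our preliminary lemmas. For part (1), take $\psi_1 = \psi_{g-i,1}^{V}$ and $\psi_2 = \overline{\psi}_{i,i-1}^{V} \otimes 1_L$; the hypothesis diagram of Lemma \ref{Dirac L2} is then precisely the diagram of Lemma \ref{Dirac Symmetric L1} divided by $\nu_V^{g-i,1} = i$, and a direct computation identifies the resulting constant $\rho_V^{i,g-i}/\nu_V^{g-i,1}$ with the stated $\rho^i$. For part (2), take $\psi_1 = \overline{\psi}_{i,1}^{V}$ and $\psi_2 = \psi_{g-i,i-1}^{V}$; here Lemma \ref{Dirac Symmetric L2} supplies the required diagram, and the assumption $r_L = 1$ is exactly what ensures $\nu_{V^{\vee}}^{i,1} = \nu_{V^{\vee}}^{g-i,1}$ (up to the common factor $\binom{r+g-1}{i}$, which becomes cancellable when it is a non-zero divisor in $\mathrm{End}(\mathbb{I})$, giving the second equivalent diagram). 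Lemma \ref{Dirac L2} then yields the two commutative diagrams after one checks the simple arithmetic that the overall scalar reduces to $\rho^i / 2$.

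For part (3), under the hypotheses $L \simeq \mathbb{L}^{\otimes 2}$ and $V$ of symmetric rank $g$, Corollary \ref{Symmetric algebras CT} guarantees that $\varphi_{i,i}$ (hence $\psi$) is a perfect symmetric pairing valued in a square of an invertible object. Since $\chi(\tau_{X,X}) = 1$ and $r_{\vee^{i}V} > 0$, Proposition \ref{Dirac P3} produces a section $s_\Delta^{n-2}$ of $\Delta^n$. To propagate this section to $\overline{\partial}_{i-1}^{n}$ and $\partial_{i-1}^{n}$, I would use the elementary observation that if $f = f_2 \circ f_1$ admits a section $s$ (so $f \circ s = 1$), then $f_1 \circ s$ is a section of $f_2$. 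Applied to the commutative diagrams from parts (1) and (2) — noting that the bottom morphisms are invertible, since they are nonzero scalar multiples of isomorphisms built from $\tau_{L,V}$ and $\tau_{L,V^\vee}$, and that the required binomial coefficients are invertible because $V$ has symmetric rank $g$ — this yields sections of $\overline{\partial}_{i-1}^{n} \otimes 1_L$ and $\partial_{i-1}^{n}$, and invertibility of $L$ promotes the first to a section of $\overline{\partial}_{i-1}^{n}$. Existence of the kernels then follows from pseudo-abelianness via the associated idempotents $s \circ f$.

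The main obstacle is the bookkeeping in verifying that the product of constants from Lemma \ref{Dirac Symmetric L1} and \ref{Dirac Symmetric L2}, after specializing to $g = 2i$ and dividing out the appropriate $\nu$'s, matches the stated $\rho^i$ exactly; this is tedious but essentially forced once the pattern from the alternating case has been understood. The only substantive difference from the alternating case is that the symmetric setting is uniform in the parity of $i$, so no bifurcation into odd/even subcases is needed.
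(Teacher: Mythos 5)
Your proposal is correct and follows essentially the same route as the paper, which proves this theorem exactly by repeating the argument of Theorem \ref{Dirac Alternating T1} with Lemmas \ref{Dirac Symmetric L1} and \ref{Dirac Symmetric L2} in place of their alternating analogues: Lemma \ref{Dirac L2} gives parts $(1)$--$(2)$ after checking the constants (with $\binom{r+g-1}{i}$ cancelled when it is a non-zero divisor), and part $(3)$ combines Corollary \ref{Symmetric algebras CT}, Proposition \ref{Dirac P3} and the section-propagation trick $s_{2}:=f_{1}\circ s$ with pseudo-abelianness. Your observations that the symmetric case is uniform in the parity of $i$ and that $r_{L}=1$ is what equalizes $\nu_{V^{\vee}}^{i,1}$ and $\nu_{V^{\vee}}^{g-i,1}$ match the intended computation.
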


\section{Some remarks about the functoriality of the Dirac operators}

We will assume, from now on, that we are given an object $V\in \mathcal{C}$
and that $\mathcal{C}\ $and $\mathcal{D}$\ are $\mathbb{Q}$-linear rigid and
pseudo-abelian $ACU$ tensor categories. Once again, if $X$ is an object, we
set $r_{X}:=\mathrm{rank}\left( X\right) $ and we use the shorthand $r:=r_{V}
$. As usual, we write $e_{X,?}^{n}$, $i_{X,?}^{n}$ and $p_{X,?}^{n}$ for the
idempotent $e_{X,?}^{n}$ in $End\left( \otimes ^{n}X\right) $ giving rise to 
$\wedge ^{n}X$ when $?=a$ and $\vee ^{n}X$ when $?=s$ and the associated
canonical injective and surjective morphisms. We denote by $D_{V,?}^{i,j}$
and $D_{i,j}^{V,?}$ the Poincare duality morphisms in the algebra $\otimes
^{\cdot }V$ when $?=t$, $\wedge ^{\cdot }V$ when $?=a$ and $\vee ^{\cdot }V$
when $?=s$. Then it easily follows from \cite[Lemma 2.3, \S 5 and \S 6]{MS}
that, for every $g\geq i$ and $?=a$ or $s$,%
\begin{equation}
D_{V,?}^{i,g}:A_{i}\overset{i_{V,?}^{i}}{\rightarrow }\otimes ^{i}V\overset{%
D_{t}^{i,g}}{\rightarrow }\left( \otimes ^{g-i}V^{\vee }\right) \otimes
\left( \otimes ^{g}V^{\vee \vee }\right) \overset{p_{V^{\vee
},?}^{g-i}\otimes p_{V^{\vee \vee },?}^{g}}{\rightarrow }A_{g-i}^{\vee
}\otimes A_{g}^{\vee \vee }  \label{Poincare and Dirac functoriality F4}
\end{equation}%
and%
\begin{equation}
D_{i,g}^{V,?}:A_{i}^{\vee }\overset{i_{V,?}^{i}}{\rightarrow }\otimes
^{i}V^{\vee }\overset{D_{i,g}^{t}}{\rightarrow }\left( \otimes
^{g-i}V\right) \otimes \left( \otimes ^{g}V^{\vee }\right) \overset{%
p_{V,?}^{g-i}\otimes p_{V^{\vee },?}^{g}}{\rightarrow }A_{g-i}\otimes
A_{g}^{\vee }  \label{Poincare and Dirac functoriality F5}
\end{equation}

Suppose that we are given a (covariant) additive $AU$ tensor functor $F:%
\mathcal{C}\rightarrow \mathcal{D}$; it preserves internal $\hom $s and
dualities. We suppose that $F$ has the following further properties:

\begin{itemize}
\item $F\left( \tau _{V,V}\right) =\varepsilon \cdot \tau _{F\left( V\right)
,F\left( V\right) }$ and $F\left( \tau _{V^{\vee },V^{\vee }}\right)
=\varepsilon \cdot \tau _{F\left( V\right) ,F\left( V\right) }$, where $%
\varepsilon \in \left\{ \pm 1\right\} $;

\item $F\left( \tau _{V^{\vee },V}\right) =\eta \cdot \tau _{F\left(
V\right) ^{\vee },F\left( V\right) }$ (so that $F\left( \tau _{V,V^{\vee
}}\right) =\eta \cdot \tau _{F\left( V\right) ,F\left( V\right) ^{\vee }}$),
where $\eta \in \left\{ \pm 1\right\} $.
\end{itemize}

We remark that, if $\varepsilon =1$ (resp. $\varepsilon =-1$) and $X\in
\left\{ V,V^{\vee },V^{\vee \vee }\right\} $, we have $F\left(
e_{X,a}^{n}\right) =e_{F\left( X\right) ,a}^{n}$ (resp. $F\left(
e_{X,a}^{n}\right) =e_{F\left( X\right) ,s}^{n}$), $F\left(
e_{X,s}^{n}\right) =e_{F\left( X\right) ,s}^{n}$ (resp. $F\left(
e_{X,s}^{n}\right) =e_{F\left( X\right) ,a}^{n}$)\ and the same for the
associated injective and surjective morphisms. The following result is now
an easy consequence of this remark, $\left( \text{\ref{Poincare and Dirac
functoriality F4}}\right) $, $\left( \text{\ref{Poincare and Dirac
functoriality F5}}\right) $ and an explicit computation showing that $%
F\left( D_{V,t}^{i,g}\right) =\eta ^{\frac{i^{2}+i}{2}}\cdot D_{F\left(
V\right) ,t}^{i,g}$ and $F\left( D_{i,g}^{V,t}\right) =\eta ^{\frac{i^{2}-i}{%
2}}\cdot D_{i,g}^{F\left( V\right) ,t}$ (see \cite[\S 5]{MS} for the
explicit of $D_{V,t}^{i,j}$ and $D_{i,j}^{V,t}$).

\begin{lemma}
\label{Poincare and Dirac functoriality L1}Suppose that we are given a
(covariant) additive $AU$ tensor functor $F:\mathcal{C}\rightarrow \mathcal{D%
}$ as above.

\begin{itemize}
\item[$\left( 1\right) $] If $\varepsilon =1$ then we have%
\begin{eqnarray*}
F\left( D_{V,a}^{i,g}\right) &=&\eta ^{\frac{i^{2}+i}{2}}\cdot D_{F\left(
V\right) ,a}^{i,g}\text{, }F\left( D_{V,s}^{i,g}\right) =\eta ^{\frac{i^{2}+i%
}{2}}\cdot D_{F\left( V\right) ,s}^{i,g}\text{,} \\
F\left( D_{i,g}^{V,a}\right) &=&\eta ^{\frac{i^{2}-i}{2}}\cdot
D_{i,g}^{F\left( V\right) ,a}\text{, }F\left( D_{i,g}^{V,s}\right) =\eta ^{%
\frac{i^{2}-i}{2}}\cdot D_{i,g}^{F\left( V\right) ,s}\text{.}
\end{eqnarray*}

\item[$\left( 2\right) $] If $\varepsilon =-1$ then the same formulas hold after
  swapping the symbols $s$ and $a$ in the right-hand side.
\end{itemize}
\end{lemma}

\bigskip

Fix $g\geq i$ such that $g=2i$ and set $L_{a}:=\wedge ^{g}V^{\vee \vee }$
and $L_{s}:=\vee ^{g}V^{\vee \vee }$. Write $\psi _{i,1}^{V,a}=\psi
_{g-i,1}^{V,a}$ and $\overline{\psi }_{g-i,g-i-1}^{V,a}=\overline{\psi }%
_{i,i-1}^{V,a}$ for the pairings defined before Lemma \ref{Dirac Alternating
L1}, $\psi _{i,g-i-1}^{V,a}=\psi _{g-i,i-1}^{V,a}$ and $\overline{\psi }%
_{g-i,1}^{V,a}=\overline{\psi }_{i,1}^{V,a}$ for those defined before \ref%
{Dirac Alternating L2}, $\psi _{i,1}^{V,s}=\psi _{g-i,1}^{V,s}$ and $%
\overline{\psi }_{g-i,g-i-1}^{V,s}=\overline{\psi }_{i,i-1}^{V,s}$ for the
ones considered before Lemma \ref{Dirac Symmetric L1} and $\psi
_{i,g-i-1}^{V,s}=\psi _{g-i,i-1}^{V,s}$ and $\overline{\psi }_{g-i,1}^{V,s}=%
\overline{\psi }_{i,1}^{V,s}$ for those defined before \ref{Dirac Symmetric
L2}.

Consider the following operators from \S \S \ref{Subsection LD Alternating}%
\footnote{%
Of course some of them will be zero, but it will be convenient to consider
all of them, in order to state the result in a symmetric way.}:%
\begin{equation*}
\begin{array}{llll}
\Delta ^{\mathrm{Alt}^{n}\left( \wedge ^{i}V\right) }:=\Delta _{i_{\wedge
^{g}V}\circ \varphi _{i,i},a}^{n}: & \mathrm{Alt}^{n}\left( \wedge
^{i}V\right) & \rightarrow & \mathrm{Alt}^{n-2}\left( \wedge ^{i}V\right)
\otimes L_{a}\text{,} \\ 
\overline{\partial }_{i-1}^{\mathrm{Alt}^{n}\left( \wedge ^{i}V\right)
}:=\partial _{\overline{\psi }_{i,i-1}^{V,a},a}^{n}: & \mathrm{Alt}%
^{n}\left( \wedge ^{i}V\right) \otimes \wedge ^{i-1}V^{\vee } & \rightarrow
& \mathrm{Alt}^{n-1}\left( \wedge ^{i}V\right) \otimes V\text{,} \\ 
\partial _{i-1}^{\mathrm{Alt}^{n}\left( \wedge ^{i}V\right) }:=\partial
_{\psi _{g-i,i-1}^{V,a},a}^{n}: & \mathrm{Alt}^{n}\left( \wedge ^{i}V\right)
\otimes \wedge ^{i-1}V & \rightarrow & \mathrm{Alt}^{n-1}\left( \wedge
^{i}V\right) \otimes V^{\vee }\otimes L_{a}\text{,} \\ 
\end{array}%
\end{equation*}%
and similar for $\Delta^{\mathrm{Sym^n}(\wedge^i V)} :=\Delta _{i_{\wedge
^{g}V}\circ \varphi _{i,i},a}^{n}$, $\bar\partial_{i-1}^{\mathrm{Sym^n}(\wedge^i V)} :=\partial _{\overline{\psi }_{i,i-1}^{V,a},s}^{n}$ and $\partial_{i-1}^{\mathrm{Sym^n}(\wedge^i V)} :=\partial
 _{\psi _{g-i,i-1}^{V,a},s}^{n}$ where one swaps
the symbols $\mathrm{Alt}$ with $\mathrm{Sym}$.

Similarly, in order to symmetrically state the results, we will need to
consider the operators from \S \S \ref{Subsection LD Symmetric} together
with the analogous operators induced on the alternating powers:%
\begin{equation*}
\begin{array}{llll}
\Delta ^{\mathrm{Alt}^{n}\left( \vee ^{i}V\right) }:=\Delta _{i_{\vee
^{g}V}\circ \varphi _{i,i},a}^{n}: & \mathrm{Alt}^{n}\left( \vee ^{i}V\right)
& \rightarrow & \mathrm{Alt}^{n-2}\left( \vee ^{i}V\right) \otimes L_{s}%
\text{,} \\ 
\overline{\partial }_{i-1}^{\mathrm{Alt}^{n}\left( \vee ^{i}V\right)
}:=\partial _{\overline{\psi }_{i,i-1}^{V,s},a}^{n}: & \mathrm{Alt}%
^{n}\left( \vee ^{i}V\right) \otimes \vee ^{i-1}V^{\vee } & \rightarrow & 
\mathrm{Alt}^{n-1}\left( \vee ^{i}V\right) \otimes V\text{,} \\ 
\partial _{i-1}^{\mathrm{Alt}^{n}\left( \vee ^{i}V\right) }:=\partial _{\psi
_{g-i,i-1}^{V,s},a}^{n}: & \mathrm{Alt}^{n}\left( \vee ^{i}V\right) \otimes
\vee ^{i-1}V & \rightarrow & \mathrm{Alt}^{n-1}\left( \vee ^{i}V\right)
\otimes V^{\vee }\otimes L_{s}\text{,} \\ 
\end{array}%
\end{equation*}
and similar for the remaining three operators with $\textrm{Alt}$ and $\textrm{Sym}$ swapped.
The following result, whose proof is left to the reader, follows from Lemma %
\ref{Poincare and Dirac functoriality L1} and a small computation.

\begin{proposition}
\label{Poincare and Dirac functoriality P1}Suppose that we are given a
(covariant) additive $AU$ tensor functor $F:\mathcal{C}\rightarrow \mathcal{D%
}$ as above.

\begin{itemize}
\item[$\left( 1\right) $] If $\varepsilon =1$ then we have%
\begin{equation*}
\begin{array}{ll}
F\left( \Delta ^{\mathrm{Alt}^{n}\left( \wedge ^{i}V\right) }\right) =\Delta
^{\mathrm{Alt}^{n}\left( \wedge ^{i}F\left( V\right) \right) }\text{,} & 
F\left( \Delta ^{\mathrm{Alt}^{n}\left( \vee ^{i}V\right) }\right) =\Delta ^{%
\mathrm{Alt}^{n}\left( \vee ^{i}F\left( V\right) \right) }\text{,} \\ 
F\left( \overline{\partial }_{i-1}^{\mathrm{Alt}^{n}\left( \wedge
^{i}V\right) }\right) =\eta ^{\frac{i\left( i+1\right) }{2}+1}\cdot 
\overline{\partial }_{i-1}^{\mathrm{Alt}^{n}\left( \wedge ^{i}F\left(
V\right) \right) }\text{,} & F\left( \overline{\partial }_{i-1}^{\mathrm{Alt}%
^{n}\left( \vee ^{i}V\right) }\right) =\eta ^{\frac{i\left( i+1\right) }{2}%
+1}\cdot \overline{\partial }_{i-1}^{\mathrm{Alt}^{n}\left( \vee ^{i}F\left(
V\right) \right) }\text{,} \\ 
F\left( \partial _{i-1}^{\mathrm{Alt}^{n}\left( \wedge ^{i}V\right) }\right)
=\eta ^{\frac{g\left( g-1\right) }{2}}\cdot \partial _{i-1}^{\mathrm{Alt}%
^{n}\left( \wedge ^{i}F\left( V\right) \right) }\text{,} & F\left( \partial
_{i-1}^{\mathrm{Alt}^{n}\left( \vee ^{i}V\right) }\right) =\eta ^{\frac{%
g\left( g-1\right) }{2}}\cdot \partial _{i-1}^{\mathrm{Alt}^{n}\left( \vee
^{i}F\left( V\right) \right) }\text{.}
\end{array}%
\end{equation*}
Six more formulas hold where the symbols $\operatorname{Alt}$ and $\operatorname{Sym}$ are swapped.

\item[$\left( 2\right) $] If $\varepsilon =-1$ and $i$ is even, then
  similar twelve formulas hold where in the right-hand side the symbols $\wedge$ and $\vee$
  must be swapped. If $\varepsilon = -1$ and $i$ is odd, in addition one must swap
  the symbols $\operatorname{Alt}$
  and $\operatorname{Sym}$.

\end{itemize}
\end{proposition}

\subsection{Application to quaternionic objects}

We will now focus on the case $i=2$ and $g=2i=4$ and we let $B$ be a
quaternion $\mathbb{Q}$-algebra, whose main involution we denote by $%
b\mapsto b^{\iota }$. An alternating (resp. symmetric) quaternionic object
in $\mathcal{C}$\ is a couple $\left( V,\theta \right) $\ where $V$ has
alternating (resp. symmetric) rank $4$ and $\theta :B\rightarrow End\left(
V\right) $ is a unitary ring homomorphism. We will assume that such a $%
\left( V,\theta \right) $ has been given in the following discussion.

We have $\vee ^{2}B\subset B\otimes B$, the $\mathbb{Q}$-vector space
generated by the elements $b_{1}\vee b_{2}=\frac{1}{2}\left( b_{1}\otimes
b_{2}+b_{2}\otimes b_{1}\right) $. Noticing that%
\begin{equation*}
\left( b_{1}+b_{2}\right) \otimes \left( b_{1}+b_{2}\right) =b_{1}\otimes
b_{1}+b_{2}\otimes b_{2}+b_{1}\otimes b_{2}+b_{2}\otimes b_{1}
\end{equation*}%
and that $b\vee b=b\otimes b$, we see that%
\begin{equation*}
b_{1}\vee b_{2}=\frac{\left( b_{1}+b_{2}\right) \vee \left(
b_{1}+b_{2}\right) }{2}-\frac{b_{1}\vee b_{1}}{2}-\frac{b_{2}\vee b_{2}}{2}%
\text{,}
\end{equation*}%
so that $\vee ^{2}B$ is the $\mathbb{Q}$-vector space generated by the
elements $b\vee b$. Considering $B\otimes B$ as a $\mathbb{Q}$-algebra in
the natural way, it follows that $\vee ^{2}B$ is a subalgebra, because the
product of elements of the form $b\vee b$ is again of this form. Let \textrm{%
Tr}$:B\rightarrow \mathbb{Q}$ and \textrm{Nr}$:B\rightarrow \mathbb{Q}$ be
the reduced trace and norm and set $B_{0}:=\ker \left( \mathrm{Tr}\right) $.

Write $W$ for the $\mathbb{Q}$-vector space $B$, endowed with the action of $%
B\otimes B$ defined by the rule $b_{1}\otimes b_{2}\cdot
x:=b_{1}xb_{2}^{\iota }$. It gives rise to a unitary ring homomorphism $%
f:B\otimes B\rightarrow End_{\mathbb{Q}}\left( W\right) \simeq \mathbf{M}%
_{4}\left( \mathbb{Q}\right) $ which is injective because $B\otimes B$ is
simple, hence an isomorphism by counting dimensions. As a $\vee ^{2}B$%
-module $W=B_{0}\oplus \mathbb{Q}$ and it easily follows that the resulting
homomorphism%
\begin{equation*}
\vee ^{2}B\rightarrow End_{\mathbb{Q}}\left( B_{0}\right) \oplus End_{%
\mathbb{Q}}\left( \mathbb{Q}\right) \simeq \mathbf{M}_{3}\left( \mathbb{Q}%
\right) \oplus \mathbb{Q}
\end{equation*}%
is an isomorphism: it is injective because $End_{\mathbb{Q}}\left(
B_{0}\right) \oplus End_{\mathbb{Q}}\left( \mathbb{Q}\right) \subset End_{%
\mathbb{Q}}\left( W\right) $ and $f$ is injective, hence an isomorphism
again by counting dimensions. Furthermore, the action of $\vee ^{2}B$ on $%
\mathbb{Q}$ is given by the $\mathbb{Q}$-algebra homomorphism%
\begin{equation*}
\chi :\vee ^{2}B\rightarrow \mathbb{Q}\text{, }\chi \left( b_{1}\vee
b_{2}\right) =\frac{\mathrm{Tr}\left( b_{1}^{\iota }b_{2}\right) }{2}\text{.}
\end{equation*}%
It follows that there is an idempotent $e_{-}\in \vee ^{2}B$ characterized
by $se_{-}=\chi \left( s\right) e_{-}$ for every $s\in \vee ^{2}B$.

We have a natural $B\otimes B$-action on $V\otimes V$ by $\theta ^{\otimes
2}:=\theta \otimes \theta $\ and, since $b\otimes b\circ
e_{V,?}^{2}=e_{V,?}^{2}\circ b\otimes b$ for $?\in \left\{ a,s\right\} $, $%
B\subset B\otimes B$ (diagonally)\ operates on $\wedge ^{2}V$ and $\vee ^{2}V
$. But $\vee ^{2}B$ is generated by the elements of the form $b\otimes b$ as
a $\mathbb{Q}$-algebra (and indeed as a $\mathbb{Q}$-vector space, as
already noticed): hence $\vee ^{2}B\subset B\otimes B$ operates on $\wedge
^{2}V$ and $\vee ^{2}V$. The above discussion shows that we may write%
\begin{equation}
\wedge ^{2}V=\left( \wedge ^{2}V\right) _{+}\oplus \left( \wedge
^{2}V\right) _{-}\text{ and }\vee ^{2}V=\left( \vee ^{2}V\right) _{+}\oplus
\left( \vee ^{2}V\right) _{-}  \label{Poincare and Dirac functoriality F6}
\end{equation}%
where $\left( \wedge ^{2}V\right) _{-}:=\func{Im}\left( \theta ^{\otimes
2}\left( e_{-}\right) \right) $ and $\left( \vee ^{2}V\right) _{-}:=\func{Im}%
\left( \theta ^{\otimes 2}\left( e_{-}\right) \right) $ are characterized by
the property that $\vee ^{2}B$ acts on them via $\chi $. Indeed we remark
that, since $\vee ^{2}B$ is generated by the diagonal image of $B\subset
B\otimes B$ and $\chi \left( b\otimes b\right) =$\textrm{Nr}$\left( b\right) 
$, $\left( \wedge ^{2}V\right) _{-}$ (resp. $\left( \vee ^{2}V\right) _{-}$)
is the unique maximal subobject of $\wedge ^{2}V$ (resp. $\vee ^{2}V$) on
which $B$ acts via the reduced norm.

Associated with $\left( V,\theta \right) $\ is the dual quaternionic object $%
\left( V^{\vee },\theta ^{\vee }\right) $ where $\theta ^{\vee }\left(
b\right) :=\theta \left( b^\iota\right) ^{\vee }$. We will simply write $\left(
\wedge ^{2}V^{\vee }\right) _{\pm }$ (resp. $\left( \vee ^{2}V^{\vee
}\right) _{\pm }$) for the $\pm $ components attached to $\left( V^{\vee
},\theta ^{\vee }\right) $ obtained in this way. Since by definition $\theta
^{\vee \otimes 2}\left( e_{-}\right) :=\left( \theta \left( e_{-}\right)
^{\vee }\right) ^{\otimes 2}=\left( \theta \left( e_{-}\right) ^{\otimes
2}\right) ^{\vee }$, we have $\left( \wedge ^{2}V^{\vee }\right) _{\pm
}=\left( \wedge ^{2}V_{\pm }\right) ^{\vee }$ (resp. $\left( \vee
^{2}V^{\vee }\right) _{\pm }=\left( \vee ^{2}V_{\pm }\right) ^{\vee }$)

We summarize the above discussion in the first part of following lemma,
while the second follows from the remark before Lemma \ref{Poincare and
Dirac functoriality L1}.

\begin{lemma}
\label{Poincare and Dirac functoriality L2}If $\left( V,\theta \right) $ is
an alternating (resp. symmetric) quaternionic object in $\mathcal{C}$, there
is a canonical decomposition $\left( \text{\ref{Poincare and Dirac
functoriality F6}}\right) $\ (in the category of quaternionic objects), where 
$\left( \wedge ^{2}V\right) _{-}$ (resp. $\left( \vee ^{2}V\right) _{-}$) is
characterized by the fact that it is the unique maximal subobject $X$ of $%
\wedge ^{2}V$ (resp. $\vee ^{2}V$) such that the action of $B$ acting
diagonally on $\wedge ^{2}V$ (resp. $\vee ^{2}V$) is given by the reduced
norm on $X$. We have $\left( \wedge ^{2}V^{\vee }\right) _{\pm }=\left(
\wedge ^{2}V_{\pm }\right) ^{\vee }$ (resp. $\left( \vee ^{2}V^{\vee
}\right) _{\pm }=\left( \vee ^{2}V_{\pm }\right) ^{\vee }$).

Suppose that we are given a (covariant) additive $AU$ tensor functor $F:%
\mathcal{C}\rightarrow \mathcal{D}$ as above and define $F\left( \theta
\right) \left( b\right) :=F\left( \theta \left( b\right) \right) $. Then $%
\left( F\left( V\right) ,F\left( \theta \right) \right) $ is an alternating
(resp. symmetric) quaternionic object in $\mathcal{D}$ when $\varepsilon =1$%
, $\left( F\left( V\right) ,F\left( \theta \right) \right) $ is a symmetric
(resp. alternating) quaternionic object in $\mathcal{D}$ when $\varepsilon
=-1$ and we have%
\begin{equation*}
F\left( \left( \wedge ^{2}V\right) _{\pm }\right) =\left( \wedge ^{2}F\left(
V\right) \right) _{\pm }\text{ (resp. }F\left( \left( \vee ^{2}V\right)
_{\pm }\right) =\left( \vee ^{2}F\left( V\right) \right) _{\pm })\text{ when 
}\varepsilon =1
\end{equation*}%
and%
\begin{equation*}
F\left( \left( \wedge ^{2}V\right) _{\pm }\right) =\left( \vee ^{2}F\left(
V\right) \right) _{\pm }\text{ (resp. }F\left( \left( \vee ^{2}V\right)
_{\pm }\right) =\left( \wedge ^{2}F\left( V\right) \right) _{\pm })\text{
when }\varepsilon =-1\text{.}
\end{equation*}
\end{lemma}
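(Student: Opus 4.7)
The plan is to reduce both parts of the lemma to the construction of the idempotent $e_{-} \in \vee^{2}B$ recalled in the paragraphs preceding the statement, and then to track its image under $\theta^{\otimes 2}$, under dualization, and under $F$. Most of the first part is already laid out in prose; what remains is to collect the pieces and verify a few naturality statements for the second part.

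For the first paragraph of the lemma, I would argue as follows. Because $e_{-} \in \vee^{2}B \subset B \otimes B$ lies in the symmetric subalgebra, the endomorphism $\theta^{\otimes 2}(e_{-})$ of $V \otimes V$ commutes with $\tau_{V,V}$ and hence with both $e_{V,a}^{2}$ and $e_{V,s}^{2}$; it therefore descends to idempotents on $\wedge^{2}V$ and $\vee^{2}V$. By pseudo-abelianness their images supply the summands $\left( \wedge^{2}V \right)_{-}$ and $\left( \vee^{2}V \right)_{-}$, with complements coming from $1-e_{-}$. For the intrinsic characterization, I would use that $\vee^{2}B$ is generated as a $\mathbb{Q}$-algebra by the diagonal elements $b \otimes b$, on which $\chi$ takes the value $\mathrm{Nr}(b)$: any subobject on which $B$ acts diagonally by the reduced norm is annihilated by $s-\chi(s)$ for every $s \in \vee^{2}B$, hence by $1-e_{-}$, and so is contained in $\left( \wedge^{2}V \right)_{-}$, while on $\left( \wedge^{2}V \right)_{-}$ the diagonal $B$ acts through $\chi$ and thus by the reduced norm. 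The duality identification $\left( \wedge^{2}V^{\vee} \right)_{\pm} = \left( \left( \wedge^{2}V \right)_{\pm} \right)^{\vee}$ (and the symmetric analogue) then follows because dualization is a contravariant $ACU$ self-functor of $\mathcal{C}$ under which $\theta^{\vee \otimes 2}(e_{-}) = \left( \theta^{\otimes 2}(e_{-}) \right)^{\vee}$, and the image of a dualized idempotent is the dual of the original image.

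For the second paragraph I would apply $F$ to these constructions. Starting from $e_{V,a}^{2} = (1-\tau_{V,V})/2$ and $e_{V,s}^{2} = (1+\tau_{V,V})/2$ together with the hypothesis $F(\tau_{V,V}) = \varepsilon \cdot \tau_{F(V),F(V)}$, the remark preceding Lemma \ref{Poincare and Dirac functoriality L1} gives $F(e_{V,a}^{2}) = e_{F(V),a}^{2}$ and $F(e_{V,s}^{2}) = e_{F(V),s}^{2}$ when $\varepsilon = 1$, and swaps these when $\varepsilon = -1$; this is what produces the swap of $\wedge^{2}$ and $\vee^{2}$ under $F$ in the latter case. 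Additivity and tensor-compatibility of $F$ yield the key identity $F(\theta^{\otimes 2}(e_{-})) = F(\theta)^{\otimes 2}(e_{-})$, and since $F$ sends idempotents to idempotents and their images to images, the decomposition is carried to the analogous decomposition for $F(V)$, possibly with $\wedge^{2}$ and $\vee^{2}$ interchanged in accordance with $\varepsilon$. The fact that $\left( F(V), F(\theta) \right)$ has rank $4$ of the appropriate alternating or symmetric type is immediate from the fact that an additive $AU$ tensor functor preserves rank and invertibility of the relevant top exterior or symmetric power.

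The only mildly delicate point I expect is the intrinsic characterization of $\left( \wedge^{2}V \right)_{-}$ in terms of the diagonal $B$-action alone: this is what guarantees that the decomposition is genuinely canonical and not dependent on auxiliary choices such as a splitting $\vee^{2}B \simeq \mathbf{M}_{3}(\mathbb{Q}) \oplus \mathbb{Q}$. Once this is in place, the rest is bookkeeping with idempotents, tensor-compatible functors, and the sign $\varepsilon$; the sign $\eta$ plays no role here since the argument uses only internal structure of $V$ and its tensor powers, not the explicit Poincar\'e morphisms $D^{i,g}$.
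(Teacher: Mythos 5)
Your proposal is correct and takes essentially the same route as the paper, whose own proof is nothing more than the observation that the first assertion summarizes the preceding discussion of the idempotent $e_{-}\in\vee^{2}B$, the character $\chi$ and the reduced-norm characterization, while the second follows from the remark before Lemma \ref{Poincare and Dirac functoriality L1} on how $F$ transforms the (anti)symmetrizing idempotents according to $\varepsilon$ --- precisely the two ingredients you assemble, including the duality step via $\theta^{\vee\otimes 2}(e_{-})=(\theta^{\otimes 2}(e_{-}))^{\vee}$. The only imprecision is your closing claim that an additive $AU$ tensor functor ``preserves rank'': since the rank involves the commutativity constraint, it is only preserved up to the sign $\eta$ (via $F(\tau_{V^{\vee},V})=\eta\cdot\tau$), but this does not affect the argument at the level of detail the paper itself demands, as the paper's proof does not address the rank conditions either.
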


\bigskip 

Since $i=2$, we have $1=i-1$ and it follows that$\ \psi _{i,1}^{V,?}=\psi
_{g-i,1}^{V,?}=\psi _{i,g-i-1}^{V,?}=\psi _{g-i,i-1}^{V,?}$ and $\overline{%
\psi }_{g-i,g-i-1}^{V,?}=\overline{\psi }_{i,i-1}^{V,?}=\overline{\psi }%
_{g-i,1}^{V,?}=\overline{\psi }_{i,1}^{V,?}$. Hence, our discussion on Dirac
operators is confined to the two pairings $\psi ^{V,?}:=\psi
_{i,1}^{V,?}$ and $\overline{\psi }_{g-i,g-i-1}^{V,?}:=\overline{\psi }%
_{i,1}^{V,?}$. Together with the multiplication maps they induce%
\begin{equation*}
\begin{array}{llll}
\Delta ^{\mathrm{Sym}^{n}\left( \wedge ^{2}V\right) }:=\Delta _{i_{\wedge
^{g}V}\circ \varphi _{2,2},a}^{n} & \mathrm{Sym}^{n}\left( \wedge
^{2}V\right)  & \rightarrow  & \mathrm{Sym}^{n-2}\left( \wedge ^{2}V\right)
\otimes L_{a}\text{,} \\ 
\overline{\partial }^{\mathrm{Sym}^{n}\left( \wedge ^{2}V\right) }:=\partial
_{\overline{\psi }^{V,a},s}^{n}: & \mathrm{Sym}^{n}\left( \wedge
^{2}V\right) \otimes V^{\vee } & \rightarrow  & \mathrm{Sym}^{n-1}\left(
V\right) \otimes V\text{,} \\ 
\partial ^{\mathrm{Sym}^{n}\left( \wedge ^{2}V\right) }:=\partial _{\psi
^{V,a},s}^{n}: & \mathrm{Sym}^{n}\left( \wedge ^{2}V\right) \otimes V & 
\rightarrow  & \mathrm{Sym}^{n-1}\left( V\right) \otimes V^{\vee }\otimes
L_{a}%
\end{array}%
\end{equation*}%
and their analogous where $\wedge ^{2}V$ is replaced by $\vee ^{2}V$ in the
notation and the sources and the targets. On the other hand, let $\psi
_{-}^{V,?}:=\psi ^{V,?}\circ \left( i_{-}\otimes 1_{V}\right) $, $\overline{%
\psi }_{-}^{V,?}:=\overline{\psi }^{V,?}\circ \left( i_{-}\otimes 1_{V^{\vee
}}\right) $ and $\varphi _{2,2,-}:=\varphi _{2,2}\circ \left( i_{-}\otimes
i_{-}\right) $ be the restrictions of these pairings, where $i_{-}$ is the
injection associated to $e_{-}$. Then we have operators $\Delta _{-}^{?}$, $%
\overline{\partial }_{-}^{?}$ and $\partial _{-}^{?}$ with $?\in \left\{ 
\mathrm{Alt}^{n}\left( \wedge ^{2}V\right) ,\mathrm{Sym}^{n}\left( \wedge
^{2}V\right) ,\mathrm{Alt}^{n}\left( \vee ^{2}V\right) ,\mathrm{Sym}%
^{n}\left( \vee ^{2}V\right) \right\} $ induced by these pairings.

\begin{corollary}
\label{Poincare and Dirac functoriality C1}Suppose that $\left( V,\theta
\right) $ is an alternating (resp. symmetric) quaternionic object in $%
\mathcal{C}$ such that $L_{a}\simeq \mathbb{L}^{\otimes 2}$ (resp. $%
L_{s}\simeq \mathbb{L}^{\otimes 2}$) for some invertible object $\mathbb{L}$
and that we have $r_{\wedge ^{2}V}>0$ (resp. $r_{\vee ^{2}V}>0$)\footnote{%
As explained in the introduction, this latter condition on the rank of the $2
$-powers is always fulfilled when $V$ is Kimura positive (resp. negative).}.
Then the Laplace and the Dirac operators induced by these $\varphi _{2,2,-}$%
, $\psi _{-}^{V,?}$ and $\overline{\psi }_{-}^{V,?}$ satisfies the
conclusion of Theorem \ref{Dirac Alternating T2} (resp. Theorem \ref{Dirac
Symmetric T1}).

Furthermore, if we are given a (covariant) additive $AU$ tensor functor $F:%
\mathcal{C}\rightarrow \mathcal{D}$ as above, then the conclusion of
Proposition \ref{Poincare and Dirac functoriality P1} holds true with the
Laplace and the Dirac operators induced by these $\varphi _{2,2,-}$, $\psi
_{-}^{V,?}$ and $\overline{\psi }_{-}^{V,?}$.
\end{corollary}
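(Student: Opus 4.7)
The strategy is to reduce the Corollary to Theorems \ref{Dirac Alternating T2}/\ref{Dirac Symmetric T1} and Proposition \ref{Poincare and Dirac functoriality P1} by verifying that the restriction to the $-$ component preserves all the structural ingredients on which those results rest.

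The key technical step, which I expect to be the main obstacle, is establishing that the idempotent $\theta^{\otimes 2}(e_-)$ is self-adjoint with respect to $\varphi_{2,2}$, i.e.
\begin{equation*}
\varphi_{2,2}\circ (\theta^{\otimes 2}(e_-)\otimes 1) = \varphi_{2,2}\circ(1\otimes \theta^{\otimes 2}(e_-))
\end{equation*}
on $\wedge^{2}V\otimes \wedge^{2}V$ (and the analogue on $\vee^{2}V\otimes \vee^{2}V$). I would prove this by writing $e_-$ as a $\mathbb{Q}$-linear combination of diagonal elements $b\otimes b\in \vee^{2}B$, using that $\varphi_{2,2}$ is diagonal-$B$-equivariant with $B$ acting on $L_a$ (resp.\ $L_s$) through $\mathrm{Nr}(b)^{2}$, and exploiting the invariance of the character $\chi$ of $\vee^{2}B$ under the main involution $\iota$. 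This is the categorical counterpart of the concrete Chow-motive computation sketched in \cite[\S 10]{IS}.

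Once self-adjointness is in hand, the summands $(\wedge^{2}V)_{+}$ and $(\wedge^{2}V)_{-}$ are automatically orthogonal under $\varphi_{2,2}$, so $\varphi_{2,2}$ splits as $\varphi_{2,2,+}\oplus \varphi_{2,2,-}$; perfectness of $\varphi_{2,2}$, given by Corollary \ref{Alternating algebras CT} (resp.\ \ref{Symmetric algebras CT}) applied to the rank-$4$ object $V$, then forces perfectness of $\varphi_{2,2,-}$. By definition $\psi^{V,?}_-$ and $\overline{\psi}^{V,?}_-$ are obtained by precomposing $\psi^{V,?}$ and $\overline{\psi}^{V,?}$ with $i_-\otimes 1$, so after further precomposition with $i_-\otimes i_-$ they inherit the commutative diagrams of Lemmas \ref{Dirac Alternating L1}--\ref{Dirac Alternating L2} (resp.\ \ref{Dirac Symmetric L1}--\ref{Dirac Symmetric L2}) simply by multiplying the original diagrams on the left by the (self-adjoint) idempotent. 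Feeding these into the abstract machine of Section \ref{Dirac and Laplace operators} exactly as in the proofs of Theorems \ref{Dirac Alternating T2}/\ref{Dirac Symmetric T1} yields items (1), (2) and the commutativity content of item (3). The existence of sections in (3) then comes from Proposition \ref{Dirac P3} applied to the perfect pairing $\varphi_{2,2,-}$, using that $r_{(\wedge^{2}V)_-}$ (resp.\ $r_{(\vee^{2}V)_-}$) inherits positivity from $r_{\wedge^{2}V}$ (resp.\ $r_{\vee^{2}V}$) through the trace decomposition associated to the orthogonal idempotents $\theta^{\otimes 2}(e_\pm)$.

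For the functoriality claim, since $F$ is an additive $AU$ tensor functor, the image $F(\theta^{\otimes 2}(e_-))$ is the analogous idempotent $F(\theta)^{\otimes 2}(e_-)$ on $F(V)\otimes F(V)$, interpreted according to Lemma \ref{Poincare and Dirac functoriality L2}: this is automatic because $e_-$ is defined as a $\mathbb{Q}$-linear combination of elements $b\otimes b$, and such combinations are preserved by any additive tensor functor (with the roles of $\vee^{2}$ and $\wedge^{2}$ possibly exchanged when $\varepsilon=-1$, as already tracked in Lemma \ref{Poincare and Dirac functoriality L2}). Consequently $F$ commutes with restriction to the $-$ component, and the sign conventions of Proposition \ref{Poincare and Dirac functoriality P1} transfer verbatim from the operators $\Delta$, $\partial_{i-1}$, $\overline{\partial}_{i-1}$ to their restrictions $\Delta_-$, $\partial_-$, $\overline{\partial}_-$ with no further computation.
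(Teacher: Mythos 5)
Your handling of the commutativity statements is essentially workable, but note that it needs no self-adjointness at all: the restricted pairings $\varphi_{2,2,-}$, $\psi_{-}^{V,?}$, $\overline{\psi}_{-}^{V,?}$ are by definition obtained from the unrestricted ones by precomposition with $i_{-}$, and the Laplace/Dirac constructions of \S\ref{Dirac and Laplace operators} are functorial in the pairing: if $\psi'\circ(f\otimes g)=h\circ\psi$ then $\partial_{\psi',\ast}^{n}\circ((\wedge^{n}f)\otimes g)=((\wedge^{n-1}f)\otimes h)\circ\partial_{\psi,\ast}^{n}$ (and similarly for $\vee$ and for $\Delta$). This is exactly how the paper argues; your phrase ``multiplying the original diagrams on the left by the (self-adjoint) idempotent'' should really be precomposition with the injections $i_{-}\otimes i_{-}$ etc., which already yields the restricted analogues of the diagrams and hence parts $(1)$--$(2)$, with no orthogonality input.

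The genuine gap is in part $(3)$. You propose to show that $\varphi_{2,2,-}$ is a perfect pairing on $(\wedge^{2}V)_{-}$ (via self-adjointness of $\theta^{\otimes2}(e_{-})$) and then to apply Proposition \ref{Dirac P3} directly to $X=(\wedge^{2}V)_{-}$ (resp. $(\vee^{2}V)_{-}$). But that proposition requires $\chi(\tau_{X,X})r_{X}>0$, i.e. $r_{(\wedge^{2}V)_{-}}>0$, whereas the Corollary only assumes $r_{\wedge^{2}V}>0$; rank additivity $r_{\wedge^{2}V}=r_{(\wedge^{2}V)_{+}}+r_{(\wedge^{2}V)_{-}}$ does not transfer positivity to a summand, and in the abstract categorical setting there is no analogue of Lemma \ref{Realizations L2} computing $(\wedge^{2}V)_{+}$ (e.g. as three invertible objects) that would let you control $r_{(\wedge^{2}V)_{-}}$. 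So your appeal to ``the trace decomposition associated to the orthogonal idempotents'' does not establish the hypothesis you need; moreover the self-adjointness itself, while plausible (it is the Iovita--Spiess observation recalled in the introduction), is additional work in $\mathcal{C}$ that you only sketch. The paper avoids both issues: it applies Theorem \ref{Dirac Alternating T2} (resp. Theorem \ref{Dirac Symmetric T1}) to the \emph{unrestricted} operators on $\wedge^{2}V$ (resp. $\vee^{2}V$), whose hypotheses are exactly those of the Corollary, and then produces the kernels of $\Delta_{-}^{n}$, $\partial_{-}^{n}$, $\overline{\partial}_{-}^{n}$ by combining the kernel idempotent $e$ of the unrestricted operator with the idempotent $e'=\mathrm{Sym}^{n}(e_{-})\otimes1_{Z}$ (or $\mathrm{Alt}^{n}(e_{-})\otimes1_{Z}$) cutting out the minus part, using that the restricted and unrestricted operators are intertwined by the injections. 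To salvage your route you would have to either add $r_{(\wedge^{2}V)_{-}}>0$ as a hypothesis or prove it from the quaternionic structure, neither of which is immediate; your final paragraph on functoriality under $F$ is fine and matches the paper's use of Lemma \ref{Poincare and Dirac functoriality L2} and Proposition \ref{Poincare and Dirac functoriality P1}.
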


\begin{proof}
Suppose that we are given $\psi :X\otimes Y\rightarrow Z$, $\psi ^{\prime
}:X^{\prime }\otimes Y^{\prime }\rightarrow Z^{\prime }$ and morphisms $%
f:X\rightarrow X^{\prime }$, $g:Y\rightarrow Y^{\prime }$ and $%
h:Z\rightarrow Z^{\prime }$ such that $\psi ^{\prime }\circ \left( f\otimes
g\right) =h\circ \psi $. Then it is clear from \S \ref{Dirac and Laplace
operators} that we have $\partial _{\psi ^{\prime },a}^{n}\circ \left(
\left( \wedge ^{n}f\right) \otimes g\right) =\left( \left( \wedge
^{n-1}f\right) \otimes h\right) \circ \partial _{\psi ,a}^{n}$ and $\partial
_{\psi ^{\prime },s}^{n}\circ \left( \left( \vee ^{n}f\right) \otimes
g\right) =\left( \left( \vee ^{n-1}f\right) \otimes h\right) \circ \partial
_{\psi ,s}^{n}$. Similarly, when $f=g$, we deduce $\Delta _{\psi ^{\prime
},a}^{n}\circ \left( \wedge ^{n}f\right) =\left( \left( \wedge
^{n-2}f\right) \otimes h\right) \circ \Delta _{\psi ,a}^{n}$ and $\Delta
_{\psi ^{\prime },s}^{n}\circ \left( \left( \vee ^{n}f\right) \right)
=\left( \left( \vee ^{n-2}f\right) \otimes h\right) \circ \Delta _{\psi
,s}^{n}$. We write $\psi \rightarrow _{\left( f,g,h\right) }\psi ^{\prime }$
in this case. Then, setting $A_{g}:=\wedge ^{g}V$ or $\vee ^{g}V$, we have
by definition $i_{A_{g}}\circ \varphi _{2,2,-}\rightarrow _{\left(
i_{-},i_{-},1_{L_{?}}\right) }i_{A_{g}}\circ \varphi _{2,2}$, $\psi
_{-}^{V,?}\rightarrow _{\left( i_{-},1_{V},1_{L_{?}}\right) }\psi ^{V,?}$
and $\overline{\psi }_{-}^{V,?}\rightarrow _{\left( i_{-},1_{V^{\vee
}},1_{L_{?}}\right) }\overline{\psi }^{V,?}$. It follows that, with $\psi $
one of these pairings and $\psi _{-}$ the corresponding pairing obtained by
restriction, the induced Laplace or Dirac operators commutes with the
canonical injections induced by $\wedge ^{k}i_{-}$ (resp. $\vee ^{k}i_{-}$)
in the sources and the targets. We simply write $\Delta ^{n}$, $\overline{%
\partial }^{n}$ and $\partial ^{n}$ and $\Delta _{-}^{n}$, $\overline{%
\partial }_{-}^{n}$ and $\partial _{-}^{n}$ for one of these operators.
Hence, if $\left( V,\theta \right) $ is alternating (resp. symmetric), we
may apply Theorem \ref{Dirac Alternating T2} (resp. Theorem \ref{Dirac
Symmetric T1}) to deduce that $\overline{\partial }_{-}^{n-1}\circ \partial
_{-}^{n}$ or $\partial _{-}^{n-1}\circ \overline{\partial }_{-}^{n}$ equals $%
\Delta _{-}^{n}\otimes 1_{V}$ or $\Delta _{-}^{n}\otimes 1_{V^{\vee }}$ up
to the isomorphism provided by this theorem. Let $e$ be the idempotent which
gives the kernel of one of the Laplace or Dirac operators $\Delta ^{n}$, $%
\overline{\partial }^{n}$ and $\partial ^{n}$; on the other hand we have an
idempotent $e^{\prime }$ of the form $e^{\prime }:=\mathrm{Alt}^{n}\left(
e_{-}\right) \otimes 1_{Z}$ or $\mathrm{Sym}^{n}\left( e_{-}\right) \otimes
1_{Z}$ which corresponds to the injections induced by $\wedge ^{k}i_{-}$
(resp. $\vee ^{k}i_{-}$) in the sources of these operators $\Delta _{-}^{n}$%
, $\overline{\partial }_{-}^{n}$ and $\partial _{-}^{n}$. Then $ee^{\prime }$
gives the kernel of the Laplace or Dirac operators $\Delta _{-}^{n}$, $%
\overline{\partial }_{-}^{n}$ and $\partial _{-}^{n}$ (once again because
the operators induced by $\psi $ or $\psi _{-}$ are related by a commutative
diagram involving injections). Hence the analogue of Theorem \ref{Dirac
Alternating T2} (resp. Theorem \ref{Dirac Symmetric T1}) $\left( 3\right) $
is true. Finally, the statement about $F$ follows from Lemma \ref{Poincare
and Dirac functoriality L2} and Proposition \ref{Poincare and Dirac
functoriality P1}.
\end{proof}

\bigskip 

\begin{definition}
\label{Def quaternionic}If we are given an alternating (resp. symmetric)
quaternionic object in $\mathcal{C}$, we define $M_{2}\left( V,\theta
\right) :=\left( \wedge ^{2}V\right) _{-}$ (resp. $M_{2}\left( V,\theta
\right) :=\left( \vee ^{2}V\right) _{-}$),%
\begin{eqnarray*}
&&\text{ }M_{2n}\left( V,\theta \right) :=\ker \left( \Delta _{-}^{\mathrm{%
Sym}^{n}\left( \wedge ^{2}V\right) }\right) \subset \mathrm{Sym}^{n}\left(
\left( \wedge ^{2}V\right) _{-}\right) \text{, where }n\geq 2 \\
&&\text{ (resp. }M_{2n}\left( V,\theta \right) :=\ker \left( \Delta _{-}^{%
\mathrm{Sym}^{n}\left( \vee ^{2}V\right) }\right) \subset \mathrm{Sym}%
^{n}\left( \left( \vee ^{2}V\right) _{-}\right) \text{)}
\end{eqnarray*}%
and $M_{1}\left( V,\theta \right) :=V$ (resp. $M_{1}\left( V,\theta \right)
:=V$)%
\begin{eqnarray*}
&&\text{ }M_{2n+1}\left( V,\theta \right) :=\ker \left( \partial _{-}^{%
\mathrm{Sym}^{n}\left( \wedge ^{2}V\right) }\right) \subset \mathrm{Sym}%
^{n}\left( \left( \wedge ^{2}V\right) _{-}\right) \otimes V\text{, where }%
n\geq 1 \\
&&\text{ (resp. }M_{2n+1}\left( V,\theta \right) :=\ker \left( \partial
_{-}^{\mathrm{Sym}^{n}\left( \vee ^{2}V\right) }\right) \subset \mathrm{Sym}%
^{n}\left( \left( \vee ^{2}V\right) _{-}\right) \otimes V\text{)}
\end{eqnarray*}
\end{definition}

It follows from Lemma \ref{Poincare and Dirac functoriality L2}\ and
Corollary \ref{Poincare and Dirac functoriality C1} that these objects are
canonical in the category of quaternionic objects and that, if we are given
a (covariant) $AU$ tensor functor $F:\mathcal{C}\rightarrow \mathcal{D}$ as
above, then $F\left( M_{2n}\left( V,\theta \right) \right) =M_{2n}\left(
F\left( V\right) ,F\left( \theta \right) \right) $ and $F\left(
M_{2n+1}^{\left( \oplus 2\right) }\left( V,\theta \right) \right)
=M_{2n+1}^{\left( \oplus 2\right) }\left( F\left( V\right) ,F\left( \theta
\right) \right) $.

\section{Realizations}

\subsection{The quaternionic Poincar\'{e} upper half plane}

We write%
\begin{equation*}
\mathcal{P}:=\mathbf{P}_{\mathbb{C}}^{1}-\mathbf{P}_{\mathbb{R}}^{1}:=%
\mathcal{H}^{+}\sqcup \mathcal{H}^{-}\text{ and }\mathcal{H}:=\mathcal{H}^{+}%
\text{,}
\end{equation*}%
where $\mathcal{H}^{\pm }$ is the connected component of $\mathcal{P}$ such
that $\pm i\in \mathcal{H}^{\pm }$. We recall that $\mathcal{P}$ has a
natural moduli interpretation in the category of analytic spaces\footnote{%
  See~\cite[I.1.5]{GR} for a treatment of analytic spaces, but beware that they are called ``complex spaces'' in this book.}  as follows. Set $L_{1}:=\mathbb{Z}^{2}$, $%
P_{1}:=\left( \mathbb{Z}^{2}\right) ^{\vee }$ (the $\mathbb{Z}$-dual)\ and,
for a positive integer $k$, $L_{k}:=S_{\mathbb{Z}}^{k}\left( L_{1}\right) $
(the $k$-symmetric power of $L_{1}$) and $P_{k}:=S_{\mathbb{Z}}^{k}\left(
P_{1}\right) =L_{k}^{\vee }$, the space of homogeneous polynomials of degree 
$k$ in two variables $\left( X,Y\right) $. Then we have $\mathcal{O}_{%
\mathbf{P}_{\mathbb{C}}^{1}}\left( k\right) \left( \mathbf{P}_{\mathbb{C}%
}^{1}\right) =P_{k,\mathbb{C}}$\footnote{%
If $M$ is an $A$-module over a ring $A\subset B$, we write $M_{B}:=B\otimes
_{A}M$.}. To give an $\mathcal{S}$-point $x:\mathcal{S}\rightarrow \mathbf{P}%
_{\mathbb{C}}^{1}$ from an analytic space $\mathcal{S}$ is to give an
epimorphism $\mathcal{O}_{\mathcal{S}}\left( P_{1}\right) \twoheadrightarrow 
\mathcal{L}$\footnote{%
If $M$ is an $A$-module over a ring $A\subset \mathbb{C}$ and $\mathcal{S}$
is an analytic space, we write $M_{\mathcal{S}}$ for the associated sheaf of
locally constant $M$-valued functions on $\mathcal{S}$ and $\mathcal{O}_{%
\mathcal{S}}\left( M\right) :=\mathcal{O}_{\mathcal{S}}\otimes _{A_{\mathcal{%
S}}}M_{\mathcal{S}}$.} up to isomorphism, where $\mathcal{L}$ is an
invertible sheaf on $\mathcal{S}$ and, taking $x=1_{\mathbf{P}_{\mathbb{C}%
}^{1}}$, gives the universal quotient%
\begin{equation*}
\mathcal{O}_{\mathbf{P}_{\mathbb{C}}^{1}}\left( P_{1}\right)
\twoheadrightarrow \mathcal{O}_{\mathbf{P}_{\mathbb{C}}^{1}}\left( 1\right)
\end{equation*}%
mapping the global sections $1\otimes X,1\otimes Y\in \mathcal{O}_{\mathbf{P}%
_{\mathbb{C}}^{1}}\left( P_{1,\mathbb{C}}\right) \left( \mathbf{P}_{\mathbb{C%
}}^{1}\right) $ respectively to the global sections $X,Y\in \mathcal{O}_{%
\mathbf{P}_{\mathbb{C}}^{1}}\left( 1\right) \left( \mathbf{P}_{\mathbb{C}%
}^{1}\right) $. Taking duals we see that to give $x:\mathcal{S}\rightarrow 
\mathbf{P}_{\mathbb{C}}^{1}$ is the same as to give a monomorphism $\mathcal{%
L}\hookrightarrow \mathcal{O}_{\mathcal{S}}\left( L_{1,\mathbb{C}}\right) $
up to isomorphism, where $\mathcal{L}$ is an invertible sheaf on $\mathcal{S}
$ and the cokernel of the inclusion is locally free too; taking $x=1_{%
\mathbf{P}_{\mathbb{C}}^{1}}$ gives the universal object%
\begin{equation*}
\mathcal{O}_{\mathbf{P}_{\mathbb{C}}^{1}}\left( -1\right) \hookrightarrow 
\mathcal{O}_{\mathbf{P}_{\mathbb{C}}^{1}}\left( L_{1}\right) \text{.}
\end{equation*}%
Although not needed, let us remark that we have indeed $\mathcal{O}_{\mathbf{%
P}_{\mathbb{C}}^{1}}\left( L_{1}\right) =\mathcal{O}_{\mathbf{P}_{\mathbb{C}%
}^{1}}\left( P_{1}\right) $ and the above universal epimorphism and
monomorphism are part of usual canonical short exact sequence. It follows
that, setting $F_{x}^{-1}\left( \mathcal{O}_{\mathbf{P}_{\mathbb{C}%
}^{1}}\left( L_{1}\right) \right) :=\mathcal{O}_{\mathbf{P}_{\mathbb{C}%
}^{1}}\left( L_{1}\right) $ and $F_{x}^{0}\left( \mathcal{O}_{\mathbf{P}_{%
\mathbb{C}}^{1}}\left( L_{1}\right) \right) :=\func{Im}\left( \mathcal{L}%
\hookrightarrow \mathcal{O}_{\mathcal{S}}\left( L_{1}\right) \right) $, the
space $\mathbf{P}_{\mathbb{C}}^{1}$ classifies all the possible filtrations
on $\mathcal{O}_{\mathcal{S}}\left( L_{1}\right) $ by an invertible $%
\mathcal{O}_{\mathcal{S}}$-module having locally free cokernel.

It is easy to realize that a necessary and sufficient condition for a point $%
x:\mathcal{S}\rightarrow \mathbf{P}_{\mathbb{C}}^{1}$ to factor through $%
\mathcal{P}$ is that the filtration $F_{x}^{\cdot }\left( \mathcal{O}_{%
\mathbf{P}_{\mathbb{C}}^{1}}\left( L_{1}\right) \right) $ on $\mathcal{O}_{%
\mathbf{P}_{\mathbb{C}}^{1}}\left( L_{1}\right) =\mathcal{O}_{\mathbf{P}_{%
\mathbb{C}}^{1}}\left( L_{1,\mathbb{R}}\right) $ gives $L_{1,\mathbb{R},%
\mathcal{P}}$\ the structure of a variation of Hodge structures of type $%
\left\{ \left( -1,0\right) ,\left( 0,-1\right) \right\} $. Hence $\mathcal{P}
$ classifies variations of Hodge structures on $\mathcal{S}$ of Hodge type $%
\left\{ \left( -1,0\right) ,\left( 0,-1\right) \right\} $ with fibers in the
constant sheaf $L_{1,\mathbb{R},\mathcal{P}}$. The universal object is%
\begin{equation*}
\mathcal{L}_{1}:=\left( L_{1,\mathbb{R}},\mathcal{O}_{\mathbf{P}_{\mathbb{C}%
}^{1}}\left( -1\right) _{\mid \mathcal{P}}\hookrightarrow \mathcal{O}_{%
\mathbf{P}_{\mathbb{C}}^{1}}\left( L_{1}\right) _{\mid \mathcal{P}}\right) 
\text{.}
\end{equation*}

\bigskip

Let us fix $B$, an indefinite quaternion algebra, an identification $%
B_{\infty }\simeq \mathbf{M}_{2}\left( \mathbb{R}\right) $ and a lattice $%
I\subset B$ with right (resp. left)\ order $R\left( I\right) $ (resp. $%
E\left( I\right) $). Then $I,R\left( I\right) \subset \mathbf{M}_{2}\left( 
\mathbb{R}\right) $, $\mathbb{R}\otimes _{\mathbb{Z}}R\left( I\right) \simeq 
\mathbf{M}_{2}\left( \mathbb{R}\right) $ are identified as $\mathbb{R}$%
-algebras, $\mathbb{R}\otimes _{\mathbb{Z}}I\simeq \mathbf{M}_{2}\left( 
\mathbb{R}\right) $ as right $\mathbb{R}\otimes _{\mathbb{Z}}R\left(
I\right) \simeq \mathbf{M}_{2}\left( \mathbb{R}\right) $-modules and we also
have $\mathcal{O}_{\mathcal{S}}\left( I\right) \simeq \mathcal{O}_{\mathcal{S%
}}\left( \mathbf{M}_{2}\left( \mathbb{R}\right) \right) $ for every analytic
space $\mathcal{S}$. We will mainly regard $E\left( I\right) $ as the
endomorphism group of $I$ as a right $R\left( I\right) $-module\footnote{%
\bigskip An endomorphism $\varphi $ of $I$ as a right $R\left( I\right) $%
-module induces and endomorphism of $W=B$ as a right $B$-module. Recalling
the identification $f:B\otimes B\rightarrow End_{\mathbb{Q}}\left( W\right) $
obtained when discussing quaternionic objects (defined by $b_{1}\otimes
b_{2}\cdot x:=b_{1}xb_{2}^{\iota }$), it is easy to see that $\varphi $ is
induced by left multiplication by some element of $B$, which then belongs to 
$E\left( I\right) $.}. We will always with $R$ to denote a maximal order.

\begin{definition}
A quaternionic variation of Hodge structures on $\mathcal{S}$\ ($qVHS$ in
short)\ is a variation of Hodge structures of type $\left\{ \left(
-1,0\right) ,\left( 0,-1\right) \right\} $ with fibers in the constant
coherent sheaf $\mathbf{M}_{2}\left( \mathbb{R}\right) _{\mathcal{S}}$ such
that the action of $\mathbf{M}_{2}\left( \mathbb{R}\right) $ induced on $%
\mathcal{O}_{\mathcal{S}}\left( \mathbf{M}_{2}\left( \mathbb{R}\right)
\right) $ by right multiplication preserves the filtration $F^{\cdot }\left( 
\mathcal{O}_{\mathcal{S}}\left( \mathbf{M}_{2}\left( \mathbb{R}\right)
\right) \right) $ on $\mathcal{O}_{\mathcal{S}}\left( \mathbf{M}_{2}\left( 
\mathbb{R}\right) \right) $.

An $I$-rigidified quaternionic variation of Hodge structures on $\mathcal{S}$%
\ ($IqVHS$ in short)\ is a variation of Hodge structures with fibers in the
sheaf $I_{\mathcal{S}}$ such that $\left( \mathbb{R}\otimes _{\mathbb{Z}%
}I\right) _{\mathcal{S}}\simeq \mathbf{M}_{2}\left( \mathbb{R}\right) _{%
\mathcal{S}}$ gives by transport to the right hand side the structure of a
quaternionic variation of Hodge structures on $\mathcal{S}$.
\end{definition}

We note that, having fixed $I\subset \mathbf{M}_{2}\left( \mathbb{R}\right) $%
, to give a $qVHS$ is the same thing as to give an $IqVHS$.

\bigskip

As above, let $I\subset B$ be a lattice and let $O\subset B$ be any order.

\begin{definition}
\label{Def AFEC}A fake (analytic) $O$-elliptic curve over $\mathcal{S}$ ($%
f_{O}EC$ in short)\ is $\left( \mathcal{A}/\mathcal{S},i\right) $ where $%
\mathcal{A}/\mathcal{S}$ is an analytic abelian surface over $\mathcal{S}$%
\footnote{%
By analytic abelian surface over $\mathcal{S}$\ we mean a proper and flat
morphism $\pi :\mathcal{A}\rightarrow \mathcal{S}$ of analytic spaces of
relative dimension $2$, with a section $e:\mathcal{S}\rightarrow \mathcal{A}$
and a morphism $\mathcal{A}\times _{\mathcal{S}}\mathcal{A}\rightarrow 
\mathcal{A}$ satisfying the usual group constraints making the morphism $e$
a unit section.} and $i:O\rightarrow End_{\mathcal{S}}\left( \mathcal{A}%
\right) $ is a ring morphism (acting from the right on $\mathcal{A}/\mathcal{%
S}$).

An $I$-rigidified fake (analytic) elliptic curve over $\mathcal{S}$ ($IfEC$%
)\ is a $\left( \mathcal{A}/\mathcal{S},i,\rho \right) $ where $\left( 
\mathcal{A}/\mathcal{S},i\right) $ is an $R\left( I\right) $-fake (analytic)
elliptic curve over $\mathcal{S}$ and $\rho :R^{1}\pi _{\ast }\mathbb{Z}_{%
\mathcal{A}}^{\vee }\overset{\sim }{\rightarrow }I_{\mathcal{S}}$ is an
isomorphism as right sheaves of $R\left( I\right) $-modules (the action on
the left hand side is by functoriality).
\end{definition}

\begin{remark}
\label{Realizations R1}Although not needed in the sequel, we remark that the
proof of \cite[Ch. III, Proposition $\left( 1.5\right) $]{BC} applies in
this analytic setting, implying that a fake (analytic) $R$-elliptic curve $%
\mathcal{A}/\mathcal{S}$ always has a canonical principal polarization (see
loc.cit. for the precise conditions making the polarization canonical). In
particular, it is algebrizable when $\mathcal{S}$ is algebrizable.

Also, we remark that, if $I\simeq \mathbb{Z}^{4}$ is an $R$-module (left or
right), then $I\simeq R$: indeed, $\mathbb{Q}\otimes _{\mathbb{Z}}I\simeq B$
because $B$ is simple, so that we may view $I\subset B$ as an $R$-module;
but the class number of $B$ is one (by strong approximation), implying $%
I\simeq R$. For a fake (analytic) $R$-elliptic curve $\left( \mathcal{A}/%
\mathcal{S},i\right) $, we have $\left( R^{1}\pi _{\ast }\mathbb{Z}_{%
\mathcal{A}}^{\vee }\right) _{s}\simeq \mathbb{Z}^{4}$ for every $s\in 
\mathcal{S}$ by the (topological) proper base change theorem. Because the
left hand side is naturally a right $R$-module, we see that $R^{1}\pi _{\ast
}\mathbb{Z}_{\mathcal{A}}^{\vee }\simeq R_{\mathcal{S}}$ when $\mathcal{S}$
is simply connected.
\end{remark}

\bigskip

If we are given an $I$-rigidified fake (analytic) elliptic curve $\left(
\pi :\mathcal{A}\rightarrow \mathcal{S},i,\rho \right) $\ over $\mathcal{S}$%
, the exponential map gives an exact sequence of sheaves on $\mathcal{S}$,%
\begin{equation}
0\rightarrow R^{1}\pi _{\ast }\mathbb{Z}_{\mathcal{A}}^{\vee }\rightarrow T_{%
\mathcal{A}/\mathcal{S}}\rightarrow \mathcal{A}\rightarrow 0\text{.}
\label{Realizations D1}
\end{equation}%
Then we may define $F^{0}\left( \mathcal{O}_{\mathcal{S}}\left( R^{1}\pi
_{\ast }\mathbb{Z}_{\mathcal{A}}^{\vee }\right) \right) $ by means of the
exact sequence%
\begin{equation}
0\rightarrow F^{0}\left( \mathcal{O}_{\mathcal{S}}\left( R^{1}\pi _{\ast }%
\mathbb{Z}_{\mathcal{A}}^{\vee }\right) \right) \rightarrow \mathcal{O}_{%
\mathcal{S}}\left( R^{1}\pi _{\ast }\mathbb{Z}_{\mathcal{A}}^{\vee }\right)
\rightarrow T_{\mathcal{A}/\mathcal{S}}\rightarrow 0\text{.}
\label{Realizations D2}
\end{equation}%
By means of $i$ the ring $R\left( I\right) $ acts on this sequence (say from
the right). The rigidification yields $\rho :R^{1}\pi _{\ast }\mathbb{Z}_{%
\mathcal{A}}^{\vee }\simeq I_{\mathcal{S}}$ compatible with the right action
of $R\left( I\right) $ on $I_{\mathcal{S}}$. It follows that $F^{0}\left( 
\mathcal{O}_{\mathcal{S}}\left( R^{1}\pi _{\ast }\mathbb{Z}_{\mathcal{A}%
}^{\vee }\right) \right) \simeq F^{0}\left( \mathcal{O}_{\mathcal{S}}\left(
I\right) \right) \simeq F^{0}\left( \mathcal{O}_{\mathcal{S}}\left( \mathbf{M%
}_{2}\left( \mathbb{R}\right) \right) \right) $ (the right hand sides
defined by transport)\ gives $I_{\mathcal{S}}$ a rigidified quaternionic
variation of Hodge structures on $\mathcal{S}$ that we denote $R^{1}\pi
_{\ast }\mathbb{Z}_{\mathcal{A}}^{\vee }$. The correspondence is indeed an
equivalence of categories: when $\mathcal{S}=S^{an}$ for a complex algebraic
variety $S$, this is an application of \cite[Theorem 7.13]{Mi} or \cite[4.4.3%
]{De2}; the reference \cite[Proposition $\left( 2.2\right) $ $\left(
ii\right) $]{De1} contains (without proof) the analogous statement in the
case of analytic elliptic curves $\mathcal{E}$ over analytic spaces and our
case $B=\mathbf{M}_{2}\left( \mathbb{Q}\right) $ and $R\left( I\right) =%
\mathbf{M}_{2}\left( \mathbb{Z}\right) $ follows writing $\mathcal{A}=%
\mathcal{E}^{2}$ and splitting the rigidified quaternionic variation of
Hodge structures in a similar way. The above general result is shown in the
proof of Proposition \ref{Realizations P1} below (and a posteriori
equivalent to it). See also \cite[Theorem 3]{Sh} for related results.

\bigskip

The following result yields a quaternionic moduli description of $\mathcal{P}
$, which depends on the fixed identification $B_{\infty }\simeq \mathbf{M}%
_{2}\left( \mathbb{R}\right) $ and the choice of a lattice $I\subset B$.

\begin{proposition}
\label{Realizations P1}The analytic space $\mathcal{P}$ classifies $I$%
-rigidified fake (analytic) elliptic curve over $\mathcal{S}$. If $\left(
\pi _{I}:\mathcal{A}_{I}\rightarrow \mathcal{P},i_{I},\rho _{I}\right) $ is
the universal fake elliptic curve, we have that $R^{1}\pi _{I\ast }\mathbb{Z}%
_{\mathcal{A}_{I}}^{\vee }=I_{\mathcal{P}}$ and the associated variation of
Hodge structures on $\mathcal{O}_{\mathcal{S}}\left( I\right) \simeq 
\mathcal{O}_{\mathcal{S}}\left( \mathbf{M}_{2}\left( \mathbb{R}\right)
\right) $ is given by $\mathcal{L}_{1}\oplus \mathcal{L}_{1}$.
\end{proposition}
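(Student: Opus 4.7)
The plan is to reduce the classification problem to one about filtrations on $\mathcal{O}_{\mathcal{S}}(R)$. By the equivalence of categories between rigidified fake analytic elliptic curves $(\mathcal{A}/\mathcal{S},i,\rho)$ and rigidified quaternionic variations of Hodge structures on $\mathcal{S}$ (recalled above, with references to Milne and Deligne), it suffices to show that the latter are classified by holomorphic maps $\mathcal{S}\to \mathcal{P}$, and then to identify the universal VHS as $\mathcal{L}_{1}\oplus \mathcal{L}_{1}$. Such a rigidified quaternionic VHS amounts to an $\mathcal{O}_{\mathcal{S}}$-submodule $F^{0}\subset \mathcal{O}_{\mathcal{S}}(R)\simeq \mathcal{O}_{\mathcal{S}}(\mathbf{M}_{2}(\mathbb{R}))$ of $\mathcal{O}_{\mathcal{S}}$-rank $2$, whose cokernel is locally free, stable under right multiplication by $\mathbf{M}_{2}(\mathbb{R})$, and such that the resulting filtration on $\mathbf{M}_{2}(\mathbb{R})_{\mathcal{S}}$ is of Hodge type $\{(-1,0),(0,-1)\}$.

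The key step is then Morita equivalence. Since $\mathcal{O}_{\mathcal{S}}$ is a sheaf of $\mathbb{C}$-algebras, we have $\mathcal{O}_{\mathcal{S}}(\mathbf{M}_{2}(\mathbb{R}))=\mathcal{O}_{\mathcal{S}}(\mathbf{M}_{2}(\mathbb{C}))$, and the row decomposition gives an isomorphism of right $\mathbf{M}_{2}(\mathbb{C})$-modules
\begin{equation*}
\mathcal{O}_{\mathcal{S}}(\mathbf{M}_{2}(\mathbb{C}))\simeq \mathcal{O}_{\mathcal{S}}(L_{1,\mathbb{C}})\oplus \mathcal{O}_{\mathcal{S}}(L_{1,\mathbb{C}}).
\end{equation*}
Morita equivalence between right $\mathbf{M}_{2}(\mathbb{C})$-modules and $\mathbb{C}$-vector spaces (locally on $\mathcal{S}$) yields a bijection between right-$\mathbf{M}_{2}(\mathbb{C})$-stable $\mathcal{O}_{\mathcal{S}}$-submodules of this sheaf and $\mathcal{O}_{\mathcal{S}}$-submodules of $\mathcal{O}_{\mathcal{S}}^{\oplus 2}$, via $\mathcal{M}\mapsto \mathcal{M}\otimes_{\mathcal{O}_{\mathcal{S}}}\mathcal{O}_{\mathcal{S}}(L_{1,\mathbb{C}})$. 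The requirement that $F^{0}$ has $\mathcal{O}_{\mathcal{S}}$-rank $2$ with locally free cokernel translates into requiring $\mathcal{M}$ to be an invertible $\mathcal{O}_{\mathcal{S}}$-submodule of $\mathcal{O}_{\mathcal{S}}^{\oplus 2}\simeq \mathcal{O}_{\mathcal{S}}(L_{1,\mathbb{C}})$ with locally free cokernel. By the moduli description of $\mathbf{P}_{\mathbb{C}}^{1}$ recalled at the beginning of the section, such $\mathcal{M}$ are in bijection with holomorphic maps $\mathcal{S}\to \mathbf{P}_{\mathbb{C}}^{1}$. Finally, the Hodge type condition $\{(-1,0),(0,-1)\}$ on the filtration of $L_{1,\mathbb{R},\mathcal{S}}$ cut out by $\mathcal{M}$ is precisely the condition that this map factors through $\mathcal{P}\subset \mathbf{P}_{\mathbb{C}}^{1}$, by the very characterization of $\mathcal{P}$ recalled above.

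Taking $\mathcal{S}=\mathcal{P}$ and the identity map, the universal $\mathcal{M}$ is $\mathcal{O}_{\mathbf{P}_{\mathbb{C}}^{1}}(-1)_{\mid \mathcal{P}}\hookrightarrow \mathcal{O}_{\mathcal{P}}(L_{1})$, which is the invertible subsheaf defining $\mathcal{L}_{1}$. Tensoring with $\mathcal{O}_{\mathcal{P}}(L_{1,\mathbb{C}})$ and using the row decomposition, the universal $F^{0}$ becomes the direct sum of two copies of $F^{0}(\mathcal{L}_{1})$, so the resulting quaternionic VHS is $\mathcal{L}_{1}\oplus \mathcal{L}_{1}$. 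The identification $R^{1}\pi_{\ast}\mathbb{Z}_{\mathcal{A}}^{\vee}\simeq R_{\mathcal{P}}$ as right sheaves of $R$-modules then follows from the rigidification in the Milne--Deligne equivalence. The main technical obstacle is the careful bookkeeping of left versus right module structures: one must verify that right multiplication by $R$ on $R^{1}\pi_{\ast}\mathbb{Z}_{\mathcal{A}}^{\vee}$ corresponds under the rigidification to right multiplication on $R_{\mathcal{P}}$, and that the Morita correspondence intertwines the Hodge filtration on $F^{0}$ with the standard Hodge filtration on $\mathcal{L}_{1}$; all other steps are formal consequences of the moduli description of $\mathbf{P}_{\mathbb{C}}^{1}$ and Morita equivalence.
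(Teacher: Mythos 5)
Your proposal is correct and takes essentially the same route as the paper: after reducing via the Milne--Deligne equivalence to classifying (rigidified) quaternionic variations of Hodge structures, the paper simply asserts that $\mathcal{L}\mapsto\mathcal{L}\oplus\mathcal{L}$ identifies these with variations of type $\left\{(-1,0),(0,-1)\right\}$ with fibers in $L_{1,\mathbb{R}}$ and concludes from the moduli description of $\mathcal{P}$. Your Morita-equivalence argument for right-$\mathbf{M}_{2}$-stable filtrations is just an explicit spelling out of that same identification, so the two proofs coincide in substance.
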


\begin{proof}
Let us remark that, if we may apply \cite[Theorem 7.13]{Mi} or \cite[4.4.3]%
{De2}, the above discussion would imply that we have just to classify $I$%
-rigidified quaternionic variation of Hodge structures, rather than $I$%
-rigidified fake (analytic) elliptic curves. Rather, we begin in the
opposite direction.

\emph{Step 1: the analytic space }$\mathcal{P}$\emph{\ classifies }$IqVHS$%
\emph{, with universal object as described above.} As remarked above, the
choice $B_{\infty }\simeq \mathbf{M}_{2}\left( \mathbb{R}\right) $
determines $I\subset \mathbf{M}_{2}\left( \mathbb{R}\right) $ identifying
the data of $IqVHS$ and $qVHS$. Hence we classify $qVHS$. But it is easy to
see that the association $\mathcal{L\mapsto L}\oplus \mathcal{L}$ realizes
an identification between variations of Hodge structures on $\mathcal{S}$ of
Hodge type $\left\{ \left( -1,0\right) ,\left( 0,-1\right) \right\} $ with
fibers in the constant coherent sheaf $L_{1,\mathbb{R}}$ and $qVHS$. The
claim follows from the above description of $\mathcal{P}$ and its universal
object.

\emph{Step 2: there is an }$IfEC$\emph{\ with prescribed associated }$IqVHS$%
\emph{. }The proof will be given in \S \ref{Subsection Universal fake
elliptic curve} point $\left( 2\right) $\ below.

\emph{Step 3: the analytic space }$\mathcal{P}$\emph{\ classifies rigidified
fake (analytic) elliptic curves.} First, we remark that the association $%
\left( \pi :\mathcal{A}\rightarrow \mathcal{S}\right) \leadsto R^{1}\pi
_{\ast }\mathbb{Z}_{\mathcal{A}}^{\vee }$ is fully faithful thanks to the
above exact sequences $\left( \text{\ref{Realizations D1}}\right) $ and $%
\left( \text{\ref{Realizations D2}}\right) $, as remarked in the proof of 
\cite[Theorem 7.13]{Mi}. As explained above, the added data $i$ (resp. $\rho 
$) corresponds to giving the structure of an $R\left( I\right) $-module
object (resp. an $I$-rigidification). It follows that $\left( \pi :\mathcal{A%
}\rightarrow \mathcal{S},i,\rho \right) \leadsto R^{1}\pi _{\ast }\mathbb{Z}%
_{\mathcal{A}}^{\vee }$ is fully faithful, valued in $IqVHS$ on $\mathcal{S}$%
. Let us now show that this latter functor is essentially surjective; in
view of Step 1, this is equivalent to our claim (and the proof will directly
show the versal part of the statement about $\mathcal{P}$). Hence, suppose
we are given $\mathbf{x}$, an $IqVHS$ on $\mathcal{S}$; thanks to Step 1,
it gives rise to a morphism of analytic spaces $x:\mathcal{S}\rightarrow 
\mathcal{P}$ and we have $\mathbf{x}=x^{\ast }\mathbf{u}$, where $\mathbf{u}$
is the universal $IqVHS$ on $\mathcal{P}$ and $x^{\ast }$ denotes the
pull-back of variations of Hodge structures. According to Step 2, we have $%
\mathbf{u}=R^{1}\pi _{\ast }\mathbb{Z}_{\mathcal{A}}^{\vee }$. Let us remark
that, by the (topological) proper base change theorem, we have $x^{\ast
}R^{1}\pi _{\ast }\mathbb{Z}_{\mathcal{A}}^{\vee }=R^{1}\pi _{\ast }\mathbb{Z%
}_{\mathcal{A}\times _{\mathcal{P}}\mathcal{S}}^{\vee }$ as variations of
Hodge structures, i.e. base change commutes with the formation of the
variation of Hodge structure $R^{1}\pi _{\ast }\mathbb{Z}_{\mathcal{A}%
}^{\vee }$: indeed, the formation of the tangent spaces $T_{\mathcal{A}/%
\mathcal{S}}$\ commutes with base changes, as it follows from $\left( \text{%
\ref{Realizations D1}}\right) $ and the fact that the formation of $R^{1}\pi
_{\ast }\mathbb{Z}_{\mathcal{A}}^{\vee }$ and $\mathcal{A}$ commute with
base changes; then, because the formation of $T_{\mathcal{A}/\mathcal{S}}$
and $\mathcal{O}_{\mathcal{S}}\left( R^{1}\pi _{\ast }\mathbb{Z}_{\mathcal{A}%
}^{\vee }\right) $ commute with base changes (the latter by definition of
pull-back of variations of Hodge structures, once again because the
formation of $R^{1}\pi _{\ast }\mathbb{Z}_{\mathcal{A}}^{\vee }$ commutes
with base changes), we see that the formation of the whole $\left( \text{\ref%
{Realizations D2}}\right) $ commutes with base changes. Summarizing, $%
\mathbf{x}=R^{1}\pi _{\ast }\mathbb{Z}_{\mathcal{A}\times _{\mathcal{P}}%
\mathcal{S}}^{\vee }$ as variations of Hodge structures, as wanted.
\end{proof}

\begin{remark}
\label{Realizations R2}Having fixed $B_{\infty }\simeq \mathbf{M}_{2}\left( 
\mathbb{R}\right) $, Proposition \ref{Realizations P1} implies that we have
that $R^{1}\pi _{I\ast }\mathbb{Q}_{\mathcal{A}_{I}}^{\vee }=B_{\mathcal{P}}$
with associated variation of Hodge structures on $\mathcal{O}_{\mathcal{S}%
}\left( B\right) \simeq \mathcal{O}_{\mathcal{S}}\left( \mathbf{M}_{2}\left( 
\mathbb{R}\right) \right) $ given by $\mathcal{L}_{1}\oplus \mathcal{L}_{1}$%
. Hence the underlying rational variation of Hodge structures $R^{1}\pi
_{\ast }\mathbb{Q}_{\mathcal{A}}^{\vee }:=R^{1}\pi _{I\ast }\mathbb{Q}_{%
\mathcal{A}_{I}}^{\vee }$ does not depend on the choice of the lattice $%
I\subset B$: it is endowed with a natural left $B$-action, extending the
left $E\left( I\right) $-action and acting on the quaternionic structure
(given by right multiplication).
\end{remark}

\subsection{Linear algebra in the category of $B^{\times }$-representations}

We write $x\mapsto x^{\iota }$ to denote the main involution of $B$, so that 
$x+x^{\iota }=\limfunc{Tr}\left( x\right) $ and $xx^{\iota }=\limfunc{Nr}%
\left( x\right) $. We let $B^{\times }$ acts on $B$ by left multiplication,
while we write $B^{\iota }$ to denote $B$ on which $B^{\times }$ acts from
the left by the rule $b\cdot x:=bxb^{\iota }$. We write $B^{0}:=\ker \left( 
\limfunc{Tr}\right) $ to denote the trace zero elements, viewed as a $%
B^{\times }$-subrepresentation of $B^{\iota }$ (indeed $\limfunc{Tr}\left(
bxb^{\iota }\right) =\limfunc{Nr}\left( b\right) \limfunc{Tr}\left( x\right) 
$). If $V\in \mathrm{R}$\textrm{ep}$\left( B^{\times }\right) $ and $r\in 
\mathbb{Z}$, we let $V\left( r\right) $ be $V$ on which $B^{\times }$ acts
by $b\cdot _{r}v=\limfunc{Nr}\nolimits^{-r}\left( b\right) bv$, so that $%
V\left( r\right) =V\otimes \mathbb{Q}\left( r\right) $ (canonically). We let 
$B_{+}^{\times }\subset B^{\times }$ be the subgroup of elements having
positive norm.

In \cite{JL} certain Laplace and Dirac operators has been defined with
source and target those of the subsequent Lemma \ref{Realizations L1}. While
their definition is completely explicit, it is only the definition of the
Laplace operator that readily generalizes to arbitrary tensor categories; on
the other hand, the definition of the Dirac operator requires the theory we
have developed in order to provide good models for their kernels which have
a general meaning for tensor categories. Indeed, we have the following key
remark, that allows us to replace the Jordan-Livn\'{e} models with ours,
whose proof is left to the reader.

\begin{lemma}
\label{Realizations L1}Let%
\begin{equation*}
f_{n}:\limfunc{Sym}\nolimits^{n}\left( B_{0}\right) \rightarrow \limfunc{Sym}%
\nolimits^{n-2}\left( B_{0}\right) \left( -2\right) \text{ and }g_{n}:%
\limfunc{Sym}\nolimits^{n}\left( B_{0}\right) \otimes B\rightarrow \limfunc{%
Sym}\nolimits^{n-1}\left( B_{0}\right) \otimes B\left( -1\right)
\end{equation*}%
be any epimorphism in $\limfunc{Rep}\nolimits_{\mathbb{Q}}\left( B^{\times
}\right) $. Once we fix $B\otimes \mathbb{F}\simeq \mathbf{M}_{2}\left( 
\mathbb{F}\right) $, where $\mathbb{F}$ is any splitting field of $B$, there
are canonical isomorphisms%
\begin{equation*}
\ker \left( f_{n}\right) \otimes \mathbb{F}\simeq L_{2n}\otimes \mathbb{F}%
\text{, }\ker \left( g_{n}\right) \otimes \mathbb{F}\simeq
L_{2n+1}^{2}\otimes \mathbb{F}
\end{equation*}%
which are compatible with the $\left( B\otimes \mathbb{F}\right) ^{\times }$%
-action on the left hand side, the $\mathbf{GL}_{2}\left( \mathbb{F}\right) $%
-action on the right side and the induced identification $\left( B\otimes 
\mathbb{F}\right) ^{\times }\simeq \mathbf{GL}_{2}\left( \mathbb{F}\right) $.
\end{lemma}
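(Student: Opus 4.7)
The plan is to reduce, by base change to $\mathbb{F}$, to an explicit isotypical analysis for $\mathbf{GL}_{2}(\mathbb{F})$-representations, and then invoke Schur's lemma. Fix the given splitting $B\otimes\mathbb{F}\simeq \mathbf{M}_{2}(\mathbb{F})$, so that $(B\otimes\mathbb{F})^{\times}\simeq \mathbf{GL}_{2}(\mathbb{F})$. First, I would identify the base-changed building blocks. The representation $B\otimes\mathbb{F}$, with its left-multiplication action, decomposes as two column copies of the standard representation, so $B\otimes\mathbb{F}\simeq (L_{1}\otimes\mathbb{F})^{\oplus 2}$. For $B^{\iota}$, use $b^{\iota}=\mathrm{Nr}(b)\,b^{-1}$ for invertible $b$ to rewrite the action as $g\mapsto \det(g)\,\mathrm{Ad}(g)$ on $\mathbf{M}_{2}(\mathbb{F})$. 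The trace-zero summand of the adjoint representation is $\mathrm{Sym}^{2}(L_{1})\otimes \det^{-1}$, and multiplying by $\det$ cancels the twist, giving a canonical identification $B_{0}\otimes\mathbb{F}\simeq L_{2}\otimes\mathbb{F}$.

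Next, I would apply the Pieri/Clebsch--Gordan rules to the symmetric and tensor powers appearing in the source and target of $f_{n}\otimes\mathbb{F}$ and $g_{n}\otimes\mathbb{F}$. For the Laplace side, the standard $\mathbf{SL}_{2}$-decomposition $\mathrm{Sym}^{n}(\mathrm{Sym}^{2}V)\simeq \bigoplus_{k=0}^{\lfloor n/2\rfloor} L_{2n-4k}$ upgrades via central-character matching to
\[
\mathrm{Sym}^{n}(B_{0})\otimes\mathbb{F}\simeq \bigoplus_{k=0}^{\lfloor n/2\rfloor} L_{2n-4k}(-2k)\otimes\mathbb{F},
\]
and a direct reindexing shows that $\mathrm{Sym}^{n-2}(B_{0})(-2)\otimes\mathbb{F}$ has the same constituents except for the $k=0$ piece $L_{2n}(0)\otimes\mathbb{F}$. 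For the Dirac side, combine this with $L_{a}\otimes L_{1}\simeq L_{a+1}\oplus L_{a-1}(-1)$ and with the two copies of $L_{1}$ in $B\otimes\mathbb{F}$ to obtain multiplicity-two decompositions on both sides; the same reindexing reveals that $\mathrm{Sym}^{n-1}(B_{0})\otimes B(-1)\otimes\mathbb{F}$ differs from $\mathrm{Sym}^{n}(B_{0})\otimes B\otimes\mathbb{F}$ precisely by the piece $(L_{2n+1})^{\oplus 2}$ (occurring with no Tate twist).

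Schur's lemma now does the work: any $\mathbf{GL}_{2}(\mathbb{F})$-equivariant morphism between these decompositions is block-diagonal on isotypical components, with blocks that are scalars in the Laplace case and $2\times 2$ matrices in the Dirac case. Since $f_{n}$ and $g_{n}$ are epimorphisms, all blocks corresponding to shared constituents must be invertible, forcing the kernel to be exactly the leftover piece; this gives $\ker(f_{n})\otimes\mathbb{F}\simeq L_{2n}\otimes\mathbb{F}$ and $\ker(g_{n})\otimes\mathbb{F}\simeq L_{2n+1}^{2}\otimes\mathbb{F}$, canonically and independently of the chosen epimorphism (different epimorphisms differ by invertible blocks on the shared constituents, so they have the same kernel). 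The main obstacle will be the careful bookkeeping of Tate twists, where a twist $(-r)$ on the $B^{\times}$ side corresponds to $\det^{-r}$ on the $\mathbf{GL}_{2}(\mathbb{F})$ side: the verification that the two leftover pieces really appear with no Tate twist (rather than $(-n)$ or similar) is what pins down the stated kernels exactly as $L_{2n}\otimes\mathbb{F}$ and $L_{2n+1}^{2}\otimes\mathbb{F}$, and requires tracking twists consistently through both the iterated symmetric-power decomposition and the Clebsch--Gordan step.
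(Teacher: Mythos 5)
Your argument is correct and is essentially the intended one: the paper leaves this proof to the reader, and the point of the lemma is exactly the isotypic/Schur argument you give --- base change to $\mathbb{F}$, identify $B_{0}\otimes \mathbb{F}\simeq L_{2}\otimes \mathbb{F}$ and $B\otimes \mathbb{F}\simeq (L_{1}\otimes \mathbb{F})^{\oplus 2}$, decompose source and target into pairwise non-isomorphic constituents $L_{a}\otimes \det^{m}$ (multiplicity one for $f_{n}$, two for $g_{n}$), note that the source exceeds the target by exactly $L_{2n}$ (resp.\ $L_{2n+1}^{\oplus 2}$) with trivial twist, and conclude that any equivariant epimorphism is invertible on every shared isotypic block, so its kernel is the leftover isotypic component, independently of the chosen epimorphism. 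Two small bookkeeping corrections. First, your closing dictionary has the sign reversed for this paper's convention: since $V(r)$ carries the action $b\cdot v=\mathrm{Nr}^{-r}(b)\,bv$, the twist $(-r)$ corresponds to $\det^{r}$ (so $\mathbb{Q}(-1)=\det$); this is in fact the dictionary your displayed decompositions already use, and with the opposite sign the source and target of $f_{n}$ would share no constituents at all, contradicting the existence of an epimorphism. Second, the Clebsch--Gordan rule $L_{a}\otimes L_{1}\simeq L_{a+1}\oplus L_{a-1}(-1)$ is only valid for $a\geq 1$; the bottom constituent $L_{0}$ of $\mathrm{Sym}^{n}(B_{0})$ (when $n$ is even) or of $\mathrm{Sym}^{n-1}(B_{0})$ (when $n$ is odd) contributes just $L_{1}$, and it is precisely this degenerate case that makes the low ends of the two constituent lists match, so that the surplus is exactly $L_{2n+1}^{\oplus 2}$ with no Tate twist.
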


The following Lemma will be useful. Recall that, if $M$ is an object in a
pseudo abelian $\mathbb{Q}$-linear category on which $B$-acts, we may write $%
\wedge ^{2}M=\left( \wedge ^{2}M\right) _{+}\oplus \left( \wedge
^{2}M\right) _{-}$ canonically, where $B$ operates on $\left( \wedge
^{2}M\right) _{-}$ via the reduced norm. The following

\begin{lemma}
\label{Realizations L2}Let $\theta :B\overset{\sim }{\rightarrow }End_{%
\mathrm{R}\text{\textrm{ep}}\left( B^{\times }\right) }\left( B\right) $ be
the isomorphism provided by the right multiplication. Then we have, in $%
\limfunc{Rep}\nolimits_{\mathbb{Q}}\left( B^{\times }\right) $,%
\begin{equation*}
\wedge ^{2}B=\left( \wedge ^{2}B\right) _{+}\oplus \left( \wedge
^{2}B\right) _{-}\text{ with }\left( \wedge ^{2}B\right) _{+}\simeq \mathbb{Q%
}\left( -1\right) ^{3}\text{ and }\left( \wedge ^{2}B\right) _{-}\simeq B_{0}%
\text{.}
\end{equation*}
\end{lemma}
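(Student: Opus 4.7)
The plan is to realize the first summand via an explicit $B^{\times}$-equivariant map and to identify the second by a character calculation. Set
\[
\Psi\colon \wedge^{2} B \longrightarrow B_{0}, \qquad \Psi(x \wedge y) := x y^{\iota} - y x^{\iota},
\]
noting that $\Psi(x \wedge y)$ is trace zero because $\mathrm{Tr}(xy^{\iota}) = \mathrm{Tr}((xy^{\iota})^{\iota}) = \mathrm{Tr}(yx^{\iota})$, so the image lies in $B_{0} \subset B^{\iota}$. A direct check gives $\Psi(bx \wedge by) = b\,\Psi(x \wedge y)\,b^{\iota}$, so $\Psi$ is $B^{\times}$-equivariant for the ambient left-regular action on the source and the natural action $b \cdot c = bcb^{\iota}$ of $B^{\iota}$ on the target.

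The next step is to exploit the $\vee^{2} B$-action through $\theta$. Since $\theta(b)(x) = xb^{\iota}$, for $s = b_{1} \vee b_{2} = (b_{1} \otimes b_{2} + b_{2} \otimes b_{1})/2$ one unwinds the action and uses $b_{1}^{\iota} b_{2} + b_{2}^{\iota} b_{1} = \mathrm{Tr}(b_{1}^{\iota} b_{2})$ to obtain the key identity
\[
\Psi\bigl(\theta^{\otimes 2}(s) \cdot \omega\bigr) = \chi(s)\,\Psi(\omega), \qquad s \in \vee^{2} B.
\]
Taking $s = 1 - e_{-}$, so that $\chi(s) = 0$, shows $\Psi$ vanishes on $(\wedge^{2} B)_{+}$ and hence factors through $(\wedge^{2} B)_{-}$. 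Since $\Psi(1 \wedge u) = u^{\iota} - u \neq 0$ for $u \in B \setminus \mathbb{Q}$, and since $B_{0}$ is $B^{\times}$-irreducible, the restriction $\Psi\colon (\wedge^{2} B)_{-} \to B_{0}$ is surjective.

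To promote this to an isomorphism, one shows $\dim(\wedge^{2} B)_{-} = 3$. Extending scalars to $\overline{\mathbb{Q}}$ identifies $B \otimes \overline{\mathbb{Q}}$ with $V_{\mathrm{std}} \boxtimes V_{\mathrm{std}}$ as a representation of $B^{\times} \times B^{\times}$ (the first factor acting by the left-regular action, the second by $\theta$), so $\wedge^{2} B \otimes \overline{\mathbb{Q}}$ decomposes as $(\mathrm{Sym}^{2} V_{\mathrm{std}} \boxtimes \det) \oplus (\det \boxtimes \mathrm{Sym}^{2} V_{\mathrm{std}})$, with both summands $3$-dimensional. The first is precisely the locus on which the $\theta$-diagonal acts by $\det = \mathrm{Nr}$, so it coincides with $(\wedge^{2} B)_{-} \otimes \overline{\mathbb{Q}}$; Galois descent gives $\dim(\wedge^{2} B)_{-} = 3$, and $\Psi$ induces $(\wedge^{2} B)_{-} \simeq B_{0}$.

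For $(\wedge^{2} B)_{+}$, the complementary $3$-dimensional $B^{\times}$-subrepresentation, a character calculation finishes the job. Cayley--Hamilton for $b$ acting on $B$ by left multiplication gives $\mathrm{tr}_{\wedge^{2} B}(b) = \mathrm{Tr}(b)^{2} + 2\,\mathrm{Nr}(b)$; subtracting the character $\mathrm{Tr}(b)^{2} - \mathrm{Nr}(b)$ of $B_{0}$ leaves $3\,\mathrm{Nr}(b)$, the character of $\mathbb{Q}(-1)^{3}$. Since $\mathrm{Rep}_{\mathbb{Q}}(B^{\times})$ is semisimple ($B^{\times}$ is a reductive algebraic $\mathbb{Q}$-group) and $\mathbb{Q}(-1)$ is a $\mathbb{Q}$-rational character, this forces $(\wedge^{2} B)_{+} \simeq \mathbb{Q}(-1)^{\oplus 3}$. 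The main technical hurdle is verifying the $\chi$-equivariance of $\Psi$ across all of $\vee^{2} B$ together with securing the dimension count; the latter could also be bypassed by exhibiting three linearly independent $B^{\times}$-equivariant functionals $\omega_{c}(x \wedge y) := -\mathrm{Tr}(ycx^{\iota})$, $c \in B_{0}$, from $\wedge^{2} B$ to $\mathbb{Q}(-1)$, each of which vanishes on $(\wedge^{2} B)_{-}$ for Hom-space reasons.
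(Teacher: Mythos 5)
Your argument is correct, and it is worth noting that the paper does not actually write out a proof of this lemma: it is stated as a consequence of the general framework of the preceding section (the idempotent $e_{-}\in \vee ^{2}B$, the decomposition $\vee ^{2}B\simeq \mathbf{M}_{3}\left( \mathbb{Q}\right) \oplus \mathbb{Q}$ coming from the module $W=B_{0}\oplus \mathbb{Q}$, and the characterization of $\left( \wedge ^{2}V\right) _{-}$ as the maximal subobject on which the diagonal $B$-action is by the reduced norm). Your proof supplies exactly the missing verification, and in a pleasantly explicit way: the map $\Psi \left( x\wedge y\right) =xy^{\iota }-yx^{\iota }$ is $B^{\times }$-equivariant, the identity $\Psi \left( \theta ^{\otimes 2}\left( s\right) \omega \right) =\chi \left( s\right) \Psi \left( \omega \right) $ need only be checked on the spanning elements $b\vee b$ (where it is the computation $xb^{\iota }\left( yb^{\iota }\right) ^{\iota }-yb^{\iota }\left( xb^{\iota }\right) ^{\iota }=\mathrm{Nr}\left( b\right) \left( xy^{\iota }-yx^{\iota }\right) $, matching $\chi \left( b\otimes b\right) =\mathrm{Nr}\left( b\right) $ in the paper), and together with $\chi \left( 1-e_{-}\right) =0$ this shows $\Psi $ kills $\left( \wedge ^{2}B\right) _{+}$ and maps $\left( \wedge ^{2}B\right) _{-}$ onto the irreducible $B_{0}$; the dimension count over $\overline{\mathbb{Q}}$ via $\wedge ^{2}\left( V_{1}\boxtimes V_{2}\right) \simeq \left( \mathrm{Sym}^{2}V_{1}\boxtimes \wedge ^{2}V_{2}\right) \oplus \left( \wedge ^{2}V_{1}\boxtimes \mathrm{Sym}^{2}V_{2}\right) $ then upgrades surjectivity to an isomorphism, and the character computation $\mathrm{Tr}\left( b\right) ^{2}+2\mathrm{Nr}\left( b\right) -\left( \mathrm{Tr}\left( b\right) ^{2}-\mathrm{Nr}\left( b\right) \right) =3\mathrm{Nr}\left( b\right) $ identifies the complement as $\mathbb{Q}\left( -1\right) ^{\oplus 3}$ (indeed, once the base change of $\left( \wedge ^{2}B\right) _{+}$ is scalar, the action is scalar over $\mathbb{Q}$ already, so no descent subtlety arises). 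Two small points you should make explicit: you tacitly take $\theta \left( b\right) x=xb^{\iota }$, which is the right reading of ``right multiplication'' since the paper requires $\theta $ to be a unitary ring homomorphism (plain right multiplication is an anti-homomorphism); and the phrase ``Galois descent'' is unnecessary, as $\left( \wedge ^{2}B\right) _{-}$ is the image of the $\mathbb{Q}$-rational idempotent $\theta ^{\otimes 2}\left( e_{-}\right) $, so its dimension can simply be read off after base change. With these clarifications your proof is complete and fully consistent with the paper's construction.
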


Since $\left( B,\theta \right) $ is an \emph{alternating} quaternionic
object, we may define%
\begin{equation*}
L_{2n}^{B}:=M_{2n}\left( B,\theta \right) \text{ (for }n\geq 1\text{)\ and }%
L_{2n+1}^{B\left( 2\right) }:=M_{2n+1}\left( B,\theta \right) \text{ (for }%
n\geq 0\text{)}
\end{equation*}%
and it is a consequence of Lemmas \ref{Realizations L1} and \ref%
{Realizations L2} that, when $B\otimes \mathbb{F}\simeq \mathbf{M}_{2}\left( 
\mathbb{F}\right) $,%
\begin{equation}
L_{2n,\mathbb{F}}^{B}\simeq L_{2n,\mathbb{F}}\text{ and }L_{2n+1,\mathbb{F}%
}^{B\left( 2\right) }\simeq L_{2r+1,\mathbb{F}}^{2}\text{.}
\label{Realizations F splitting}
\end{equation}

\subsection{Variations of Hodge structures attached to $B^{\times }$%
-representations}

In this subsection we define a $\mathbb{Q}$-additive $ACU$ tensor functor,
depending on the choice of an identification $B_{\infty }\simeq \mathbf{M}%
_{2}\left( \mathbb{R}\right) $,%
\begin{equation*}
\mathcal{L}:\mathrm{R}\text{\textrm{ep}}\left( B^{\times }\right)
\rightarrow \mathbf{VHS}_{\mathcal{P}}\left( \mathbb{Q}\right) \text{,}
\end{equation*}%
where $\mathbf{VHS}_{\mathcal{S}}\left( \mathbb{F}\right) $ denotes the
category of variations of Hodge structures on $\mathcal{S}$ with
coefficients in the field $\mathbb{F}\subset \mathbb{R}$. The identification 
$B_{\infty }\simeq \mathbf{M}_{2}\left( \mathbb{R}\right) $ induces $\mathrm{%
R}$\textrm{ep}$\left( B_{\infty }^{\times }\right) \simeq \mathrm{R}$\textrm{%
ep}$\left( \mathbf{GL}_{2,\mathbb{R}}\right) $ and it follows from \cite[%
Corollary 3.2 and its proof for uniqueness]{Ha} that we may define a (unique
up to equivalence) faithful and exact $\mathbb{Q}$-additive $ACU$ tensor
functor%
\begin{equation*}
\mathcal{L}_{\mathbb{R}}:\mathrm{R}\text{\textrm{ep}}\left( B_{\infty
}^{\times }\right) \rightarrow \mathbf{VHS}_{\mathcal{P}}\left( \mathbb{R}%
\right)
\end{equation*}%
requiring $\mathcal{L}_{\mathbb{R}}\left( L_{1,\mathbb{R}}\right) :=\mathcal{%
L}_{1}$. Since $\mathcal{O}_{\mathcal{P}}\left( V\right) =\mathcal{O}_{%
\mathcal{P}}\left( V_{\mathbb{R}}\right) $ for every $V\in \mathrm{R}$%
\textrm{ep}$\left( B^{\times }\right) $, we deduce that the restriction of $%
\mathcal{L}_{\mathbb{R}}$ to $\mathrm{R}$\textrm{ep}$\left( B^{\times
}\right) \rightarrow \mathrm{R}$\textrm{ep}$\left( B_{\infty }^{\times
}\right) $ via $V\mapsto V_{\mathbb{R}}\ $factors through $\mathbf{VHS}_{%
\mathcal{P}}\left( \mathbb{Q}\right) \rightarrow \mathbf{VHS}_{\mathcal{P}%
}\left( \mathbb{R}\right) $ (again via scalar extension)\ and gives our $%
\mathcal{L}$. It follows from this description and Proposition \ref%
{Realizations P1} (see Remark \ref{Realizations R2})\ that we have%
\begin{equation}
\mathcal{L}\left( B\right) =R^{1}\pi _{\ast }\mathbb{Q}_{\mathcal{A}}^{\vee }%
\text{,}  \label{Realizations F key}
\end{equation}%
if $B$ denotes the left $B^{\times }$-representation whose underlying vector
space is $B$ with the action given by left multiplication and quaternionic
structure induced by the right multiplication $\theta $.

Since $\left( \mathcal{L}\left( B\right) ,\mathcal{L}\left( \theta \right)
\right) $ is an \emph{alternating} quaternionic object, we may define%
\begin{equation*}
\mathcal{L}_{2n}^{B}:=M_{2n}\left( \mathcal{L}\left( B\right) ,\mathcal{L}%
\left( \theta \right) \right) \text{ (for }n\geq 1\text{)\ and }\mathcal{L}%
_{2n+1}^{B\left( 2\right) }:=M_{2n+1}\left( \mathcal{L}\left( B\right) ,%
\mathcal{L}\left( \theta \right) \right) \text{ (for }n\geq 0\text{).}
\end{equation*}

\bigskip

If $K\subset \widehat{B}^{\times }$ is an open and compact subgroup, we may
consider the Shimura curve%
\begin{equation*}
S_{K}\left( \mathbb{C}\right) :=B^{\times }\backslash \left( \mathcal{P}%
\times \widehat{B}^{\times }\right) /K=B_{+}^{\times }\backslash \left( 
\mathcal{H}\times \widehat{B}^{\times }\right) /K
\end{equation*}%
where:

\begin{itemize}
\item $B^{\times }$ acts diagonally on $\mathcal{P}\times \widehat{B}%
^{\times }$ (via $B^{\times }\subset B_{\infty }^{\times }$ and the diagonal
embedding $B^{\times }\subset \widehat{B}^{\times }$ on the second component)

\item The action of $K$ is trivial on $\mathcal{P}$ and by right
multiplication on $\widehat{B}^{\times }$.
\end{itemize}

When $B\neq \mathbf{M}_{2}\left( \mathbb{Q}\right) $, $X_{K}\left( \mathbb{C}%
\right) :=S_{K}\left( \mathbb{C}\right) $ is compact and otherwise we set $%
X_{K}\left( \mathbb{C}\right) :=\overline{S_{K}\left( \mathbb{C}\right) }$,
compactified by "adding cusps". Then $\mathcal{L}\left( V\right) $ (for any $%
V\in \mathrm{R}$\textrm{ep}$\left( B^{\times }\right) $)\ descend to a
variation of Hodge structures $\mathcal{L}_{K}\left( V\right) $\ on $%
S_{K}\left( \mathbb{C}\right) $\footnote{%
Suffices indeed to check that $\mathcal{L}_{1}$ descend to $S_{K}\left( 
\mathbb{C}\right) $ in order to get a functor $\mathcal{L}_{\mathbb{R},K}$
(from \cite{Ha}) valued in $\mathbf{VHS}_{S_{K}\left( \mathbb{C}\right)
}\left( \mathbb{R}\right) $ and then appeal to the uniqueness to deduce that 
$\mathcal{L}_{\mathbb{R},K}\left( V\right) $ is obtained from $\mathcal{L}_{%
\mathbb{R}}\left( V\right) $ by descend for every $V$. Then one can promote
the restriction of $\mathcal{L}_{\mathbb{R},K}$ to $\mathrm{R}$\textrm{ep}$%
\left( B^{\times }\right) $ to take values in $\mathbf{VHS}_{S_{K}\left( 
\mathbb{C}\right) }\left( \mathbb{Q}\right) $, exactly as above.}.

Setting $\pi _{0}\left( S_{K}\left( \mathbb{C}\right) \right) :=B^{\times
}\backslash \widehat{B}^{\times }/K$ we have%
\begin{equation*}
\pi _{0}:S_{K}\left( \mathbb{C}\right) \twoheadrightarrow \pi _{0}\left(
S_{K}\left( \mathbb{C}\right) \right) \text{ and }\pi _{K}:\widehat{B}%
^{\times }\rightarrow \pi _{0}\left( S_{K}\left( \mathbb{C}\right) \right)
\end{equation*}%
If $x\in \widehat{B}^{\times }$ define $\Gamma _{K}\left( x\right)
:=xKx^{-1}\cap B^{\times }$ (resp. $\Gamma _{K}\left( x\right)
_{+}:=xKx^{-1}\cap B_{+}^{\times }$), where $B^{\times }\subset \widehat{B}%
^{\times }$ is diagonally embedded, that we view as a subgroup $\Gamma
_{K}\left( x\right) \subset B^{\times }\subset B_{\infty }^{\times }=\mathbf{%
GL}_{2}\left( \mathbb{R}\right) $. We have the mutually inverse bijections%
\begin{equation*}
p_{x}:\pi _{0}^{-1}\left( \pi _{K}\left( x\right) \right) =B^{\times
}\backslash B^{\times }\left( \mathcal{P}\times xK/K\right) \overset{\sim }{%
\rightarrow }\Gamma _{K}\left( x\right) \backslash \mathcal{P}\text{ and }%
\iota _{x}:\Gamma _{K}\left( x\right) \backslash \mathcal{P}\overset{\sim }{%
\rightarrow }\pi _{0}^{-1}\left( \pi _{K}\left( x\right) \right)
\end{equation*}%
defined by the rules%
\begin{equation*}
\left[ \tau ,xk\right] \mapsto \left[ \tau \right] \text{ and }\left[ \tau %
\right] \mapsto \left[ \tau ,x\right]
\end{equation*}%
Then%
\begin{equation}
S_{K}\left( \mathbb{C}\right) =\tbigsqcup\nolimits_{\pi _{K}\left( x\right)
\in \pi _{0}\left( X_{K}\left( \mathbb{C}\right) \right) }\pi
_{0}^{-1}\left( \pi _{K}\left( x\right) \right) \simeq
\tbigsqcup\nolimits_{\pi _{K}\left( x\right) \in \pi _{0}\left( X_{K}\left( 
\mathbb{C}\right) \right) }\Gamma _{K}\left( x\right) \backslash \mathcal{P}%
\text{ and }\Gamma _{K}\left( x\right) _{+}\backslash \mathcal{H}\overset{%
\sim }{\rightarrow }\Gamma _{K}\left( x\right) \backslash \mathcal{P}
\label{Realizations F decomposition}
\end{equation}%
It follows from the Eichler-Shimura isomorphism (see \cite[Ch. 6]{Hi} and 
\cite[\S 3.2]{GSS} or \cite[\S 2.4]{RS} for the statement in the
quaternionic setting) and $\left( \text{\ref{Realizations F splitting}}%
\right) $ that the cohomology groups (let $\left( ?\right) =\phi $ when $k$
is even and $\left( ?\right) =\left( 2\right) $ when $k$ is odd)%
\begin{equation}
H^{1}\left( S_{K}\left( \mathbb{C}\right) ,\mathcal{L}_{k,K}^{B\left(
?\right) }\right) \simeq \tbigoplus\nolimits_{\pi _{K}\left( x\right) \in
\pi _{0}\left( X_{K}\left( \mathbb{C}\right) \right) }H^{1}\left( \Gamma
_{K}\left( x\right) ,L_{k}^{B\left( ?\right) }\right) \text{,}
\label{Realizations F cohomology}
\end{equation}%
afford weight $k+2$ modular forms of level $K$ when $k$ is \emph{even} and
two copies of them when $k$ is \emph{odd}. Indeed, it is not difficult to
define Hecke operators on the family $\left\{ \mathcal{L}_{k,K}^{B\left(
?\right) }\right\} _{K}$ by means of correspondences, which are given by
double cosets on the right hand side; we also remark that the left hand side
has a natural Hodge structure endowed with Hecke multiplication. We remark
that, when $B\neq \mathbf{M}_{2}\left( \mathbb{Q}\right) $ the Hecke action
on $\left( \text{\ref{Realizations F cohomology}}\right) $ is purely
cuspidal, whereas in case $B=\mathbf{M}_{2}\left( \mathbb{Q}\right) $ the
Hecke action factors $\left( \text{\ref{Realizations F cohomology}}\right) $
as the direct sum of its cuspidal and Eisenstin part. It is a non-trivial
task to lift this decomposition at the motivic level in order to single out
a motive of cuspidal modular forms: this is done in \cite{Sc} and we will
not touch this problem.

\subsection{The motives of quaternionic modular forms and their realizations}

Let $K\subset \widehat{B}^{\times }$ be an open and compact subgroup which
is small enough so that $S_{K}$ is a fine moduli space and let $\pi
_{K}:A_{K}\rightarrow S_{K}$ be the universal level $K$ fake elliptic curve
over $S_{K}$. Consider the relative motive $h(A_{K})$ as an object of $%
\mathbf{Mot}_{+}^{0}(S_{K},\mathbb{F})$, where $h$ is the contravariant
functor 
\begin{equation*}
h\colon \mathbf{Sch}(S_{K})\rightarrow \mathbf{Mot}_{+}^{0}(S_{K},\mathbb{F})
\end{equation*}%
from the category of smooth and projective schemes over $S$ to the category
of Chow motives with coefficients in a field $\mathbb{F}$. By functoriality
of the motivic decomposition, there is $\theta :B\rightarrow End\left(
h^{1}(A_{K})\right) $ making $\left( h^{1}(A_{K}),\theta \right) $ a \emph{%
symmetric} quaternionic object, and we may define%
\begin{equation*}
M_{2n,K}^{B}:=M_{2n}\left( h^{1}(A_{K}),\theta \right) \text{ (for }n\geq 1%
\text{)\ and }M_{2n+1,K}^{B\left( 2\right) }:=M_{2n+1}\left(
h^{1}(A_{K}),\theta \right) \text{ (for }n\geq 0\text{)}
\end{equation*}%
There is a realization functor%
\begin{equation*}
R_{S_{K}}:\mathbf{Mot}_{+}^{0}(S_{K},\mathbb{F})\rightarrow D^{b}\left( 
\mathbf{VMHS}\left( S_{K},\mathbb{F}\right) \right)
\end{equation*}%
extending the correspondence mapping $\pi :X\rightarrow S_{K}$ to $R\pi
_{\ast }\mathbb{F}_{X}^{\vee }$. Here $\mathbf{VMHS}(S_K,\mathbb{F})$
denotes the abelian category of variations of mixed Hodge structures over $%
S_K$ with coefficients in $\mathbb{F}$. See~\cite[14.4]{PS} for details.

\begin{theorem}
  \label{thm:realizations}
Taking $F=\mathbb{Q}$ we have the following realizations.

\begin{itemize}
\item[$\left( 1\right) $] Suppose that $2n\geq 2$ is even. Then:%
\begin{equation*}
R(M_{2n,K}^{B})=\mathcal{L}_{2n,K}^{B}[-2n]\text{.}
\end{equation*}

\item[$\left( 2\right) $] Suppose that $2n+1\geq 3$ is odd. Then:%
\begin{equation*}
R(M_{2n+1,K}^{B\left( 2\right) })=\mathcal{L}_{2n+1,K}^{B\left( 2\right)
}[-(2n+1)].
\end{equation*}
\end{itemize}
\end{theorem}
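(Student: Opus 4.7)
The plan is to deduce both statements as direct consequences of the abstract functoriality results of Section 6 applied to the realization functor $R=R_{S_K}$, combined with the key identification $(\ref{Realizations F key})$ descended to level $K$ via Proposition~\ref{Realizations P1}. First I would place $R$ into the framework of Section 6: the realization sends $h^1(A_K)$ to $R^1\pi_{K,*}\mathbb{Q}_{A_K}^\vee[-1]$, and composing with the functor ``extract the cohomology in the appropriate degree'' produces an additive $AU$ tensor functor
\begin{equation*}
F:\mathbf{Mot}_+^0(S_K,\mathbb{Q})\longrightarrow \mathbf{VMHS}(S_K,\mathbb{Q})
\end{equation*}
whose behavior on the braiding of $h^1(A_K)\otimes h^1(A_K)$ acquires a sign from the Koszul rule, so that in the language of Section 6 we have $\varepsilon=-1$ (and $\eta=-1$ is checked similarly). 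This is precisely the twisted functoriality covered by Lemma~\ref{Poincare and Dirac functoriality L1} and Proposition~\ref{Poincare and Dirac functoriality P1}, and it accommodates the remark at the start of Section 6 that $R$ need not respect associativity constraints strictly.

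Next, $(\ref{Realizations F key})$ together with its descent along $\mathcal{P}\to S_K$ (via Proposition~\ref{Realizations P1}) provides a canonical isomorphism
\begin{equation*}
(F(h^1(A_K)),F(\theta))\;\simeq\;(\mathcal{L}_K(B),\mathcal{L}_K(\theta))
\end{equation*}
of quaternionic objects in $\mathbf{VMHS}(S_K,\mathbb{Q})$. Note that $(h^1(A_K),\theta)$ is a \emph{symmetric} quaternionic object (Kimura negative of rank $-4$, with $\vee^4 h^1(A_K)\simeq\mathbb{Q}(-2)$, a square of an invertible object), whereas the convention $\varepsilon=-1$ together with Lemma~\ref{Poincare and Dirac functoriality L2} converts this into an \emph{alternating} quaternionic object on the VMHS side, compatibly with the alternating cup-product pairing on $R^1\pi_{K,*}\mathbb{Q}_{A_K}^\vee$. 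This exchange is the precise mechanism by which $\mathcal{L}_{2n,K}^B$ and $\mathcal{L}_{2n+1,K}^{B(2)}$, built via the alternating construction, correspond to $M_{2n,K}^B$ and $M_{2n+1,K}^{B(2)}$, built via the symmetric construction for $h^1(A_K)$.

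I would then invoke Corollary~\ref{Poincare and Dirac functoriality C1} together with Proposition~\ref{Poincare and Dirac functoriality P1}(2), applied in the case $i=2$ where the outer $\mathrm{Sym}^n$ is preserved by $F$, to conclude
\begin{equation*}
F(M_{2n}(h^1(A_K),\theta))\simeq M_{2n}(F(h^1(A_K)),F(\theta))=\mathcal{L}_{2n,K}^B
\end{equation*}
and similarly $F(M_{2n+1}(h^1(A_K),\theta))\simeq \mathcal{L}_{2n+1,K}^{B(2)}$. Reincorporating the shifts: since $h^1(A_K)\mapsto \mathcal{L}_K(B)[-1]$ and the construction of $M_{2n}$ (resp.\ $M_{2n+1}$) involves $\mathrm{Sym}^n(\vee^2 h^1(A_K))$ (resp.\ its tensor product with $h^1(A_K)$), the realization is concentrated in cohomological degree $2n$ (resp.\ $2n+1$), and passage to the kernel of the Laplace/Dirac operator preserves the degree, yielding the shifts $[-2n]$ and $[-(2n+1)]$.

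The main obstacle is a careful bookkeeping of the Koszul sign conventions and a rigorous verification that the realization functor fits the Section 6 framework with the expected parameters $(\varepsilon,\eta)=(-1,-1)$; this is precisely what Section 6 was engineered to handle, so once the sign combinatorics is in place the theorem follows from the formalism developed there together with the identification of $F(h^1(A_K))$ with $\mathcal{L}_K(B)$.
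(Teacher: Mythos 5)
Your proposal is correct and takes essentially the same route as the paper: identify $R(h^{1}(A_{K}))=\mathcal{L}_{K}(B)[-1]$ via the key isomorphism for the universal fake elliptic curve, use that $R_{S_{K}}$ is an anti-commutative (so $\varepsilon=-1$) additive $AU$ tensor functor so that the Section 6 functoriality (Lemma on quaternionic objects and the Corollary on the restricted Laplace/Dirac operators, summarized in the remark after the definition of $M_{k}(V,\theta)$) lets $R$ commute with the construction $M_{k}(-,\theta)$ while exchanging symmetric and alternating quaternionic objects, and finally absorb the shift through $M_{k}(\mathcal{L}_{K}(B)[-1],\mathcal{L}_{K}(\theta))=M_{k}(\mathcal{L}_{K}(B),\mathcal{L}_{K}(\theta))[-k]$. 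The paper's proof is exactly this (citing K\"unnemann for anti-commutativity), so your additional $(\varepsilon,\eta)$ sign bookkeeping is consistent but not needed beyond $\varepsilon=-1$, since the kernels defining $M_{k}$ are unchanged by the unit factors $\eta^{\bullet}$.
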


\begin{proof}
As in \cite[Remarks 2) after Corollary 3.2]{DM} one has $R\left(
h^{1}(A_{K})\right) =R^{1}\pi _{K\ast }\mathbb{Q}_{A_{K}}^{\vee }\left[ -1%
\right] $ and $R^{1}\pi _{K\ast }\mathbb{Q}_{A_{K}}^{\vee }$ is obtained by
descending the sheaf $R^{1}\pi _{\ast }\mathbb{Q}_{\mathcal{A}}^{\vee }$
(see Proposition \ref{Realizations UfEC L1} below for details), so that $\left( 
\text{\ref{Realizations F key}}\right) $ implies $R\left(
h^{1}(A_{K})\right) =\mathcal{L}_{K}\left( B\right) \left[ -1\right] $.
Since $R_{S_{K}}$ is a $AU$ tensor functor (indeed anti-commutative, see 
\cite[Remark $\left( 2.6.1\right) $]{Ku}), we deduce from the remark after
Definition \ref{Def quaternionic} that (let $\left( ?\right) =\phi $ when $k$
is even and $\left( ?\right) =\left( 2\right) $ when $k$ is odd)%
\begin{equation*}
R(M_{k,K}^{B\left( ?\right) })=M_{k}\left( \mathcal{L}_{K}\left( B\right) %
\left[ -1\right] ,\mathcal{L}_{K}\left( \theta \right) \right) =M_{k}\left( 
\mathcal{L}_{K}\left( B\right) ,\mathcal{L}_{K}\left( \theta \right) \right) %
\left[ -k\right] =\mathcal{L}_{k,K}^{B\left( ?\right) }[-k]\text{.}
\end{equation*}
\end{proof}

Together with $\left( \text{\ref{Realizations F cohomology}}\right) $ and
recalling that the group cohomology of $L_{k}$ is concentrated in degree $1$%
, we deduce the following result.

\begin{corollary}
Let $H$ be the Betti realization functor, valued in $\mathbf{VHS}\left( 
\mathbb{Q}\right) $, and let view $M_{2n,K}^{B}$ and $M_{2n+1,K}^{B\left(
2\right) }$ as motives defined over $\mathbb{Q}$.

\begin{itemize}
\item[$\left( 1\right) $] Suppose that $2n\geq 2$ is even. Then:%
\begin{equation*}
H^{i}(M_{2n,K}^{B})=\left\{ 
\begin{array}{ll}
H^{1}\left( S_{K}\left( \mathbb{C}\right) ,\mathcal{L}_{k,K}^{B\left(
?\right) }\right) & \text{if }i=2n+1 \\ 
0 & \text{otherwise.}%
\end{array}%
\right.
\end{equation*}

\item[$\left( 2\right) $] Suppose that $2n+1\geq 3$ is odd. Then:%
\begin{equation*}
H^{i}(M_{2n+1,K}^{B\left( 2\right) })=\left\{ 
\begin{array}{ll}
H^{1}\left( S_{K}\left( \mathbb{C}\right) ,\mathcal{L}_{k,K}^{B\left(
?\right) }\right) & \text{if }i=2n+2 \\ 
0 & \text{otherwise.}%
\end{array}%
\right.
\end{equation*}
\end{itemize}
\end{corollary}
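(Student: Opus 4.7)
The plan is to reduce both assertions to a single statement about the hypercohomology of the variation of Hodge structures $\mathcal{L}_{k,K}^{B(?)}$ over $S_{K}(\mathbb{C})$, and then to prove that this hypercohomology is concentrated in degree $1$.

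First I would identify the Betti realization. When we regard $M_{k,K}^{B(?)}\in \mathbf{Mot}_{+}^{0}(S_{K},\mathbb{Q})$ as a motive over $\mathbb{Q}$ (via the structure morphism $p\colon S_{K}\to \mathrm{Spec}(\mathbb{Q})$), standard compatibilities for the realization of Chow motives over a smooth proper base yield $H^{i}(M)=\mathbb{H}^{i}\bigl(S_{K}(\mathbb{C}),R_{S_{K}}(M)\bigr)$ as an object of $\mathbf{VHS}(\mathbb{Q})$; this is essentially the Leray spectral sequence for $p$ at the level of realizations, combined with the construction of the total motive from a relative one (see \cite{DM} and \cite[14.4]{PS}). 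Applying the preceding theorem, which computes $R_{S_{K}}(M_{k,K}^{B(?)})=\mathcal{L}_{k,K}^{B(?)}[-k]$, the shift in the derived category translates into a shift of indexing, giving
\[
H^{i}\bigl(M_{k,K}^{B(?)}\bigr)=H^{i-k}\bigl(S_{K}(\mathbb{C}),\mathcal{L}_{k,K}^{B(?)}\bigr),
\]
so the claim reduces to showing that $H^{j}(S_{K}(\mathbb{C}),\mathcal{L}_{k,K}^{B(?)})$ vanishes for $j\neq 1$ (with $k=2n$ or $k=2n+1$).

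Next I would establish the vanishing. Since $B\neq \mathbf{M}_{2}(\mathbb{Q})$, the Shimura curve $S_{K}(\mathbb{C})=X_{K}(\mathbb{C})$ is a compact Riemann surface, so the only potentially non-zero degrees are $j=0,1,2$. For $j=0$, the connected-component decomposition used in $(\text{\ref{Realizations F cohomology}})$ extends to degree $0$ and gives $H^{0}(S_{K}(\mathbb{C}),\mathcal{L}_{k,K}^{B(?)})=\bigoplus_{\pi_{K}(x)}(L_{k}^{B(?)})^{\Gamma_{K}(x)}$. Each $\Gamma_{K}(x)=xKx^{-1}\cap B_{+}^{\times}$ is an arithmetic subgroup, hence Zariski dense in $B^{\times}$ (Borel density), so any $\Gamma_{K}(x)$-fixed vector would be $B^{\times}$-fixed; but $L_{k}^{B}$ and $L_{k}^{B(2)}$ have no non-zero $B^{\times}$-invariants for $k\geq 1$ (after extending scalars to a splitting field of $B$ and invoking $(\text{\ref{Realizations F splitting}})$, they become $L_{k,\mathbb{F}}$ or $L_{k,\mathbb{F}}^{\oplus 2}$, which carry no $\mathbf{GL}_{2}(\mathbb{F})$-invariants). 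Hence $H^{0}=0$. For $j=2$, I would apply Poincar\'e duality on the compact Riemann surface to identify $H^{2}(S_{K}(\mathbb{C}),\mathcal{L}_{k,K}^{B(?)})$ with the dual of $H^{0}(S_{K}(\mathbb{C}),(\mathcal{L}_{k,K}^{B(?)})^{\vee})$ (up to a Tate twist from the orientation), and the same argument applied to the contragredient representation shows this $H^{0}$ vanishes.

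The main obstacle is the careful justification of the first step: one has to unwind how the functor $H$ on motives over $\mathbb{Q}$ relates, via $p_{\ast}$ on the motivic side and the Leray spectral sequence on the Hodge side, to the relative realization $R_{S_{K}}$ already constructed. Once this identification is in place, steps two and three are routine applications of standard results (Zariski density of arithmetic lattices and Poincar\'e duality on compact Riemann surfaces), and the two cases $k=2n$ and $k=2n+1$ are treated in parallel by the same argument because the degree shift $[-k]$ matches the parity of the Hodge weight exactly to produce $i=k+1$ in both.
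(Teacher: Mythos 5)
Your proposal is correct and follows essentially the same route as the paper: combine the realization theorem $R_{S_K}(M_{k,K}^{B(?)})=\mathcal{L}_{k,K}^{B(?)}[-k]$ with the identification of the Betti realization over $\mathbb{Q}$ as (hyper)cohomology of $S_K(\mathbb{C})$ with coefficients in the relative realization, and then use that this cohomology is concentrated in degree $1$. The only difference is that where the paper simply cites the concentration of the group cohomology of $L_k$ in degree $1$ (via the decomposition $(\text{\ref{Realizations F cohomology}})$), you supply the standard proof of it — vanishing of invariants via Zariski density of the arithmetic groups $\Gamma_K(x)$ for $H^0$, and Poincar\'e duality on the compact components for $H^2$ — which is a correct filling-in of the same argument rather than a different approach.
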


As explained after $\left( \text{\ref{Realizations F cohomology}}\right) $,
this motivates our designation of $M_{2n,K}^{B}$ (resp. $M_{2n+1,K}^{B\left(
2\right) }$) as the motive of (resp. two copies of)\ level $K$ and weight $%
2n+2$ (resp. $2n+3$) modular forms: indeed the functoriality of our
construction implies that the Hecke correspondences induces a Hecke
multiplication on $M_{2n,K}^{B}$ (resp. $M_{2n+1,K}^{B\left( 2\right) }$),
which is compatible with that on the realizations. As remarked after $\left( 
\text{\ref{Realizations F cohomology}}\right) $, in case $B=\mathbf{M}%
_{2}\left( \mathbb{Q}\right) $ Scholl has been able to single out a motive
of cuspidal modular forms $M_{m,K}^{\mathrm{cusp}}$. The concrete
construction of its motive is actually different, as we always start the
game with two copies of the universal elliptic curve. Its method is finer
even on the open modular curve: it gives $M_{m,K}$ such that $M_{m,K}$ has
the same realization of $M_{2n,K}^{B}$ when $m=2n$ and realize one of the
two copies of the realization of $M_{2n+1,K}^{B\left( 2\right) }$ in case $%
m=2n+1$. The abstract approach employed here for computing the realizations,
inspired by \cite{IS}, easily adapts to the other realizations: one has only
to appropriately replace $\left( \text{\ref{Realizations F key}}\right) $
(which is, for example, the deeper \cite[Lemma 5.10]{IS} in the $p$-adic
realm considered there).

\subsection{\label{Subsection Universal fake elliptic curve}Final remarks}

In this \S , we collect basic facts that are surely well-known to experts
about analytic families of fake elliptic curves, mainly due to Shimura,
following the point of view of \cite{De1} in the case of modular curves (see
also \cite[\S 6.1]{C}). This will allow us to finish the proof of Proposition~\ref{Realizations P1},
as well as of Theorem~\ref{thm:realizations} thanks to the following result, whose
proof will be given at the very end of the section.

\begin{proposition}
\label{Realizations UfEC L1}If $K$ is small enough, then $R^{1}\pi _{K\ast }%
\mathbb{Q}_{A_{K}}^{\vee }$ is obtained by descending the sheaf $R^{1}\pi
_{\ast }\mathbb{Q}_{\mathcal{A}}^{\vee }$.
\end{proposition}

Recall our fixed identification $B_{\infty }\simeq 
\mathbf{M}_{2}\left( \mathbb{R}\right) $ and note that, for every $\tau \in 
\mathcal{P}$, we may identify (see \cite[Prop. 14]{Sh} or \cite[Lemma 1.6]{P}):
\begin{equation*}
\Phi _{\tau }:B_{\infty }\simeq \mathbf{M}_{2}\left( \mathbb{R}\right) 
\overset{\sim }{\rightarrow }\mathbb{C}^{2}\text{, via }w\mapsto w^{\iota
}\left( 
\begin{array}{c}
\tau \\ 
1%
\end{array}%
\right) \text{,}
\end{equation*}%
thus getting a $\mathcal{C}^{\infty }$-morphism
\begin{equation*}
\Phi :\mathcal{P}\times B_{\infty }\overset{\sim }{\rightarrow }\mathcal{P}%
\times \mathbb{C}^{2}\text{, via }\left( \tau ,w\right) \mapsto \left( \tau
,\Phi _{\tau }\left( w\right) \right) \text{.}
\end{equation*}%
If $\beta =\left( 
\begin{array}{cc}
a & b \\ 
c & d%
\end{array}%
\right) $, then we define $j\left( \beta ,\tau \right) =c\tau +d$. We remark
the formula
\begin{equation}
j\left( \beta ^{\iota },\tau \right) \Phi _{\beta ^{\iota }\tau }\left(
w\right) =\Phi _{\tau }\left( \beta w\right)  \label{Realizations UfEC F1}
\end{equation}

Suppose that we are given a left $B_{\infty }^{\times }$-representation $V$
and let us consider $B_{\infty }^{\times }\ltimes V$, the multiplication
being defined by the rule $\left( \beta _{1},v_{1}\right) \left( \beta
_{2},v_{2}\right) :=\left( \beta _{1}\beta _{2},\beta _{1}v_{2}+v_{1}\right) 
$. We define a left action of $B_{\infty }^{\times }\ltimes V$ on $\mathcal{P%
}\times V$ by the rule%
\begin{eqnarray}
&&\left( B_{\infty }^{\times }\ltimes V\right) \times \left( \mathcal{P}%
\times V\right) \rightarrow \mathcal{P}\times V  \notag \\
&&\left( \beta ,b\right) \cdot \left( \tau ,w\right) :=\left( \beta \tau
,\beta w+b\right) \text{.}  \label{Realizations UfEC F2}
\end{eqnarray}%
On the other hand, we define a left action of $B_{\infty }^{\times }\ltimes
B_{\infty }$ on $\mathcal{P}\times \mathbb{C}^{2}$ by the rule%
\begin{eqnarray}
&&\left( B_{\infty }^{\times }\ltimes B_{\infty }\right) \times \left( 
\mathcal{P}\times \mathbb{C}^{2}\right) \rightarrow \mathcal{P}\times 
\mathbb{C}^{2}  \notag \\
&&\left( \beta ,b\right) \cdot \left( \tau ,w\right) :=\left( \beta \tau ,%
\frac{\det \left( \beta \right) }{j\left( \beta ,\tau \right) }\left(
w+b^{\iota }\beta ^{-\iota }\left( 
\begin{array}{c}
\tau \\ 
1%
\end{array}%
\right) \right) \right)  \label{Realizations UfEC F3}
\end{eqnarray}%
It is easy to see, using $\left( \text{\ref{Realizations UfEC F1}}\right) $,
that $\Phi $ is $B_{\infty }^{\times }\ltimes B_{\infty }$-equivariant, thus
making the actions $\left( \text{\ref{Realizations UfEC F2}}\right) $ and $%
\left( \text{\ref{Realizations UfEC F3}}\right) $ correspond to each other:
%
%
\begin{equation}
\Phi \left( \left( \beta ,b\right) \cdot \left( \tau ,w\right) \right)
=\left( \beta ,b\right) \cdot \Phi \left( \left( \tau ,w\right) \right) 
\text{.}  \label{Realizations UfEC F4}
\end{equation}

\bigskip

Let us now give some constructions showing the existence of a $I$-rigidified
fake (analytic) elliptic curve as required in Step 2 of the proof of
Proposition \ref{Realizations P1}, explaining the relationship with the
level $K$ fake elliptic curves and making explicit the sheaves of locally
constant functions underlying the $\mathcal{L}\left( V\right) $'s and the
data obtained from the open and compact subgroups of $\widehat{B}^{\times }$.

\noindent\textbf{(Step 1)} Let $\Gamma \subset E\left( I\right) ^{\times }$
be a subgroup acting without fixed points on $\mathcal{P}$ and consider%
\begin{equation*}
\pi _{\Gamma ,I}:\mathcal{A}_{\Gamma ,I}:=\left( \Gamma \ltimes I\right)
\backslash \left( \mathcal{P}\times B_{\infty }\right) \rightarrow \Gamma
\backslash \mathcal{P}\text{,}
\end{equation*}%
where $\pi _{\Gamma ,I}$ is induced by the first projection. This is a morphism
of analytic spaces, since the fact that $\Gamma$ acts without fixed points on $\mathcal{P}$
makes the induced action of $\Gamma \ltimes I$ properly discontinuous.
This also implies that for any other $\Gamma ^{\prime }\subset \Gamma $
 the following diagram is cartesian
\begin{equation}
\begin{array}{ccccc}
\mathcal{A}_{\Gamma ^{\prime },I} & \rightarrow & \mathcal{A}_{\Gamma ,I} & 
&  \\ 
\pi _{\Gamma ^{\prime },I}\downarrow \text{ \ \ \ } &  & \text{ \ \ \ \ }%
\downarrow \pi _{\Gamma ,I} & \text{and} & \pi _{\Gamma ,I}=\Gamma
\backslash \pi _{\Gamma ^{\prime },I}\text{.} \\ 
\Gamma ^{\prime }\backslash \mathcal{P} & \rightarrow & \Gamma \backslash 
\mathcal{P} &  & 
\end{array}
\label{Realizations UfEC D1}
\end{equation}%
Note that, by construction, the second projection $\mathcal{P}\times \mathbb{%
C}^{2}\rightarrow \mathbb{C}^{2}$ induces%
\begin{equation}
\pi _{1,I}^{-1}\left( \tau \right) \overset{\sim }{\rightarrow }\Phi _{\tau
}\left( I\right) \backslash \mathbb{C}^{2}\text{.}
\label{Realizations UfEC D2}
\end{equation}%
More generally, because $\left( \text{\ref{Realizations UfEC D1}}\right) $
is cartesian, we see that $\left( \text{\ref{Realizations UfEC D2}}\right) $%
\ implies that we have a non-canonical bijection $\pi _{\Gamma
,I}^{-1}\left( \Gamma \tau \right) \simeq \Phi _{\tau }\left( I\right)
\backslash \mathbb{C}^{2}$. Note that, when $\Gamma =\left\{ 1\right\} $, we
have an evident morphism $\mathcal{A}_{1,I}\times _{\mathcal{P}}\mathcal{A}%
_{1,I}\rightarrow \mathcal{A}_{1,I}$ and a section making $\pi _{1,I}$ an
analytic abelian surface over $\mathcal{P}$. Indeed, writing $\mathcal{A}%
_{\Gamma ,I}=\Gamma \backslash \mathcal{A}_{1,I}$, we see that $\pi _{\Gamma
,I}$ has a natural structure of fake (analytic) $R\left( I\right) $-elliptic
curve over $\mathcal{P}$, with right $R\left( I\right) $-multiplication $%
i_{\Gamma ,I}:R\left( I\right) \rightarrow End_{\Gamma \backslash \mathcal{P}%
}\left( \mathcal{A}_{\Gamma ,I}\right) $\ induced by $\left( \tau ,w\right)
\mapsto \left( \tau ,wr\right) $.

\noindent\textbf{(Step 2)} When $\Gamma =\left\{ 1\right\} $, we remove the
subscript $\Gamma $ from the notation and the above construction yields%
\begin{equation*}
\pi _{I}:\mathcal{A}_{I}:=\left( 1\ltimes I\right) \backslash \left( 
\mathcal{P}\times B_{\infty }\right) \rightarrow \mathcal{P}
\end{equation*}%
and $i_{I}:R\left( I\right) \rightarrow End_{\mathcal{P}}\left( \mathcal{A}%
_{I}\right) $ which is further endowed with an $I$-rigidification given by
the identity $\rho _{I}:R^{1}\pi _{\ast }\mathbb{Z}_{\mathcal{A}}^{\vee }=I_{%
\mathcal{P}}$. It is easy to see that we have $R^{1}\pi _{\ast }\mathbb{%
Z}_{\mathcal{A}}^{\vee }=I_{\mathcal{P}}$ as rigidified quaternionic
variations of Hodge structures, i.e. the associated variation of Hodge
structures on $\mathcal{O}_{\mathcal{S}}\left( I\right) \simeq \mathcal{O}_{%
\mathcal{S}}\left( \mathbf{M}_{2}\left( \mathbb{R}\right) \right) $ is $%
\mathcal{L}_{1}\oplus \mathcal{L}_{1}$: indeed, suffices to look at the
complex structure on the fiber over $\tau $, which in view of $\left( \text{%
\ref{Realizations UfEC D2}}\right) $\ is obtained identifying $\mathbb{R}%
\otimes _{\mathbb{Q}}I\simeq \mathbf{M}_{2}\left( \mathbb{R}\right) \overset{%
\sim }{\rightarrow }\mathbb{C}^{2}$, via $b\mapsto b^{\iota }\left( 
\begin{array}{c}
\tau \\ 
1%
\end{array}%
\right) $; this is two copies of the complex structure obtained identifying $%
\mathbb{R}^{2}\overset{\sim }{\rightarrow }\mathbb{C}$ via $\left(
x,y\right) \mapsto x\tau +y$, i.e. it is two copies of the fiber of $%
\mathcal{L}_{1}$ over $\tau $. This completes the proof of Proposition \ref%
{Realizations P1} and then we know that $\left( \pi _{I},i_{I},\rho
_{I}\right) $ is the universal $I$-rigidified fake (analytic) elliptic curve.

We remark that the rule%
\begin{equation}
\beta ^{\iota }\left( \pi :\mathcal{A}\rightarrow \mathcal{S},i,\rho \right)
=\left( \pi :\mathcal{A}\rightarrow \mathcal{S},i,\rho \right) \beta
:=\left( \pi :\mathcal{A}\rightarrow \mathcal{S},i,\beta \circ \rho \right) 
\text{,}  \label{Realizations UfEC F6}
\end{equation}%
where $\beta \in E\left( I\right) ^{\times }$ denotes the $R\left( I\right) $%
-linear morphism given by left multiplication by $\beta $, induces a left $%
E\left( I\right) ^{\iota \times }$-action on $IqVHS$.
Applying this
definition to the universal family $\left( \pi _{I}:\mathcal{A}%
_{I}\rightarrow \mathcal{P},i_{I},\rho _{I}\right) $ gives a unique morphism 
$\left[ \beta ^{\iota }\right] :\mathcal{P}\rightarrow \mathcal{P}$ such
that $\left[ \beta ^{\iota }\right] ^{\ast }\left( \pi _{I}:\mathcal{A}%
_{I}\rightarrow \mathcal{P},i_{I},\rho _{I}\right) =\left( \pi _{I}:\mathcal{%
A}_{I}\rightarrow \mathcal{P},i_{I},\beta \circ \rho _{I}\right) $. The
multiplication by $j\left( \beta ^{\iota },\tau \right) $ induces an
isomorphism $\Phi _{\beta ^{\iota }\tau }\left( I\right) \backslash \mathbb{C%
}^{2}\simeq j\left( \beta ^{\iota },\tau \right) \Phi _{\beta ^{\iota }\tau
}\left( I\right) \backslash \mathbb{C}^{2}$ and we have $j\left( \beta
^{\iota },\tau \right) \Phi _{\beta ^{\iota }\tau }\left( I\right)
\backslash \mathbb{C}^{2}\simeq \Phi _{\tau }\left( \beta I\right)
\backslash \mathbb{C}^{2}=\Phi _{\tau }\left( I\right) \backslash \mathbb{C}%
^{2}$ in view of $\left( \text{\ref{Realizations UfEC F1}}\right) $; also,
it follows from $\left( \text{\ref{Realizations UfEC F1}}\right) $\ that,
under this isomorphism, the identification identity $H_{1}\left( \Phi
_{\beta ^{\iota }\tau }\left( I\right) \backslash \mathbb{C}^{2},\mathbb{Z}%
\right) =I$ corresponds to $H_{1}\left( \Phi _{\tau }\left( I\right)
\backslash \mathbb{C}^{2},\mathbb{Z}\right) =I\overset{\beta \cdot }{%
\rightarrow }I$. Hence $\left[ \beta ^{\iota }\right] \left( \tau \right)
=\beta ^{\iota }\tau $ and, by uniqueness, we see that $\left( \text{\ref%
{Realizations UfEC F2}}\right) $ (and $\left( \text{\ref{Realizations UfEC
F3}}\right) $) describes the resulting cartesian diagram $\mathcal{A}_{I}/%
\mathcal{P}\rightarrow \mathcal{A}_{I}/\mathcal{P}$. In particular, we see
that%
\begin{equation}
\pi _{\Gamma ,I}=\Gamma \backslash \pi _{I}\text{ with }\Gamma \subset
E\left( I\right) ^{\times }\text{ acting via }\left( \text{\ref{Realizations
UfEC F6}}\right) =\left( \text{\ref{Realizations UfEC F2}}\right) \text{.}
\label{Realizations UfEC F7}
\end{equation}%
Furthermore, if for an integer $N\geq 1$ we define%
$\Gamma _{I,N}:=\left\{ \gamma \in E\left( I\right) ^{\times }:\gamma \equiv 1%
\text{ \textrm{mod }}IN\right\}$,
then the rigidification $\rho _{I}:R^{1}\pi _{\ast }\mathbb{Z}_{\mathcal{A}%
}^{\vee }=I_{\mathcal{P}}$ yields a natural isomorphism $\rho _{I,N}:\mathcal{A}_{\Gamma _{I,N},I}\left[ N\right] =I\backslash N^{-1}I$.

\noindent\textbf{(Step 3)} Fix a maximal order $R$ and, for an integer $N\geq
1$, let $K_{N}\subset \widehat{R}^{\times }$ be the normal subgroup of
elements that are congruent to $1$ modulo $N$. Because $B$ has class number
one (by strong approximation), we have $\widehat{B}^{\times }=B^{\times }%
\widehat{R}^{\times }$ and we see that, for every $K\subset \widehat{R}%
^{\times }$, we have 
\begin{equation}
\pi _{0}\left( S_{K}\left( \mathbb{C}\right) \right) =B^{\times }\backslash
B^{\times }\widehat{R}^{\times }/K\overset{\sim }{\leftarrow }R^{\times
}\backslash \widehat{R}^{\times }/K\text{ and }\Gamma _{K}\left( x\right)
\subset R^{\times }  \label{Realizations UfEC F dec1}
\end{equation}%
choosing $x\in \widehat{R}^{\times }$ in the definition of $\Gamma
_{K}\left( x\right) $\ that appears in the decomposition $\left( \text{\ref%
{Realizations F decomposition}}\right) $. We may therefore apply the above
considerations \textbf{(Step 1)} and \textbf{(Step 2)} to $\pi _{\Gamma
_{K}\left( x\right) ,R}:\mathcal{A}_{\Gamma _{K}\left( x\right)
,R}\rightarrow \Gamma _{K}\left( x\right) \backslash \mathcal{P}$ assuming $K
$ is so small that $\Gamma _{K}\left( x\right) $ acts without fixed points
on $\mathcal{P}$. We define, for every $K\subset \widehat{R}^{\times }$,%
\begin{equation*}
\pi _{K,R}:\mathcal{A}_{K,R}:=\left( R^{\times }\ltimes R\right) \backslash
\left( \mathcal{P}\times B_{\infty }\times \widehat{R}^{\times }\right)
/K\rightarrow B^{\times }\backslash \left( \mathcal{P}\times \widehat{B}%
^{\times }\right) /K=S_{K}\left( \mathbb{C}\right) \text{,}
\end{equation*}%
where $\pi _{K}$ is induced by the first and the third projection, $%
R^{\times }\ltimes R$ acts via $\left( \beta ,b\right) \cdot \left( \tau
,w,x\right) =\left( \left( \beta ,b\right) \cdot \left( \tau ,w\right)
,\beta x\right) =\left( \beta \tau ,\beta w+b,\beta x\right) $ and the
action of $K$ is trivial on $\mathcal{P}\times B_{\infty }$ and by right
multiplication on $\widehat{R}^{\times }$. Choosing $x\in \widehat{R}%
^{\times }$ as in $\left( \text{\ref{Realizations UfEC F dec1}}\right) $, we
see that we have the mutually inverse bijections%
\begin{eqnarray}
&&p_{x}:\pi _{K.R}^{-1}\left( \pi _{0}^{-1}\left( \pi _{K}\left( x\right)
\right) \right) =\left( R^{\times }\ltimes R\right) \backslash \left(
R^{\times }\ltimes R\right) \left( \mathcal{P}\times B_{\infty }\times
xK/K\right) \overset{\sim }{\rightarrow }\Gamma _{K}\left( x\right)
\backslash \mathcal{A}_{R}=\mathcal{A}_{\Gamma _{K}\left( x\right) ,R} 
\notag \\
&&\iota _{x}:\mathcal{A}_{\Gamma _{K}\left( x\right) ,R}=\Gamma _{K}\left(
x\right) \backslash \mathcal{A}_{R}\overset{\sim }{\rightarrow }\pi
_{K}^{-1}\left( \pi _{0}^{-1}\left( \pi _{K}\left( x\right) \right) \right) 
\label{Realizations UfEC F decUfEC}
\end{eqnarray}%
defined by the rules 
\begin{equation*}
\left[ \tau ,w,xk\right] \mapsto \left[ \tau ,w\right] \text{ and }\left[
\tau ,w\right] \mapsto \left[ \tau ,w,x\right] \text{.}
\end{equation*}

\noindent\textbf{(Step 4)} Again assuming that $K\subset \widehat{R}^{\times
} $, let us show that $\pi _{K,R}:\mathcal{A}_{K,R}\rightarrow S_{K}\left( 
\mathbb{C}\right) $ is canonically identified with $\pi _{K}:A_{K}\left( 
\mathbb{C}\right) \rightarrow S_{K}\left( \mathbb{C}\right) $ for $K$ small
enough . Indeed, the general result follows from the case $K=K_{N}$ (because 
$\left\{ K_{N}\right\} $ is cofinal, hence we may choose $K_{N}\subset K$
and take $K$-invariants from $\pi _{K_{N},R}=\pi _{K_{N}}$ to get $\pi
_{K,R}=\pi _{K}$). In particular, we have 
\begin{equation*}
\Gamma _{K_{N}}\left( x\right) :=xK_{N}x^{-1}\cap B^{\times }=K_{N}\cap
B^{\times }=:\Gamma _{R,N}\text{,}
\end{equation*}%
the subgroup of the elements of $\Gamma _{R}:=\widehat{R}^{\times }\cap
B^{\times }$ that are congruent to one modulo $N$. As explained in \cite[Ch.
III, Th\'{e}or\`{e}me $\left( 1.1\right) $]{BC}, $S_{K_{N}}$ classifies fake 
$R$-elliptic curves $\left( \pi :A\rightarrow S,i\right) $\ together with an
isomorphism of right $R$-modules $\rho _{N}:A\left[ N\right] \simeq \frac{%
N^{-1}R}{R}$: we identify $S_{K_{N}}\left( S\right) $ with the set of
isomorphism classes of these triples and, abusively, for an analytic space $%
\mathcal{S}$ we write $S_{K_{N}}\left( \mathcal{S}\right) $ for the
corresponding moduli problem in the category of analytic spaces. Let us
consider a subfunctor $\mathcal{S}_{K_{N}}^{\circ }\subset S_{K_{N}}$
defined as follows: it is characterized by the fact that, if $\mathcal{S}$ is
connected, it classifies triples $\left( \pi :\mathcal{A}\rightarrow 
\mathcal{S},i,\rho _{N}\right) $ with the property that, writing $\widetilde{%
\mathcal{S}}\rightarrow \mathcal{S}$ for the universal cover and $\left( 
\widetilde{\pi }:\widetilde{\mathcal{A}}\rightarrow \widetilde{\mathcal{S}},%
\widetilde{i},\widetilde{\rho _{N}}\right) $ for the pull-back of $\left(
\pi :\mathcal{A}\rightarrow \mathcal{S},i,\rho _{N}\right) $ to $\widetilde{%
\mathcal{S}}$, there is a rigidification $\widetilde{\rho }:R^{1}\widetilde{%
\pi }_{\ast }\mathbb{Z}_{\widetilde{\mathcal{A}}}^{\vee }\overset{\sim }{%
\rightarrow }R_{\widetilde{\mathcal{S}}}$ which lifts $\widetilde{\rho _{N}}$.
%
We remark that $\left( \frac{R}{NR}\right) ^{\times }$ acts
simply transitively from the right on the set of isomorphism $\widetilde{%
\rho _{N}}:\widetilde{\mathcal{A}}\left[ N\right] \simeq \frac{N^{-1}R}{R}$
(resp. $\rho _{N}:\mathcal{A}\left[ N\right] \simeq \frac{N^{-1}R}{R}$)\
similarly as in $\left( \text{\ref{Realizations UfEC F6}}\right) $ and, by Remark \ref{Realizations R1}, there is always a
rigidification $R^{1}\widetilde{\pi }_{\ast }\mathbb{Z}_{\widetilde{\mathcal{%
A}}}^{\vee }\overset{\sim }{\rightarrow }R_{\widetilde{\mathcal{S}}}$, which
then induces $x\circ \widetilde{\rho _{N}}$ for some $x\in \left( \frac{R}{NR%
}\right) ^{\times }$: in other words, $S_{K_{N}}=\tbigcup\nolimits_{x\in
\left( \frac{R}{NR}\right) ^{\times }}\mathcal{S}_{K_{N}}^{\circ }x$.
Indeed, we can refine the union as follows. First, we remark that $\mathcal{S%
}_{K_{N}}^{\circ }r=\mathcal{S}_{K_{N}}^{\circ }$ for every $r$ coming from $%
R^{\times }$ because $R^{\times }$ acts on the rigidifications via $\left( 
\text{\ref{Realizations UfEC F6}}\right) $, implying that $%
S_{K_{N}}=\tbigcup\nolimits_{x\in R^{\times }\backslash \widehat{R}^{\times
}/K_{N}}\mathcal{S}_{K_{N}}^{\circ }x$. Suppose that $\left( \pi
:A\rightarrow S,i,\rho _{N}\right) $ is an $\mathcal{S}$ point in $\mathcal{S%
}_{K_{N}}^{\circ }x_{1}\cap \mathcal{S}_{K_{N}}^{\circ }x_{2}$, meaning that 
$x_{1}\circ \widetilde{\rho _{N}}$ and $x_{2}\circ \widetilde{\rho _{N}}$
lift to rigidifications $\widetilde{\rho _{1}},\widetilde{\rho _{2}}:R^{1}%
\widetilde{\pi }_{\ast }\mathbb{Z}_{\widetilde{\mathcal{A}}}^{\vee }\overset{%
\sim }{\rightarrow }R_{\widetilde{\mathcal{S}}}$. Because the $R^{\times }$%
-action $\left( \text{\ref{Realizations UfEC F6}}\right) $ on the set of
rigidification is simply transitive, we may write $\widetilde{\rho _{2}}=r\circ 
\widetilde{\rho _{1}}$ for some $r\in R^{\times }$ and then we see that $%
x_{2}\circ \widetilde{\rho _{N}}=r\circ x_{1}\circ \widetilde{\rho _{N}}$,
which implies $x_{2}=r\circ x_{1}$ in $\left( \frac{R}{NR}\right) ^{\times }=%
\widehat{R}^{\times }/K_{N}$. In other words,%
\begin{equation}
S_{K_{N}}=\tbigsqcup\nolimits_{x\in R^{\times }\backslash \widehat{R}%
^{\times }/K_{N}}\mathcal{S}_{K_{N}}^{\circ }x\text{.}
\label{Realizations UfEC F dec2}
\end{equation}

We would like to understand $\pi _{K_{N}}:A_{K_{N}}\left( \mathbb{C}\right)
\rightarrow S_{K_{N}}\left( \mathbb{C}\right) $ as a morphism of analytic
spaces. Comparing $\left( \text{\ref{Realizations F decomposition}}\right) $
with $\left( \text{\ref{Realizations UfEC F dec1}}\right) $ and $\left( 
\text{\ref{Realizations UfEC F dec2}}\right) $, suffices to understand $%
\mathcal{S}_{K_{N}}^{\circ }$. Let us identify $\Gamma _{R,N}\backslash 
\mathcal{P\simeq S}_{K_{N}}^{\circ }$ by showing that%
\begin{equation*}
\left( \pi _{\Gamma _{R,N},R}:\mathcal{A}_{\Gamma _{R,N},R}\rightarrow
\Gamma _{R,N}\backslash \mathcal{P},i_{\Gamma _{R,N},R},\rho _{R,N}\right) 
\end{equation*}%
is the universal object that classifies the triples $\left( \pi :\mathcal{A}%
\rightarrow \mathcal{S},i,\rho _{N}\right) \in \mathcal{S}_{K_{N}}^{\circ
}\left( \mathcal{S}\right) $ as above (see \cite[Theorem 6.1.10]{C} for the
analogue of this result in the modular case and \cite[Theorem 3 and
Proposition 15]{Sh} or \cite[Propositions 1.7, 1.11]{P}
for a description of the fibers of $\pi _{\Gamma _{N},R}$ as a
classifying space). Suppose we are given an $R$-fake (analytic) elliptic
curve over $\mathcal{S}$ with full level $N$ structure $\left( \pi :\mathcal{%
A}\rightarrow \mathcal{S},i,\rho _{N}\right) $, where $\mathcal{S}$ is
connected. Let $\widetilde{\mathcal{S}}\rightarrow \mathcal{S}$ be the
universal cover, write $\left( \widetilde{\pi }:\widetilde{\mathcal{A}}%
\rightarrow \widetilde{\mathcal{S}},\widetilde{i},\widetilde{\rho _{N}}%
\right) $ for the pull-back of $\left( \pi :\mathcal{A}\rightarrow \mathcal{S%
},i,\rho _{N}\right) $ to $\widetilde{\mathcal{S}}$ and choose a
rigidification $\widetilde{\rho }:R^{1}\widetilde{\pi }_{\ast }\mathbb{Z}_{%
\widetilde{\mathcal{A}}}^{\vee }\overset{\sim }{\rightarrow }R_{\widetilde{%
\mathcal{S}}}$ which lifts $\widetilde{\rho _{N}}$: two different lifts
being uniquely determined up to replacing $\widetilde{\rho }$ with $\beta
\circ \widetilde{\rho }$ for some $\beta \in \Gamma _{R,N}$. We remark that
the rigidification $\widetilde{\rho }$ yields a representation%
\begin{equation}
\pi _{1}\left( \mathcal{S}\right) \rightarrow \operatorname{Aut}\left( R^{1}\widetilde{\pi }%
_{\ast }\mathbb{Z}_{\widetilde{\mathcal{A}}}^{\vee }\right) \simeq \operatorname{Aut}\left(
R_{\widetilde{\mathcal{S}}}\right) =E\left( R\right) ^{\times }=R^{\times }%
\text{,}  \label{Realizations UfEC F8}
\end{equation}%
whose image is contained in $\Gamma _{R,N}$ because the elements of $\pi
_{1}\left( \mathcal{S}\right) $ act as the identity on $\widetilde{\rho _{N}}%
:\widetilde{\mathcal{A}}\left[ N\right] \simeq \frac{N^{-1}R}{R}$ which
comes from the constant sheaf $\rho _{N}:\mathcal{A}\left[ N\right] \simeq 
\frac{N^{-1}R}{R}$.
By Proposition %
\ref{Realizations P1}, there is a unique morphism $\widetilde{x}:\widetilde{%
\mathcal{S}}\rightarrow \mathcal{P}$ such that $\widetilde{x}^{\ast }\left(
\pi _{R}:\mathcal{A}_{R}\rightarrow \mathcal{P},i_{R},\rho _{R}\right)
=\left( \widetilde{\pi }:\widetilde{\mathcal{A}}\rightarrow \widetilde{%
\mathcal{S}},\widetilde{i},\widetilde{\rho }\right) $, which is $\pi
_{1}\left( \mathcal{S}\right) $-equivariant with respect to $\left( \text{%
\ref{Realizations UfEC F8}}\right) $ (because $\tilde x^*(\tilde \rho) = 
\rho_R$ and $\left( \text{\ref{Realizations
UfEC F6}}\right) =\left( \text{\ref{Realizations UfEC F2}}\right) $ in $%
\left( \text{\ref{Realizations UfEC F7}}\right) $). In particular, writing $%
\rho _{R,N}:\mathcal{A}_{R}\left[ N\right] =R\backslash N^{-1}R$ for the
identification induced by $\rho _{R}$ (same notation already in force for $%
\mathcal{A}_{\Gamma _{R,N},R}$), we have $\widetilde{x}^{\ast }\left( \rho
_{R,N}\right) =\widetilde{\rho _{N}}$. We see that we have constructed a
commutative diagram%
\begin{equation}
\begin{array}{ccc}
\left( \widetilde{\pi }:\widetilde{\mathcal{A}}\rightarrow \widetilde{%
\mathcal{S}},\widetilde{i},\widetilde{\rho _{N}}\right)  & \overset{%
\widetilde{x}}{\rightarrow } & \left( \pi _{R}:\mathcal{A}_{R}\rightarrow 
\mathcal{P},i_{R},\rho _{R,N}\right)  \\ 
\downarrow  &  & \downarrow  \\ 
\left( \pi :\mathcal{A}\rightarrow \mathcal{S},i,\rho _{N}\right)  & \overset%
{x}{\dashrightarrow } & \left( \pi _{\Gamma _{R,N},R}:\mathcal{A}_{\Gamma
_{R,N},R}\rightarrow \Gamma _{R,N}\backslash \mathcal{P},i_{\Gamma
_{R,N},R},\rho _{R,N}\right) 
\end{array}
\label{Realizations UfEC D3}
\end{equation}%
in which all the arrows are cartesian (the most right because $\left( \text{%
\ref{Realizations UfEC D1}}\right) $ is cartesian and by definition of the $%
\rho _{R,N}$'s) and we are looking for the dotted arrow $x$. First, we
remark that it exists because $\pi =\pi _{1}\left( \mathcal{S}\right)
\backslash \widetilde{\pi }$, the image of $\pi _{1}\left( \mathcal{S}%
\right) $ in $R^{\times }$ via $\left( \text{\ref{Realizations UfEC F8}}%
\right) $ is contained in $\Gamma _{R,N}$ and $\widetilde{x}$ is $\pi
_{1}\left( \mathcal{S}\right) $-equivariant with respect to $\left( \text{%
\ref{Realizations UfEC F8}}\right) $. It is a cartesian arrow because the
left vertical arrow is both surjective on the base $\widetilde{\mathcal{S}}%
\rightarrow \mathcal{S}$ and cartesian and the composition of it with $x$ is
cartesian 
(by the commutativity of $\left( \text{\ref%
{Realizations UfEC D3}}\right) $, because $\widetilde{x}$ and the right
vertical arrow are cartesian). Because $x$ is uniquely determined (since the
left vertical arrow is surjective)\ and, as remarked, two lifts of $%
\widetilde{\rho _{N}}$ differs by some $\beta \in \Gamma _{R,N}$ which
leaves $x$ unchanged, also the uniqueness of $x$ is proved.

\noindent\textbf{(Step 5)} If $V$ is a left $B^{\times }$-representation,
then we define $L(V) := \mathcal{P}\times V\to \mathcal{P}$.
If $K\subset \widehat{B}^{\times }$ is an open and compact subgroup, we may
form%
\begin{equation*}
L_{K}\left( V\right) :=\left( B^{\times }\ltimes 1\right) \backslash \left( 
\mathcal{P}\times B_{\infty }\times \widehat{B}^{\times }\right)
/K\rightarrow B^{\times }\backslash \left( \mathcal{P}\times \widehat{B}%
^{\times }\right) /K=S_{K}\left( \mathbb{C}\right) \text{.}
\end{equation*}%
We identify $L\left( V\right) $ and $L_{K}\left( V\right) $ with the
associated sheaf of sections. It is clear that we have $\mathcal{L}\left(
V\right) =L\left( V\right) $ and $\mathcal{L}_{K}\left( V\right)
=L_{K}\left( V\right) $, where we abusively write $\mathcal{L}\left(
V\right) $ and $\mathcal{L}_{K}\left( V\right) $ to denote the underlying
sheaves of locally constant functions\footnote{%
Indeed, we may also work with $B_{\infty }^{\times }$-representations, apply 
\cite[Corollary 3.2]{Ha} to construct $V\mapsto L\left( V\right) $ (resp. $%
L_{K}\left( V\right) $)\ by means of an $L_{\mathbb{R}}$\ as we did for $%
\mathcal{L}$ (resp. $\mathcal{L}_{K}$) and the uniqueness implies that we
have just to remark that $\mathcal{L}_{\mathbb{R}}\left( L_{1,\mathbb{R}%
}\right) =L_{\mathbb{R}}\left( L_{1,\mathbb{R}}\right) $.}. Taking $V=B$, it
is clear from $\mathcal{A}_{R}=\left( 1\ltimes R\right) \backslash \left( 
\mathcal{P}\times B_{\infty }\right) $ that we have $L\left( B\right)
=R^{1}\pi _{\ast }\mathbb{Q}_{\mathcal{A}}^{\vee }$, thus confirming $\left( 
\text{\ref{Realizations F key}}\right) $.

\bigskip

Finally, the proof of Proposition~\ref{Realizations UfEC L1} is straightforward. Given $K$ small enough,
we have $K\subset \widehat{R}^{\times }$ for some maximal order $R$. It then
 follows from \textbf{(Step 4)} that we have $R^{1}\pi _{K\ast }\mathbb{Q}%
_{A_{K}}^{\vee }=R^{1}\pi _{K,R\ast }\mathbb{Q}_{\mathcal{A}_{K,R}}^{\vee }$%
, where the right hand side is obtained descending the sheaves $R^{1}\pi
_{R\ast }\mathbb{Q}_{\mathcal{A}_{R}}^{\vee }$, in view of $\left( \text{\ref%
{Realizations UfEC F decUfEC}}\right) $ and the cartesian diagram $\left( 
\text{\ref{Realizations UfEC D1}}\right) $. This concludes the proof.

\end{document}